% To: Saharon Shelah <shelah@math.huji.ac.il>, Saharon Shelah <shlhetal@math.huji.ac.il>
% Subject: Sh1045 revs
% CC: Shani Ben-David <shani.ben-david@mail.huji.ac.il>
% Date: Wed, 1 Nov 2017 19:36:25 +0000
% From: Alice Leonhardt <leonhard@math.rutgers.edu>
% Mime-Version: 1.0
% context-Description: sh1045.tex
% X-sliced-and-diced-by: 'savemail' 2.1, Apr 2009

 %\documentclass[12pt]{article}

\documentclass{amsart}
\usepackage{color}

\usepackage{mathrsfs}
\usepackage{amscd,amssymb,amsopn,amsmath,amsthm,graphics,amsfonts,enumerate,verbatim,calc}
\usepackage[all]{xy}
\usepackage[utf8]{inputenc}

\newtheorem{theorem}{Theorem}[section]

\newtheorem{lemma}[theorem]{Lemma}
\newtheorem{proposition}[theorem]{Proposition}
\newtheorem{fact}[theorem]{Fact}

\newtheorem{observation}[theorem]{Observation}
\newtheorem{corollary}[theorem]{Corollary}

\theoremstyle{definition}
\newtheorem{definition}[theorem]{Definition}

\newtheorem{problem}[theorem]{Problem}

\newtheorem{discussion}[theorem]{Discussion}

\newtheorem{hypothesis}[theorem]{Hypothesis}
\newtheorem{acknowledgement}{Acknowledgment}
\theoremstyle{remark}
\newtheorem{remark}[theorem]{Remark}

\newtheorem{notation}[theorem]{Notation}

\newtheorem{construction}[theorem]{Construction}

\DeclareMathOperator{\Tor}{Tor}\DeclareMathOperator{\tor}{tor}
\DeclareMathOperator{\End}{End}

\DeclareMathOperator{\BEnd}{E_b}
\DeclareMathOperator{\Hom}{Hom}

\DeclareMathOperator{\ord}{ord}
\DeclareMathOperator{\AP}{AP}

\DeclareMathOperator{\supp}{supp}
\DeclareMathOperator{\range}{Rang}
\DeclareMathOperator{\NQr}{NQr}

\DeclareMathOperator{\Qr}{Qr}

\newcommand{\Ker}{{\rm Ker}}

\newcommand{\Ext}{{\rm Ext}}

\newcommand{\cl}{{\rm cl}}

\newcommand{\Ord}{{\rm Ord}}

\newcommand{\id}{{\rm id}}

\newcommand{\Rang}{{\rm Rang}}\newcommand{\coker}{{\rm Coker}}

\newcommand{\lh}{{\lg}}
\newcommand{\rest}{{\restriction}}

\newcommand{\rng}{{\rm Rang}}

\newcommand{\sn}{{\smallskip\noindent}}

\newcommand{\cB}{{\mathscr B}}

\newcommand{\cH}{{\mathscr H}}

\newcommand{\bbL}{{\mathbb L}}

\newcommand{\bbP}{{\mathbb P}}

\newcommand{\bbZ}{{\mathbb Z}}

\newcommand{\cf}{{\rm cf}}

\newcount\skewfactor
\def\mathunderaccent#1#2 {\let\theaccent#1\skewfactor#2
\mathpalette\putaccentunder}
\def\putaccentunder#1#2{\oalign{$#1#2$\crcr\hidewidth
\vbox to.2ex{\hbox{$#1\skew\skewfactor\theaccent{}$}\vss}\hidewidth}}

\newcommand{\blueq}[1]{{\color{blue} #1}}

\usepackage{hyperref}

\begin{document}

\title[Co-Hopfian and boundedly endo-rigid groups] {Co-Hopfian and boundedly endo-rigid mixed abelian  groups}

\author[M. Asgharzadeh]{Mohsen Asgharzadeh}

\address{Mohsen Asgharzadeh, Hakimiyeh, Tehran, Iran.}

\email{mohsenasgharzadeh@gmail.com}

\author[M.  Golshani]{Mohammad Golshani}

\address{Mohammad Golshani, School of Mathematics, Institute for Research in Fundamental Sciences (IPM), P.O.\ Box:
	19395--5746, Tehran, Iran.}

\email{golshani.m@gmail.com}

\author[S. Shelah]{Saharon Shelah}

\address{Saharon Shelah, Einstein Institute of Mathematics, The Hebrew University of Jerusalem, Jerusalem,
	91904, Israel, and Department of Mathematics, Rutgers University, New Brunswick, NJ
	08854, USA.}

\email{shelah@math.huji.ac.il}
\thanks{The second author's research has been supported by a grant from IPM (No. 1401030417).  The
	third author would like to thank the Israel Science Foundation (ISF) for partially supporting this research by grant No.
	1838/19, his research partially supported by the grant ``Independent Theories'' NSF-BSF NSF 2051825, (BSF 3013005232). The third author is grateful to an individual who prefers to remain anonymous for providing typing services that were used during the work on the paper. This is publication 1232 of third author.}

\subjclass[2010]{ Primary: 03E75; 20K30; 20K21;     Secondary:  20A15;  16S50}

\keywords {Black boxes; bounded endomorphisms; co-Hopfian groups; endomorphism algebras; mixed abelian  groups; p-groups; set theoretical methods in algebra.
}

% Parts of sh1028 incorporated into this file; Nov/1/2013
% Formerly F1380 with different title; changed March 3, 2014

% Previous version: October 11, 2017

\iffalse
{\color{green}Please see pages 27 and 29. The questions are written by} {\color{red}red} {\color{green}and the corresponding parts are marked by} {\color{blue}blue}\fi

\begin{abstract}
For a given cardinal $\lambda$ and a torsion abelian  group $K$ of cardinality less than $\lambda$, we present, under some mild conditions (for example  $\lambda=\lambda^{\aleph_0}$),  boundedly endo-rigid abelian  group   $G$ of cardinality $\lambda$ with $\tor(G)=K$. Essentially, we give a complete characterization of such pairs $(K, \lambda)$. Among other things,  we use a twofold version of the black box. We present an application of the construction of boundedly endo-rigid abelian groups. Namely, we turn to the existence problem of co-Hopfian abelian groups of a given size, and present some new classes of them, mainly in the case of mixed abelian groups. In particular, we give useful criteria to detect when a boundedly endo-rigid abelian group is co-Hopfian and completely determine cardinals $\lambda> 2^{\aleph_{0}}$ for which there is a co-Hopfian abelian group of size $\lambda$.
\end{abstract}

	\date{\today}
\maketitle
\medskip

\tableofcontents

%\begin{enumerate}
%\item[${{}}$]  [2012.5.22 have not read.]
%\end{enumerate}

\section {Introduction} \label{0}

By  a torsion (resp. torsion-free)  group we mean an abelian group  such that all its non-zero elements  are of finite  (resp. infinite)  order.  A mixed group $G$ contains
both non-zero elements of finite order and elements of infinite order, and these are connected
via the celebrated short exact sequence
$$(\ast)\qquad\qquad 0\longrightarrow \tor(G)\longrightarrow G\longrightarrow \frac{G}{\tor(G)}\longrightarrow 0.$$
Despite the importances of $(\ast)$,  there are
series of questions concerning   how to glue the issues  from torsion and torsion-free parts and put them together to check the desired  properties for
mixed groups.

\iffalse
Let us focus to the following  extremely important situation.\fi
Reinhold Baer was interested to find an interplay between abelian  groups and rings, see \cite{B1} and \cite{B2}.
In this regard, he raised the following general problem:
\begin{problem}Which rings can be the endomorphism ring of  a given abelian  group   $G$?\end{problem}
There are a lot of interesting research papers and books that study this problem,
%that realize rings as an endomorphism ring of both torsion and torsion-free groups.
see for example  the books \cite{EM02} and \cite{GT}. According to the recent book of Fuchs \cite{fuchs},
 for mixed groups, only
very little can be said. As an achievement,  we cite the  works
of Corner-G\"obel  \cite{CG85} and Franzen-Goldsmith \cite{fg}.

For any group $G$,  by  $E_f(G)$
we mean the ideal of $\End(G)$ consisting of all elements of $\End(G)$ whose
image is finitely-generated.  In \cite{c}, Corner has constructed an  abelian
group $G:=(M,+)$, for some ring $R$ and  an $R$-module $M$,  such that any of its endomorphisms is of the form multiplication  by some $r\in R$ plus
a distinguished function from $E_f(G)$. One can allow such a distinguished function ranges over other classes such  as finite-range, countable-range, inessential range or even small homomorphism, and there are a lot of work trying to clarify such situations. As a short list, we may mention the papers  Corner-G\"obel  \cite{CG85},      Dugas-G\"obel \cite{dg}, Corner \cite{c}, Thome \cite{t} and  Pierce \cite{p}.

Here, by a bounded group, we mean a group $G$ such that $ nG = 0$ for some fixed  $0< n\in\mathbb{N}$.
By a theorem of Baer and Pr\"ufer a bounded group is a direct sum of
cyclic groups. The converse is not true. However, there is a partial converse
for countable $p$-groups. For more details see the book of Fuchs \cite{fuchs}.
A homomorphism $h\in G_1\to G_2$ of abelian  groups is called
bounded if $\Rang(h)$ is bounded.

\begin{definition} An abelian  group $G$ is  \emph{boundedly rigid} when every endomorphism of it has the form $\mu_n+h$, where $\mu_n$ is multiplication by $n \in \mathbb{Z}$ and $h$ has bounded range.  By $\BEnd(G)$
	we mean the ideal of $\End(G)$ consisting of all elements of $\End(G)$ whose
	image is  bounded.
\end{definition}

Let us explain some motivation.
The concept of a rigid system of torsion-free   groups has a natural analogue for the class of separable  $p$-primary   groups: a family $\{G_i:i\in I\} $ of separable p-primary  groups is called rigid-like if  for all $i\neq j\in I$ every homomorphism  $G_i \to G_j$ is small, and also for all $i\in I$, every endomorphism of $G_i$ is the sum of a small endomorphism and multiplication by a $p$-adic integer.
In his paper \cite{45}, Shelah confirmed a conjecture of Pierce \cite{p} by showing that if $\mu$ is an uncountable strong limit cardinal, then there is
a rigid-like system $\{G_i:i\in I\} $ of separable $p$-primary   groups such that $|G_i | = \mu$
and $|I| = 2^\mu$, see also \cite{sh172} for more results in this direction.

Let us now turn to the paper and state our main results.
 Section 2 contains the preliminaries, basic definitions and notations that we need.
  The reader may skip it, and come back to it when needed
 later.
In Section 3, and as a main result,  we prove the following.

\begin{theorem}\label{1.2}
	Given a cardinal $\lambda$ such that  $\lambda=\lambda^{\aleph_0} > 2^{\aleph_0}$  and a torsion   group $K$ of cardinality less than $\lambda$,
	there is a  boundedly rigid abelian  group $G$ of cardinality $\lambda$
	with $\tor(G)=K.$
\end{theorem}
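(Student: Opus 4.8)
The plan is to construct $G$ as a pure subgroup of a large "completion-type" product built over the torsion group $K$, using a twofold black box to kill all potential endomorphisms that are not of the allowed form $\mu_n + h$. First I would set up the algebraic skeleton: write $K = \bigoplus_p K_p$ as the direct sum of its $p$-components, and fix a "base" free-like object. The natural ambient group is something like $B = \bigoplus_{\alpha < \lambda} \mathbb{Z} x_\alpha \oplus K$, together with its $\mathbb{Z}$-adic (or rather, $\prod_p p^{?}$-adic, chosen so that $K$ embeds purely) completion $\widehat{B}$; elements of $\widehat{B}$ are "branches" that carry both a torsion-free infinite sum part and a torsion tail living in $K$. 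The group $G$ will be generated over $B$ by a carefully chosen family of $\lambda$-many such branches $\{y_\eta : \eta \in T\}$ indexed along a tree $T$ of height $\omega$, so that $|G| = \lambda$ (here $\lambda = \lambda^{\aleph_0}$ is used to guarantee we have enough branches and that the construction closes off) and $\tor(G) = K$ (this requires checking that adding the $y_\eta$'s creates no new torsion, i.e. purity of $B$ in $G$ above $K$).

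Next I would invoke the black box. One copy of the black box enumerates, along an increasing continuous chain $\langle G_i : i < \lambda \rangle$ with $G = \bigcup_i G_i$, all "candidate" partial endomorphisms $\varphi : G_i \to \widehat{G_i}$ (or their relevant restrictions), predicting them on small pieces; the "twofold" aspect — the second layer — is what lets us simultaneously control the behavior on the torsion-free skeleton and on the torsion part $K$, which do not interact cleanly via the exact sequence $(\ast)$. At stage $i$, given a predicted candidate $\varphi$, we ask whether it already looks like $\mu_n + h$ on the branch approximations seen so far. If it does not — i.e. there is no single integer $n$ such that $\varphi - \mu_n$ has bounded range on the current approximation — then we choose the next branch $y_{\eta}$ (a genuinely "new" element, independent over $G_i$ in $\widehat{G_i}$) so that $\varphi(y_\eta)$, if it existed in $G$, would have to equal an element not in $G$, or would violate the divisibility/coherence conditions defining $G$. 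The standard trap: pick $y_\eta = \sum_k c_k z_{\eta \restriction k}$ with the coefficients $c_k$ growing fast, so that $\varphi(y_\eta) = \sum_k c_k \varphi(z_{\eta\restriction k})$ is forced by continuity, and then arrange (using the freedom in choosing $\eta$, via the black box genericity) that this forced value is not in $G_{i+1}$ unless $\varphi$ agrees with some $\mu_n$ up to bounded error.

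The main obstacle — and the reason the hypothesis $\lambda > 2^{\aleph_0}$ appears rather than just $\lambda = \lambda^{\aleph_0}$ — is the bookkeeping needed to handle endomorphisms whose "non-multiplication" part is bounded but nonzero: we must not kill those, since they are explicitly allowed. So the combinatorial game at each stage is not "diagonalize against $\varphi$" but rather "either diagonalize against $\varphi$, or certify that $\varphi \restriction G_i = \mu_n \restriction G_i + (\text{bounded})$ for some $n$ determined by $\varphi$'s action on a single non-torsion branch." Making this dichotomy effective requires that the integer $n$ be read off correctly and uniformly — essentially, $\varphi(y_\eta)/y_\eta$ must stabilize $p$-adically for cofinally many primes — and that the bounded correction on $G_i$ extends to a bounded correction on all of $G$, which is where one needs the $K$-side of the twofold black box to guarantee $h$ has globally bounded range (some fixed $N$ with $NG \cap \operatorname{Rang}(h) = 0$ failing, i.e. $N \cdot \operatorname{Rang}(h) = 0$). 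I expect the proof to conclude by verifying: (i) $|G| = \lambda$ and $\tor(G) = K$; (ii) the chain is continuous and each diagonalization step is possible (freeness/genericity lemma from the black box machinery, cited from the preliminaries); (iii) given the full list of candidates was handled, any $\varphi \in \operatorname{End}(G)$ was predicted unboundedly often, so either it was trapped (contradiction) or it equals $\mu_n + h$ with $h \in \BEnd(G)$ — hence $G$ is boundedly rigid.
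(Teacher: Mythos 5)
Your overall picture---a tree of branches indexed by $\omega$-sequences, a black box that predicts enough to kill bad endomorphisms, and a trap built from branches whose images are ``forced'' by relations---is the right kind of Shelah-style plan, but several pieces of your proposal are either mischaracterized or missing in ways that would stop the proof from closing.

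First, the algebraic setting is different from the paper's and your version is not clearly well-posed. The paper does not work inside a $\mathbb{Z}$-adic completion $\widehat B$; instead $G_{\bold c}$ is presented abstractly by generators $K \cup \{x_\nu,y_{\eta,n}\}$ modulo the relations
$y_{\eta,n}=a_{\eta,n}+n!\,y_{\eta,n+1}+(x_{\eta\restriction_L n}-x_{\eta\restriction_R n})$.
Your $y_\eta=\sum_k c_k z_{\eta\restriction k}$ inside $\widehat B$ is a Corner-style sum, but you never explain how $K$ embeds purely into such a completion without $K$ acquiring a divisible hull, nor how $\tor(G)=K$ survives. The paper has to \emph{prove} that $G_{\bold c}/K$ is reduced (Lemma~\ref{g31}), and this uses cotorsion-freeness of $(R,+)$, which your proposal never mentions.

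Second, and more importantly, you do not correctly describe what makes the black box ``twofold,'' and this is the engine of the whole argument. In the paper each $\eta(n)$ is a \emph{pair} $(\eta(n,1),\eta(n,2))$, giving the left/right restrictions $\eta\restriction_L n,\,\eta\restriction_R n$, and the prediction in Definition~\ref{m1} guarantees $f(\eta\restriction_L n)=f(\eta\restriction_R n)$ for a coloring $f$ that records the endomorphism's data. When one chooses $f$ so that $f(\nu_1)=f(\nu_2) \Rightarrow h(x_{\nu_1})=h(x_{\nu_2})$, the differences $h(x_{\eta\restriction_L n})-h(x_{\eta\restriction_R n})$ \emph{vanish} and the relation collapses to $h(y_{\eta,n})=a_n+n!\,h(y_{\eta,n+1})$ with $a_n\in K$. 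Your description of the second layer as ``controlling behavior on the torsion-free skeleton and the torsion part'' misses this cancellation trick entirely, and without it the trap does not fire.

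Third, the trap itself is absent. You say the forced value ``is not in $G_{i+1}$,'' but the actual obstruction is algebraic, not bookkeeping: Lemma~\ref{g10} shows that for an unbounded $h\in\Hom(G_{\bold c},K)$ one can choose $\langle a_n\rangle$ in $\rng(h)$ with rapidly growing orders (using Kaplansky's direct-summand lemmas), so that the system $z_n=a_n+n!\,z_{n+1}$ has \emph{no} solution in any $G_{\bold d}\supseteq G_{\bold c}$; the contradiction is carried by the factorials against the orders of the $a_n$, together with the reducedness of $G_{\bold c}/K$. Nothing in your proposal produces this concrete impossibility.

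Finally, the reduction structure is missing. The proof does not ``diagonalize against $\varphi$'' in one step; it first shows (Lemmas~\ref{g12}, \ref{findm}) that off a countable $\Lambda_h$ the support of $h(rx_\nu)$ is essentially $\{\nu\}\cup\Lambda_h$, then (Lemma~\ref{g13}) that a single $m_*\in R$ works for all generic $\nu$, and only then (Lemma~\ref{t11}) that a hom into $G_\Lambda+K$ for countable $\Lambda$ is bounded --- the last step invoking cotorsion-freeness of $R$ again. Your ``$\varphi(y_\eta)/y_\eta$ stabilizes $p$-adically'' is not how $m_*$ is read off and is not even meaningful in the paper's abstract presentation. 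Also, your explanation of why $\lambda>2^{\aleph_0}$ appears is incorrect: that hypothesis merely excludes the already-settled case $\lambda=2^{\aleph_0}$; the bounded endomorphisms are spared automatically because they never produce an unbounded hom into $K$ in the final reduction, not because of extra bookkeeping.
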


To prove this, we introduce a series of definitions and present several claims. The first one is the rigidity context, denoted by ${\bold k}$, see Definition \ref{f1}. Also, the main technical tool is a variation of ``\textit{Shelah's black
	box}'', and  we refer to it as   \emph{twofold black box}.  For its definition (resp. its existence), see Definition \ref{m1} (resp. Lemma \ref{m2}). It may be worth to mention that the  black boxes were introduced  by Shelah in   \cite{Sh:227},
where he showed that they follow from ZFC (here, ZFC means the Zermelo--Fraenkel set theory with the axiom of choice).
We can consider black boxes
as  a general method to generate a class of diamond-like principles provable in ZFC.
Then, we continue by introducing the approximation blocks, denoted by $\AP$, more precisely, see  Definition \ref{g2}. There is a distinguished
object  $\bold c$ in $\AP$ that we call it full. The twofold black box, helps us to find  such distinguished objects, see Lemma
\ref{g7}. Here, one may define  the group $G:=G_\bold c$.
Let  $h\in\End(G)$. In order to show $h$ is
boundedly rigid, we apply a couple of reductions (see Lemmas \ref{g12}--\ref{g15}),
to reduce to the case that $h$ factors throughout  $G\to \tor(G)$. Finally, in Lemma \ref{g10} we handle this case,  by showing that any map $G\to \tor(G)$ is indeed boundedly rigid.

In the course of the proof of Theorem \ref{1.2},  we develop  a general  method  that allows us to prove
$0\to \mathbb{Z}\to \End(G)\to \frac{\End(G)}{\BEnd(G)}\to 0$ is exact, and also enables  us to  present a connection to Problem 1.1.
In order to display the connection,
 let $R$ be a ring coming  from the rigidity context. For the propose of the introduction, we may assume that $(R,+)$ is cotorsion-free, see Definition \ref{cot} (with the convenience that the argument becomes easier if we work with $R:=\mathbb{Z}$, or even $(R,+)$ is $\aleph_{1}$-free).
Following our construction, every endomorphism  of $G$ has the form $\mu_r+h$, where $\mu_r$ is multiplication by $r \in R$ and $h$ has bounded range, i.e., the sequence
$$0\longrightarrow  R\longrightarrow \End(G)\longrightarrow \frac{\End(G)}{\BEnd(G)}\longrightarrow 0$$is exact.

\begin{definition}
A group	$G$ is called \emph{Hopfian} (resp. \emph{co-Hopfian}) if its surjective (resp. injective) endomorphisms are automorphisms.
\end{definition}

Essentially, we give complete characterization of the pairs $(K, \lambda)$
by relating our work with the recent   works of Paolini and Shelah, see \cite{1205}, \cite{1214} and \cite{Sh:F2005}.
To this end, first we
recall the following folklore problem:
\begin{problem}\label{p1}
	Construct  co-Hopfian groups of a given size.
\end{problem}

Baer \cite{baer} was the first to investigate Problem \ref{p1} for abelian  groups.
A torsion-free abelian  group is co-Hopfian if and only if it is divisible of finite rank, hence the problem naturally reduces to the torsion and mixed cases. In their important paper  \cite{Be}, Beaumont and Pierce  proved  that if $G$ is co-Hopfian, then $\tor(G)$ is of size at most continuum, and further that $G$ cannot be a $p$-groups of size $\aleph_0$. This naturally left open the problem of the existence of co-Hopfian $p$-groups of uncountable size $\leq 2^{\aleph_0}$, which was later solved by Crawley \cite{crawley} who proved that there exist co-Hopfian $p$-groups of size $2^{\aleph_0}$. Braun and Str\"ungmann \cite{independence_paper} showed that
the existence of three types of infinite abelian  $p$-groups of
size $\aleph_0 < |G| < 2^{\aleph_0}$ are independent of ZFC:
\begin{itemize}
	\item[(a)] both Hopfian and co-Hopfian,
	\item[(b)] Hopfian but not co-Hopfian,
	\item[(c)] co-Hopfian but not Hopfian.
\end{itemize}Also, they proved that the above three
types of groups of size $2^{\aleph_0}$ exist in ZFC.
So, in the light of Theorem \ref{1.2}, the remaining part is $2^{\aleph_{0}}<\lambda<\lambda^{\aleph_{0}}$. Very recently, and among other things,  Paolini and Shelah \cite{1214}
proved that there  is no co-Hopfian
group of size $\lambda$ for such a $\lambda$.
As an application, in Section 4,   we  determine  cardinals $\lambda> 2^{\aleph_{0}}$ for which there is a co-Hopfian group of size $\lambda$. For the precise statement, see Corollary \ref{end}.

Let us recall a connection between  the concepts  boundedly endo-rigid groups and  (co-)Hopfian groups. First, recall from the seminal paper \cite{sh44},  for  any $\lambda $ less than  the first beautiful cardinal,  Shelah proved that  there is an endo-rigid torsion-free group of cardinality $\lambda$.
By definition, for any $f\in\End(G)$ there is $m_f \in \mathbb{Z}$ such that $f(x) = m_fx$. So, $f$ is onto iff $m_f=\pm 1$. In other words, $G$ is Hopfian.
This naturally motives us to detect co-Hopfian property by the help of some boundedly endo-rigid groups. This is what we want to do in \S 4.
Namely,   our first result on co-Hopfian groups is stated as follows:

\begin{construction}
Let $K := \oplus \{\frac{\mathbb{Z}}{p^n\mathbb{Z}} : p \in \mathbb{P} \text{ and } 1 \leq n <m \}$, where  $m< \omega$,
and  $\mathbb{P}$ is the set of prime numbers. Let $G$ be a boundedly endo-rigid abelian group   such that $\tor(G)=K$. Then $G$ is co-Hopfian.	
\end{construction}
W may  recall from Theorem \ref{1.2} that such a group exists for any $\lambda=\lambda^{\aleph_0} > 2^{\aleph_0}$. In fact, the size of $G$ is $\lambda$.

Let $h$ be a natural number. One of the tools that we use is the $h$-power torsion subgroup of $G$:
$$\Gamma_h(G):=\{g\in G:\exists n\in \mathbb {N} \emph{ such that }h^{n}g=0\}.$$The assignment $G\mapsto\Gamma_h(G)$ defines a functor from the category of abelian groups to itself.
It may be worth to mention that, in the style of Grothendieck, this is called  section functor and some authors use $\Tor_h(-)$ to denote it.

In our study of the co-Hopfian property of $G$, the following subset of prime numbers appears: $$ S_{G}: = \{p \in \mathbb{P}:  G / \Gamma_p(G)\emph{ is not  p-divisible}\}.$$
The set $ S_{G}$ helps us to present a useful criteria to detect when a boundedly endo-rigid abelian  group is co-Hopfian:

\begin{proposition}
	Assume $\lambda > 2^{\aleph_{0}}$ and  $G$ is a boundedly endo-rigid abelian  group  of size $\lambda$. Then $G$ is co-Hopfian if and only if:
	
	\begin{enumerate}
		\item[(a):] $S_G$ is a non-empty set of primes,
		
		\item[(b):]
		\begin{enumerate}
			\item[$(b_1)$] $\Gamma_p(G) \neq G,$
			
			\item[$(b_2)$] if $p \in S_G,$ then $ \Gamma_p(G)$ is not bounded,
			
			\item[$(b_3)$] if $\Gamma_p(G)$ is bounded, then it is finite.
		\end{enumerate}
	\end{enumerate}
\end{proposition}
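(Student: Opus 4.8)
The plan is to prove both implications by using the exact sequence $(\ast)$ for $G$ and reducing the co-Hopfian question on $G$ to a question about $\tor(G)$ together with a divisibility condition on $\overline G:=G/\tor(G)$. Three elementary observations will be used throughout. First, $\overline G$ is torsion-free and $p\in S_G$ iff $\overline G$ is not $p$-divisible: indeed $\tor(G)/\Gamma_p(G)=\bigoplus_{q\neq p}\Gamma_q(G)$ is $p$-divisible and an extension of a $p$-divisible group by a $p$-divisible group is $p$-divisible, so $G/\Gamma_p(G)$ is $p$-divisible exactly when its quotient $\overline G$ is. Second, if $\Gamma_p(G)$ is bounded then $\Gamma_p(G)=G[p^k]$ for the exponent $p^k$, and $G[p^k]$ is pure in $G$; being bounded and pure it is a direct summand, $G=\Gamma_p(G)\oplus G'$ with $\Gamma_p(G')=0$. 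Third, a nonzero divisible abelian group is never boundedly endo-rigid (multiplication by $1/p$ on a $\bbQ$-summand, or by a non-integral $p$-adic on a $\bbZ(p^\infty)$-summand, is not of the form $\mu_n+(\text{bounded range})$).

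For the \emph{only if} direction assume $G$ is co-Hopfian. If $\Gamma_p(G)=G$ then $G$ is a $p$-group of size $\lambda>2^{\aleph_0}$, contradicting the Beaumont--Pierce bound $|\tor(G)|\le 2^{\aleph_0}$; this is $(b_1)$. If $\Gamma_p(G)$ is bounded and infinite, write $\Gamma_p(G)\cong\bigoplus_j(\bbZ/p^j\bbZ)^{(\kappa_j)}$ with some $\kappa_j$ infinite; the embedding of $(\bbZ/p^j\bbZ)^{(\kappa_j)}$ onto a proper summand of itself, extended by the identity on $G'$ and on the remaining factors, is an injective non-surjective endomorphism of $G$ --- impossible. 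This gives $(b_3)$, and it also gives $(b_2)$: if $p\in S_G$ and $\Gamma_p(G)$ were bounded it would be finite, $G=\Gamma_p(G)\oplus G'$ with $\Gamma_p(G')=0$, while $p\in S_G$ forces $G'\cong G/\Gamma_p(G)$ non-$p$-divisible, so $\mu_p\restriction G'$ is injective but not surjective and $\mathrm{id}_{\Gamma_p(G)}\oplus(\mu_p\restriction G')$ is an injective non-surjective endomorphism of $G$. Finally $(a)$: if $S_G=\varnothing$ then $\overline G$ is divisible, hence by the Beaumont--Pierce bound and $|G|=\lambda>2^{\aleph_0}$ one has $\overline G\cong\bbQ^{(\lambda)}$; since $G$ itself is not divisible (third observation), such a $G$ cannot be co-Hopfian --- for instance $G/nG$ is then a homomorphic image of $\tor(G)/n\tor(G)$, hence bounded, for every $n$, which is incompatible with co-Hopficity of a boundedly endo-rigid group.

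For the \emph{if} direction assume $(a)$ and $(b)$, let $f\in\End(G)$ be injective, and write $f=\mu_n+h$ with $\Rang(h)\subseteq G[m]$. If $n=0$ then $f=h$ embeds $G$ into the bounded group $G[m]$, so $G=\tor(G)$ is bounded; as $|G|>\aleph_0$ some $\Gamma_p(G)$ is infinite and bounded, contradicting $(b_3)$, so $n\neq0$. Since $\overline h=0$ and $\overline G$ is torsion-free, the map induced by $f$ on $\overline G$ is $\mu_n$, so the snake lemma applied to $(\ast)$ gives: $f$ is injective iff $f\restriction\tor(G)$ is injective, and $f$ is surjective iff $f\restriction\tor(G)$ is surjective and $\overline G$ is $n$-divisible. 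Moreover the homomorphic image of a $p$-group is a $p$-group, so $h(\Gamma_p(G))\subseteq G[m]$ is $p$-primary and hence contained in $\Gamma_p(G)[p^{v_p(m)}]$; in particular $f$ maps each $\Gamma_p(G)$ into itself and $f\restriction\tor(G)=\bigoplus_p\psi_p$ with $\psi_p=\mu_n\restriction\Gamma_p(G)+h_p$, $\Rang(h_p)\subseteq\Gamma_p(G)[p^{v_p(m)}]$. The key lemma is: \emph{if $T$ is an unbounded $p$-group, $a\ge1$, and $g\colon T\to T$ has $\Rang(g)\subseteq T[p^b]$, then $\mu_{p^a}+g$ is not injective} --- because $(\mu_{p^a}+g)^k=\mu_{p^{ak}}+g_k$ with $\Rang(g_k)\subseteq T[p^b]$ carries $T[p^{ak+b}]$ into $T[p^b]$, so injectivity would embed a group of exponent $p^{ak+b}$ into one of exponent $p^b<p^{ak+b}$, absurd. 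Using this: if a prime $q\mid n$ has $q\nmid m$ and $\Gamma_q(G)\neq0$, pick $0\neq x\in\Gamma_q(G)[q]$; then $f(x)=h(x)\in G[m]\cap G[q]=0$, contradicting injectivity; and if $p\mid n$ lies in $S_G$, then $\Gamma_p(G)$ is unbounded by $(b_2)$, so absorbing the prime-to-$p$ factor of $n$ (an automorphism of $\Gamma_p(G)$) into $h_p$, the key lemma shows $\psi_p$ is not injective --- again a contradiction. Hence every prime factor of $n$ lies outside $S_G$, i.e. $\overline G$ is $n$-divisible.

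It remains to show each $\psi_p$ is surjective. If $\Gamma_p(G)$ is finite this is automatic. If $\Gamma_p(G)$ is infinite, then by $(b_3)$ it is unbounded, so by the key lemma $p\nmid n$; thus $\mu_n\restriction\Gamma_p(G)$ is an automorphism and, post-composing with its inverse, one must prove that $\mathrm{id}+g'$ is surjective whenever it is injective, for a bounded-range $g'$ on $\Gamma_p(G)$. This is the main obstacle: for an arbitrary bounded-range $g'$ the map $\mathrm{id}+g'$ can be injective but not surjective (an infinite bounded group is not co-Hopfian), so one must use that $g'$ is not arbitrary but comes from $\BEnd(G)$ for the boundedly endo-rigid $G$ --- specifically, the structural description of $\Hom(G,\tor(G))$ obtained in Section 3 (in the spirit of Lemma \ref{g10}), which constrains such $g'$ enough to force $\mathrm{id}+g'$ onto. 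Granting this, $f\restriction\tor(G)$ is surjective and $\overline G$ is $n$-divisible, so $f$ is surjective and $G$ is co-Hopfian.
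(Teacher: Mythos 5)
Your overall strategy for the ``if'' direction --- reduce surjectivity of $f=\mu_n+h$ via the sequence $0\to\tor(G)\to G\to G/\tor(G)\to 0$ to surjectivity of $f\restriction\tor(G)$ together with $n$-divisibility of $G/\tor(G)$, and then factor $f\restriction\tor(G)=\bigoplus_p\psi_p$ --- is sound and parallels the paper's $(*)_3$--$(*)_5$ in the proof of Proposition~\ref{s11c}; your key lemma that $\mu_{p^a}+g$ is never injective on an unbounded $p$-group when $g$ has bounded range is correct and is exactly the content of the paper's step $(*)_2$. But you yourself flag a gap with ``Granting this'': you never prove that on an unbounded $\Gamma_p(G)$ with $p\nmid n$, a map $\id+g'$ with $g'$ bounded must be surjective once it is injective, and your appeal to ``the structural description of $\Hom(G,\tor(G))$ obtained in Section~3 (in the spirit of Lemma~\ref{g10})'' is not an argument: Lemma~\ref{g10} is a statement about the specific group $G_{\bold c}$ built from a full ${\bold c}\in\AP$, and Proposition~\ref{s11c} is asserted for an \emph{arbitrary} boundedly endo-rigid $G$ of size $\lambda$, not for the particular $G_{\bold c}$ of the construction. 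The paper does not use Section~3 here; it routes this step through the $\NQr$/anti-witness machinery --- after $(*)_5$ it observes that $h_p=h\restriction\Gamma_p(G)$ is a $(1,p)$-anti-witness, hence $\NQr(\Gamma_p(G))$ holds, and Lemma~\ref{a22} then forces $\Gamma_p(G)$ infinite, which the paper plays off against $(*)_2$. You would need to replace your ``Granting this'' by an actual chain of that kind.

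There is a second, smaller gap in your proof of clause $(a)$ of the ``only if'' direction. Having reduced to $\overline G\cong\bbQ^{(\lambda)}$ and $G$ not divisible, you assert that ``$G/nG$ is then a homomorphic image of $\tor(G)/n\tor(G)$, hence bounded, for every $n$, which is incompatible with co-Hopficity of a boundedly endo-rigid group,'' but you give no reason for the final incompatibility and it is not immediate. The paper argues differently: it takes an $\mathcal{L}_{\aleph_1,\aleph_1}$-elementary submodel $G_1\prec(\cH(\chi),\in)$ of cardinality $2^{\aleph_0}$ with $\tor(G)\subseteq G_1$ (using the Beaumont--Pierce bound $|\tor(G)|\le 2^{\aleph_0}$, hence co-Hopficity), picks $x\in G\setminus G_1$, exploits divisibility of $G/\tor(G)$ to build $x_n$ with $n!x_{n+1}\equiv x_n\ (\mathrm{mod}\ \tor(G))$, and pulls back a matching sequence $y_n\in G_1$ by $\mathcal{L}_{\aleph_1,\aleph_1}$-elementarity so that $\bigcup_n\bbZ(x_n-y_n)$ is a nonzero divisible subgroup of $G$; that subgroup is a summand and, by Discussion~\ref{endo}, has endomorphism ring $\prod_p\widehat{\bbZ}_p^{\oplus x_p}$, contradicting bounded endo-rigidity of $G$. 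Either this construction or another explicit derivation is needed where your ``for instance'' stands.
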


Let  $G$ be an abelian  group. In  order to show that  $G$ is (not) co-Hopfian,  and also to see a connection to bounded morphisms, we introduce  a useful set $\rm{NQr}_{(m, n)}(G)$ consisting of those bounded
$h \in \End(\Gamma_{n}(G))$  such that:
\begin{enumerate}
	\item $h' := m\cdot\id_{\Gamma_{n}(G)} + h \in \End(\Gamma_{n}(G))$ is $1$-to-$1$,
	
	\item $h'$ is not onto or $m > 1$ and $G/\Gamma_{n}(G)$ is not $m$-divisible.
\end{enumerate}	In a series of nontrivial cases  we check $\rm{NQr}_{(m, n)}(G)$ and its negation. This enables us to present   some  new classes of co-Hopfian and non co-Hopfian groups (see  below,  items \ref{a19}--\ref{a31}).

For all unexplained definitions from set theoretic algebra
see
the  books by Eklof-Mekler \cite{EM02}  and G\"{o}bel-Trlifaj
\cite{GT}. Also, for  unexplained definitions from the group theory
see the books of Fuchs \cite{fuchs}, \cite{fuch_vol1} and \cite{fuch_vol2}.

\section{Preliminaries}In this paper all groups are abelian, otherwise specialized.
In this section we recall some basic definitions and facts that will be used for later sections of the paper.
\begin{definition}
	\label{k-free}
	An abelian  group $G$ is called $\aleph_1$-free if every countable subgroup of $G$ is free. More generally, an abelian  group $G$ is called $\lambda$-free if every subgroup of $G$ of cardinality
	$< \lambda$ is free.
\end{definition}

\begin{definition}
	\label{strongly}
	Let $\kappa$   be a regular cardinal. An abelian  group $G$ is said to be
	strongly $\kappa$-free if there is a set $\mathcal{S}$ of $<\kappa$-generated free subgroups of
	$G$ containing {0}  such that for any subset $S$ of $G$ of cardinality
	$<\kappa$   and any $N\in\mathcal{S}$, there is an $L\in\mathcal{S}$  such that $S\cup N\subset L $  and
	$L/N$ is free.
\end{definition}

A  group $G$ is pure in an abelian group $H$ if $G\subseteq H$ and
$nG = nH\cap G$ for every $n\in\mathbb{Z}$. The common notation for this notion
is $G\subseteq_\ast H$.
\begin{fact}\label{pure}
	 Suppose $G$ is a torsion-free group. Then the intersection of pure subgroups of $G$ is again pure.
		In particular, for every $S\subset G$, there
		exists a minimal pure subgroup of $G$ containing $S$. The common notation for this subgroup
		is $\langle S\rangle_G^\ast$.
\end{fact}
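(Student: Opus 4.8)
The plan is to verify the defining condition of purity directly. Let $\{H_i : i \in I\}$ be a family of pure subgroups of the torsion-free group $G$ and put $H = \bigcap_{i \in I} H_i$; I must show that $nH = nG \cap H$ for every $n \in \mathbb{Z}$. The inclusion $nH \subseteq nG \cap H$ is automatic, so the content is the reverse inclusion, which I would establish as follows. Fix $x \in nG \cap H$; if $n = 0$ there is nothing to prove, so assume $n \neq 0$, and write $x = ng$ with $g \in G$. Since $x \in H \subseteq H_i$ and $H_i$ is pure in $G$, we get $x \in nG \cap H_i = nH_i$, say $x = n g_i$ with $g_i \in H_i$. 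Then $n(g - g_i) = 0$; here I invoke the hypothesis that $G$ is torsion-free together with $n \neq 0$ to conclude $g = g_i$, hence $g \in H_i$. As $i \in I$ was arbitrary, $g \in \bigcap_{i \in I} H_i = H$, so $x = ng \in nH$, as required. This shows $H$ is pure in $G$.

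For the second assertion, given $S \subseteq G$ I would let $\mathcal F$ be the collection of all pure subgroups of $G$ containing $S$. This family is non-empty, since $G$ itself is pure in $G$ and contains $S$, so by the first part $\langle S\rangle_G^\ast := \bigcap \mathcal F$ is a pure subgroup of $G$. It contains $S$ and is contained in every member of $\mathcal F$, hence it is the smallest pure subgroup of $G$ containing $S$, which is exactly the claimed minimality.

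The argument is essentially a routine unwinding of definitions, so there is no serious obstacle; the one point that deserves care — and the only place the torsion-freeness hypothesis enters — is the cancellation step $ng = n g_i \Rightarrow g = g_i$. Without torsion-freeness the witnesses $g_i$ produced inside the various $H_i$ need not agree, and indeed the conclusion genuinely fails in the presence of torsion, so it is worth flagging this explicitly in the write-up.
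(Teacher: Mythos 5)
Your argument is correct, and it is exactly the standard one: the inclusion $nH\subseteq nG\cap H$ is trivial, the reverse inclusion uses purity of each $H_i$ to produce witnesses $g_i$ with $ng_i=x$, and torsion-freeness forces all these witnesses to coincide, putting the common witness in the intersection. The paper states this as a background Fact without giving a proof, so there is no alternative argument to compare against; you have also correctly isolated the cancellation $ng=ng_i\Rightarrow g=g_i$ as the unique place torsion-freeness is needed and as the reason the statement fails for general abelian groups.
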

\begin{fact}[See {\cite[ Theorem~7]{kap}}]\label{kapla1} Let $G$ be an abelian  group and $H$ a pure and bounded subgroup of $G$.  Then $H$ is a direct summand of $G$.
\end{fact}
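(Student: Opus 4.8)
The plan is to reduce the statement to the structure theory of bounded groups — the theorem of Baer and Pr\"ufer recalled in the Introduction — and to exploit the fact that purity makes heights computed inside $H$ agree with heights computed inside $G$. We may assume $H\neq 0$. Since $H$ is bounded it is torsion, so $H=\bigoplus_p H_p$ is the direct sum of its primary components, only finitely many of which are nonzero; and each $H_p$ is again pure in $G$, since an element of $mG\cap H_p$ lies in $mG\cap H=mH$ by purity of $H$, hence in $mH_p$. A retraction $G\to H$ can be assembled from retractions $\rho_p\colon G\to H_p$: for $q\neq p$ every homomorphism $H_q\to H_p$ is zero (its image would have exponent both a power of $q$ and a power of $p$), so $\rho:=\sum_p\rho_p$ restricts to the identity on $H$. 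Thus it suffices to treat the case $p^kH=0$ for a prime $p$ and some $k\geq 1$.

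Next I would pass to $\bar G:=G/p^kG$. Since $H\cap p^kG=p^kH=0$ by purity, the quotient map carries $H$ isomorphically onto its image $\bar H$; and a short calculation with purity shows that $\bar H$ is pure in $\bar G$ (for $j<k$: if $p^jg=h+p^kg'$ with $h\in H$, then $h=p^j(g-p^{k-j}g')\in p^jG\cap H=p^jH$, so the class of $p^jg$ modulo $p^kG$ lies in $p^j\bar H$). Moreover a retraction $\bar\rho\colon\bar G\to\bar H$ pulls back to a retraction $G\to\bar G\to\bar H\cong H$ — it restricts to $\id_H$ because $\bar\rho$ fixes $\bar H$. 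So it is enough to split $\bar H$ off $\bar G$. The point of this reduction is that $\bar G$ now has exponent $p^k$, so by the Baer--Pr\"ufer theorem it is a direct sum of cyclic $p$-groups, in which all heights are finite and bounded.

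The crux is therefore: \emph{a pure subgroup $\bar H$ of a direct sum $\bar G$ of cyclic $p$-groups of bounded order is a direct summand}. Here I would rerun the usual proof of the Baer--Pr\"ufer theorem, relative to $\bar H$. Writing $h_A(x)=\max\{j:x\in p^jA\}$ for the $p$-height in a group $A$, purity of $\bar H$ in $\bar G$ gives $h_{\bar H}(x)=h_{\bar G}(x)$ for $x\in\bar H$, and more precisely $\bar H[p]\cap p^j\bar H=\bar H[p]\cap p^j\bar G$ for every $j$. Now construct a basis of the socle $\bar G[p]$ adapted to the height filtration $\bar G[p]\supseteq\bar G[p]\cap p\bar G\supseteq\cdots$, doing it in two stages: first choose $\bbZ/p\bbZ$-independent sets $B^{H}_{j}$ with $\bigcup_{j'\geq j}B^{H}_{j'}$ a basis of $\bar H[p]\cap p^j\bar G$, then enlarge by sets $B^{G}_{j}$ so that $\bigcup_{j'\geq j}(B^{H}_{j'}\cup B^{G}_{j'})$ is a basis of $\bar G[p]\cap p^j\bar G$. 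Each $x\in B^{H}_{j}\cup B^{G}_{j}$ then has $\bar G$-height exactly $j$, and those $x\in B^{H}_{j}$ also have $\bar H$-height $j$, so one may pick $z_x$ with $p^jz_x=x$, taking $z_x\in\bar H$ whenever $x\in\bar H[p]$. The Baer--Pr\"ufer argument then gives $\bar G=\bigoplus_x\langle z_x\rangle$ over all the chosen socle vectors; applying the same argument inside $\bar H$, with socle basis $\bigcup_jB^{H}_{j}$ and the same generators $z_x\in\bar H$, gives $\bar H=\bigoplus_{x\in\bigcup_jB^{H}_{j}}\langle z_x\rangle$. Hence $\bar H$ is the direct sum of some of the cyclic summands of $\bar G$, so $\bar G=\bar H\oplus\bigl(\bigoplus_{x\in\bigcup_jB^{G}_{j}}\langle z_x\rangle\bigr)$, and running the reductions backwards proves the Fact.

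The step I expect to be the real obstacle is that last one: one must choose the socle basis of $\bar G[p]$ \emph{compatibly} with that of $\bar H[p]$, so that the resulting Pr\"ufer decomposition of $\bar G$ restricts to a Pr\"ufer decomposition of $\bar H$ — equivalently, so that the generators $z_x$ attached to the ``$\bar H$-part'' generate exactly $\bar H$ and no more. This is possible precisely because purity forces the height filtration of $\bar H[p]$ computed inside $\bar H$ to coincide with the one induced from $\bar G$; apart from $H\cap p^kG=0$, this is the only place purity enters. (A shorter but less elementary route: a bounded group is algebraically compact, hence pure-injective, so $\id_H$ extends along the pure embedding $H\hookrightarrow G$ to the desired retraction $G\to H$.)
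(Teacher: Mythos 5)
Your argument is correct, but note that the paper does not prove Fact~\ref{kapla1} at all: it is quoted from Kaplansky's book (Theorem~7 there), so what you have supplied is a self-contained proof of a black-boxed fact. Your route --- split $H$ into its finitely many primary components, reduce to $p^kH=0$, pass to $\bar G=G/p^kG$ using $H\cap p^kG=p^kH=0$, and then refine the Pr\"ufer--Baer decomposition of the bounded group $\bar G$ by a socle basis adapted simultaneously to the height filtrations of $\bar G$ and of $\bar H$ --- is sound: the filtration-compatible extension of the basis works because purity gives $\bar H[p]\cap p^j\bar H=\bar H[p]\cap p^j\bar G$, and all the reduction steps (purity of each $H_p$, purity of $\bar H$ in $\bar G$, pulling a retraction back through the quotient) check out. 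The one place where you lean on an assertion rather than an argument is the sentence ``the Baer--Pr\"ufer argument then gives $\bar G=\bigoplus_x\langle z_x\rangle$'': that \emph{every} height-adapted socle basis, with arbitrary choices of purifying elements $z_x$, yields a direct decomposition is true but is itself a small theorem (directness follows by multiplying a relation by the right power of $p$ and using independence in the socle; surjectivity follows by induction on the exponent, since $p\bar G$ inherits the adapted data via $pz_x$), and readers whose ``usual proof'' of Baer--Pr\"ufer goes through Kulikov's criterion will not recognize it as literally that proof. Compared with the source the paper cites, your argument buys an explicit compatible cyclic decomposition of $G/p^kG$ restricting to one of $\bar H$, at the cost of redoing bounded-group structure theory; the shortest proof in the paper's own terms is the one you mention in your final parenthesis, since Fact~\ref{a7} (bounded groups are pure-injective) applied to $\operatorname{id}_H$ along $H\subseteq_* G$ produces the retraction in one line.
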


The notation $\tor(G) $ stands for the full torsion subgroup
of $G$. There is a natural connection with the functor $\Tor^{\mathbb{Z}}_1(-,\sim)$:$$\tor(G)=\Tor^{\mathbb{Z}}_1(\mathbb{Q}/  \mathbb{Z},G).$$
\begin{fact}[See {\cite[ Theorem~8]{kap}}]\label{kapla2} Let $G $ be an abelian  group and $T \subseteq_* \tor(G)$. If $T$ is the direct sum of a divisible group and a group of bounded exponent, then $T$ is a direct summand of $G$. The same result holds if $T \subseteq_* G.$
\end{fact}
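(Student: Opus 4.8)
The plan is to combine two standard facts: that a divisible subgroup of an abelian group is always a direct summand (divisibility $=$ injectivity over $\mathbb{Z}$), and Fact \ref{kapla1}, which says that a pure subgroup of bounded exponent is a direct summand. Write the hypothesis on $T$ as $T = D \oplus B$ with $D$ divisible and $nB = 0$ for some $0 < n \in \mathbb{Z}$.

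First I would split off the divisible part. Since $D$ is a divisible subgroup of $G$, it is a direct summand, say $G = D \oplus G'$. Because $D \subseteq T \subseteq G = D + G'$, the modular law yields $T = D \oplus (T \cap G')$, and the projection $G \to G'$ restricts to an isomorphism from $T/D$ onto $T \cap G'$; hence $T \cap G' \cong B$ is bounded (annihilated by $n$).

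Next I would check that $T \cap G'$ is pure in $G'$. For the version with $T \subseteq_\ast G$ one argues directly: for each $m \in \mathbb{Z}$, decomposing along $G = D \oplus G'$ and $T = D \oplus (T \cap G')$ gives $mG = mD \oplus mG'$ and $mT = mD \oplus m(T\cap G')$, so the purity equality $mG \cap T = mT$ projects onto $mG' \cap (T \cap G') = m(T \cap G')$. For the version with $T \subseteq_\ast \tor(G)$, note that $\tor(G)$ is pure in $G$ and purity is transitive, so in fact $T \subseteq_\ast G$ already, reducing to the previous case. Now $T \cap G'$ is a pure subgroup of bounded exponent in $G'$, so by Fact \ref{kapla1} there is a subgroup $G''$ with $G' = (T \cap G') \oplus G''$. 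Combining, $G = D \oplus (T \cap G') \oplus G'' = T \oplus G''$, which is the assertion.

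The argument is short and essentially bookkeeping; the only point that needs a small verification is the inheritance of purity by the complement $T \cap G'$ of the divisible summand, handled by the projection computation above. The boundedness hypothesis on the non-divisible part is exactly what is needed to invoke Fact \ref{kapla1} — without it the summand $T \cap G'$ cannot in general be split off.
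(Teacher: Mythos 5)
The paper does not prove this Fact; it is quoted from Kaplansky's book (Theorem 8) without argument, so there is no ``paper's proof'' to compare against. Your argument is correct and is essentially the standard derivation: split off the divisible part $D$ as a summand $G = D \oplus G'$, use the modular law to get $T = D \oplus (T \cap G')$, verify (as you do, by the projection computation) that $T \cap G'$ is a pure bounded subgroup of $G'$, and then invoke Fact \ref{kapla1}; the reduction of the case $T \subseteq_\ast \tor(G)$ to $T \subseteq_\ast G$ via purity of $\tor(G)$ in $G$ and transitivity of purity is also sound, so the proposal is a complete and correct proof of the cited fact.
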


\begin{fact}[See \cite{Be}]\label{co-hop_pgroup}
	\begin{enumerate}
	\item[(i)]  Let $G$ be a countable $p$-group. Then $G$ is co-Hopfian if and only if $G$ is finite.
	
	\item[(ii)] If a group $G$ is co-Hopfian, then $\tor(G)$ is of size at most continuum, and further that $G$ cannot be a $p$-groups of size $\aleph_0$.\end{enumerate}
\end{fact}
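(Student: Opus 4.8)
The plan is to prove the three separate assertions bundled into the statement: that finite groups are co-Hopfian; that a countably infinite reduced $p$-group is not co-Hopfian (which gives the nontrivial implication in (i) and the second assertion of (ii)); and that a co-Hopfian group has $|\tor(G)|\le 2^{\aleph_0}$. The first is the pigeonhole principle — an injective self-map of a finite set is onto, so injective endomorphisms of a finite group are automorphisms. For the other two the uniform device will be to locate inside $G$ a direct summand of one of three shapes — an infinite direct sum $(\mathbb{Z}/p^{n}\mathbb{Z})^{(\kappa)}$ of isomorphic finite cyclic groups, an infinite direct sum of copies of $\mathbb{Z}(p^\infty)$, or an unbounded direct sum of finite cyclic groups — and to run a ``shift'' on it: on the first two a coordinate shift along an injection of the index set into itself with nonempty complement, and on the third a ``staircase'' assembled from the injections $\mathbb{Z}/p^{m}\mathbb{Z}\hookrightarrow\mathbb{Z}/p^{m'}\mathbb{Z}$ (multiplication by $p^{m'-m}$) along a strictly increasing sequence of exponents. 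Such a shift is injective and, by construction, omits a summand, so extending it by the identity on the complement realizes $G$ as isomorphic to a proper subgroup of itself. The splitting-off of the relevant summands comes from the two Kaplansky-type facts already recorded: a pure bounded subgroup is a direct summand (Fact~\ref{kapla1}), and a divisible subgroup of the torsion part is a direct summand (Fact~\ref{kapla2}).

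For the forward direction of (i) I would show a countably infinite reduced $p$-group $G$ is not co-Hopfian (the general case being reduced to this by splitting off the divisible summand via Fact~\ref{kapla2}). Fix a basic subgroup $B=\bigoplus_{n\ge 1}B_n$, where $B_n$ is a direct sum of copies of $\mathbb{Z}/p^{n}\mathbb{Z}$; since $B$ is pure with $G/B$ divisible while $G$ is reduced, $B$ is infinite. If some $B_n$ is infinite, then $B_n$ is a pure bounded subgroup of $G$, hence a direct summand (Fact~\ref{kapla1}), and a coordinate shift on $B_n$ does the job. If every $B_n$ is finite, then $B$, hence $G$, is unbounded; and if moreover $G$ has no nonzero element of infinite $p$-height, Prüfer's theorem makes $G$ a direct sum of cyclic groups of unbounded orders, on which the staircase shift works. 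The one remaining case — $G$ countable, reduced, its basic subgroup having all homogeneous components finite, but $\bigcap_n p^{n}G\ne 0$ (so $G$ is not a direct sum of cyclics) — is the hard one, and this is where I expect the main obstacle: here I would appeal to Ulm's classification of countable reduced $p$-groups and use the Ulm invariants to produce a proper subgroup realizing the same invariant sequence. Everything outside this case is essentially bookkeeping with direct summands.

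For the cardinality bound in (ii), suppose $|\tor(G)|>2^{\aleph_0}$ and decompose $\tor(G)=\bigoplus_p T_p$ into primary components; some $T_p$ has $|T_p|>2^{\aleph_0}$, and we split $T_p=D\oplus R$ into divisible and reduced parts. If $|D|>2^{\aleph_0}$, then $D\cong(\mathbb{Z}(p^\infty))^{(\kappa)}$ with $\kappa$ infinite is a divisible subgroup of $\tor(G)$, hence a direct summand of $G$ by Fact~\ref{kapla2}, and the coordinate shift on $D$, extended by the identity, is injective but not onto. Otherwise $|R|>2^{\aleph_0}$; since $R$ embeds into the torsion-completion of a basic subgroup $B=\bigoplus_n B_n$ of $R$, and that completion has cardinality at most $|B|^{\aleph_0}$, we must have $|B|>\aleph_0$, so some $B_n$ is infinite. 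As $R$ is pure in $G$ (being a direct summand of the summand $T_p$ of the pure subgroup $\tor(G)$), $B_n$ is pure in $R$ and bounded, so $B_n$ is a pure bounded subgroup of $G$ and hence a direct summand by Fact~\ref{kapla1}; once more a coordinate shift on $B_n$ gives an injective, non-surjective endomorphism of $G$. So a co-Hopfian group has torsion subgroup of size at most $2^{\aleph_0}$, which is the first assertion of (ii); the remaining assertion of (ii) and the nontrivial half of (i) are the countable statement established in the previous paragraph, whose Ulm-theoretic case is the only step that is not purely formal — and, as it happens, irrelevant for the applications in this paper, where the torsion groups in play are explicit bounded direct sums of finite cyclic groups.
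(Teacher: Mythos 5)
The paper does not prove this Fact at all: it is quoted from Beaumont--Pierce \cite{Be}, so the only question is whether your argument would actually establish the statement, and there are two genuine gaps. First, the reduction you dismiss as bookkeeping --- ``the general case being reduced to the reduced case by splitting off the divisible summand'' --- cannot be carried out, because the statement without a reducedness hypothesis is false: $\mathbb{Z}(p^\infty)$ is a countably infinite $p$-group which is co-Hopfian (every endomorphism is multiplication by a $p$-adic integer $\alpha$; if $\alpha$ is not a unit its kernel contains the $p$-socle, so injectivity forces $\alpha$ to be a unit, hence surjectivity), and the same holds for any $D\oplus F$ with $D$ divisible of finite rank and $F$ finite. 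So after splitting $G=D\oplus R$, the configuration ``$D$ of finite nonzero rank, $R$ finite'' is a counterexample to what you are trying to prove, not a case to be dispatched. The intended (and needed) hypothesis is that the $p$-group be reduced, which is how Beaumont--Pierce prove it and how the paper actually uses the Fact: in Lemma \ref{a28} and Proposition \ref{s11c} it is applied only to a countably infinite summand of a bounded $p$-group, where your summand-plus-coordinate-shift argument (via Facts \ref{kapla1}, \ref{kapla2} and \ref{dir}) does suffice and indeed mirrors the staircase construction the paper itself runs in the Discussion following Lemma \ref{a22}.

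Second, the case you isolate as hard --- $G$ countable, reduced, unbounded, with all homogeneous components of the basic subgroup finite and $\bigcap_n p^nG\neq 0$ --- is not an edge case; it is the entire content of the Beaumont--Pierce theorem, and ``use the Ulm invariants to produce a proper subgroup realizing the same invariant sequence'' is a restatement of what has to be proved, not an argument: the Ulm--Kaplansky invariants of a subgroup are computed with heights taken inside the subgroup, and exhibiting a proper subgroup with the same invariant sequence is exactly the nontrivial construction. There is also a flaw in your cardinality bound for (ii): the claim that a reduced $p$-group $R$ embeds into the torsion-completion of its basic subgroup is valid only when $R$ is separable (a subgroup of a separable group is separable, since heights can only increase when passing to the larger group), so it fails precisely when $p^\omega R\neq 0$; at that point you need the classical inequality $|R|\le |B|^{\aleph_0}$ for arbitrary reduced $p$-groups, or Beaumont--Pierce's own argument, neither of which follows from the completion picture you invoke. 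In short, the easy splitting/shift portions are correct and are all the paper ever uses, but as a proof of the Fact as stated the proposal is incomplete, and its claimed reduction step is actually blocked by a counterexample.
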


\begin{fact}[See {\cite[Theorem~17.2]{fuch_vol1}}]\label{bounded_exponent} If $G $ is a $p$-group of bounded exponent, then $G$ is a direct sum of (finitely many, up to isomorphism) finite cyclic groups.
\end{fact}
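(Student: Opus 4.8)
The plan is to reduce the assertion to the classical Pr\"ufer--Baer theorem that a $p$-group of bounded exponent is a direct sum of cyclic groups; the sharpening ``finitely many, up to isomorphism'' is then automatic, since if $p^nG=0$ then every cyclic subgroup of $G$ has order among $p,p^2,\dots,p^n$, so in any decomposition $G=\bigoplus_i\langle g_i\rangle$ the summands realize at most $n$ isomorphism types. Thus it suffices to prove: if $G$ is abelian with $p^nG=0$, then $G\cong\bigoplus_i\mathbb{Z}/p^{k_i}\mathbb{Z}$. I would argue by induction on $n$.

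For $n=1$ the group $G$ is a vector space over $\mathbb{Z}/p\mathbb{Z}$, and a basis exhibits it as a direct sum of copies of $\mathbb{Z}/p\mathbb{Z}$; the same handles the case $pG=0$. So suppose $n\ge 2$ and $pG\ne 0$. Since $p^{n-1}(pG)=0$, the induction hypothesis gives $pG=\bigoplus_{j\in J}\langle b_j\rangle$ with each $b_j\ne 0$, say of order $p^{k_j}$ with $1\le k_j\le n-1$. For each $j$ choose $a_j\in G$ with $pa_j=b_j$; then $a_j$ has order $p^{k_j+1}$. Set $A:=\sum_{j\in J}\langle a_j\rangle\le G$.

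The first real step is to check that $A=\bigoplus_j\langle a_j\rangle$ and that $pA=pG$. For the latter, $pA=\sum_j\langle pa_j\rangle=\sum_j\langle b_j\rangle=pG$. For directness, given a relation $\sum_j c_ja_j=0$ with $c_j\in\mathbb{Z}$, multiply by $p$ to get $\sum_j c_jb_j=0$; directness of $\bigoplus_j\langle b_j\rangle$ forces $p^{k_j}\mid c_j$, so writing $c_j=p^{k_j}e_j$ and using $p^{k_j}a_j=p^{k_j-1}b_j$ (valid as $k_j\ge 1$) we obtain $\sum_j e_j\,p^{k_j-1}b_j=0$, and directness again yields $p\mid e_j$, i.e.\ $p^{k_j+1}\mid c_j$ and $c_ja_j=0$. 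This piece of bookkeeping is the one slightly delicate point of the proof.

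Next I would show $A$ is pure in $G$: since $A$ is a $p$-group it suffices to check $p^kG\cap A\subseteq p^kA$, and if $a=p^kg\in A$ with $k\ge 1$ then $pg\in pG=pA$, say $pg=pa'$ with $a'\in A$, whence $a=p^{k-1}(pg)=p^ka'\in p^kA$; the case $k=0$ is trivial. Now $A$ is pure and bounded by $p^n$, so Fact \ref{kapla1} gives $G=A\oplus C$ for some subgroup $C$. From $pG=pA\oplus pC$ together with $pA=pG$ we get $pC=0$, so $C$ is a $\mathbb{Z}/p\mathbb{Z}$-vector space, hence a direct sum of cyclic groups of order $p$; since $A=\bigoplus_j\langle a_j\rangle$ is already a direct sum of cyclic $p$-groups, $G=A\oplus C$ is a direct sum of cyclic groups, completing the induction. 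The main obstacle is the directness/lifting step of the previous paragraph; everything after it (purity, the appeal to Fact \ref{kapla1}, and splitting off $C$) is essentially formal. One could instead avoid the explicit lifting by choosing a maximal family of elements of maximal order generating an internal direct sum and working with the socle $G[p]$, but the route above looks shortest given that Fact \ref{kapla1} is already on hand.
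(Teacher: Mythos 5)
The paper states this as a classical fact (Pr\"ufer--Baer), citing Fuchs Vol.~I, Theorem~17.2, and gives no proof, so there is no in-paper argument to compare against; I am judging your proposal on its own. Your induction on the exponent is correct. The one delicate step, directness of $A=\sum_j\langle a_j\rangle$ after lifting the generators of $pG$, is handled properly: from $\sum_j c_j a_j=0$, multiplying by $p$ gives $p^{k_j}\mid c_j$; writing $c_j=p^{k_j}e_j$ and using $p^{k_j}a_j=p^{k_j-1}b_j$ yields $\sum_j e_j p^{k_j-1}b_j=0$, hence $p\mid e_j$, so $p^{k_j+1}\mid c_j$ and $c_j a_j=0$. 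The purity check and the final splitting via Fact~\ref{kapla1} are also fine, and the ``finitely many up to isomorphism'' refinement is indeed automatic once the exponent is $p^n$. One thing worth noting: you route through Fact~\ref{kapla1} (Kaplansky's Theorem~7), which the paper also cites without proof; this is legitimate and not circular, since Kaplansky's theorem admits proofs that do not presuppose Pr\"ufer's theorem (for instance, a pure extension of a group $H$ with $nH=0$ represents an element of $\bigcap_m m\,\Ext(G/H,H)\subseteq n\,\Ext(G/H,H)=0$, or one can use Kaplansky's own maximal-complement argument). Fuchs' Theorem~17.2 treats bounded groups in general and argues differently (via maximal pure-independent systems), but given that Fact~\ref{kapla1} is already available, your restriction to the $p$-primary case with induction on the exponent is a clean and somewhat shorter route, as you say.
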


\begin{definition}\label{cot} 	\begin{enumerate}
		\item[(i)]  An abelian group $G$ is called
	{\em cotorsion} if $\Ext(J, G) = 0$ for all torsion-free       abelian    groups $J$. 

\item[(ii)]  An abelian group $G$ is called
{\em cotorsion-free} if it has no nonzero co-torsion subgroup.\end{enumerate}\end{definition}

In other words, $G$ is cotorsion provided that it is a direct summand of every
abelian group $H$ containing $G$ with the property that $H/G$ is torsion-free.
Here, we recall  a useful source to produce a cotorsion-free group:
\begin{fact}(See \cite[Corollary 2.10(ii)]{EM02}).
Any $\aleph_1$-free group is	cotorsion-free.
	\end{fact}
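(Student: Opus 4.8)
My plan is to reduce the claim to the assertion that no nonzero $\aleph_1$-free group can be cotorsion. Granting that, the claim follows at once: every subgroup of an $\aleph_1$-free group is again $\aleph_1$-free, so an $\aleph_1$-free group can have no nonzero cotorsion subgroup and is therefore cotorsion-free. Thus I would fix a nonzero $\aleph_1$-free group $G$ and work toward a contradiction with the assumption that it is cotorsion.

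First I would record two elementary facts about $G$. It is torsion-free, since a nonzero torsion element would generate a nonzero finite — hence countable and non-free — subgroup. It is also reduced, since a nonzero divisible subgroup would, being torsion-free, be a nonzero $\bbQ$-vector space, and so contain a copy of $\bbQ$, a countable non-free group; both conclusions contradict $\aleph_1$-freeness.

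Next I would carry out the main step, a standard homological computation. Since $\bbQ$ is torsion-free, cotorsion of $G$ gives $\Ext(\bbQ,G)=0$; applying $\Hom(-,G)$ to $0\to\bbZ\to\bbQ\to\bbQ/\bbZ\to 0$ and using $\Hom(\bbQ/\bbZ,G)=0=\Hom(\bbQ,G)$ (the image of a homomorphism out of $\bbQ$ is divisible, hence either a copy of $\bbQ$ or torsion, both impossible) together with $\Ext(\bbZ,G)=0=\Ext(\bbQ,G)$, the six-term exact sequence collapses to an isomorphism $G\cong\Ext(\bbQ/\bbZ,G)\cong\prod_{p\in\bbP}\Ext(\bbZ(p^\infty),G)$. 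Writing $\bbZ(p^\infty)=\varinjlim_n\bbZ/p^n\bbZ$ and using that $\Hom(\bbZ/p^n\bbZ,G)=0$ (so the relevant $\varprojlim^1$ vanishes), one identifies $\Ext(\bbZ(p^\infty),G)$ with the $p$-adic completion $\widehat{G}_p=\varprojlim_n G/p^nG$, a torsion-free module over the ring $\bbZ_p$ of $p$-adic integers. As $G\neq 0$, some $\widehat{G}_p$ is nonzero; picking $0\neq x$ there, torsion-freeness makes $\alpha\mapsto\alpha x$ an embedding $\bbZ_p\hookrightarrow\widehat{G}_p\hookrightarrow G$. So $G$ contains a copy of $\bbZ_p$. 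But $\bbZ_p$ is not $\aleph_1$-free: for any prime $q\neq p$ the element $q$ is a unit in $\bbZ_p$, so the additive subgroup generated by $\{q^{-n}:n\in\bbN\}$ is a copy of $\bbZ[1/q]$ — a countable, rank-one, non-finitely-generated, hence non-free, group. Since subgroups of $\aleph_1$-free groups are $\aleph_1$-free, this is the contradiction sought.

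I expect the only non-formal ingredient to be the structure step identifying a nonzero torsion-free reduced cotorsion group with a nonzero product of $p$-adically complete torsion-free $\bbZ_p$-modules — equivalently, the fact that such a group contains a copy of some $\bbZ_p$. If one prefers to quote more, the whole argument shortens to invoking the standard characterization of cotorsion-free groups as exactly the torsion-free, reduced groups with no subgroup isomorphic to any $\bbZ_p$, after which the two elementary facts above, together with the observation that $\bbZ_p$ contains the countable non-free group $\bbZ[1/q]$, finish the proof.
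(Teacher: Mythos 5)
Your argument is correct, but note that the paper does not prove this statement at all: it is quoted verbatim from Eklof--Mekler \cite[Corollary 2.10(ii)]{EM02}, so there is no in-paper proof to compare against. What you have done is reprove the relevant piece of the EM02 structure theory from scratch: after the (correct) reductions that an $\aleph_1$-free group is torsion-free and reduced and that $\aleph_1$-freeness passes to subgroups, your main step identifies a reduced torsion-free cotorsion group with $\prod_{p}\varprojlim_n G/p^nG$ via $G\cong\Ext(\mathbb{Q}/\mathbb{Z},G)$, extracts a copy of $\widehat{\mathbb{Z}}_p$, and then kills it with the countable non-free subgroup $\mathbb{Z}[1/q]\subseteq\widehat{\mathbb{Z}}_p$ for $q\neq p$; each homological step ($\Hom(\mathbb{Q},G)=\Hom(\mathbb{Q}/\mathbb{Z},G)=0$, $\Ext(\mathbb{Q},G)=0$, the $\varprojlim^1$ vanishing, $p$-torsion-freeness of the completion) checks out. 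The shortcut you mention at the end is in fact the route most consonant with the paper itself: its Fact \ref{redco} (EM02 Theorem V.2.9) characterizes cotorsion-free groups as the reduced, torsion-free groups with no subgroup isomorphic to $\widehat{\mathbb{Z}}_p$, and combining that with your two elementary observations plus $\mathbb{Z}[1/q]\hookrightarrow\widehat{\mathbb{Z}}_p$ gives the statement in a few lines without any completion machinery. So: your long version buys self-containedness (it essentially proves the characterization it would otherwise quote), while the short version matches the level at which the paper treats this fact, namely as an imported black box.
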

The $p$-torsion parts of a group  $G$ are important sources to produce pure subgroups. \begin{notation}Let $ \mathbb{P}$ denote the set of all prime numbers.
	
	\begin{enumerate}
\item[(i)]  Let $p\in \mathbb{P}$. The $p$-power torsion subgroup of $G$
is
$$\Gamma_p(G):=\{g\in G:\exists n\in \mathbb {N} \emph{ such that }p^{n}g=0\}.$$		\item[(ii)]		For each $1 \leq m < \omega$, we let $\Gamma_m(G) := \bigoplus \{\Gamma_p(G) : p \, \vert \, m\}$.
	\end{enumerate}\end{notation}
\iffalse
In the style of Grothendieck, this is called  section functor and is denoted by $\Gamma_p(G)$.

\fi
Recall that the assignment $G\mapsto\Gamma_h(G)$ defines a functor from the category of abelian groups to itself, which is also called  section functor. It has the following important property.  Suppose
$f:G\to H$
 is a homomorphism of abelian groups. Then the following diagram of natural short exact sequences is commutative:

$$
\begin{CD}
0@>>> \Gamma_h(H) @>\subseteq>>H @>>> H/ \Gamma_h(H)  @>>> 0\\
@.@Af\upharpoonright AA @AfAA @A \overline{f}AA   \\
0@>>>\Gamma_h(G) @> \subseteq>>G @>>>  G/ \Gamma_h(G) @>>> 0,\\
\end{CD}
$$where $\overline{f}(g+\Gamma_h(G)):=f(g)+\Gamma_h(H)$.

 The connection from $p$-power torsion functors and the classical torsion functor is read as follows:
$$\Tor^{\mathbb{Z}}_1(\mathbb{Q}/  \mathbb{Z},G)=\tor(G)=\bigoplus_{p\in \mathbb{P}} \Gamma_p(G). $$
\begin{notation}
	In this paper, by  $\End(-)$ we mean $\End_\mathbb{Z}(-)$ where $(-)$ is at least an abelian group, otherwise we specify it.
\end{notation}
The following notion of boundness plays an important role in establishing the main theorems:

\begin{definition}\label{a1}
	Let $G$ be an abelian  group of size $\lambda$. We say $G$ is  \emph{boundedly endo-rigid} when for every $f \in \End(G)$ there is $m \in \bbZ$ such that the map $x \mapsto f(x) - mx$ has bounded range.
\end{definition}
The next fact  follows from the definition.
\begin{fact}\label{a4}
An abelian group 	$G$ is boundedly endo-rigid if and only if for every $f \in \End(G)$ there is $m \in \mathbb{Z}$ and bounded $h \in \End(G)$ such that $f(x) = mx + h(x)$.
\end{fact}

\begin{fact}\label{a7}
	Let $K$ be a bounded torsion abelian group and let $G \subseteq_{*} H$. If $g \in \rm{Hom}(G, K)$, then there is $h \in \rm{Hom}(H, K)$ extending $g$. This property is conveniently summarized by the subjoined  diagram:$$\xymatrix{
		&0 \ar[r]&G\ar[r]^{\subseteq_\ast}\ar[d]_{g}&H\ar[dl]^{\exists h}\\
		&& K
		&&&}$$
\end{fact}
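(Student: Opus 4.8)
\textbf{Proof proposal for Fact~\ref{a7}.}
The statement is the assertion that a bounded torsion abelian group $K$ is ``pure-injective'' in the weak sense needed here: homomorphisms into $K$ extend along pure embeddings. My plan is to reduce to the classical injectivity of $K$ with respect to pure-exact sequences, which is itself a consequence of the Baer--Pr\"ufer structure theorem together with Fact~\ref{kapla1}.

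First I would recall that a bounded torsion group $K$ satisfies $nK=0$ for some $n\geq 1$, hence by the theorem of Baer and Pr\"ufer (cited in the introduction, and see Fact~\ref{bounded_exponent} for the $p$-primary case) $K$ is a direct sum of finite cyclic groups, all of exponent dividing $n$. Next, consider the pushout of the two maps $g\colon G\to K$ and the inclusion $G\subseteq_\ast H$: form $P:=(K\oplus H)/\{(g(x),-x):x\in G\}$, with the canonical maps $\iota_K\colon K\to P$ and $\iota_H\colon H\to P$. A routine diagram chase shows $\iota_K$ is injective and that $P/\iota_K(K)\cong H/G$ — more importantly, since $G\subseteq_\ast H$, the copy $\iota_K(K)$ of $K$ sits purely inside $P$. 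Now $\iota_K(K)$ is a pure and bounded subgroup of the abelian group $P$, so by Fact~\ref{kapla1} it is a direct summand: there is a retraction $\rho\colon P\to \iota_K(K)\cong K$ with $\rho\circ\iota_K=\id_K$. Setting $h:=\rho\circ\iota_H\colon H\to K$ gives, for $x\in G$, $h(x)=\rho(\iota_H(x))=\rho(\iota_K(g(x)))=g(x)$, using the defining relation of the pushout. Hence $h$ extends $g$, as required.

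The one point deserving care is the verification that $\iota_K(K)$ is \emph{pure} in $P$, since Fact~\ref{kapla1} needs purity and not merely the bounded-subgroup hypothesis. This is exactly where the hypothesis $G\subseteq_\ast H$ (rather than an arbitrary embedding) is used: for $m\in\mathbb{Z}$ and $k\in K$ with $\iota_K(k)=m\cdot\iota_H(y)+\iota_K(k')$ coming from an element of $P$ divisible by $m$, one unwinds the pushout relation to find $z\in G$ with $mz - \text{(something)}\in G$ and then invokes purity of $G$ in $H$ to pull the division back into $G$ and then across $g$ into $K$. I expect this bookkeeping — tracking the equivalence relation defining $P$ and matching divisibility witnesses — to be the only mildly delicate step; everything else is formal. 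An alternative, avoiding the pushout, is to argue directly: decompose $K=\bigoplus_{i}\mathbb{Z}/p_i^{k_i}\mathbb{Z}$, reduce to the case $K$ cyclic (using that a finite product of the extension problems can be solved coordinatewise, and that a bounded group embeds purely in its divisible hull's bounded analogue), and for $K=\mathbb{Z}/p^k\mathbb{Z}$ extend $g$ one element of $H/G$ at a time by a Zorn's-lemma argument, where at each step purity of $G$ in $H$ guarantees that the order constraints on the new value are consistent. Either route works; I would present the pushout argument as it is shortest and makes the role of Fact~\ref{kapla1} transparent.
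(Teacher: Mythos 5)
Your pushout argument is correct, and in fact it is the standard proof that bounded abelian groups are pure-injective; the paper states Fact~\ref{a7} without proof as a known result, so there is no in-text argument to compare against. You correctly identify that the crux is the purity of $\iota_K(K)$ in $P$, and the verification you sketch does work: if $m\bigl(\iota_K(k')+\iota_H(y)\bigr)=\iota_K(k)$, then unwinding the defining relation of $P$ gives $my\in G$, purity of $G$ in $H$ yields $z\in G$ with $my=mz$, and then $k=m\bigl(k'+g(z)\bigr)\in mK$. From there Fact~\ref{kapla1} supplies a retraction $\rho\colon P\to\iota_K(K)$ and $h:=\rho\circ\iota_H$ does the job. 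One small slip in your write-up: a general element of $P$ divisible by $m$ gives $\iota_K(k)=m\bigl(\iota_H(y)+\iota_K(k')\bigr)$ rather than $m\cdot\iota_H(y)+\iota_K(k')$ as written — but this is clearly a typo and the surrounding reasoning is sound. Your alternative direct route is less convincing as stated: $K$ is a direct \emph{sum} of cyclics (possibly infinitely many), not a finite product, so ``solving coordinatewise'' needs some care — typically one notes $K$ is a direct summand of a direct \emph{product} of bounded cyclics and extends into the product — but since you chose to present the pushout proof, this does not affect the validity of what you submitted.
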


\begin{fact}\label{a10}
	Let $G $ be abelian group and suppose that $G$ is not bounded, then the bounded endomorphisms of $G$ (i.e., those $f \in \End(G)$ with bounded range) form an ideal of the ring $\End(G)$, we denote this ideal by $\BEnd(G)$. With respect to this terminology, $G$ is boundedly rigid if and only if the quotient ring $\End(G)/ \BEnd(G) \cong \mathbb{Z}$.
\end{fact}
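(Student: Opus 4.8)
The final stated item is Fact~\ref{a10}: for an unbounded abelian group $G$, the bounded endomorphisms $\BEnd(G)$ form a (two-sided) ideal of $\End(G)$, and $G$ is boundedly rigid iff $\End(G)/\BEnd(G)\cong\mathbb{Z}$.

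Let me sketch a proof.

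For the ideal claim: I need to check $\BEnd(G)$ is a subgroup under addition and absorbs multiplication on both sides. If $f,g$ have bounded range, say $nf(G)=0$ and $mg(G)=0$, then $nm(f-g)(G)=0$, so $f-g$ has bounded range; this gives the subgroup property (and $0\in\BEnd(G)$). For the ideal property, if $f\in\BEnd(G)$ with $n\cdot\range(f)=0$ and $\phi\in\End(G)$ is arbitrary, then $\range(\phi f)=\phi(\range(f))\subseteq\range(f)$... wait, more carefully: $\phi f(G)=\phi(f(G))$, and since $n f(G)=0$ we get $n\phi(f(G))=\phi(nf(G))=\phi(0)=0$, so $\phi f\in\BEnd(G)$; also $f\phi(G)=f(\phi(G))\subseteq f(G)$, so $n\cdot(f\phi)(G)=0$ and $f\phi\in\BEnd(G)$. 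Hence $\BEnd(G)$ is a two-sided ideal.

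For the iff: Consider the ring map $\mathbb{Z}\to\End(G)$, $m\mapsto\mu_m$ (multiplication by $m$). Composing with the quotient gives $\psi\colon\mathbb{Z}\to\End(G)/\BEnd(G)$. By Fact~\ref{a4}, $G$ is boundedly rigid iff every $f\in\End(G)$ is $\mu_m+h$ with $h$ bounded, i.e. iff $\psi$ is surjective. So it remains to see $\psi$ is injective exactly when $G$ is unbounded (which is our standing hypothesis). If $m\neq 0$ and $\mu_m\in\BEnd(G)$, then $mG=\range(\mu_m)$ is bounded, say $n(mG)=0$, so $(nm)G=0$ and $G$ is bounded; contrapositively, $G$ unbounded forces $\mu_m\notin\BEnd(G)$ for $m\neq 0$, i.e. $\psi$ is injective. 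Combining: under the unboundedness hypothesis, $\psi$ is always injective, and it is surjective iff $G$ is boundedly rigid; hence $G$ is boundedly rigid iff $\psi$ is an isomorphism iff $\End(G)/\BEnd(G)\cong\mathbb{Z}$ (as rings, via $\psi$).

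**Main obstacle.** Honestly there is essentially no obstacle here: this is a routine unwinding of definitions, and the only point that needs the hypothesis ``$G$ not bounded'' is the injectivity of $\psi$ (so that the image of $\mathbb{Z}$ isn't collapsed). One should be mildly careful that ``$\cong\mathbb{Z}$'' is meant as rings and is witnessed by the canonical map $\psi$, not merely an abstract isomorphism — but since any unital ring map out of $\mathbb{Z}$ is determined by sending $1\mapsto 1$, an abstract ring isomorphism $\End(G)/\BEnd(G)\cong\mathbb{Z}$ would in any case force $\psi$ to be that isomorphism, so the two readings coincide. Thus the plan is: (1) verify the two-sided ideal axioms via the ``common bound'' trick; (2) identify $\End(G)/\BEnd(G)$-being-$\mathbb{Z}$ with surjectivity of $\psi$ through Fact~\ref{a4}; (3) use unboundedness for injectivity of $\psi$; conclude.
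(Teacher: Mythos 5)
Your proof is correct. The paper records this as a Fact in the preliminaries with no proof attached, so there is no argument in the paper to compare against; your write-up simply supplies the routine verification the authors left implicit. The three ingredients — the common-bound trick for closure under subtraction and two-sided absorption, the identification of bounded rigidity with surjectivity of the canonical map $\psi\colon\mathbb{Z}\to\End(G)/\BEnd(G)$ via Fact~\ref{a4}, and the use of unboundedness precisely to get injectivity of $\psi$ — are exactly what one expects. Your closing remark that an abstract ring isomorphism $\End(G)/\BEnd(G)\cong\mathbb{Z}$ already forces $\psi$ to be an isomorphism (because $\mathbb{Z}$ is initial among unital rings, so composing with any such isomorphism yields the identity on $\mathbb{Z}$) is a correct and useful observation that removes any ambiguity in how ``$\cong\mathbb{Z}$'' is to be read.
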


\begin{remark}\label{a13} Recall that torsion subgroups are pure. Let $f$ be a bounded endomorphism of  $\tor(G)$.
		By Fact \ref{a7}, we have
	$$\xymatrix{
		&0 \ar[r]&\tor(G)\ar[r]^{\subseteq_\ast}\ar[d]_{f}&G\ar[dl]^{\exists h}\\
		&& \tor(G)
		&&&}$$
Let  $\hat{f}:G\stackrel{h}\longrightarrow \tor(G)\stackrel{\subseteq}\longrightarrow G$. In sum, $f$
	 extends to an endomorphisms $\hat{f}$ of $G$ with the same range:
		$$\xymatrix{
		&& \tor(G)\ar[d]_{\subseteq}\ar[r]^{f}&\tor(G) \ar[d]^{\subseteq}&\\
		&& G\ar[r]_{\hat{f}}& G\\
		&&&}$$
	 Hence, the notion of boundedly rigid is really the right notion of endo-rigidity for mixed groups (for $G$ torsion-free abelian  group, we say that $G$ is endo-rigid when $\End(G) \cong \mathbb{Z}$). For instance, we look at $K = \bigoplus \{\frac{\mathbb{Z}}{p^{\ell+1}\mathbb{Z}}: \ell < m\}$, for some $m<\omega,$ and recall that this has many bounded endomorphisms. The same will happen for any $G$ extending it.
\end{remark}

In what follows we will use the concept of reduced group several times. Let us recall its definition.
\begin{definition}\label{red}Let $G$ be an abelian group.
	\begin{enumerate}
		\item[(a)] $G$ is called reduced if it contains no divisible subgroup  other than $0$.
		
		\item[(b)]  $G$ is called injective if for any inclusion $G_1\subseteq G_2$ of abelian  groups,
	 any morphism $f:G_1\to G$ can be extended into $G_2$:
		$$\xymatrix{
			&0 \ar[r]&G_1\ar[r]^{\subseteq}\ar[d]_{f}&G_2\ar[dl]^{\exists h}\\
			&& G
			&&&}$$	\end{enumerate}
\end{definition}

\begin{fact}\label{ided} (See  \cite{fuchs}). An abelian group $G$ is divisible if and only if it is injective.\end{fact}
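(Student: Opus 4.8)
The plan is to prove the two implications separately, using nothing beyond the fact that $\mathbb{Z}$ is a principal ideal domain. For the easy direction, suppose $G$ is injective in the sense of Definition \ref{red}(b). Given $g \in G$ and $0 \neq n \in \mathbb{Z}$, I would consider the inclusion $n\mathbb{Z} \subseteq \mathbb{Z}$ together with the homomorphism $f \colon n\mathbb{Z} \to G$ determined by $f(n) = g$ (well defined since $n\mathbb{Z}$ is infinite cyclic with generator $n$). Injectivity yields $h \colon \mathbb{Z} \to G$ with $h\rest n\mathbb{Z} = f$, and then $n\cdot h(1) = h(n) = f(n) = g$, so $g \in nG$. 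As $g$ and $n$ were arbitrary, $G$ is divisible.

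For the converse, suppose $G$ is divisible and let $G_1 \subseteq G_2$ be abelian groups with a homomorphism $f \colon G_1 \to G$; the goal is to extend $f$ to $G_2$. First I would set up the standard Zorn's lemma argument: consider the poset of pairs $(G', f')$ with $G_1 \subseteq G' \subseteq G_2$ and $f' \colon G' \to G$ extending $f$, ordered by extension of both domain and map. Chains have upper bounds (take unions of domains and maps), so there is a maximal element $(G^\ast, f^\ast)$, and it remains to show $G^\ast = G_2$.

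Suppose not, and pick $x \in G_2 \setminus G^\ast$. Let $I := \{ n \in \mathbb{Z} : nx \in G^\ast\}$, an ideal of $\mathbb{Z}$, hence $I = n_0 \mathbb{Z}$ for some $n_0 \geq 0$. If $n_0 = 0$, then $G^\ast + \langle x \rangle = G^\ast \oplus \langle x \rangle$ and one extends $f^\ast$ by sending $x$ to $0$. If $n_0 > 0$, use divisibility of $G$ to choose $y \in G$ with $n_0 y = f^\ast(n_0 x)$, and define $f^{\ast\ast} \colon G^\ast + \langle x \rangle \to G$ by $f^{\ast\ast}(a + kx) := f^\ast(a) + ky$. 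Either way one contradicts maximality, provided $f^{\ast\ast}$ is well defined: if $a + kx = a' + k'x$, then $(k-k')x \in G^\ast$, so $n_0 \mid k-k'$, say $k - k' = n_0 t$; then $a' - a = n_0 t x$ and $f^\ast(a'-a) = t\,f^\ast(n_0 x) = t n_0 y = (k-k')y$, whence $f^\ast(a) + ky = f^\ast(a') + k'y$. Thus $G^\ast = G_2$. Equivalently, one can invoke Baer's criterion to reduce at once to extending maps out of the ideals $n\mathbb{Z}$ of $\mathbb{Z}$, which is precisely the divisibility computation of the first paragraph; the Zorn argument above is just that criterion written out for $R = \mathbb{Z}$.

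I do not expect any genuine obstacle here, since the statement is classical; the only step requiring any care is the well-definedness verification for $f^{\ast\ast}$ in the extension step, which is the short cyclic-group computation displayed above.
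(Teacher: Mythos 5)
Your argument is correct and is exactly the classical Baer-criterion proof (extension along $n\mathbb{Z}\subseteq\mathbb{Z}$ for one direction, Zorn's lemma with the cyclic-extension step for the other), which is the proof given in the reference the paper cites; the paper itself states this only as a Fact with a citation to Fuchs and provides no proof of its own. No gaps: the well-definedness computation for $f^{\ast\ast}$ and the observation that $n_0=0$ forces $\langle x\rangle\cap G^{\ast}=0$ are the only delicate points, and you handle both.
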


Here, we recall a connection between reduced and co-torsion-free abelian groups:

\begin{fact}\label{redco} (See  \cite[theorem V.2.9]{EM02}). An abelian group $G$ is cotorsion-free if and only if it is reduced and torsion-free and does not contain a subgroup
	isomorphic to $\widehat{\mathbb{Z}}_p$ for any prime $p$.\end{fact}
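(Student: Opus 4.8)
The plan is to reprove the cited characterization (it is Theorem~V.2.9 of \cite{EM02}) essentially from scratch, the only nontrivial input being the structure of torsion-free reduced cotorsion groups. Throughout recall that $G$ is cotorsion-free exactly when it has no nonzero cotorsion subgroup, and that a subgroup of a reduced (resp. torsion-free) group is again reduced (resp. torsion-free).

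For the implication $(\Rightarrow)$, assume $G$ is cotorsion-free. If $G$ were not reduced it would contain a nonzero divisible subgroup, hence a subgroup isomorphic to $\mathbb{Q}$ or to $\mathbb{Z}(p^{\infty})$ for some prime $p$; by Fact~\ref{ided} such a group is injective, hence a direct summand of every overgroup, hence cotorsion, contradicting cotorsion-freeness. If $G$ had a nonzero torsion element it would contain a copy of $\mathbb{Z}/p\mathbb{Z}$; being bounded, this group is algebraically compact, hence cotorsion, again a contradiction. Finally $\widehat{\mathbb{Z}}_p$, being complete in its $p$-adic topology, is algebraically compact and hence cotorsion, so $G$ cannot contain a copy of it. This proves the three stated conditions.

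For the converse, suppose $G$ is reduced, torsion-free, and contains no copy of any $\widehat{\mathbb{Z}}_p$, yet there is a nonzero cotorsion subgroup $C\le G$. Then $C$ is torsion-free and reduced. Applying $\Hom(-,C)$ to the exact sequence $0\to\mathbb{Z}\to\mathbb{Q}\to\mathbb{Q}/\mathbb{Z}\to 0$ and using $\Hom(\mathbb{Q},C)=0$ ($C$ reduced and torsion-free), $\Hom(\mathbb{Q}/\mathbb{Z},C)=0$ ($C$ torsion-free), and $\Ext(\mathbb{Q},C)=0$ ($C$ cotorsion, $\mathbb{Q}$ torsion-free), the long exact sequence collapses to an isomorphism $C\cong\Ext(\mathbb{Q}/\mathbb{Z},C)=\prod_{p}\Ext(\mathbb{Z}(p^{\infty}),C)$. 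Since $C\neq 0$, some factor $\Ext(\mathbb{Z}(p^{\infty}),C)$ is nonzero; as a direct summand of the product it embeds in $C$, hence in $G$. Writing $\mathbb{Z}(p^{\infty})=\varinjlim_n\mathbb{Z}/p^n\mathbb{Z}$ and noting that the $\varprojlim^1$ term vanishes because $C[p^n]=0$, we get $\Ext(\mathbb{Z}(p^{\infty}),C)\cong\varprojlim_n\Ext(\mathbb{Z}/p^n\mathbb{Z},C)\cong\varprojlim_n C/p^nC=\widehat{C}_p$, the $p$-adic completion of $C$. This $\widehat{C}_p$ is torsion-free: if $px\in p^{n}C$ then $x\in p^{n-1}C$ by torsion-freeness of $C$, so a $p$-torsion coherent sequence is already $0$. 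Now $\widehat{C}_p$ is a nonzero module over $\widehat{\mathbb{Z}}_p$; picking $0\neq x\in\widehat{C}_p$ and sending $r\mapsto rx$ gives a $\widehat{\mathbb{Z}}_p$-linear map $\widehat{\mathbb{Z}}_p\to\widehat{C}_p$ whose kernel is a closed ideal $p^{k}\widehat{\mathbb{Z}}_p$; torsion-freeness of $\widehat{C}_p$ forces $k=0$, so $\widehat{\mathbb{Z}}_p\cong\widehat{\mathbb{Z}}_p\,x\subseteq\widehat{C}_p\subseteq G$, contradicting the hypothesis. Hence no such $C$ exists and $G$ is cotorsion-free.

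The main obstacle is the converse direction, and within it the identification $C\cong\Ext(\mathbb{Q}/\mathbb{Z},C)$ together with the computation of $\Ext(\mathbb{Z}(p^{\infty}),C)$ as a $p$-adic completion; this is precisely the torsion-free case of Harrison's structure theorem for cotorsion groups, and one could alternatively simply quote it. The remaining points — collapsing the long exact sequences, the vanishing of the $\varprojlim^1$ term, and the description of closed ideals of $\widehat{\mathbb{Z}}_p$ — are routine.
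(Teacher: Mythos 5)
The paper does not prove this Fact itself; it is cited verbatim from \cite[Theorem V.2.9]{EM02}, so there is no ``paper's proof'' to compare against. Your argument is therefore judged on its own terms, and it is sound: you take the standard route through Harrison's structure theory for reduced torsion-free cotorsion groups, which is exactly how the result is proved in the literature. The forward direction correctly runs on the fact that divisible, bounded, and $p$-adically complete groups are all cotorsion (the first directly from $\Ext(-,\text{injective})=0$, the latter two via algebraic compactness), and the converse correctly collapses the $\Hom/\Ext$ long exact sequence of $0\to\mathbb{Z}\to\mathbb{Q}\to\mathbb{Q}/\mathbb{Z}\to 0$ to get $C\cong\Ext(\mathbb{Q}/\mathbb{Z},C)\cong\prod_p\widehat{C}_p$, identifies $\Ext(\mathbb{Z}(p^\infty),C)\cong\widehat{C}_p$ via the Milnor sequence with vanishing $\varprojlim^1$ because $C[p^n]=0$, and observes that $\widehat{C}_p$ is torsion-free.

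There is one small slip in the last step. You write that the kernel of $r\mapsto rx$ is a closed ideal $p^k\widehat{\mathbb{Z}}_p$ and that torsion-freeness of $\widehat{C}_p$ ``forces $k=0$,'' then conclude $\widehat{\mathbb{Z}}_p\cong\widehat{\mathbb{Z}}_p x$. But $k=0$ would mean the kernel is all of $\widehat{\mathbb{Z}}_p$, i.e.\ $x=0$. What you actually want is: the kernel is an ideal of $\widehat{\mathbb{Z}}_p$, hence either $0$ or $p^k\widehat{\mathbb{Z}}_p$ for some $k\ge 0$; the latter is impossible since $p^k x\ne 0$ for $x\ne 0$ in the torsion-free group $\widehat{C}_p$, so the kernel is $0$ and the map is injective. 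This is a wording error rather than a gap, and the rest of the argument is correct.
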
Recall that $\widehat{\mathbb{Z}}_p$ means completion of $ \mathbb{Z}$ in the $p$-adic topology.
Here, we collect more basic facts about injective groups that we need:
\begin{discussion}\label{endo}
	Let $p\in \mathbb{P}$ be a prime number. 
	\begin{enumerate}
	\item[(i)] 
	(See \cite[page 11]{EM02}). By the structure theorem for an injective abelian group $I$, we mean the following decomposition:
	$$I=\bigoplus_{p\in \mathbb{P}}\mathbb{Z}(p^\infty)^{\oplus{x_p}}\oplus \mathbb{Q}^{\oplus{x}},$$where ${x_p}$ and $x$ are index sets.
		\item[(ii)] (See \cite[Theorem 3.7]{matlis}).
Let $p,q\in\mathbb{P}_0:=\mathbb{P}\cup\{0\}$, and set $\mathbb{Z}(0^\infty):=\mathbb{Q}$. Then	$$
\Hom(\mathbb{Z}(p^\infty),\mathbb{Z}(q^\infty))=\left\{\begin{array}{ll}
\widehat{\mathbb{Z}}_p&\mbox{if } p=q\\
	0	&\mbox{otherwise }
	\end{array}\right.
	$$with the convenience that $\widehat{\mathbb{Z}}_0=\mathbb{Q}$.

	\item[(iii)] Combining i) and ii) turns out the following well-known formula:$$\End(I)=\prod_{p\in\mathbb{P}_0} \widehat{\mathbb{Z}}_{p}^{\oplus{x_p}},$$where 
	$x_0:=x$.
\end{enumerate}
\end{discussion}

\iffalse
We will use the following well-known lemma.
\begin{lemma}
\label{lem1} Suppose $G$ is an  abelian group and $g \in G$. Then $\langle g \rangle_G$, the subgroup of $G$ generated by $g$, can be embedded into a countable injective abelian group.
\end{lemma}
\begin{proof}
We give a proof for completeness. Let $H=\langle g \rangle_G.$ If $H \simeq \mathbb{Z}$, then we have an embedding from $H$ into the injective abelian group $\mathbb{Q}$,
so we are done. Otherwise, for some finite $n<\omega$, a sequence $\langle  p_i: i< n   \rangle$ of prime numbers and a sequence $\langle \ell_i: i<n    \rangle$
of positive integers,
\[
H \simeq \bigoplus_{i<n} \frac{\mathbb{Z}}{p_i^{\ell_i}\mathbb{Z}}.
\]
Then $H$ can be embedded into the injective abelian group $\bigoplus_{i<n}\mathbb{Z}(p_i^{\infty})$.
\end{proof}
\fi
%\section{Boundedly rigid abelian  groups}
%\label{s1}
\section{ The ZFC construction of boundedly rigid mixed groups}
\label{s3}
In this section we show that for any  cardinal $\lambda=\lambda^{\aleph_0} > 2^{\aleph_0}$
and any torsion abelian group $K$ of size less than $\lambda,$ there exists a boundedly rigid abelian group $G$
with $\tor(G)=K$, see Theorem \ref{th2}.

To this end, we define the notion of rigidity context $\bold k$ which in particular codes a torsion group $K$, and assign to
it a collection of objects $\bold m$, which among other things have a group $G$ with $\tor(G)=K.$
We show that under the above assumptions on $\lambda$ and $K$, we can always find such an
$\bold m$ such that the associated group  $G$ is boundedly rigid.

\begin{definition}
\label{f1}
\begin{enumerate}
\item We say a  tuple $\bold k$ is a \emph{rigidity context} when
$$\bold k=\big(K_{\bold k}, R_{\bold k}, \phi_r^{\bold k}, \Psi^{\bold k}_{r, s}, \Psi^{\bold k}_{(r, s)},S_{\bold k}\big)_{r,s\in R_{\bold k}}=\big(K, R, \phi_r, \Psi_{r, s}, \Psi_{(r, s)}, S\big)_{r,s\in R}$$
where
\begin{enumerate}
\item[(a)] $K$ is a reduced torsion abelian  group,

 \item[(b)] $R$ is a ring,
\item[(c)]  $S$ is a set of prime numbers, $S^{\bot}_{\bold k}=\mathbb{P} \setminus S$ is its complement, and $R$  is  $S^{\bot}_{\bold k}$-divisible. This means that
 $R$ is divisible for any $p\in S^{\bot}_{\bold k}$,
 \iffalse
  $R$ is a ring such that $(R, +)$ is cotorsion,\fi

\item[(d)] for $r \in R$, the map $ \phi_r \in \End(K)$ has bounded range,

\item[(e)] if $r, s \in R,$ then $\Psi_{r, s}=\phi_{r}+\phi_s- \phi_{r+s} \in \End(K)$,

\item[(f)] if $r, s \in R,$ then $\Psi_{(r, s)} \in \End(K)$ has bounded range and, letting $t=rs,$ for $x \in K$ we have
\[
\Psi_{(r, s)}(x)=\phi_r(\phi_s(x)) - \phi_t(x).
\]
\end{enumerate}

\item We say $\bold k$ is nontrivial when for some prime $p\in S_{\bold k}$ the p-torsion $ \Gamma_{p}(K)$ is infinite, or the set $$\{p\in S_{\bold k}:\Gamma_{p}(K)\neq 0\}$$ is infinite.

\item By $\mathbb{Z}_{\bold k}$ we mean the subring of $\mathbb{Q}$ generated by $\{1\} \cup \{\frac{1}{p}:p\in S^{\bot}_{\bold k}\}$.
\end{enumerate}
\end{definition}

\begin{observation}	Suppose $(R_ {\bold k},+)$ is   cotorsion-free as an abelian group. Then
$S_{\bold k}\neq\emptyset$.
\end{observation}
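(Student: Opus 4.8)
The plan is to prove the contrapositive: if $S_{\bold k} = \emptyset$, then $(R_{\bold k}, +)$ is not cotorsion-free. Assume $S_{\bold k} = \emptyset$. Then $S^{\bot}_{\bold k} = \mathbb{P} \setminus S_{\bold k} = \mathbb{P}$, the set of all primes. By condition (c) of Definition \ref{f1}, $R = R_{\bold k}$ is divisible for every $p \in S^{\bot}_{\bold k}$, so $R$ is $p$-divisible for every prime $p$. Hence $(R_{\bold k}, +)$ is a divisible abelian group.

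Now I would invoke the structure theory of divisible groups together with the cotorsion-free criterion. First, the trivial degenerate case: if $R = 0$, this is excluded by the implicit assumption that $R$ is a (nontrivial) ring, or one observes that the zero group is cotorsion (being a summand of everything) and hence not cotorsion-free unless one adopts the convention that $0$ is cotorsion-free — but in any case a ring has $1 \neq 0$ in the standard convention here, so $(R_{\bold k}, +) \neq 0$. So $(R_{\bold k}, +)$ is a nonzero divisible abelian group, hence by the structure theorem (see Discussion \ref{endo}(i) applied to the divisible group $R$, or Fact \ref{ided}) it decomposes as $\bigoplus_{p} \mathbb{Z}(p^\infty)^{(x_p)} \oplus \mathbb{Q}^{(x)}$ with not all index sets empty. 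In either case, $R$ contains a nonzero divisible subgroup (a copy of some $\mathbb{Z}(p^\infty)$ or of $\mathbb{Q}$). A nonzero divisible group is certainly not reduced, and in fact it is cotorsion: $\mathbb{Q}$ and each $\mathbb{Z}(p^\infty)$ are cotorsion, and by Fact \ref{redco}, a cotorsion-free group must be reduced and torsion-free, whereas $R$ here is not reduced (it contains $\mathbb{Z}(p^\infty)$ or is itself a nonzero divisible, hence non-reduced, group). Therefore $(R_{\bold k}, +)$ is not cotorsion-free, completing the contrapositive.

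The one genuinely delicate point — really the only obstacle — is pinning down exactly which degenerate/boundary cases the statement intends, namely whether $R_{\bold k}$ is allowed to be the zero ring and what "cotorsion-free" means for it. Under the natural reading (a ring has a nonzero identity, or at minimum the hypothesis "$(R_{\bold k},+)$ is cotorsion-free" tacitly presumes $R \neq 0$), the argument above is airtight: "$S_{\bold k} = \emptyset$" forces $R$ to be divisible, divisible nonzero groups are not reduced, and by Fact \ref{redco} non-reduced groups cannot be cotorsion-free. I would present exactly this three-line chain — $S_{\bold k}=\emptyset \Rightarrow R$ divisible $\Rightarrow R$ not reduced $\Rightarrow R$ not cotorsion-free — as the body of the proof, citing Fact \ref{redco} for the last implication.
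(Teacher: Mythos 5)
Your proof is correct and follows essentially the same route as the paper's: from $S_{\bold k}=\emptyset$ and Definition \ref{f1}(1)(c), conclude $(R,+)$ is divisible, hence non-reduced, hence not cotorsion-free by Fact \ref{redco}. The paper streamlines the middle step by noting directly that a divisible ring with $1\neq 0$ contains a copy of $\mathbb{Q}$ (the divisible hull of $\mathbb{Z}\cdot 1$), whereas you invoke the full structure theorem for divisible groups; both give non-reducedness, so the difference is only cosmetic.
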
\begin{proof} Suppose on the way of contradiction that $S_{\bold k}=\emptyset$. In other words, $S^{\bot}_{\bold k}$  is the set of prime numbers.
By Definition \ref{f1}(1.c), $R$  is  $S^{\bot}_{\bold k}$-divisible. This means that $\mathbb{Q}\subseteq R_ {\bold k}$. It turns out from Fact \ref{redco}   that $(R_ {\bold k},+)$ is not cotorsion-free, a contradiction.\end{proof}
\begin{definition}
\label{f11}
 Let $\bold k$ be  a rigidity context. By   $\bold M_{\bold k}$ we mean the family of all tuples
$$\bold m =\big(\bold k_{\bold m}, G_{\bold m}, F^{\bold m}_r, F^{\bold m}_{r, s}, F^{\bold m}_{(r, s)}\big)_{r, s \in R_{\bold k_{\bold m}}}= \big(\bold k, G, F_r, F_{r, s}, F_{(r, s)}\big)_{r, s \in R_{\bold k}}$$
where
\begin{enumerate}
\item[(a)] $G$ is an abelain group,

\item[(b)] $\tor(G)=K_{\bold k}$,

\item[(c)] for $r \in R_{\bold k},$ $F_r$ is an endomorphism of $G$ extending $\phi^{\bold k}_r$:$$\xymatrix{
	&& K\ar[d]_{\subseteq}\ar[r]^{\phi_r}&K \ar[d]^{\subseteq}&\\
	&& G\ar[r]_{F_r}& G\\
	&&&}$$

\item[(d)] for $r, s \in R_{\bold k},$ $F_{r, s} \in \End(G)$ extends $\Psi {r, s}$
 $$\xymatrix{
	&& K\ar[d]_{\subseteq}\ar[r]^{\Psi {r, s}}&K \ar[d]^{\subseteq}&\\
	&& G\ar[r]_{F_{r, s}}& G\\
	&&&}$$
and they have the same range $F_{r, s}[G]=\Psi_ {r, s}[K]$.

\item[(e)]  for $r, s \in R_{\bold k},$ $F_{(r, s)} \in \End(G)$ extends $\Psi^{\bold k}_{(r, s)}$: $$\xymatrix{
	&& K\ar[d]_{\subseteq}\ar[r]^{\Psi {(r, s)}}&K \ar[d]^{\subseteq}&\\
	&& G\ar[r]_{F_{(r, s)}}& G\\
	&&&}$$
and thereby they have the same range $F_{(r, s)}[G]=\Psi_ {(r, s)}[K]$.

\item[(f)] if $r, s, t \in R$ and $t=r+s$, then for $x \in G$,
\[
F_{r , s}(x)= F_r(x)+F_s(x)-F_{t }(x),
\]

\item[(g)] if $r, s, t \in R$ and $t=rs$, then for $x \in G$,
\[
F_{(r , s)}(x)= F_r(F_s(x))-F_{t }(x).
\]

\end{enumerate}
\end{definition}

\begin{definition}	Adopt the previous notation, and let $$\bold M=\bigcup\big\{\bold M_\bold k:  \bold k \emph{ is a rigidity context} \big\}.$$\begin{enumerate}
	\item[(1)] We define  $\leq _{\bold M}$
	as the following partial order on $\bold M$. Namely,  $\bold m\leq _{\bold M}\bold n $ iff

	\begin{itemize}
		\item[(a)] $\bold m,\bold n\in {\bold M} $,
		\item[(b)] $\bold k_{\bold m}=\bold k_{\bold n}$,
		\item[(c)] $G_{\bold m}\subseteq G_{\bold n}$,
		\item[(d)] $F^{\bold m}_r\subseteq F^{\bold n}_r$.
	\end{itemize}
	\item[(2)] By	$\leq _{\bold M_{\bold k}}$ we mean $\leq _{\bold M}\rest \bold M_{\bold k}$.

\end{enumerate}
\end{definition}

\begin{notation}
	Let $r \in R$ and
	$x \in G_{\bold m}$. By $rx$ we mean $r x:=F^{\bold m}_r(x)\in G_{\bold m} $.
\end{notation}

\begin{definition}
\label{f2}
Suppose $\bold k$ is a rigidity context and $\bold m \in \bold M_{\bold k}$.
\begin{enumerate}
\item We say $\bold m$ is \emph{boundedly rigid} when for every $f \in \End(G_{\bold m})$
there are $r \in R$ and $h \in \End_b(G_{\bold m})$\footnote{so,  $h$ has a bounded range.} and
\[x \in G_{\bold m} \implies f(x)=rx+h(x).
\]
\item We say $\bold m$ is \emph{free} when it has a base $B$ which means that the set
$\{ x+K_{\bold k}: x \in B\}$ is a free base of the abelian  group $G_{\bold m}/K.$

\item We say $\bold m$ is \emph{$\lambda$-free} when $G_{\bold m}/K$ is.

\item We say $\bold m$ is \emph{strongly $\lambda$-free} when $G_{\bold m}/K$ is.

\item  Let $M_{\bold m}$ be the $R$-module obtained by expanding  $G_{\bold m}/K$ such that for $x, y \in G_{\bold m}$ and $r \in R$
\[
rx+K=y+K \iff F^{\bold m}_{r}(x)=y.
\]
\end{enumerate}
\end{definition}
The next easy lemma shows that $M_{\bold m}$ as defined above is well-defined.
\begin{lemma}\label{ms}Suppose $\bold k$ is a rigidity context and $\bold m \in \bold M_{\bold k}$. Then
$M_{\bold m}$ can be turn to an $R$-module structure.
	\end{lemma}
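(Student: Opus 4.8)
The plan is to verify directly that the relation defining scalar multiplication on $M_{\bold m} = G_{\bold m}/K$ is a well-defined function and that it satisfies the module axioms, using the compatibility conditions (f) and (g) of Definition~\ref{f11} together with the fact that each $F_r^{\bold m}$ is an endomorphism of $G_{\bold m}$. First I would check that the prescription $r\cdot(x+K) := F_r^{\bold m}(x)+K$ does not depend on the choice of coset representative: if $x+K = x'+K$, then $x-x'\in K = \tor(G_{\bold m})$, and since $F_r^{\bold m}$ extends $\phi_r^{\bold k}\in\End(K)$ we get $F_r^{\bold m}(x)-F_r^{\bold m}(x') = \phi_r^{\bold k}(x-x')\in K$, so the two images agree modulo $K$. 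Hence the operation descends to $G_{\bold m}/K$.

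Next I would verify the $R$-module axioms one by one. Additivity in the module argument, $r\cdot((x+K)+(y+K)) = r\cdot(x+K) + r\cdot(y+K)$, is immediate because $F_r^{\bold m}$ is additive on $G_{\bold m}$. For additivity in the ring argument, $(r+s)\cdot(x+K) = r\cdot(x+K) + s\cdot(x+K)$, I would invoke clause (f): with $t=r+s$ we have $F_r^{\bold m}(x)+F_s^{\bold m}(x) - F_t^{\bold m}(x) = F_{r,s}^{\bold m}(x)$, and since $F_{r,s}^{\bold m}$ has range contained in $\Psi_{r,s}^{\bold k}[K]\subseteq K$, this element lies in $K$; thus $F_t^{\bold m}(x) \equiv F_r^{\bold m}(x)+F_s^{\bold m}(x) \pmod K$, which is exactly the desired identity modulo $K$. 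Similarly, for multiplicativity $(rs)\cdot(x+K) = r\cdot(s\cdot(x+K))$, clause (g) gives $F_r^{\bold m}(F_s^{\bold m}(x)) - F_{rs}^{\bold m}(x) = F_{(r,s)}^{\bold m}(x) \in \Psi_{(r,s)}^{\bold k}[K]\subseteq K$, so $F_{rs}^{\bold m}(x)\equiv F_r^{\bold m}(F_s^{\bold m}(x))\pmod K$, and the right-hand side is by definition $r\cdot(s\cdot(x+K))$ once we note that $s\cdot(x+K) = F_s^{\bold m}(x)+K$ and $r\cdot(F_s^{\bold m}(x)+K) = F_r^{\bold m}(F_s^{\bold m}(x))+K$ by the already-established well-definedness. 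Finally, $1_R$ acts as the identity because $F_1^{\bold m}$ is, up to an element of $\BEnd$ with range in $K$, the identity on $G_{\bold m}$; more carefully, one checks from clause (f) applied with $r=s$ replaced appropriately, or directly from the rigidity context axioms, that $F_1^{\bold m}(x) - x \in K$, so $1\cdot(x+K) = x+K$. (If the definition of rigidity context as stated does not literally force $F_1^{\bold m}(x)-x\in K$, one adds this as the obvious normalization, or derives it from $\phi_1 = \id_K$ together with the additivity relation.)

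The argument is essentially bookkeeping, so I do not expect a genuine obstacle; the one point requiring a little care is making sure that in each module axiom the "error term" produced by the relevant $F$-function genuinely lands in $K = \tor(G_{\bold m})$ rather than merely in some bounded subgroup of $G_{\bold m}$ — but this is guaranteed because clauses (d) and (e) of Definition~\ref{f11} stipulate that $F_{r,s}^{\bold m}[G_{\bold m}] = \Psi_{r,s}^{\bold k}[K]$ and $F_{(r,s)}^{\bold m}[G_{\bold m}] = \Psi_{(r,s)}^{\bold k}[K]$, both subsets of $K$. Once this is observed, every axiom reduces to the corresponding identity for the $\phi$'s and $\Psi$'s on $K$ modulo passing to the quotient, and the lemma follows.
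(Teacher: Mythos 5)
Your proof is correct and takes the same approach as the paper: define $r\cdot(g+K):=F^{\bold m}_r(g)+K$ and check the module axioms, the point being that clauses (d)--(g) of Definition~\ref{f11} force every error term $F_{r,s}^{\bold m}(x)$, $F_{(r,s)}^{\bold m}(x)$ to land in $K$, so the identities hold modulo $K$ (the paper's own proof is a one-liner that states the assignment and asserts it works). Your parenthetical caveat about the unital axiom is a fair observation --- the definitions as written do not literally force $F_1^{\bold m}=\operatorname{id}$ modulo $K$ --- and the paper does not address it either; treating it as an implicit normalization, as you do, is the right call.
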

\begin{proof}
	Since $M_{\bold m}$ is an expansion of $G_{\bold m}/K$, it is an abelian group. Let $r\in R$ and $m:=g+K\in M_{\bold m}$ where $g\in G$. The assignment $$(r,m)\mapsto rm:= F^{\bold m}_r(g)+K\in G_{\bold m}/K=M_{\bold m},$$ defines the desired module structure on $ M_{\bold m}$.
\end{proof}

\begin{lemma}
\label{f3}Suppose $\bold k$ is a rigidity context and $\bold m \in \bold M_{\bold k}$. The following assertions hold.
\begin{enumerate}
\item Suppose $R_{\bold k}=\mathbb{Z}$ (so, $S^{\bot}_{\bold k}=\emptyset$). Then $\bold m$
is boundedly rigid iff  $G_{\bold m}$ is boundedly rigid.
\item Let $R_{\bold k}=\mathbb{Z}_{\bold k}$ (see Definition \ref{f1}(3)). Then  $\bold m$
is boundedly rigid iff  $G_{\bold m}$ is boundedly rigid.
\item if $\phi^{\bold k}_r$ is zero for every $r \in R$,
then $G_{\bold m}$ is an R-module.

%\item $G_{\bold m}/K$ is naturally equipped with an $R$-module structure called $M_{\bold m}$, where
%\[
%x  \in M_{\bold m}, r \in R \implies r(x+K)=rx+K.
%\] In particular, this  module structure is compatible with Observation \ref{ms}.
\end{enumerate}
\end{lemma}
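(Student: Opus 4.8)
The plan is to reduce all three items to the cocycle identities (f), (g) of Definition~\ref{f11}, read modulo bounded endomorphisms. Since in our setting $G_{\bold m}$ is not bounded, Fact~\ref{a10} makes $\BEnd(G_{\bold m})$ a two-sided ideal, and I would work in the ring $E:=\End(G_{\bold m})/\BEnd(G_{\bold m})$, writing $\bar f$ for the class of $f$. First I would record that $F_{r,s}$ and $F_{(r,s)}$ have bounded range (this is part of Definition~\ref{f11}(d),(e), consistent with the boundedness of $\Psi_{r,s},\Psi_{(r,s)}$ coming from Definition~\ref{f1}(1)(d),(e),(f)), so $\bar F_{r,s}=\bar F_{(r,s)}=0$ in $E$; substituting this into (f) and (g) yields $\bar F_{r+s}=\bar F_r+\bar F_s$ and $\bar F_{rs}=\bar F_r\bar F_s$, and, taking one index equal to $1$, that $\bar F_1$ is idempotent and a two-sided unit for the subring $\{\bar F_r : r\in R_{\bold k}\}$ of $E$. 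Thus $r\mapsto\bar F_r$ is, up to this unit issue, a ring homomorphism $R_{\bold k}\to E$, whose image is exactly the set of classes realised by the ``multiplication-by-$r$'' endomorphisms $F_r$.

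Next, for items (1) and (2), I would show that for these particular base rings the image $\{\bar F_r : r\in R_{\bold k}\}$ coincides with $\{\bar\mu_m : m\in\mathbb{Z}\}$, where $\mu_m$ is ordinary multiplication by $m$; granting this, the condition defining ``$\bold m$ is boundedly rigid'' in Definition~\ref{f2}(1) and the condition defining ``$G_{\bold m}$ is boundedly rigid'' in Definition~\ref{a1} (rephrased via Facts~\ref{a4}, \ref{a10}) become literally the same statement about $E$, and both equivalences follow. For $R_{\bold k}=\mathbb{Z}$: by Definition~\ref{f2}(5) and Lemma~\ref{ms}, $F_m$ induces multiplication by $m$ on $M_{\bold m}=G_{\bold m}/K_{\bold k}$, so $F_m-\mu_m$ has range inside $K_{\bold k}=\tor(G_{\bold m})$; using $S^{\bot}_{\bold k}=\emptyset$ together with Definition~\ref{f1}(1)(d) one sees that the relevant part of $K_{\bold k}$ is bounded, so $F_m-\mu_m\in\BEnd(G_{\bold m})$, i.e.\ $\bar F_m=\bar\mu_m$, giving $\{\bar F_m:m\in\mathbb{Z}\}=\mathbb{Z}\cdot\bar\mu_1=\{\bar\mu_m:m\in\mathbb{Z}\}$ (and $\bar\mu_1=1_E$). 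For $R_{\bold k}=\mathbb{Z}_{\bold k}$ the same argument handles $m\in\mathbb{Z}$, and for a prime $p\in S^{\bot}_{\bold k}$ the $p$-divisibility of $R_{\bold k}$ (Definition~\ref{f1}(1)(c)) and the reducedness of $K_{\bold k}$ force $G_{\bold m}$ to be uniquely $p$-divisible, so $\bar\mu_p$ is a unit of $E$, $\bar F_{1/p}=\bar\mu_p^{-1}$, and one checks that these inverses are already present in $\{\bar\mu_m:m\in\mathbb{Z}\}$ modulo $\BEnd(G_{\bold m})$.

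For item (3), assume $\phi^{\bold k}_r=0$ for every $r\in R_{\bold k}$. Then $\Psi^{\bold k}_{r,s}=\phi_r+\phi_s-\phi_{r+s}=0$, and for $x\in K_{\bold k}$, $\Psi^{\bold k}_{(r,s)}(x)=\phi_r(\phi_s(x))-\phi_{rs}(x)=0$; so by Definition~\ref{f11}(d),(e) the endomorphisms $F_{r,s}$ and $F_{(r,s)}$ have range $\{0\}$, i.e.\ vanish identically. Hence (f) and (g) read, for all $x\in G_{\bold m}$, $F_{r+s}(x)=F_r(x)+F_s(x)$ and $F_{rs}(x)=F_r(F_s(x))$ — now genuine identities. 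Setting $r\cdot x:=F_r(x)$, additivity of each $F_r$ gives distributivity over addition in $G_{\bold m}$, the two displayed identities give distributivity over addition in $R_{\bold k}$ and associativity of the action, and $1\cdot x=x$ follows from $F_1$ being idempotent, acting as the identity on $M_{\bold m}$, and restricting to $\phi_1=0$ on $K_{\bold k}$. Thus $(G_{\bold m},+,\cdot)$ is an $R_{\bold k}$-module.

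The hard part I anticipate is the identification used in items (1) and (2): passing from ``$F_r$ induces multiplication by $r$ on $G_{\bold m}/K_{\bold k}$'' (which is immediate from Lemma~\ref{ms}) to ``$F_r=\mu_r$ modulo $\BEnd(G_{\bold m})$'', and, in case (2), verifying that inverting the primes of $S^{\bot}_{\bold k}$ is invisible modulo $\BEnd(G_{\bold m})$. This is precisely where $S^{\bot}_{\bold k}=\emptyset$ (item 1), respectively the $S^{\bot}_{\bold k}$-divisibility of $R_{\bold k}$ and the reducedness of $K_{\bold k}$ (item 2), together with the non-boundedness of $G_{\bold m}$, get genuinely used.
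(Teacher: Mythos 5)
Your argument for item (3) coincides with the paper's: once $\phi_r=0$ forces $F_{r,s}$ and $F_{(r,s)}$ to vanish, the cocycle identities (f),(g) of Definition~\ref{f11} become the module axioms, exactly as the paper verifies.

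For items (1) and (2) the paper gives no argument beyond ``trivial and follow from the definitions'', so only the correctness of your proposal is in question, and there is a genuine gap. You assert, unconditionally, that $\bar F_m=\bar\mu_m$ in $E=\End(G_{\bold m})/\BEnd(G_{\bold m})$, deducing it from $S^{\bot}_{\bold k}=\emptyset$ together with Definition~\ref{f1}(1)(d). But Definition~\ref{f1}(1)(d) only says each $\phi_r$ has bounded range; it does not say $K_{\bold k}$ is bounded, and $S^{\bot}_{\bold k}=\emptyset$ has no bearing on this. Restricting to $K:=K_{\bold k}$, Definition~\ref{f11}(c) gives $(F_m-\mu_m)\restriction K=\phi_m-m\cdot\id_K$; since $\phi_m$ is bounded, this restriction is bounded if and only if $m\cdot\id_K$ is, i.e.\ (for $m\neq 0$) if and only if $K$ is bounded. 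Definition~\ref{f1}(1)(a) permits $K$ to be any reduced torsion group, e.g.\ $\bigoplus_p\mathbb{Z}/p\mathbb{Z}$, so $F_m-\mu_m$ need not lie in $\BEnd(G_{\bold m})$, and both directions of your argument break at this point.

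What rescues the equivalence (when $G_{\bold m}\neq K$) is that either hypothesis of the iff already forces $K$ to be bounded, after which your identification does hold, since $F_m-\mu_m$ has range in $K$. Concretely: if $\bold m$ is boundedly rigid, apply Definition~\ref{f2}(1) to $f=\id_{G_{\bold m}}$, read the result modulo $K$ on the torsion-free quotient $G_{\bold m}/K$ to see $r=1$, and conclude $\id_K-\phi_1=(\id-F_1)\restriction K$ is bounded, hence $K$ is bounded; symmetrically, if $G_{\bold m}$ is boundedly rigid, apply Definition~\ref{a1} to $f=F_1$ to get $F_1-\id$ bounded and again $K$ bounded. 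Your write-up skips this reduction entirely and instead appeals to hypotheses that do not yield it. A similar issue afflicts your closing remark for item (2): $\bar\mu_p^{-1}\in\{\bar\mu_m:m\in\mathbb{Z}\}$ would mean $(pm-1)G_{\bold m}$ is bounded for some integer $m$, which is impossible when $G_{\bold m}/K\neq 0$; here too the case analysis on $G_{\bold m}=K$ versus $G_{\bold m}\neq K$, and on $K$ bounded, is exactly where the content of the statement lives, and cannot be bypassed.
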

\begin{proof}
(1) and (2) are trivial and follow from the definitions.

(3): For each $x \in G_{\bold m}$ and $r \in R$, we  set $r x:=F^{\bold m}_r(x)$. It is straightforward to furnish the following three properties:
\begin{itemize}
\item the identity $r(x+y)=rx+ry$ follows from Definition \ref{f1}(2)(c),
\item the equality $(r+s)x=rx+sx$ follows from Definition \ref{f1}(2)(d),
\item the equality $r(sm)=(rs)m$ follows from items (e) and (f) from Definition \ref{f1}(2).
\end{itemize}
From these, $G_{\bold m}$ is equipped with an $R$-module structure.
%(3): This follows from clause (2) above applied to the zero map $$x+K \mapsto \phi^{\bold k}_{r}(x)+K.$$
\end{proof}

In what follows, the notation $\lh(-)$ stands for the length function.
\begin{definition}
\label{f5} Let $\alpha \in \Ord.$
\begin{enumerate}
\item  By $\Lambda_\omega[\alpha]$ we mean $$\big\{ \eta:  \lh(\eta)=\omega\emph{ and } \eta(n)=(\eta(n, 1), \eta(n, 2))\emph{ where } \eta(n, 1) \leq \eta(n, 2) <\eta(n+1, 1)< \alpha \big\}.$$

\item For each $\eta\in\Lambda_\omega[\alpha]$, we let $\bold j(\eta)=\bigcup \{\eta(n, 1): n<\omega\}.$
\item $\Lambda_{<\omega}[\alpha]:=\{\langle \rangle\} \cup \bigcup\limits_{k<\omega}\Lambda_k[\alpha]$, where $\Lambda_k[\alpha]$ is the set of all  $\eta$ furnished with the following four properties:
\begin{enumerate}
\item[(a)] $\lh(\eta)=k+1$,

\item[(b)]  $\eta(k)<\alpha$,

\item[(c)] for any $\ell < k $ we suppose  $\eta(\ell)$ is furnished with a pairing property in the following sense:\begin{enumerate}
	\item[$(c.1)$]   $\eta(\ell)=(\eta(\ell, 1), \eta(\ell, 2))$, where $\eta(\ell, 1) \leq \eta(\ell, 2) < \alpha$, and
		\item[$(c.2)$] Suppose in addition $\ell+1<k$, we may and do assume  that $ \eta(\ell, 2) <\eta(\ell+1, 1)$,\end{enumerate}
\item[(d)] if $\ell < k,$ then $\eta(\ell, 1)= \eta(\ell, 2)\Longleftrightarrow\ell=0.$
\end{enumerate}

\item $\Lambda[\alpha]:=\Lambda_\omega[\alpha] \cup \Lambda_{<\omega}[\alpha].$

\item For any $\eta \in \Lambda[\alpha]$ and $k+1 < \lh(\eta)$ we set

\begin{enumerate}
	\item[(5.1)]   $\eta \restriction_L k := \big\langle (\eta(\ell, 1), \eta(\ell, 2)): \ell < k                \big\rangle^{\smallfrown} \langle  \eta(k, 1) \rangle$, and	\item[(5.2)] $\eta \restriction_R k: = \big\langle (\eta(\ell, 1), \eta(\ell, 2)): \ell < k               \big \rangle^{\smallfrown} \langle  \eta(k, 2) \rangle.$\end{enumerate}

Note that $\eta \restriction_L k$ and  $\eta \restriction_R k$ belong to $\Lambda_{k+1}[\alpha].$
\item We say $\Lambda \subseteq \Lambda[\alpha]$ is \emph{downward closed} while for each $\eta \in \Lambda$ and $k+1 < \lh(\eta)$ we have
$\eta \restriction_L k, \eta \restriction_R k \in \Lambda.$
\end{enumerate}
\end{definition}
We next define when a subset of $\Lambda_\omega[\alpha]$
is free.

\begin{definition}
\label{f111} Suppose $\alpha \in \Ord$ and $\Lambda \subseteq  \Lambda_\omega[\alpha]$.
\begin{enumerate}
	\item  	We say $\Lambda$ is free whenever there is a function $h:\lambda\to \omega$ such that the sequence $$\big\langle\{\eta\rest_Ln,\eta\rest_Rn:h(\eta) \leq n<\omega\}: \eta\in\Lambda\big\rangle$$
	is a sequence of pairwise disjoint sets.
	
	\item 	We say $\Lambda$  is $\mu$-free when  every $\Lambda'\subseteq\Lambda$ of cardinality $<\mu$ is free.
\end{enumerate}
\end{definition}

\iffalse
	Recall that for a stationary set $S \subseteq \lambda,$ Jensen's diamond $\diamondsuit_\lambda(S)$ asserts the existence
	of a sequence $\langle  S_\alpha: \alpha \in S   \rangle$
	such that for every $X \subseteq \lambda$ the set $\{\alpha \in S: X \cap \alpha=S_\alpha    \}$
	is stationary.
	
\begin{theorem}
\label{f6} Assume
$\lambda=\cf(\lambda) \geq 2^{\aleph_0}$, $S \subseteq \{\alpha < \lambda: \cf(\alpha)=\aleph_0  \}$
is stationary non-reflecting and $\Diamond_S$ holds. Then there is $\bold m$ such that:
\begin{enumerate}
\item $\bold m \in \bold M_{\bold k},$

\item $\bold m$ is $\lambda$-free (even strongly $\lambda$-free)

\item $\bold m$ is boundedly rigid.
\end{enumerate}
\end{theorem}
\begin{proof}
As usual, see  \cite{EM02}, \cite{GT} and/or see the algebraic claims later.
\end{proof}
\fi
%\section{The simple twofold $\lambda$-Black Box}
%\label{s2}
%\section{ZFC construction of boundedly rigid mixed abelian  groups}
%\label{s3}
We can now state the main result of this section as follows.
\begin{theorem}
\label{th2} Let $\lambda=\lambda^{\aleph_0} > 2^{\aleph_0}$. Let
$\bold k$ be a nontrivial rigidity context such that $K:=K_{\bold k}$ and $ R:=R_{\bold k}$ are of cardinality  $\leq \lambda$.
Then there exists an abelian  group $G$
such that $\tor(G)=K$
and $G$ is boundedly rigid. In particular, the sequence $$0\longrightarrow   R \longrightarrow \End(G)\longrightarrow \frac{\End(G)}{\BEnd(G)}\longrightarrow 0$$is exact.
\end{theorem}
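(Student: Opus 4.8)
The plan is to build the group $G$ as $G_{\bold c}$ for a suitably chosen ``full'' approximation object $\bold c \in \AP$, the existence of which is guaranteed by applying the twofold black box (Lemma \ref{m2}) to the rigidity context $\bold k$. First I would verify the cardinal arithmetic hypothesis $\lambda = \lambda^{\aleph_0} > 2^{\aleph_0}$ and the bound $|K|, |R| \leq \lambda$ are exactly what is needed to invoke the black box and produce a full $\bold c$; from fullness one reads off that $\bold m := \bold m_{\bold c} \in \bold M_{\bold k}$ is $\lambda$-free (even strongly $\lambda$-free) and, crucially, that the prediction property of the black box holds for all potential endomorphisms of $G := G_{\bold m}$. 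The construction must arrange $\tor(G) = K$: this is built into the definition of $\bold M_{\bold k}$ (clause (b) of Definition \ref{f11}), and the $\lambda$-freeness of $G/K$ together with $K$ reduced ensures no torsion is accidentally created along the transfinite construction; I would check that the direct-limit (or union) taken over the black-box construction preserves $\tor(-) = K$ at each stage and in the limit, using that $K$ is bounded-by-$p$ on each $p$-component in the nontrivial case and that purity of $K$ in $G$ is maintained.

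The heart of the argument is showing $\bold m$ is boundedly rigid, i.e., every $f \in \End(G)$ has the form $x \mapsto rx + h(x)$ with $r \in R$ and $h \in \BEnd(G)$. Here I would follow the reduction scheme advertised in the introduction: by Lemmas \ref{g12}--\ref{g15}, one first uses the black-box prediction to capture $f$ on a large enough piece of $G$, then peels off the ``expected'' part — multiplication by some $r \in R$ coming from the action of $f$ on the module $M_{\bold m} = G/K$ (using that $M_{\bold m}$ carries an $R$-module structure, Lemma \ref{ms}, and that endomorphisms of the relevant free-ish $R$-module are forced to be scalars by the rigidity context combined with the freeness/black-box genericity). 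After subtracting $\mu_r$ one is reduced to an endomorphism that kills $G/K$, i.e. factors through the quotient map $G \to G/\tor(G)$ composed with... more precisely, factors through $G \to \tor(G) = K$; then Lemma \ref{g10} asserts that any homomorphism $G \to \tor(G)$ already has bounded range, which finishes the identification $f = \mu_r + h$ with $h \in \BEnd(G)$. Throughout, the black box is used in its ``twofold'' form precisely so that one can simultaneously predict the endomorphism and control the torsion/torsion-free interaction — this twofold feature is what makes the mixed case work where a single black box would not.

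Finally, for the ``in particular'' clause I would argue exactness of
$$0 \longrightarrow R \longrightarrow \End(G) \longrightarrow \End(G)/\BEnd(G) \longrightarrow 0.$$
The surjection onto $\End(G)/\BEnd(G)$ is the canonical quotient map, so exactness there is automatic. The map $R \to \End(G)$ is $r \mapsto F_r = F^{\bold m}_r$; it is a ring homomorphism by clauses (f), (g) of Definition \ref{f11} (additivity and multiplicativity matching $F_{r,s}$ and $F_{(r,s)}$, whose ranges are bounded, so mod $\BEnd(G)$ the map is an honest ring map). Injectivity of $R \to \End(G)/\BEnd(G)$: if $F_r$ had bounded range then $r$ would act by a bounded operator on $M_{\bold m} = G/K$, and since $\Psi_{r,0}$-type corrections are bounded, $r$ would act boundedly on a free $R$-module of rank $\geq 1$ — for $r \neq 0$ this contradicts $M_{\bold m}$ being (strongly) $\lambda$-free and non-bounded, so $r = 0$. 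Exactness in the middle — $\ker(\End(G) \to \End(G)/\BEnd(G)) = \BEnd(G)$ equals the image of $R$ intersected with... no: we need $\mathrm{im}(R) + \BEnd(G) = \End(G)$, which is exactly bounded rigidity proved above, and $\mathrm{im}(R) \cap \BEnd(G) = 0$, which is the injectivity just noted. So the sequence is exact.

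\medskip

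I expect the main obstacle to be the heart step — extracting the scalar $r \in R$ from a predicted endomorphism and showing the residual map lands in $\tor(G)$ — because this is where the delicate interplay between the free $R$-module structure on $G/K$, the black-box genericity, and the boundedness of the structure maps $\phi_r, \Psi_{r,s}, \Psi_{(r,s)}$ all have to be combined simultaneously; the torsion-preservation and the exactness bookkeeping are comparatively routine once the construction is in hand.
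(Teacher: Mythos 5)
Your proposal follows the paper's proof essentially verbatim: build a full $\bold c \in \AP$ via the twofold black box (Lemmas \ref{m2}, \ref{g7}), then run the reduction scheme of Lemmas \ref{g12}--\ref{g15} to peel off a scalar $m_* \in R$, and close with the torsion-target argument (Lemmas \ref{g10}, \ref{g11}). One small inaccuracy, inherited from the paper's own introductory summary: after subtracting $\mu_{m_*}$ the residual endomorphism does not literally factor through $G \to \tor(G)$, but rather has range inside $G_{\Lambda_h} + K$ for a countable $\Lambda_h$ (Lemma \ref{g13}); the additional Lemma \ref{t11} — using the $R$-module structure on $(G_{\Lambda_h}+K)/K$ and cotorsion-freeness of $R$ — is what brings this down to boundedness, with Lemma \ref{g11} handling only the base case of range inside $K$. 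Your bookkeeping of the exactness clause (surjectivity trivial, $\mathrm{im}(R) + \BEnd(G) = \End(G)$ from bounded rigidity, $\mathrm{im}(R) \cap \BEnd(G) = 0$ from unboundedness of scalar action on $G/K$) matches the intended reading of that sequence.
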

\iffalse
\begin{hypothesis}
\label{f4} For the rest of this section we assume that
\begin{itemize}
\item $\bold k$ is a rigidity context as in Definition \ref{f1},

\item $\lambda=\lambda^{\aleph_0} > 2^{\aleph_0}$,% (really $\lambda=2^{\aleph_0}$ is OK).

\item $\lambda > |K_{\bold r}|$,% (really $\lambda=|K|$ is OK).

%\item $G=G_{\bold k}$.
\end{itemize}
\end{hypothesis}
\fi
The rest of this section is devoted to the proof of above theorem.

\begin{definition}
\label{m0}
For any ordinal $\gamma$, a sequence  $\eta \in \Lambda[\lambda]$ and a family $\Lambda \subseteq \Lambda[\lambda]$ we define:
\begin{enumerate}
\item  $S_\gamma$
 is the closure of $\omega \cup \gamma$ under taking finite subsets, so including finite sequences.

\item  $\gamma(\eta)=\eta(0, 1)$.

\item $\Lambda_\gamma=\{\eta \in \Lambda: \gamma(\eta) < \gamma  \}$.

%\item $\Lambda(L, R)=\{ \eta \upharpoonright_L k: \eta \in \Lambda, k<\omega           \} \cup \{ \eta \upharpoonright_R k: \eta \in \Lambda, k<\omega           \}.$

\item We set
\begin{enumerate}
	\item[(4.1)]   $\Lambda_{<\omega}=\Lambda \cap \Lambda_{<\omega}[\alpha]$, and	\item[(4.2)] $\Lambda_{\omega}=\Lambda \cap \Lambda_{\omega}[\alpha].
	$\end{enumerate}
\end{enumerate}
\end{definition}
%\begin{remark}
%If $\Lambda \subseteq \Lambda[\lambda]$ is downward closed, then $\Lambda(L, R) \subseteq \Lambda.$
%\end{remark}
In order to prove Theorem \ref{th2}, we need a
twofold version of black box, that we now introduce. On simple black boxes see \cite{sh136}, \cite{sh300} and \cite{sh309}.
The presentation here is a special case of the $n$-fold $\lambda$-black box from \cite{511}, when $n=2$.
\begin{definition}
\label{m1} We say $\bold b$ is a \emph{twofold $\lambda$-black box} when it consists of:
\begin{enumerate}
\item $\bar g = \langle   g_\eta: \eta\in  \Lambda_\omega[\lambda]             \rangle$,
where 

\item $g_\eta$ is a function from $\omega$ into $S_{\lambda}$,

\item Suppose $g:\Lambda_{<\omega}[\lambda]\to S_\lambda$ is a function
and $f:\Lambda_{<\omega}[\lambda]\to\gamma$ where $\gamma< \lambda.$ Then for some $\eta \in \Lambda_{\omega}[\lambda]$ the following hold:
\begin{enumerate}
\item
 $\gamma(\eta) > \gamma$,

\item $g_\eta(0)=g(\langle\rangle)$,

\item $g_\eta(n+1)=\big(g(\eta \upharpoonright_L n), g(\eta \upharpoonright_R n)\big)$,

\item  $\eta(n, 1) < \eta(n, 2)$ and $f(\eta \upharpoonright_L n)=f(\eta \upharpoonright_R n)$   for all   $1\leq n < \omega$.
\end{enumerate}
\end{enumerate}
\end{definition}

\begin{hypothesis}
	\label{g1}For the rest of this section we adopt the following hypotheses, otherwise specializes:
	\begin{itemize}
		\item $\lambda=\lambda^{\aleph_0} > 2^{\aleph_0}$.
		\item $\bold k$ is a rigidity context as in Definition \ref{f1}.
		
		\item $K=K_{\bold k}, R=R_{\bold k}$ are of cardinality  $< \lambda$. Without loss of generality, we may assume that the set of elements of $K$ and $R$
		are subsets of $\lambda.$ \item $(R,+)$ is   cotorsion-free.
		
		\item $\bold b$ is a    twofold   $\lambda$-black box.
	\end{itemize}
\end{hypothesis}

The  following result was proved  in \cite[Lemma 1.14]{511}, with a setting more general than here. As this plays a crucial ingredient, we sketch its proof.

\begin{lemma}
\label{m2}
 There exists  a  twofold $\lambda$-black box.
\end{lemma}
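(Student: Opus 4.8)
The plan is to construct the twofold $\lambda$-black box $\bold b = \langle g_\eta : \eta \in \Lambda_\omega[\lambda]\rangle$ by a combination of the standard machinery behind Shelah's black box together with a coding trick that lets us encode a \emph{pair} of functions $g, f$ simultaneously, where $g$ records set-valued data and $f$ records ordinal-valued data with the extra \emph{equality} requirement $f(\eta\restriction_L n) = f(\eta\restriction_R n)$. First I would recall the combinatorial heart of the matter: under $\lambda = \lambda^{\aleph_0}$ (and $\lambda > 2^{\aleph_0}$), the tree $({}^{\omega>}\lambda, \trianglelefteq)$ has the property that for any colouring of its nodes by $S_\lambda$ (a set of size $\lambda$) and any colouring by ordinals $< \gamma$ for a fixed $\gamma < \lambda$, one can find branches $\eta$ that "guess" any prescribed local pattern above a prescribed level $\gamma(\eta)$; this is precisely the content of the $n$-fold black box from \cite[Lemma 1.14]{511}, specialized to $n = 2$. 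The set $\Lambda_\omega[\lambda]$ is a disjointified version of this tree: each $\eta(n) = (\eta(n,1), \eta(n,2))$ is a \emph{pair} of ordinals with $\eta(n,1) \le \eta(n,2) < \eta(n+1,1)$, so a node $\eta$ of $\Lambda_{<\omega}[\lambda]$ has two immediate "successors at stage $n$", namely $\eta\restriction_L n$ and $\eta\restriction_R n$, corresponding to reading the left versus right coordinate. This is exactly the bifurcation that makes the box "twofold".

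Next I would set up the guessing sequence. For each $\eta \in \Lambda_\omega[\lambda]$, define $g_\eta : \omega \to S_\lambda$ using a fixed bijective pairing on $S_\lambda$ so that $g_\eta(n+1)$ can hold an ordered pair of elements of $S_\lambda$ (as required by clause (c) of Definition \ref{m1}). The sequence $\bar g$ is obtained by a diagonal enumeration: since $|\Lambda_{<\omega}[\lambda]| = \lambda$ and there are $\lambda^{\aleph_0} = \lambda$ relevant pairs $(g, f)$ of functions $\Lambda_{<\omega}[\lambda] \to S_\lambda$ and $\Lambda_{<\omega}[\lambda] \to \gamma$ to handle (here one uses $\lambda^{\aleph_0} = \lambda$ crucially to count branch-data), we can enumerate all "tasks" in a list of length $\lambda$ and, proceeding along increasing ordinals $\gamma(\eta)$, choose for each task a branch $\eta$ whose values $\eta(n,1), \eta(n,2)$ lie in the relevant intervals, whose starting point $\gamma(\eta)$ exceeds the bound $\gamma$ of the task, and along which $g_\eta$ records the prescribed values $g(\langle\rangle)$, $g(\eta\restriction_L n)$, $g(\eta\restriction_R n)$. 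The arithmetic $\lambda = \lambda^{\aleph_0}$ ensures there is enough room: the number of candidate branches through any node is $\lambda^{\aleph_0} = \lambda$, while only $< \lambda$ tasks need to be accommodated "below" any fixed level.

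The delicate clause, and where I expect the real work to be, is clause (d): we must arrange $\eta(n,1) < \eta(n,2)$ (strict, not just $\le$) \emph{and} the equality $f(\eta\restriction_L n) = f(\eta\restriction_R n)$ for all $n \ge 1$. The strict inequality is a genuine constraint on which branches are admissible, and the $f$-equality forces us to choose, at each bifurcation, a pair of ordinals in the two coordinates on which the ordinal-colouring $f$ agrees. Since $f$ takes values in $\gamma < \lambda$ and the interval $[\eta(n-1,2)+1, \eta(n+1,1))$ available for placing $(\eta(n,1), \eta(n,2))$ has size $\lambda$, a pigeonhole argument shows that in any interval of size $\lambda$ there are $\lambda$-many ordinals assigned the same $f$-value; hence we can pick $\eta(n,1) < \eta(n,2)$ within such a monochromatic-for-$f$ class. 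The only subtlety is that we must do this \emph{compatibly} with also hitting the prescribed $g$-values and keeping the intervals nested and strictly increasing — this is handled by building $\eta$ recursively in $n$, at each step first fixing the $f$-class, then inside it choosing coordinates respecting the $g$-constraint (possible because $g$ is just an arbitrary function with $\lambda$-many preimages available as well) and the strict order. The conclusion then follows: $\bar g$ so constructed satisfies all of (1)–(4) of Definition \ref{m1}, which is what we wanted. I would close by remarking that the full details, in the greater generality of $n$-fold boxes, are in \cite[Lemma 1.14]{511}, and only the bookkeeping specializes here.
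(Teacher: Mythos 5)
Your intuition for the $f$-equality clause is essentially right — the paper does use exactly the pigeonhole argument you describe, applying it to the coloring $\bold c_n(\alpha) := f\big((\eta\restriction n+1)^\frown\langle\alpha\rangle\big)$ of a set of size $\lambda$ with $\gamma < \lambda$ colors, finding an unbounded monochromatic set in which to place $\eta(n+1,1) < \eta(n+1,2)$. But your scheme for producing the sequence $\bar g$ itself has a genuine cardinality error, and this is where the real idea of the proof lives.

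You propose to define $\bar g$ by a ``diagonal enumeration'' of the tasks $(g,f)$, asserting that there are $\lambda^{\aleph_0} = \lambda$ of them. This is false: $g$ ranges over functions $\Lambda_{<\omega}[\lambda] \to S_\lambda$, and since $|\Lambda_{<\omega}[\lambda]| = |S_\lambda| = \lambda$, there are $\lambda^\lambda > \lambda$ such functions, so they cannot be enumerated in a list of length $\lambda$ and matched one-to-one to branches. (The quantity $\lambda^{\aleph_0}$ counts $\omega$-sequences from a set of size $\lambda$ — this is the right count for ``branch data'', but not for tasks, and the required value of $g_\eta(n+1)$ depends on the branch $\eta$ via $g(\eta\restriction_L n)$, $g(\eta\restriction_R n)$, so you cannot fix the branch data before fixing the branch.) The paper sidesteps this entirely by not enumerating tasks at all: it fixes a partition $\langle W_{s_1,s_2} : s_1,s_2 \in S_\lambda\rangle$ of $\lambda$ into $\lambda$-many sets, each of size $\lambda$, and defines $g_\eta$ \emph{uniformly} by reading off which cell the coordinates of $\eta$ lie in — namely $g_\eta(0) = s$ iff $\eta(0,1) = \eta(0,2) \in W_{s,s}$, and $g_\eta(n+1) = (s_1,s_2)$ iff $\eta(n+1,1) \in W_{s_1,s_2}$. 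With $g_\eta$ defined this way once and for all, the verification of clause (3) of the definition is a single induction: given a task $(g,f)$, one builds $\eta$ so that $\eta(n+1,1)$, $\eta(n+1,2)$ both lie inside $W_{g(\eta\restriction_L n),\,g(\eta\restriction_R n)}$ and, within it, inside an unbounded $\bold c_n$-monochromatic set (this is where your pigeonhole lives), with the strict order $\eta(n,2) < \eta(n+1,1) < \eta(n+1,2)$. This makes $g_\eta$ ``guess'' the task without any prior enumeration. You should replace the task-enumeration mechanism by this partition-based uniform definition; the rest of your reasoning then goes through as you outlined.
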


\begin{proof} 
For notational simplicity, we set $S:=S_\lambda$, and look at the following fixed partition of $\lambda$ into $\lambda$-many sets, each of cardinality
$\lambda$:$$\big\langle W_{s_1, s_2} : s_1, s_2 \in S \big\rangle.$$ 	For each $\eta \in \Lambda_\omega[\lambda]$, we define $g_\eta(n) \in S$, by induction on $n<\omega.$  

To start, set
\[
(\ast)_1 \qquad\qquad g_\eta(0)=s \iff \eta(0, 1) = \eta(0, 2) \in W_{s, s}.
\]	
Now suppose that $n<\omega$ and $g_\eta \upharpoonright (n+1)$ 
is defined. We are going to define $g_\eta(n+1)$. It is enough to note that 
\[
(\ast)_2 \qquad\qquad g_\eta(n+1)=(s_1, s_2) \iff \eta(n+1, 1) \in W_{s_1, s_2}.
\]
We show that $\bar g = \langle   g_\eta: \eta\in  \Lambda_{\omega}[\lambda]              \rangle$ is as required.
Suppose that  $g:\Lambda_{<\omega}[\lambda]\to S_\lambda$ is a function
and $f:\Lambda_{<\omega}[\lambda]\to\gamma$ where $\gamma< \lambda.$ We define $\eta \in \Lambda_\omega[\lambda]$,
by defining $\eta(n),$ by induction on $n.$ 

Let $\eta(0):=\langle  \eta(0, 1), \eta(0, 2)    \rangle$,
where 
$$(\ast)_3 \qquad\qquad \gamma < \eta(0, 1)= \eta(0, 2) \in W_{g(\langle\rangle), g(\langle\rangle)}.$$
Now, suppose that $n<\omega$ and we have defined $\eta \upharpoonright n+1$. We define $$\eta(n+1)=\langle \eta(n+1, 1), \eta(n+1, 2) \rangle.$$ 
Set 
	\begin{itemize}
	\item[a)]
$s_1:=g(\eta\upharpoonright_L n)$, 
\item[b)] $s_2:= g(\eta \upharpoonright_R n)$,
and\item[c)]  $\bold c_n: W_{s_1, s_2} \rightarrow \gamma$ is defined via the following assignment
\[
\bold c_n(\alpha) := f\big((\eta \upharpoonright n+1) ^{\frown} \langle \alpha \rangle\big)\quad(+)
\]	\end{itemize}
As $\gamma < \lambda$ and $W_{s_1, s_2}$ has size $\lambda,$ we can find an unbounded subset $W_n$ of $W_{s_1, s_2}$ such that $\bold c_n \upharpoonright W_n$ is constant. Let
$ \eta(n+1, 1) < \eta(n+1, 2)$ be such that
\[
(\ast)_4 \qquad\qquad  \eta(n, 2) <  \eta(n+1, 1), \eta(n+1, 2) \in W_n \subseteq W_{g(\eta\upharpoonright_L n), g(\eta\upharpoonright_R n)}.
\]
We claim that the $\eta$ we constructed as above, satisfies the required conditions of Definition \ref{m1}(3). Indeed, thanks to our construction, $\gamma(\eta)=\eta(0, 1) > \gamma.$  We also have
\[
g_\eta(0)=g(\langle\rangle) \iff  \eta(0, 1) = \eta(0, 2) \in W_{g(\langle\rangle), g(\langle\rangle)},
\]
which is true by $(\ast)_3$. We also have
\[
g_\eta(n+1)=\big(g(\eta \upharpoonright_L n), g(\eta \upharpoonright_R n)\big) \iff \eta(n+1, 1) \in W_{g(\eta \upharpoonright_L n), g(\eta \upharpoonright_R n)},
\]
which is again true by $(\ast)_4$. Finally note that, clearly $f(\eta\upharpoonright_L 1)=f(\eta\upharpoonright_R 1)$, and for all $n$,

	\begin{equation*}
\begin{array}{clcr}	f(\eta\upharpoonright_L n+2) &=
f(\eta \upharpoonright n+1^{\frown} \langle \eta(n+1, 1)   \rangle)  \\&\stackrel{(+)}=\bold c_n\big(\eta(n+1, 1)\big)
\\
&\stackrel{(\ast)_4}=\bold c_n\big(\eta(n+1, 2)\big) \\
&\stackrel{(+)}=f(\eta \upharpoonright n+1^{\frown} \langle \eta(n+1, 2)   \rangle)\\
&=f(\eta\upharpoonright_R n+2).
\end{array}
\end{equation*}

The Lemma follows.
\end{proof}

Assuming hypotheses beyond ZFC, we can get stronger versions of twofold $\lambda$-black box (see again \cite{511}).
\begin{observation}
	\label{f6} Assume
	$\lambda=\cf(\lambda) \geq \aleph_1$. Let $$S \subseteq \{\alpha < \lambda: \cf(\alpha)=\aleph_0  \}$$
be a stationary and non-reflecting subset of $\lambda$ such that the principle $\Diamond_S$ holds. Then there is a $\lambda$-free twofold $\lambda$-black box
$\bold b$ such that $\Lambda_{\bold b}=\{\eta_\delta: \delta \in S    \}$ and $\bold j(\eta_\delta)=\delta$ for every $\delta \in S.$
\end{observation}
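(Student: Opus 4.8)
The plan is to run the construction of the restricted $\lambda$-black box from \cite{511} in the case $n=2$, organizing the chosen $\omega$-branches along a fixed ladder system so that the index set $\Lambda_{\bold b}$ inherits $\lambda$-freeness from the non-reflection of $S$. To set up: fix a ladder system $\bar\gamma=\langle\gamma^\delta:\delta\in S\rangle$ with $\gamma^\delta=\langle\gamma^\delta_n:n<\omega\rangle$ strictly increasing and cofinal in $\delta$; fix a coding assigning to each $B\subseteq\delta$ (for $\delta<\lambda$ a limit) a triple $\mathrm{dec}(B)=(g_B,f_B,\gamma_B)$ with $g_B\colon\Lambda_{<\omega}[\delta]\to S_\delta$, $f_B\colon\Lambda_{<\omega}[\delta]\to\delta$ and $\gamma_B<\delta$ (possible since $|\Lambda_{<\omega}[\delta]|=|S_\delta|=|\delta|$), chosen so that $\mathrm{dec}(A\cap\delta)=\big(g\restriction\Lambda_{<\omega}[\delta],\,f\restriction\Lambda_{<\omega}[\delta],\,\gamma\big)$ whenever $A$ codes a triple $(g,f,\gamma)$ with $\gamma<\delta$ and $\delta$ is closed under the coding; and fix a $\Diamond_S$-sequence $\bar A=\langle A_\delta:\delta\in S\rangle$. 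For $\delta\in S$ write $(g^\delta,f^\delta,\gamma_\delta)=\mathrm{dec}(A_\delta)$.

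Next I would define $\eta_\delta\in\Lambda_\omega[\lambda]$ and $g_{\eta_\delta}\colon\omega\to S_\lambda$ by recursion on $n$, copying the constancy step from the proof of Lemma \ref{m2} but forcing stage $n$ to land inside $[\gamma^\delta_n,\delta)$. At $n=0$ take $\eta_\delta(0,1)=\eta_\delta(0,2)$ to be any ordinal in $[\max(\gamma_\delta,\gamma^\delta_0),\delta)$ and set $g_{\eta_\delta}(0)=g^\delta(\langle\rangle)$. Given $\eta_\delta\restriction(n+1)$, consider $\bold c_n(\alpha)=f^\delta\big((\eta_\delta\restriction(n+1))^{\smallfrown}\langle\alpha\rangle\big)$ on the interval $I_n=[\max(\eta_\delta(n,2),\gamma^\delta_{n+1}),\delta)$; since $|I_n|=|\delta|>|\gamma_\delta|\ge|\mathrm{ran}(\bold c_n)|$, there are $\eta_\delta(n,2)<\eta_\delta(n+1,1)<\eta_\delta(n+1,2)$ in $I_n$ with $f^\delta(\eta_\delta\restriction_L(n+1))=f^\delta(\eta_\delta\restriction_R(n+1))$, and we set $g_{\eta_\delta}(n+1)=\big(g^\delta(\eta_\delta\restriction_L n),g^\delta(\eta_\delta\restriction_R n)\big)$. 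Since $\gamma^\delta_n\le\eta_\delta(n,1)<\delta$ for every $n$, we get $\bold j(\eta_\delta)=\sup_n\eta_\delta(n,1)=\delta$; in particular $\delta\mapsto\eta_\delta$ is injective. Put $\Lambda_{\bold b}=\{\eta_\delta:\delta\in S\}$ and $\bold b=\langle g_{\eta_\delta}:\delta\in S\rangle$.

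For the guessing clause of Definition \ref{m1}(3): given $g\colon\Lambda_{<\omega}[\lambda]\to S_\lambda$ and $f\colon\Lambda_{<\omega}[\lambda]\to\gamma$ with $\gamma<\lambda$, let $A\subseteq\lambda$ code $(g,f,\gamma)$. The set $C$ of $\delta\in S$ with $\delta>\gamma$ and $\delta$ closed under the coding apparatus contains a club, so by $\Diamond_S$ there are stationarily many $\delta\in C$ with $A\cap\delta=A_\delta$; fix any such $\delta$. Then $(g^\delta,f^\delta,\gamma_\delta)=(g\restriction\Lambda_{<\omega}[\delta],f\restriction\Lambda_{<\omega}[\delta],\gamma)$, and since every $\eta_\delta\restriction_L k,\eta_\delta\restriction_R k$ used above lies in $\Lambda_{<\omega}[\delta]$, the four conditions hold for $\eta=\eta_\delta$: $\gamma(\eta_\delta)=\eta_\delta(0,1)>\gamma_\delta=\gamma$; $g_{\eta_\delta}(0)=g^\delta(\langle\rangle)=g(\langle\rangle)$; $g_{\eta_\delta}(n+1)=(g(\eta_\delta\restriction_L n),g(\eta_\delta\restriction_R n))$; and $\eta_\delta(n,1)<\eta_\delta(n,2)$ together with $f(\eta_\delta\restriction_L n)=f(\eta_\delta\restriction_R n)$ for $1\le n<\omega$. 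Hence $\bold b$ is a twofold $\lambda$-black box, with witnesses always found inside $\Lambda_{\bold b}$.

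It remains to see that $\Lambda_{\bold b}$ is $\lambda$-free: given $\Lambda'=\{\eta_\delta:\delta\in S'\}\subseteq\Lambda_{\bold b}$ with $|S'|<\lambda$, by Definition \ref{f111} we must produce $h\colon\Lambda'\to\omega$ for which the tails $\{\eta_\delta\restriction_L n,\eta_\delta\restriction_R n:h(\eta_\delta)\le n<\omega\}$ are pairwise disjoint; since these sequences are built from the rungs $\gamma^\delta_n\le\eta_\delta(n,\cdot)<\gamma^\delta_{n+1}$, this reduces to freeness of the ladder system $\bar\gamma\restriction S'$, which holds for every $S'$ of size $<\lambda$ precisely because $S$ is a stationary non-reflecting set of ordinals of cofinality $\aleph_0$ — the standard almost-freeness argument, see \cite{EM02}. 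The point requiring care, and the only real obstacle, is the bookkeeping in the recursive construction: one must carry out the constancy step while respecting the prescribed intervals $I_n\subseteq[\gamma^\delta_n,\delta)$ (so that the rungs of $\eta_\delta$ follow $\bar\gamma$ and freeness is available), which is where regularity of $\lambda$ and $\cf(\delta)=\aleph_0$ are used to guarantee $|I_n|=|\delta|$ and hence enough room below $\delta$.
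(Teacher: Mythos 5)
Your overall plan — building $\eta_\delta$ along a ladder system so that $\bold j(\eta_\delta)=\delta$, using the $\Diamond_S$-sequence to capture the pair $(g,f)$, and deriving $\lambda$-freeness from non-reflection — is the right shape and fills in what the paper only gestures at in the remark following the observation (the paper gives no further argument). However, two steps are not correct as written.

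First, the pigeonhole at stage $n+1$ needs $|I_n|>|\range(\bold c_n)|$, and for this you assert $|\delta|>|\gamma_\delta|$. That fails for some $\delta\in S$: for instance $\delta$ can be a countable limit ordinal of cofinality $\omega$ with $\gamma_\delta$ also countable, in which case $|\delta|=|\gamma_\delta|=\aleph_0$. This is repairable — carry out the colour-homogenising recursion only for those $\delta$ with $|\gamma_\delta|<|\delta|$, define $\eta_\delta$ arbitrarily (subject only to $\bold j(\eta_\delta)=\delta$) for the remaining $\delta$, and note that in the verification step one may restrict to $\delta$ lying above $|\gamma|^{+}$, which is still a club — but the argument as stated is incomplete.

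Second, and more substantively, the freeness step has a genuine gap. You claim the rungs satisfy $\gamma^\delta_n\le\eta_\delta(n,\cdot)<\gamma^\delta_{n+1}$ and reduce the problem to freeness of $\bar\gamma\restriction S'$. Your recursion only enforces the lower bound $\eta_\delta(n,\cdot)\ge\gamma^\delta_n$; the upper bound is never imposed, and indeed cannot be, since the interval $[\gamma^\delta_n,\gamma^\delta_{n+1})$ may be far too small for the pigeonhole argument. Without the upper bound, freeness of the ladder system does not transfer to $\Lambda_{\bold b}$: two distinct $\eta_\delta,\eta_{\delta'}$ could have coinciding entries even when $\gamma^\delta$ and $\gamma^{\delta'}$ have disjoint tails. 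The correct route — and the one the paper's remark indicates — is to argue directly from $\bold j(\eta_\delta)=\delta$: given $\Lambda'=\{\eta_\delta:\delta\in S'\}$ with $|S'|<\lambda$, induct on $\beta:=\sup S'$; at limits of uncountable cofinality, non-reflection yields a club $C\subseteq\beta$ disjoint from $S$, and one glues the inductively-obtained functions on the $C$-intervals using that the tail of $\eta_\delta$ eventually lies inside the $C$-interval containing $\delta$ (this uses only $\sup_n\eta_\delta(n,1)=\delta$). The ladder system is a device to force $\bold j(\eta_\delta)=\delta$; its own freeness plays no direct role in the conclusion.
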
Recall that Jensen's diamond principle $\Diamond_S$ is a kind of prediction principle whose truth is independent of ZFC.
The point in the above proof is that if $\Lambda_{\bold b}=\{\eta_\delta: \delta \in S    \}$ and $\bold j(\eta_\delta)=\delta$ for every $\delta \in S$, then as $S$ does not reflect, the set
$\Lambda_{\bold b}$ is $\lambda$-free.

%\begin{proof}
%This is just the middle between the simple Black Box
%for $\lambda=\lambda^{\aleph_0}$ from \cite{609} and \cite{sh:e}
%and the one from \cite{511}.
%\end{proof}

\begin{remark}
 Recall from \cite{independence_paper} that a (co-)Hopfian  group of size $\lambda=2^{\aleph_0}$ exists in ZFC. We can also deal with the case of $\lambda=2^{\aleph_0}$, but   all is known in this case, so we just concentrate on the case $\lambda=\lambda^{\aleph_0} > 2^{\aleph_0}$.
\end{remark}
%\begin{notation}
%Given $\Lambda \subseteq \Lambda[\lambda]$ we set $\Lambda_{<\omega}=\Lambda \cap \Lambda_{<\omega}[\lambda]$
%and $\Lambda_{\omega}=\Lambda \cap \Lambda_{\omega}[\lambda]$.
%\end{notation}
%Let $\bold M_{\bold k}$ be as in Definition \ref{f1}(2).

\begin{definition}
\label{g2}
 Let $\AP:=\AP_{\bold k, \lambda}$ be the set of all quintuples
$$\bold c=\big(\Lambda_{\bold c}, \bold m_{\bold c}, \Gamma_{\bold c}, X_{\bold c}, \langle a^{\bold c}_{\eta, n}: \eta \in \Lambda_{\bold c}, n<\omega   \rangle\big)$$
such that:
\begin{enumerate}
\item[(a)] $\Lambda_{\bold c} \subseteq \Lambda[\lambda]$ is downward closed.

\item[(b)] $\bold m_{\bold c}  \in \bold M_{\bold k}$. We may write $G_{\bold c}, M_{\bold c}$ instead of $G_{\bold m_{\bold c}}, M_{\bold m_{\bold c}}$ respectively, etc.

\item[(c)] $X_{\bold c}$ is the following set:$$\{rx_{\nu}: r\in R, \nu  \in \Lambda_{\bold c, <\omega}    \}
\cup \{ry_{\eta, n}: r\in R, \eta \in \Lambda_{\bold c, \omega}, n<\omega  \}.$$

\item[(d)]
$G_{\bold c}$ is generated, as an abelian  group, by the sets $K$ and
 $X_{\bold c}.$ The relations presented in item (f), see below.

\item[(e)] for any ordinal $\alpha$, let $G_{\bold c, \alpha}$ be the subgroup of $G_{\bold c}$
generated by the set $K$ and
$$\{rx_{\nu}: r\in R, \nu  \in \Lambda_{\bold c, <\omega} \cap \Lambda[\alpha]   \}
\cup \{ry_{\rho, n}: r\in R, \rho \in \Lambda_{\bold c, \omega}  \cap \Lambda[\alpha], n<\omega  \}.$$

\item[(f)] $M_{\bold c}$, as an $R$-module,  is generated  by $X_{\bold c}\cup K$, freely except the following set $\Gamma_{\bold c}$
of equations:\begin{itemize}
	\item
 $y_{\eta, n}=a^{\bold c}_{\eta, n}+(n!)y_{\eta, n+1}+(x_{\eta\rest_Ln}-x_{\eta\rest_Rn}),$ \end{itemize}
where $a^{\bold c}_{\eta, n}\in G_{\bold c, \eta(0, 1)}.$
\end{enumerate}
\end{definition}
 The following is clear:
\begin{lemma}\label{siz}
	Suppose $\bold c \in \AP_{\bold k,  \lambda}$. Then $G_{\bold c}$ is of size $\lambda^{\aleph_0}$.
	\end{lemma}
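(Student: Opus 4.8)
\textbf{Proof plan for Lemma \ref{siz}.}

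The statement is that $|G_{\bold c}| = \lambda^{\aleph_0}$ for any $\bold c \in \AP_{\bold k,\lambda}$. First I would establish the upper bound $|G_{\bold c}| \le \lambda^{\aleph_0}$. By Definition \ref{g2}(d), $G_{\bold c}$ is generated as an abelian group by $K \cup X_{\bold c}$, so $|G_{\bold c}| \le |K| + |X_{\bold c}| + \aleph_0$. Now $X_{\bold c}$ consists of elements of the form $r x_\nu$ and $r y_{\eta,n}$, indexed by $r \in R$, $\nu \in \Lambda_{\bold c,<\omega}$, $\eta \in \Lambda_{\bold c,\omega}$, $n < \omega$. We have $|R| < \lambda$ by Hypothesis \ref{g1}, $|\Lambda_{<\omega}[\lambda]| = \lambda$ (finite sequences of ordinals below $\lambda$), and $|\Lambda_\omega[\lambda]| \le \lambda^{\aleph_0}$ (each such $\eta$ is an $\omega$-sequence of pairs of ordinals below $\lambda$). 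Hence $|X_{\bold c}| \le \lambda^{\aleph_0}$, and since $|K| < \lambda \le \lambda^{\aleph_0}$, we get $|G_{\bold c}| \le \lambda^{\aleph_0}$, using that a group is the set of finite $\mathbb{Z}$-linear combinations of its generators, which does not increase cardinality beyond $\lambda^{\aleph_0}$.

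For the lower bound $|G_{\bold c}| \ge \lambda^{\aleph_0}$, the natural move is to pass to the quotient $M_{\bold c} = G_{\bold c}/K$, which by Definition \ref{g2}(f) is an $R$-module generated by $X_{\bold c} \cup K$ (the image of $K$) freely except for the relations in $\Gamma_{\bold c}$. The key observation is that the relations in $\Gamma_{\bold c}$ only involve the generators $y_{\eta,n}$ and $x_{\eta\rest_L n}, x_{\eta\rest_R n}$ — each relation expresses $y_{\eta,n}$ in terms of $y_{\eta,n+1}$ and finitely many $x$'s (plus an element $a^{\bold c}_{\eta,n}$ of $G_{\bold c,\eta(0,1)}$). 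I would argue that, after using these relations to eliminate redundancy, the distinct cosets $y_{\eta,0} + K$ for $\eta \in \Lambda_{\bold c,\omega}$ remain pairwise distinct — more carefully, that the map $\eta \mapsto y_{\eta,0}$ from $\Lambda_{\bold c,\omega}$ into $M_{\bold c}$ is injective (or at least finite-to-one), because two $\omega$-branches $\eta \ne \eta'$ eventually separate and the relations chain $y_{\eta,n}$ down a fixed branch. Since a full-strength $\bold c$ will have $|\Lambda_{\bold c,\omega}| = \lambda^{\aleph_0}$ (indeed, for the ``full'' $\bold c$ produced in Lemma \ref{g7}, $\Lambda_{\bold c,\omega} = \Lambda_\omega[\lambda]$ up to the black box indexing), this forces $|M_{\bold c}| \ge \lambda^{\aleph_0}$, hence $|G_{\bold c}| \ge |M_{\bold c}| \ge \lambda^{\aleph_0}$.

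I expect the main obstacle to be the lower bound, specifically verifying that enough of the $y_{\eta,0}$ (or the $x_\nu$) survive as distinct elements in $M_{\bold c}$ despite the defining relations $\Gamma_{\bold c}$. One has to check that the system of relations does not collapse the module — e.g. by exhibiting an explicit $R$-linear (or abelian-group) homomorphism from $M_{\bold c}$ onto a module of size $\lambda^{\aleph_0}$ that separates the relevant generators, or by directly analyzing the normal form of elements modulo $\Gamma_{\bold c}$. A clean way is to note that each relation, read as $y_{\eta,n} - (n!)y_{\eta,n+1} = a^{\bold c}_{\eta,n} + x_{\eta\rest_L n} - x_{\eta\rest_R n}$, is ``triangular'' in the $y$'s, so one may safely quotient out all $y_{\eta,n}$ with $n \ge 1$ together with the $x_\nu$'s and retain a free quotient on $\{y_{\eta,0} + \cdots : \eta \in \Lambda_{\bold c,\omega}\}$; since $\AP$ is presumably set up so that the full $\bold c$ has $\Lambda_{\bold c} \supseteq$ a copy of $\Lambda_\omega[\lambda]$ of size $\lambda^{\aleph_0}$, this is enough. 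If instead $\Lambda_{\bold c}$ is allowed to be small, the lemma as stated would need the convention that we only assert the bound for those $\bold c$ with $|\Lambda_{\bold c,\omega}| = \lambda^{\aleph_0}$; I would flag this and, if needed, prove only $|G_{\bold c}| \le \lambda^{\aleph_0}$ unconditionally together with $|G_{\bold c}| = \lambda^{\aleph_0}$ for the full $\bold c$ that is actually used in the construction.
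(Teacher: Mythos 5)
The paper gives no proof at all here -- it prefaces the lemma with ``The following is clear'' -- so there is no argument of the authors' to compare yours against. Your upper bound $|G_{\bold c}|\leq\lambda^{\aleph_0}$ is exactly right: $|\Lambda_{<\omega}[\lambda]|=\lambda$, $|\Lambda_\omega[\lambda]|\leq\lambda^{\aleph_0}$, $|R|,|K|<\lambda$ by Hypothesis \ref{g1}, and passing to finite $\mathbb{Z}$-linear combinations does not raise the cardinality.

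You are also right to flag the lower bound as the genuine issue. Definition \ref{g2} places no lower bound on $\Lambda_{\bold c}$ (it can even be empty, which is downward closed), so the lemma is false as literally stated for such degenerate $\bold c$; it holds only when $\Lambda_{\bold c}$ is big enough, and in the paper it is applied (via Lemma \ref{g3}(6)) only to the full $\bold c$ from Lemma \ref{g7}, where $\Lambda_{\bold c}=\Lambda[\lambda]$. Where I would push back slightly is on the way you try to reach $\lambda^{\aleph_0}$: since Hypothesis \ref{g1} is in force, $\lambda^{\aleph_0}=\lambda$, so you do not need the $\lambda^{\aleph_0}$-many $\omega$-branches $y_{\eta,0}$ at all -- it suffices that $|\Lambda_{\bold c,<\omega}|=\lambda$, and the relations in $\Gamma_{\bold c}$ never equate two $x_\nu$'s with distinct $\nu\in\Lambda_{\bold c,<\omega}$ (this is immediate from, e.g., Lemma \ref{g3}(3), which gives $\aleph_1$-freeness of $M_{\bold c}$). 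The $y_{\eta,0}$ route is heavier, and the ``triangular elimination'' you sketch has a wrinkle: the terms $a^{\bold c}_{\eta,n}$ live in $G_{\bold c,\eta(0,1)}$ and can themselves involve $y_{\eta',m}$'s, so quotienting out all $y_{\eta,n}$ with $n\geq1$ is not obviously innocuous for the remaining $y_{\eta,0}$'s. Sticking to the $x_\nu$'s sidesteps that and gives the bound cheaply; your proposed fix (assert the upper bound unconditionally, the equality only for $\bold c$ with $|\Lambda_{\bold c,<\omega}|=\lambda$) is the correct reading of what the paper actually needs.
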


\begin{definition}\label{circ}
For any $\bold c \in \AP_{\bold k, \lambda}$, we define the following:
\begin{enumerate}
\item $\gamma_{\bold c}:=\min\{ \gamma \leq \lambda: \Lambda_{\bold c} \subseteq \Lambda[\gamma] \}$.

		\item  Let $\Omega_{\bold c}:=\Lambda_{\bold c,<\omega}\cup
\big(\Lambda_{\bold c,\omega}\times \omega\big)$
%\{\eta^\smallfrown\langle
%	n	\rangle: \eta\in\Lambda_{\bold c, < \omega},n<\omega\}$
and define $\langle
	x_{\rho}:\rho\in \Omega_{\bold c}	\rangle$ by the following rule	\begin{enumerate}
		\item[(2.1)]  If $\rho \in\Lambda_{\bold c,<\omega}$, then $x_\rho$ is defined as in Definition \ref{g2}(c).
		\item  [(2.2)] If $\rho=(\eta, n) \in \Lambda_{\bold c,\omega}\times \omega $ ,
		we define $x_\rho:=y_{\eta,n}$.	\end{enumerate}	\item
	For $b\in G_{\bold c}$ choose the sequence $$\big\langle r_{b,\ell},\eta_{b,\ell},m_{b,\ell}:\ell<n_b\big\rangle$$  such that
	$$b-\sum_{\ell<n_b} r_{b,\ell}y_{\eta_{{b,\ell}},m_{b,\ell}}\in\sum_{\rho\in\Lambda_{\bold c, < \omega}}Rx_\rho +K, $$where $r_{b,\ell}\in R \setminus \{0\}$ and $(\eta_{b,\ell}, m_{b,\ell})\in \Lambda_{\bold c,\omega} \times \omega$.
\item By $\supp_\circ(b)$ we mean $\{\eta_{{b,\ell}}:\ell<n_b\}$.
%\item Given $\Lambda \subseteq \Lambda[\lambda]$  set $\Lambda_{<\omega}=\Lambda \cap \Lambda_{<\omega}[\lambda]$
%	and $\Lambda_{\omega}=\Lambda \cap \Lambda_{\omega}[\lambda]$.

	\end{enumerate}	
\end{definition}
%\begin{remark}
%{\color{red}  $\supp_\circ(b)$ is uniquely determined.}
%\end{remark}

\begin{definition}
	\label{g211}
	Suppose $\bold c \in \AP_{\bold k,  \lambda}$ and let $a \in G_{\bold c}$.
	\begin{enumerate}

\item[(a)]
There is a finite set $\Lambda_{a}\subseteq \Lambda_{\bold c}$, a sequence $S:=\langle r_\rho:\rho\in\Lambda_{a}\rangle$ of non-zero elements of $R$,  an $n(a)<\omega$ and $d_a\in K$ such that \begin{equation*}
\begin{array}{clcr}
a  =   \sum\limits_{ \eta\in \Lambda_{a,<\omega}}r_\eta x_\eta
+  \sum\limits_{\nu\in \Lambda_{a,\omega}}r_\nu y_{\nu,n(a)} +d_a,
\end{array}
\end{equation*}
where $\Lambda_{a,<\omega}=\Lambda_{a}\cap \Lambda_{\bold c,<\omega} $ and $\Lambda_{a,\omega}=\Lambda_{a}\cap \Lambda_{\bold c,\omega} $.

\item[(b)]
Let
$\supp_{\bold c}(a)=\supp(a)$ be the minimal set $\Lambda \subseteq \Lambda_{\bold c}$ minimal with respect to the following two properties:
\begin{enumerate}
	
	\item[(b.1)]$\Lambda_a \subseteq \Lambda$.	\item[(b.2)] If $\nu\in\Lambda_{a}\cap \Lambda_{\bold c,\omega}$ and $n<\omega$ then
	$\Lambda_{a^{\bold c}_{\nu,n}}\subset \Lambda$ and $\eta\rest_Ln,\eta\rest_Rn\in\Lambda$.\end{enumerate}	

%	so it is finite and unique.
\end{enumerate}
\end{definition}

\begin{remark}
Adopt the previous notation,	
and $a \in G_{\bold c}$. Then
$\supp_{\bold c}(a)$ is the minimal set $\Lambda \subseteq \Lambda_{\bold c}$ such that
\[
a \in \big\langle \{x_\eta, y_{\nu, n}: \eta \in \Lambda(L, R), \nu \in \Lambda, n<\omega    \} \cup K      \big\rangle^*_{G_{\bold c}}.
\]	
\end{remark}

\iffalse
\begin{definition}
\label{g211}
Suppose $\bold c \in \AP_{\bold k,  \lambda}$ and let $a \in G_{\bold c}$.
\begin{enumerate}

\item[(a)]
There are a finite set $\Lambda_{a}\subseteq \Lambda_{\bold c}$, a sequence $S:=\langle r_\rho:\rho\in\Lambda_{a}\rangle$ of non-zero elements of $R$,  an $n(a)<\omega$ and $d_a\in K$ such that \begin{equation*}
\begin{array}{clcr}
a  =   \sum\limits_{ \eta\in \Lambda_{a,<\omega}}r_\eta x_\eta
 +  \sum\limits_{\nu\in \Lambda_{a,\omega}}r_\nu y_{\nu,n(a)} +d_a,
\end{array}
\end{equation*}
where $\Lambda_{a,<\omega}=\Lambda_{a}\cap \Lambda_{\bold c,<\omega} $ and $\Lambda_{a,\omega}=\Lambda_{a}\cap \Lambda_{\bold c,\omega} $.

$\gamma_{\bold c}=\min\{ \gamma \leq \lambda: \Lambda_{\bold c}(L, R) \cup \Lambda_{\bold c} \subseteq \Lambda[\gamma] \}$,

\item[(b)]
let
$\supp_{\bold c}(a)=\supp(a)$ be the minimal set $\Lambda \subseteq \Lambda_{\bold c}$ such that
\begin{enumerate}
	
	\item[(1)]$\Lambda_a \subseteq \Lambda$	\item[(2)] If $\nu\in\Lambda_{a}\cap \Lambda_{\bold c,\omega}$ and $n<\omega$ then
$\Lambda_{a^{\bold c}_{\nu,n}}\subset \Lambda$ and $\eta\rest_Ln,\eta\rest_Rn\in\Lambda$.	

for $a \in G_{\bold c}$ let
$\supp_{\bold c}(a)=\supp(a)$ be the minimal set $\Lambda \subseteq \Lambda_{\bold c}$ such that
\[
a \in \big\langle \{x_\eta, y_{\nu, n}: \eta \in \Lambda(L, R), \nu \in \Lambda, n<\omega    \} \cup K      \big\rangle^*_{G_{\bold c}},
\]
so it is finite and unique.
\fi
%\item[(c)] $\Lambda_{\bold c, \gamma}=\{\nu \in \Lambda_{\bold c}: \gamma(\nu) < \gamma             \}$.

\begin{remark}Adopt the previous notation. The following holds.
\begin{enumerate}
		\item[(1)] The set $\supp_{\bold c}(a)$ is countable.
	\item[(2)] If $a=x_\nu$ for some $\nu\in\Lambda_{\bold c}$, then $$\supp(a)\setminus S_{\eta(\nu,1)}=\{\nu\}\cup\{\nu\rest_L,n,\nu\rest_R,n:n<\omega\}.$$
\end{enumerate}
\end{remark}

\begin{definition}
\label{g2110}
 Let $\leq_{\AP}$ be the following partial order on $\AP=\AP_{\bold k, \lambda}.$ For any $\bold c,\bold d\in {\AP} $ we say $\bold c \leq_{\AP} \bold d$ when the following holds:
\begin{enumerate}
\item[(a)] $\Lambda_{\bold c} \subseteq \Lambda_{\bold d}$,

\item[(b)] $\bold m_{\bold c}\leq _{\bold M}\bold m_{\bold d} $, hence $G_{\bold c} \subseteq G_{\bold d},$ etc.

\item[(c)] $a^{\bold c}_{\eta, \ell}=a^{\bold d}_{\eta, \ell}$ for $\eta \in \Lambda_{\bold c}, \ell < \omega$,

\item[(d)] $x^{\bold c}_\eta=x^{\bold d}_\eta$ for $\eta \in \Lambda_{\bold c, < \omega}$,
 \item[(e)] $y^{\bold c}_{\eta, \ell}=y^{\bold d}_{\eta, \ell}$
for $\eta \in \Lambda_{\bold c, \omega}$ and $\ell < \omega$.
\iffalse
\item[(f)] if $\eta \in \Lambda_{\bold d}\setminus \Lambda_{\bold c}$,
then $\gamma(\eta)+1 \geq \gamma_{\bold c}$.\fi
\end{enumerate}
\end{definition}

\begin{lemma}
\label{g4} The following two assertions are valid:
\begin{enumerate}
\item $\leq_{\AP}$ is indeed a partial order,

\item If $\bar{\bold c}=\langle  \bold c_\alpha: \alpha < \delta    \rangle$ is $\leq_{\AP}$-increasing, then there exists  $\bold c_\delta=\bigcup\limits_{\alpha < \delta} \bold c_\alpha$
   in $\AP$ which is the $\leq_{\AP}$-least upper bound of the sequence $\bar{\bold c}$.
\end{enumerate}
\end{lemma}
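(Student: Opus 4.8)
The plan is to dispatch the two parts separately; both are routine verifications, and I will flag the single step that carries any content.

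For part (1), I would first record that $\leq_{\bold M}$ is a partial order: its defining clauses are $\bold k_{\bold m}=\bold k_{\bold n}$, $G_{\bold m}\subseteq G_{\bold n}$ and $F^{\bold m}_r\subseteq F^{\bold n}_r$, all manifestly reflexive and transitive, and antisymmetric because by Definition \ref{f11}(f),(g) the maps $F_{r,s}$ and $F_{(r,s)}$ are determined by $G$ together with the family $(F_r)_{r\in R}$. Granting this, reflexivity and transitivity of $\leq_{\AP}$ are immediate from clauses (a)--(e) of Definition \ref{g2110}, each being either an inclusion or an equality. For antisymmetry, from $\bold c\leq_{\AP}\bold d\leq_{\AP}\bold c$ I get $\Lambda_{\bold c}=\Lambda_{\bold d}$, $\bold m_{\bold c}=\bold m_{\bold d}$ (hence $G_{\bold c}=G_{\bold d}$), and $a^{\bold c}_{\eta,\ell}=a^{\bold d}_{\eta,\ell}$, $x^{\bold c}_\eta=x^{\bold d}_\eta$, $y^{\bold c}_{\eta,\ell}=y^{\bold d}_{\eta,\ell}$ by (c)--(e); since $X_{\bold c}$ and $\Gamma_{\bold c}$ are built from these data together with the fixed $R,K$, it follows that $\bold c=\bold d$.

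For part (2), I would define $\bold c_\delta$ by componentwise unions: $\Lambda_{\bold c_\delta}:=\bigcup_{\alpha<\delta}\Lambda_{\bold c_\alpha}$, $G_{\bold c_\delta}:=\bigcup_{\alpha<\delta}G_{\bold c_\alpha}$, $F^{\bold c_\delta}_r:=\bigcup_{\alpha<\delta}F^{\bold c_\alpha}_r$ (and similarly $F_{r,s}$, $F_{(r,s)}$), $\Gamma_{\bold c_\delta}:=\bigcup_{\alpha<\delta}\Gamma_{\bold c_\alpha}$, $X_{\bold c_\delta}:=\bigcup_{\alpha<\delta}X_{\bold c_\alpha}$, and $a^{\bold c_\delta}_{\eta,n}:=a^{\bold c_\alpha}_{\eta,n}$ for any $\alpha$ with $\eta\in\Lambda_{\bold c_\alpha}$ (well defined by clause (c)). Then I would check $\bold c_\delta\in\AP_{\bold k,\lambda}$: downward closure of $\Lambda_{\bold c_\delta}$ is clear since a union of downward closed subsets of $\Lambda[\lambda]$ is downward closed; for $\bold m_{\bold c_\delta}\in\bold M_{\bold k}$, note $G_{\bold c_\delta}$ is the union of a $\subseteq$-increasing chain of abelian groups, with $\tor(G_{\bold c_\delta})=\bigcup_\alpha\tor(G_{\bold c_\alpha})=K$, the union maps are endomorphisms extending $\phi_r,\Psi_{r,s},\Psi_{(r,s)}$ with the prescribed ranges, and the identities (f),(g) of Definition \ref{f11} hold on $G_{\bold c_\delta}$ since they hold on each $G_{\bold c_\alpha}$; moreover $a^{\bold c_\delta}_{\eta,n}\in G_{\bold c_\alpha,\eta(0,1)}\subseteq G_{\bold c_\delta,\eta(0,1)}$, as required.

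The only step with substance is verifying the presentation clause Definition \ref{g2}(f) for $\bold c_\delta$, and this is where I expect the main (mild) obstacle. I would argue that since $K\subseteq G_{\bold c_\alpha}$ for every $\alpha$, the inclusions $G_{\bold c_\alpha}\subseteq G_{\bold c_\beta}$ induce $R$-module embeddings $M_{\bold c_\alpha}=G_{\bold c_\alpha}/K\hookrightarrow G_{\bold c_\beta}/K=M_{\bold c_\beta}$ which, by clauses (c)--(e) of $\leq_{\AP}$, send generators to generators and $\Gamma_{\bold c_\alpha}$ into $\Gamma_{\bold c_\beta}$; hence $M_{\bold c_\delta}:=G_{\bold c_\delta}/K=\bigcup_\alpha M_{\bold c_\alpha}$ is the directed colimit of the $M_{\bold c_\alpha}$, and since presentations by generators and relations commute with directed colimits, $M_{\bold c_\delta}$ is exactly the $R$-module on $X_{\bold c_\delta}\cup K$ freely except the relations $\Gamma_{\bold c_\delta}$. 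The delicate point is precisely that no relation holds in the limit beyond those forced by the $\Gamma_{\bold c_\alpha}$, which reduces to the injectivity of the transition maps just noted. Finally $\bold c_\alpha\leq_{\AP}\bold c_\delta$ for all $\alpha<\delta$ by construction, so $\bold c_\delta$ is an upper bound; and if $\bold d$ is any upper bound of $\bar{\bold c}$ then $\Lambda_{\bold d}\supseteq\Lambda_{\bold c_\alpha}$, $G_{\bold d}\supseteq G_{\bold c_\alpha}$, $F^{\bold d}_r\supseteq F^{\bold c_\alpha}_r$ and the distinguished elements of $\bold d$ extend those of $\bold c_\alpha$, for every $\alpha$, whence $\bold c_\delta\leq_{\AP}\bold d$. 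Thus $\bold c_\delta$ is the $\leq_{\AP}$-least upper bound.
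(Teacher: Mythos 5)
Your proposal is correct and takes essentially the same approach as the paper: the paper also defines $\bold c_\delta$ by componentwise unions and simply asserts that "it is easily seen that $\bold c_\delta$ is as required," whereas you carry out the verification (including the useful observation that $F_{r,s}$ and $F_{(r,s)}$ are determined by the $F_r$'s via Definition~\ref{f11}(f),(g), which justifies antisymmetry and the well-definedness of the unions, and the presentation-commutes-with-directed-colimits step for clause~(f) of Definition~\ref{g2}).
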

\begin{proof}
Clause (1) is clear, for clause (2), let $$\bold c_\delta:=\big(\Lambda, \bold m, \Gamma, X, \langle a_{\eta, n}: \eta \in \Lambda, n<\omega \rangle\big),$$
where:
\begin{itemize}
\item $\Lambda:=\bigcup\limits_{\alpha < \delta}\Lambda_{\bold c_\alpha}$,

\item $\bold m=:(G, F_r, F_{r, s}, F_{(r, s)})$, where
\begin{itemize}
\item $G:=\bigcup\limits_{\alpha < \delta}G_{\bold c_\alpha}$,
\item $F_r := \bigcup\limits_{\alpha < \delta}F_r^{\bold c_\alpha}$, 
\item $F_{r, s} := \bigcup\limits_{\alpha < \delta}F_{r, s}^{\bold c_\alpha}$,
 \item $F_{(r, s)} := \bigcup\limits_{\alpha < \delta}F_{(r, s)}^{\bold c_\alpha}$.
\end{itemize}
\item $\Gamma := \bigcup\limits_{\alpha < \delta}\Gamma_{\bold c_\alpha}$,

\item $X:=\bigcup\limits_{\alpha < \delta}X_{\bold c_\alpha}$,

\item for $\eta \in \Lambda_\omega$ and $n<\omega$, we have $a_{\eta, n}= a^{\bold c_\alpha}_{\eta, n}$, for some and hence any
$\alpha < \delta$ such that $\eta \in \Lambda_{\bold c_\alpha, \omega}$.
\end{itemize}
It is easily seen that $\bold c_\delta$ is as required.
\end{proof}

An $R$-module $M$ is called $\aleph_{1}$-free, if every countably generated submodule of  $M$  is contained in a free submodule of  $M$. Similarly, $\mu$-free can be defined.
For more details, see \cite[IV. Definition 1.1]{EM02}.
\begin{lemma}
\label{g3} Let $\bold c \in \AP$.  The following claims hold:
\begin{enumerate}
\item $\tor(G_{\bold c})=K$. \item
The group $$G_{\bold c}/\big\langle K\cup \{rx_{\nu}:r\in R, \nu\in\Lambda_{\bold c,<\omega}\}\big\rangle$$
is divisible and torsion-free.
Also, the parallel result holds for the $R$-module: $$M_{\bold c}/\big\langle K\cup \{rx_{\nu}:r\in R, \nu\in\Lambda_{\bold c,<\omega}\}\big\rangle.$$

\iffalse

 $\bold m_{\bold c}$ is free with basis $ X_{\bold c}$ and $\tor(G_{\bold c})=K$,
\fi
\item The following three properties are satisfied:
\begin{enumerate}
\item $\Lambda_{\bold c} $ is $\aleph_{1}$-free.

\item If $\Lambda_{\bold c}$ is $\mu$-free, then $M_{\bold c}$ is $\mu$-free.

\item  If $\Lambda_{\bold c}$ is $\mu$-free
and $(R,+)$ is $\mu$-free, then $G_{\bold c}/K$ is a $\mu$-free abelian group.
\end{enumerate}
\item If   $\gamma \leq \gamma_{\bold c}$
and $\Lambda \subseteq \Lambda_{\bold c}$, then there exists a unique $\bold d \in \AP$
 such that

\begin{enumerate}
	%\item[(a)]  $\gamma_{\bold d}=\gamma,$
	
	\item[(a)] $\Lambda_{\bold d}=\Lambda \cap \Lambda[\gamma]$,
	
	\item[(b)] $G_{\bold d} \subseteq G_{\bold c}$.
\end{enumerate}
 Such a unique object is denoted  by $\bold d:=\bold c \restriction (\gamma, \Lambda).$

\item Assume $\eta \in \Lambda_\omega[\lambda]\setminus \Lambda_{\bold c}$,
  $\ell < \omega$ and $ a_\ell \in G_{\bold c}$ are such that
$a_\ell \in G_{\bold c, \eta(0, 1)} $ for each $\ell$.
%\[
%a_\ell \in \big\langle  \{x_\rho, y_{\nu, n}: \rho \in \Lambda_{<\omega}[\eta(0, 1)], \nu \in \Lambda_\omega[\eta(0, 1)], n<\omega \}                       %\big\rangle_{G_{\bold c}}.
%\]
Then there is $\bold d \in \AP$ equipped with the following three properties:
\begin{enumerate}
\item[(a)] $\Lambda_{\bold d}=\Lambda_{\bold c} \cup \{\eta\}\cup\{\eta\rest_Ln,\eta\rest_Rn: n<\omega\},$

\item[(b)] $\bold c \leq_{\AP} \bold d$ and so $G_{\bold c} \subseteq G_{\bold d}$,

\item[(c)] $a^{\bold d}_{\eta, \ell}=a_\ell$ for $\ell < \omega.$
\end{enumerate}
\item The group $G_{\bold c}$ is of size $\lambda$.
\end{enumerate}
\end{lemma}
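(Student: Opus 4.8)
The plan is to prove the six clauses in an order that respects their dependencies: first the purely combinatorial clause (3a), then the freeness transfer (3b), then the structural clauses (1), (2), (3c), leaving (4)--(6) as bookkeeping.

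\emph{Clauses (3a), (3b).} For (3a), take a countable $\Lambda'\subseteq\Lambda_{\bold c}$, enumerate $\Lambda'\cap\Lambda_\omega[\lambda]=\{\eta_i:i<\omega\}$, and for $i\ne j$ let $m_{i,j}$ be the least coordinate at which $\eta_i$ and $\eta_j$ disagree; then for $n\ge m_{i,j}$ the length-$(n{+}1)$ sequences $\eta_i\rest_L n,\eta_i\rest_R n$ differ from $\eta_j\rest_L n,\eta_j\rest_R n$, and sequences of distinct lengths differ automatically. Setting $h(\eta_i):=\sup\{m_{i,j}+1:j<i\}$ (a finite supremum) by recursion on $i$ makes the tails $\{\eta_i\rest_L n,\eta_i\rest_R n:n\ge h(\eta_i)\}$ pairwise disjoint, so $\Lambda'$ is free. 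For (3b), given $\Lambda'\subseteq\Lambda_{\bold c}$ of size $<\mu$, first enlarge it to be downward closed and closed under $\eta\mapsto\supp_{\bold c}(a^{\bold c}_{\eta,n})$; this keeps $|\Lambda'|<\mu$ since each support is countable, and it guarantees that the $R$-submodule $M'$ of $M_{\bold c}$ generated by the $x_\nu$ and $y_{\eta,n}$ with indices in $\Lambda'$ — which contains any prescribed $<\mu$-generated submodule — is presented by exactly those generators and the relations $y_{\eta,n}=a^{\bold c}_{\eta,n}+(n!)y_{\eta,n+1}+(x_{\eta\rest_L n}-x_{\eta\rest_R n})$ for $\eta\in\Lambda'\cap\Lambda_\omega[\lambda]$. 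Fix an $h$ witnessing the freeness of $\Lambda'$. For each such $\eta$, use this relation with $n\ge h(\eta)$ to eliminate $x_{\eta\rest_L n}$ — which by disjointness of the tails appears in no other relation — and with $n<h(\eta)$ to eliminate $y_{\eta,n}$ in favour of $y_{\eta,h(\eta)}$; performing the eliminations in increasing order of $\gamma(\eta)=\eta(0,1)$ prevents circularity, since $a^{\bold c}_{\eta,n}$ involves only strictly earlier generators. A Tietze count — one relation consumed per generator eliminated — shows $M'$ is free on the survivors, so $M_{\bold c}$ is $\mu$-free.

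\emph{Clauses (1), (2), (3c).} Taking $\mu=\aleph_1$ and combining with (3a), $M_{\bold c}$ is an $\aleph_1$-free $R$-module; since $(R,+)$ is cotorsion-free, hence torsion-free, every free $R$-module is torsion-free as an abelian group, so $M_{\bold c}=G_{\bold c}/K$ is torsion-free, whence $\tor(G_{\bold c})\subseteq K$, and as $K$ is torsion this gives (1). For (2), filter $G_{\bold c}$ continuously by the subgroups $G_{\bold c,\gamma}$ ($\gamma\le\gamma_{\bold c}$), pass to $Q:=G_{\bold c}/\langle K\cup\{rx_\nu:\nu\in\Lambda_{\bold c,<\omega}\}\rangle$ with its induced filtration $Q_\gamma$, and prove by induction on $\gamma$ that $Q_\gamma$ is divisible and torsion-free. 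At the step $\gamma\to\gamma+1$ one adjoins the images $\bar y_{\eta,n}$ of the branches with $\gamma(\eta)=\gamma$; since $Q_\gamma$ is divisible it is a direct summand, modulo it the $a^{\bold c}_{\eta,n}$-term lies in $Q_\gamma$ and the $x$-term is killed, so the induced relations read $\dot y_{\eta,n}=(n!)\dot y_{\eta,n+1}$ with no coupling between distinct branches, and the new summand is $\bigoplus_\eta\varinjlim\bigl(R\xrightarrow{0!}R\xrightarrow{1!}R\to\cdots\bigr)\cong\bigoplus_\eta(R\otimes_{\mathbb{Z}}\mathbb{Q})$, which is divisible and torsion-free; limits are trivial and the $R$-module version is identical. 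Clause (3c) follows from (3b): a $<\mu$-generated abelian subgroup of $M_{\bold c}$ generates a $<\mu$-generated $R$-submodule, which sits inside a free $R$-submodule $R^{(I)}$; as $(R,+)$ is $\mu$-free and a direct sum of $\mu$-free abelian groups is $\mu$-free, $R^{(I)}$ is $\mu$-free as an abelian group, so the original subgroup is free.

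\emph{Clauses (4), (5), (6).} For (4), put $\Lambda_{\bold d}:=\Lambda\cap\Lambda[\gamma]$ (a legitimate index set once $\Lambda$ is downward and support closed, as intended), let $G_{\bold d}$ be the subgroup of $G_{\bold c}$ generated by $K$ and the generators it indexes, restrict $F^{\bold c}_r,F^{\bold c}_{r,s},F^{\bold c}_{(r,s)}$, and set $a^{\bold d}_{\eta,n}:=a^{\bold c}_{\eta,n}$ (which lies in $G_{\bold d,\gamma(\eta)}$ by support-closure and $\gamma(\eta)<\gamma$); membership in $\AP$ holds because the presenting relations of $G_{\bold c}$ restrict, and uniqueness because the generators and the $a$'s of any competitor are forced by $\Lambda_{\bold d}$ and the ambient $G_{\bold c}$. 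For (5), adjoin new generators $y_{\eta,n}$ and those $x_{\eta\rest_L n},x_{\eta\rest_R n}$ not already present ($n<\omega$), set $a^{\bold d}_{\eta,\ell}:=a_\ell$, and impose the prescribed $\Gamma$-relations; since each relation expresses a new generator in terms of lower-index new generators plus old elements, the amalgamation is free, so $G_{\bold c}\subseteq G_{\bold d}$, $\Lambda_{\bold d}$ is downward closed, $M_{\bold d}/M_{\bold c}$ is torsion-free (whence $\tor(G_{\bold d})=K$), and the operators extend compatibly via the $R$-module structure on $M_{\bold d}$. Clause (6) is immediate: $|G_{\bold c}|\le|K|+|R|\cdot|\Lambda[\lambda]|\cdot\aleph_0=\lambda$, and equality holds by Lemma~\ref{siz} together with $\lambda=\lambda^{\aleph_0}$. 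The main obstacle is (3b): converting the elimination into an honest free basis — not merely a generating set — forces one to track how the private tail-generators handed out by $h$ interact with the coefficients $a^{\bold c}_{\eta,n}$, which live in earlier pieces and may themselves involve lower-rank branch-generators; this is precisely where the disjointness from (3a) and the support-closure of $\Lambda'$ are indispensable, and it is this Shelah-style argument — making a $\Lambda$-indexed module as free as its index set — that underlies the $\aleph_1$-free input to (1), the $\mu$-free statements in (3), and, through $M_{\bold d}/M_{\bold c}$, the torsion computations in (2) and (5).
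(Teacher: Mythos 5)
Your plan is workable and most of the clauses are handled correctly, but several of your arguments take a genuinely different route from the paper's, and there are a couple of points that need repair.

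\textbf{Clauses (3a)/(3b).} The paper's $h$ is built from \emph{two} auxiliary functions: $h_1$ (the least coordinate of pairwise disagreement, as in your $m_{i,j}$) \emph{and} $h_2(\eta):=\min\{k:\eta\rest_Lk\neq\eta\rest_Rk\}$, with $h:=\max\{h_1,h_2\}+1$. Your construction only uses the $h_1$-type data. As far as Definition~\ref{f111} goes this is fine — pairwise disjointness is all that is required — but in (3b) you then write ``use this relation with $n\ge h(\eta)$ to eliminate $x_{\eta\rest_Ln}$,'' which is only possible if $\eta\rest_Ln\neq\eta\rest_Rn$, i.e.\ $\eta(n,1)\neq\eta(n,2)$; otherwise the $x$-term vanishes and there is nothing to eliminate. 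This is precisely what the paper's $h_2$ guards against. (Under Definition~\ref{f5}, clause~(3)(d) combined with the claim in (5) that $\eta\rest_Lk\in\Lambda_k[\alpha]$ forces $\eta(\ell,1)\neq\eta(\ell,2)$ for $\ell\ge 1$, so the fix is merely to arrange $h\ge 1$; but your $h(\eta_0)=\sup\emptyset=0$ does not do that, and you should say so.) Separately, the parenthetical in (3b) — ``which by disjointness of the tails appears in no other relation'' — is false: $x_{\eta\rest_Ln}$ can occur inside $a^{\bold c}_{\eta',m}$ for a branch $\eta'$ with $\gamma(\eta')>\eta(n,1)$; tail-disjointness only controls the $x$-\emph{difference} terms of other relations, not the coefficient terms. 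Your next sentence (elimination in increasing order of $\gamma(\eta)$, exploiting $a^{\bold c}_{\eta,n}\in G_{\bold c,\eta(0,1)}$) is the correct argument, so just delete that aside. Modulo these two points, your Tietze-style elimination for (3b) is a different but legitimate route: the paper instead passes through the free tail-module $M_\Lambda$ and a split exact sequence $0\to M_\Lambda\to M_{\Lambda\cup\Lambda_*}\to M_{\Lambda\cup\Lambda_*}/M_\Lambda\to 0$. Both yield the same conclusion; yours makes the acyclicity of the dependency relation more explicit, the paper's avoids tracking the rewriting.

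\textbf{Clause (3c).} You appeal to the statement that a direct sum of $\mu$-free abelian groups is $\mu$-free. This is not completely elementary (and in particular is delicate for singular $\mu$). The paper instead uses the $T*F$ construction — a free abelian group $T\subseteq R$ containing the needed coefficients and then $T*F:=\big\langle\sum t_if_i:t_i\in T,\,f_i\in F\big\rangle$, which is free on the nose and contains $H$ — thereby avoiding any closure property of $\mu$-freeness. Your route is heavier; the paper's is the sharper argument here.

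\textbf{Clauses (1), (2), (4)--(6).} The paper dismisses (1)--(2) as ``easy,'' so there is no single intended argument to compare against. Your derivation of (1) from (3)+$\aleph_1$-freeness of $M_{\bold c}$ and torsion-freeness of $(R,+)$ is valid and does supply the missing verification that the generators-and-relations presentation actually yields $\tor(G_{\bold c})=K$; your filtration argument for (2), with successor quotients $\bigoplus_\eta(R\otimes_{\mathbb Z}\mathbb Q)$, is also correct given that $M_{\bold c}$ is presented \emph{freely} modulo $\Gamma_{\bold c}$ (Definition~\ref{g2}(f)), so the induced relations on $Q_{\gamma+1}/Q_\gamma$ are exactly $\dot y_{\eta,n}=(n!)\dot y_{\eta,n+1}$. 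Note this reorders the lemma (you prove (3) before (1)--(2)); that is legitimate but worth flagging. Clauses (4)--(6) match the paper's constructions.
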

\begin{proof}
(1)-(2):
 These are easy.

  (3): $(a):$ Let $\Lambda\subseteq\Lambda_{\bold c, \omega}$ be countable, and let $\{\eta_n:n<\omega\}$
be an enumeration of it.
 Define the maps $h_1$ and $h_2$ from $\Lambda$ to $\omega$ as follows:
 \[
 h_1(\eta_n):= \min\bigg\{k:\forall j<  n, \emph{ } \forall \ell,\emph{ }r\in\{L,R\}  \emph { we have }{\eta_j \upharpoonright_\ell k} \neq{\eta_n \upharpoonright_r k}\bigg\},
 \]
 and
 \[
 h_2(\eta_n):= \min\bigg\{k: \eta_n {\upharpoonright_L} k \neq{\eta_n \upharpoonright_R k}\bigg\}.
 \]
 Finally, we set
 \[
 h(\eta_m):= \max\{h_1(\eta_n), h_2(\eta_n)\}+1.
 \]
Having Definition \ref{f111} in mind, we are going to show $h$ is as required.
Let $j < i<\omega$ and let
\begin{itemize}
	\item   $h(\eta_{j})\leq n_j<\omega$
	
	\item  $h(\eta_{i})\leq n_i<\omega$.

\end{itemize}
We will show that $\eta_j \upharpoonright_\ell {n_i}  \neq{\eta_i \upharpoonright_r {n_j}}$,  where $\ell,r\in\{L,R\}$. To see this, we note that there is nothing to prove if ${n_i}\neq{n_j}$. So, we may and do assume that $n:={n_i}={n_j}$. Thus, $h(\eta_{j}),h(\eta_{i})\leq n$. W look at $m:= h_1(\eta_{i})$. According to the definition of $h_1,$
 we know that ${\eta_j \upharpoonright_\ell m} \neq{\eta_i \upharpoonright_r m}$. As  $m\leq n$ one has $${\eta_i \upharpoonright_\ell n} \neq{\eta_j \upharpoonright_r n}.$$
 Also given any $i<\omega,$ if $n \geq h(\eta_i)$, then by the definition of $h_2$ and as $n \geq h_2(\eta_i),$ we have
 $$ \eta_i {\upharpoonright_L} n \neq{\eta_i \upharpoonright_R n}.$$
It follows that the sequence $$\big\langle\{\eta\rest_Ln,\eta\rest_Rn:h(\eta) \leq n<\omega\}: \eta\in\Lambda\big\rangle$$
	is a sequence of pairwise disjoint sets.
By definition, $\Lambda_{\bold c} $ is $\aleph_{1}$-free.

$(b):$ For simplicity, we present the proof when $\mu:=\aleph_{1}$. Let $X\subseteq M_{\bold c}$ be countable. We are going to show that it is included into a countably generated free $R$-submodule of $M_{\bold c}$.
As $X$ countable,
\begin{itemize}
	\item   $\exists \Lambda \subseteq \Lambda_{\bold c,   \omega}$ countable,
	
	\item   $\exists \Lambda_\ast \subseteq \Lambda_{\bold c, <  \omega}$ countable
	
\end{itemize}
such that $$X\subseteq \sum\{Ry_{\eta,n}:\eta\in\Lambda \emph{ and }n<\omega \}+ \sum\{Rx_\rho:\rho\in\Lambda_\ast\}.$$
As $\Lambda_{\bold c}$ is $\aleph_1$-free
and $\Lambda$ is countable,  there is a function $h:\Lambda\to \omega$ such that the sequence $$\big\langle\{\eta\rest_Ln,\eta\rest_Rn:h(\eta) \leq n<\omega\}: \eta\in\Lambda\big\rangle$$
 is a sequence of pairwise disjoint sets.
 Now, we note the following two properties:
\begin{enumerate}
	\item[$(b)_1$:] The $R$-module $$M_\Lambda:=\big\langle x_{\eta\rest_Ln},x_{\eta\rest_Rn}, y_{\eta,n}:\eta\in\Lambda:h(\eta) \leq n<\omega\big\rangle $$
	is free;
	\item[$(b)_2$:] Set $M_{\Lambda\cup\Lambda_\ast}:=\big\langle M_{\Lambda} \cup \{ x_\nu:\nu\in\Lambda_\ast\}\big\rangle $. Then the $R$-module  $M_{\Lambda\cup\Lambda_\ast}/M_{\Lambda_\ast} $
	is free.
\end{enumerate}
In view of $(b)_2$ the short exact sequence  $$ 0\longrightarrow {M_{\Lambda}} \longrightarrow {M_{\Lambda\cup\Lambda_\ast}}\longrightarrow  M_{\Lambda\cup\Lambda_\ast}/M_{\Lambda} \longrightarrow 0,$$splits. Combining this along with  $(b)_1$,
	we observe that $ M_{\Lambda\cup\Lambda_\ast}	$ is free. Since it includes $X$, we get the desired claim.

$(c):$
Now, suppose $(R,+)$ is $\mu$-free.
Let $H$ be a subset of  $(G_{\bold c}/K,+)$ of size $<$ $\mu$. There is a free $R$-module $F$ such that $H\subset F$. There is a subset $S$ of $R$ of size $<$ $\mu$ such that any element of $H$ can be written from a linear combination from $F$ with coefficients taken from $S$. As $(R,+)$ is $\mu$-free, there is a free subgroup $(T,+)$
of it containing $S$. In other words, $$H\subseteq T\ast F:=\bigg\langle\sum_{}\{t_if_i:t_i\in T,f_i\in F\}\bigg\rangle.$$ Since  $(T\ast F,+)$  is free as an abelian group, we get the desired claim.

(4): Let $\bold d$ be such that:
\begin{itemize}
\item[4.1)] $\Lambda_{\bold d}=\Lambda \cap \Lambda[\gamma],$

\item[4.2)]  $X_{\bold d}$ is defined using $\Lambda_{\bold d}$ naturally,

\item[4.3)] for $\nu \in \Lambda_{\bold d, \omega}$ and $n<\omega$,  $a^{\bold d}_{\nu, n}=a^{\bold c}_{\nu, n}$,

\item[4.4)] $\Gamma_{\bold d}$ is defined naturally as the set of equations in (1), but only for
$\eta \in \Lambda_{\bold d, \omega}$.
\end{itemize}
This is straightforward to check that  $\bold d$ is as required.

(5): Let $\bold d$ be defined in the natural way, so that:
\begin{itemize}
\item[5.1)]  $\Lambda_{\bold d}=\Lambda_{\bold c} \cup \{\eta\}\cup\{\eta\rest_Ln,\eta\rest_Rn: n<\omega\},$

\item [5.2)] $X_{\bold d}=X_{\bold c} \cup \{ x_{\eta \upharpoonright_L n}, x_{\eta \upharpoonright_R n}: n<\omega    \}
\cup \{ y_{\eta, n}: n<\omega   \}$,

\item[5.3)] for $\nu \in \Lambda_{\bold c, \omega}$ and $n<\omega$,  $a^{\bold d}_{\nu, n}=a^{\bold c}_{\nu, n}$,

\item [5.4)]$a^{\bold d}_{\eta, n}=a_n$ for $n < \omega,$

\item[5.5)] in addition to the equations displayed in $\Gamma_{\bold c}$, $\Gamma_{\bold d}$ contains   equations of the following forms
\[
y_{\eta, n}=a_n+(n!)y_{\eta, n+1}+(x_{\eta\rest_Ln}-x_{\eta\rest_Rn}),
\]
where $n<\omega.$
\end{itemize}
The assertion is now obvious by  the above definition of $\bold d$.

(6). In view of Lemma \ref{siz}, the group $G_{\bold c}$ is of size $\lambda^{\aleph_0}$. Recall from Hypothesis
\ref{g1} that $\lambda^{\aleph_0} =\lambda$. So, the desired claim is clear.
\end{proof}\iffalse
\begin{lemma}
\label{g312}
The abelian group $G_{\bold c}/K$ is reduced.
\end{lemma}
\begin{proof}
Recall that 	
$M_{\bold c}$ is $\aleph_1$-free as an $R$-module and that $(R,+)$ is $\aleph_1$-free. According to Lemma \ref{g3}(3), $G_{\bold c}/K$ is  $\aleph_1$-free.		
Suppose on the way of contradiction that $G_{\bold c}/K$ is not reduced. Then
 it has divisible direct summand $I$.
 Since
 $G_{\bold c}/K$ is torsion-free, $I$ is both injective (see Discussion \ref{red}) and torsion-free. This yields that $\mathbb{Q}$ is a directed  summand of $G_{\bold c}/K$. It follows from Definition \ref{k-free} that $(\mathbb{Q}, +)$ is  free. This is the contradiction that we searched for it.
\end{proof}\fi

\begin{lemma}
	\label{g31} Let $\bold c \in \AP$. Then
	the abelian group $G_{\bold c}/K$ is reduced.
\end{lemma}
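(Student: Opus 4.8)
The plan is to show that any divisible subgroup of $G_{\bold c}/K$ must be trivial, exploiting the $\aleph_1$-freeness already established in Lemma \ref{g3}. The starting point is the observation that $G_{\bold c}/K$ is torsion-free: indeed, by Lemma \ref{g3}(1) we have $\tor(G_{\bold c})=K$, so every non-zero element of $G_{\bold c}/K$ has infinite order. A torsion-free divisible group is a $\mathbb{Q}$-vector space, so if $G_{\bold c}/K$ had a non-zero divisible subgroup $D$, then $D$ would contain a copy of $\mathbb{Q}$. Moreover, since $D$ is divisible it is injective (Fact \ref{ided}), hence a direct summand of $G_{\bold c}/K$, so $\mathbb{Q}$ embeds as a direct summand, and in particular $\mathbb{Q}$ is (isomorphic to) a subgroup of $G_{\bold c}/K$.

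Now I would invoke $\aleph_1$-freeness. By Lemma \ref{g3}(3), $\Lambda_{\bold c}$ is $\aleph_1$-free, hence (since $(R,+)$ is cotorsion-free, in particular $\aleph_1$-free, using Hypothesis \ref{g1} together with Fact \ref{redco}) the abelian group $G_{\bold c}/K$ is $\aleph_1$-free. Thus every countable subgroup of $G_{\bold c}/K$ is free. But $\mathbb{Q}$ is a countable subgroup which is not free (a free abelian group of finite rank is not divisible, and $\mathbb{Q}$ is not finitely generated; more directly, $\mathbb{Q}$ has no basis since any two elements are linearly dependent over $\mathbb{Z}$ yet $\mathbb{Q}\neq\mathbb{Z}$). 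This contradiction shows no non-zero divisible subgroup can exist, i.e. $G_{\bold c}/K$ is reduced.

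One small technical point I would be careful about: the $\aleph_1$-freeness coming out of Lemma \ref{g3}(3)(c) is stated for the abelian group $G_{\bold c}/K$ under the hypothesis that $(R,+)$ is $\mu$-free with $\mu=\aleph_1$; I should note explicitly that $(R,+)$ being cotorsion-free (Hypothesis \ref{g1}) implies it is $\aleph_1$-free, via Fact \ref{redco} which says a cotorsion-free group is in particular reduced torsion-free with no copy of $\widehat{\mathbb{Z}}_p$ — actually the cleanest route is that cotorsion-free groups are $\aleph_1$-free is essentially built into the hierarchy, but to be safe one can just cite that $\aleph_1$-freeness of $(R,+)$ is part of the standing assumptions or follows from cotorsion-freeness. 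I do not expect any genuine obstacle here; the only thing to get right is assembling the three ingredients — torsion-freeness of $G_{\bold c}/K$, $\aleph_1$-freeness of $G_{\bold c}/K$, and the non-freeness of $\mathbb{Q}$ — in the correct logical order. The argument is essentially the one sketched in the commented-out Lemma \texttt{g312} in the source, which I would reproduce in cleaned-up form.

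\begin{proof}
By Lemma \ref{g3}(1), $\tor(G_{\bold c})=K$, so the abelian group $G_{\bold c}/K$ is torsion-free. By Hypothesis \ref{g1}, $(R,+)$ is cotorsion-free, hence $\aleph_1$-free. By Lemma \ref{g3}(3)(a), $\Lambda_{\bold c}$ is $\aleph_1$-free, so by Lemma \ref{g3}(3)(c), $G_{\bold c}/K$ is an $\aleph_1$-free abelian group; that is, every countable subgroup of $G_{\bold c}/K$ is free.

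Assume towards a contradiction that $G_{\bold c}/K$ is not reduced, so it contains a non-zero divisible subgroup $D$. Since $G_{\bold c}/K$ is torsion-free, $D$ is a non-zero torsion-free divisible group, hence contains a subgroup isomorphic to $\mathbb{Q}$. Thus $\mathbb{Q}$ is isomorphic to a countable subgroup of $G_{\bold c}/K$, and therefore must be free by $\aleph_1$-freeness. But $\mathbb{Q}$ is divisible and non-zero, while every non-zero free abelian group has a non-trivial homomorphism onto $\mathbb{Z}$ and hence is not divisible. This contradiction shows that $G_{\bold c}/K$ contains no non-zero divisible subgroup, i.e. it is reduced.
\end{proof}
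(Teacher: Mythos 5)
Your argument reproduces the commented‐out Lemma \texttt{g312} from the source, which is exactly the version the authors abandoned because it assumes more than the standing hypotheses give. The crucial step is your claim that ``$(R,+)$ is cotorsion-free, hence $\aleph_1$-free,'' which you need in order to invoke Lemma~\ref{g3}(3)(c). This implication is false. The correct implication runs the other way: $\aleph_1$-free implies cotorsion-free (this is the Fact the paper cites from \cite[Corollary 2.10(ii)]{EM02}), and the converse fails. For a concrete counterexample take $R=\mathbb{Z}_{(p)}$, the localization of $\mathbb{Z}$ at a prime $p$: it is reduced, torsion-free, countable (so contains no copy of $\widehat{\mathbb{Z}}_p$), hence cotorsion-free by Fact~\ref{redco}; but it is a countable torsion-free group of rank one that is $q$-divisible for every prime $q\neq p$, so it is not isomorphic to $\mathbb{Z}$, not free, and therefore not $\aleph_1$-free. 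Such an $R$ is a perfectly admissible ring for a rigidity context (it is $S^\bot_{\bold k}$-divisible with $S_{\bold k}=\{p\}$), so Hypothesis~\ref{g1} does not rule it out, and your chain of citations breaks down exactly there. The paper itself flags this issue in the introduction (``the argument becomes easier if we work with $R:=\mathbb{Z}$, or even $(R,+)$ is $\aleph_1$-free''), signalling that the general case needs a different treatment.

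What the paper's actual proof does is avoid $\aleph_1$-freeness of $G_{\bold c}/K$ as an abelian group altogether. It first reduces, as you do, to showing that $\mathbb{Q}$ cannot be a direct summand of $G_{\bold c}/K$. But then it uses only Lemma~\ref{g3}(3)(a)--(b) to get that $M_{\bold c}$ is $\aleph_1$-free as an $R$-module (no assumption on $(R,+)$ beyond cotorsion-freeness is needed for that), and splits into cases. In the trivial case $R=\mathbb{Z}$, $M_{\bold c}=G_{\bold c}/K$ and $R$-module freeness coincides with abelian-group freeness, so your argument goes through. In the nontrivial case, $\mathbb{Q}$ is not a free $R$-module in general and one cannot conclude a contradiction from $\aleph_1$-freeness over $R$ alone; instead the paper places $\{1/p^n : n>0\}\subseteq\mathbb{Q}$ inside a free $R$-module $F=\bigoplus R$ and derives $0=\bigcap_{\ell>0}\ell R$-type vanishing from $(R,+)$ being reduced (a consequence of cotorsion-freeness via Fact~\ref{redco}), which forces the set $\{r/p^n\}$ to vanish, a contradiction. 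So the missing idea in your proof is precisely the bridge from $\aleph_1$-freeness of $M_{\bold c}$ \emph{over $R$} back to a contradiction when $R\neq\mathbb{Z}$; asserting $\aleph_1$-freeness of $(R,+)$ papers over this gap but is not justified by the hypotheses.
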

\begin{proof}
Suppose on the way of contradiction that $G_{\bold c}/K$ is not reduced. Then
	it has  a divisible direct summand, say $I$. By Fact \ref{ided}, $I$ is injective. We apply the structure theorem for injective abelian groups  (see Discussion \ref{endo}(i)) to find the following  decomposition:
	$$I=\bigoplus_{p\in \mathbb{P}}\mathbb{Z}(p^\infty)^{\oplus{x_p}}\oplus \mathbb{Q}^{\oplus{x}},$$where ${x_p}$ and $x$ are index sets.
Since  	$G_{\bold c}/K$ is torsion-free, $I$ is
torsion-free. So, $I$ has no $p$-torsion part.  This shows that
${x_p}=\emptyset$ for all $p\in \mathbb{P}$.
In other words, $I= \mathbb{Q}^{\oplus{x}}.$ Since $I$ is nonzero, ${x}\neq\emptyset$.
 This yields that $(\mathbb{Q},+)$ is a directed  summand of $G_{\bold c}/K$.
Thanks to Lemma \ref{g3}(3)(a)
$\Lambda_{\bold c}$ is $\aleph_1$-free.
We combine this with Lemma \ref{g3}(3)(b)
to deduce that $M_{\bold c}$ is $\aleph_1$-free as an $R$-module.

	 We have two possibilities: 1) $\bold k$ is trivial, and  2)
	$\bold k$ is nontrivial.
	
	1) $\bold k$ is trivial: Then $R:=\mathbb{Z}$. Recall
	that $M_\bold c=G_{\bold c}/K$ is $\aleph_{1}$-free. Since   $(\mathbb{Q},+)$ is countable, it should be free, a contradiction.

	2) $\bold k$ is nontrivial:
	Recall that $R$  is  $S^{\bot}_{\bold k}$-divisible. Since the context is nontrivial,  there is $p\in S^{\bot}_{\bold k}$ such that $\{1/p^n:n\gg 0\}\subseteq R$. For simplicity,
	we assume that $\{1/p^n:n> 0\}\subseteq R$.
	Since  $M_{\bold c}$ is $\aleph_1$-free and that $\{1/p^n:n> 0\}\subseteq \mathbb{Q}\subseteq M_\bold c$, there is a free $R$-module $F\subseteq  M_\bold c$ such that  $\{1/p^n:n> 0\}\subseteq F$. Let $F=\bigoplus R$.
So, the desired contraction follows by:
	\begin{equation*}
	\begin{array}{clcr}	\{r/p^n:n> 0,r\in R\}&=
\bigcap_{\ell>0}p^\ell	\{r/p^n:n> 0,r\in R\} \\&\subseteq \bigcap_{\ell>0} p^\ell F
	\\
	&=  \bigoplus(\bigcap_{\ell>0} p^\ell R) \\
	&\subseteq \bigoplus(\bigcap_{\ell>0}  \ell R)\\
	&=0,
	\end{array}
	\end{equation*}
where   the last equality comes from the fact that $(R,+)$ is cotorsion-free, in fact by Fact \ref{redco}, the abelain group $(R,+)$ is reduced, and so $\bigcap_{\ell>0}  \ell R =0$.
The proof is now complete.
\end{proof}

The following easy lemma will be used later at several places.

\begin{lemma}
\label{g21} Let $\bold c \in \AP_{\bold k, \lambda}$. Then  the following equation	\begin{equation*}
\begin{array}{clcr}
y^{\bold c}_{\eta, 0}&= \sum\limits_{i=0} ^n \big(\prod_{j<i}j!\big) a^{\bold c}_{\eta, i}+(\prod_{i=1}^{n}i!) y^{\bold c}_{\eta, n+1}+ \sum\limits_{i=0}^n \big(\prod_{j<i}j!\big)(x^{\bold c}_{\eta \restriction_L i}-x^{\bold c}_{\eta \restriction_R i}),
%\\
%&= \sum\limits_{i=0} ^n \big(\prod_{j<i}j!\big) h(rx_{\eta_i})+\big(\sum\limits_{i=0}^n \prod_{j<i}j!\big) x_{\nu_\omega} +(\prod_{i=1}^{n}i!) y_{\eta, n+1}+ %\sum\limits_{i=0}^n \big(\prod_{j<i}j!\big)(x_{\eta \restriction_L i}-x_{\eta \restriction_R i})\qquad(**).
\end{array}
\end{equation*}is valid for any $n<\omega$.
\end{lemma}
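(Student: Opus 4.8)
The statement to prove (Lemma \ref{g21}) is a closed-form unrolling of the defining recurrence in $\Gamma_{\bold c}$, namely the equations
\[
y_{\eta, n} = a^{\bold c}_{\eta, n} + (n!)\,y_{\eta, n+1} + (x_{\eta\rest_L n} - x_{\eta\rest_R n}),
\]
so the natural plan is a straightforward induction on $n$. For the base case $n = 0$, I would simply quote the defining equation with $n = 0$: $y^{\bold c}_{\eta,0} = a^{\bold c}_{\eta,0} + 0!\cdot y^{\bold c}_{\eta,1} + (x^{\bold c}_{\eta\rest_L 0} - x^{\bold c}_{\eta\rest_R 0})$, and observe that the claimed formula at $n=0$ reads $y^{\bold c}_{\eta,0} = a^{\bold c}_{\eta,0} + (1!)\,y^{\bold c}_{\eta,1} + (x^{\bold c}_{\eta\rest_L 0} - x^{\bold c}_{\eta\rest_R 0})$, which matches once one notes $\prod_{j<0} j! = 1$ (empty product), $\prod_{i=1}^0 i! = 1$, and $0! = 1$. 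So the base case is immediate.

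For the inductive step, assume the formula holds for $n$; I would substitute the defining equation for $y^{\bold c}_{\eta,n+1}$, namely $y^{\bold c}_{\eta,n+1} = a^{\bold c}_{\eta,n+1} + (n+1)!\,y^{\bold c}_{\eta,n+2} + (x^{\bold c}_{\eta\rest_L(n+1)} - x^{\bold c}_{\eta\rest_R(n+1)})$, into the term $(\prod_{i=1}^n i!)\,y^{\bold c}_{\eta,n+1}$ appearing in the inductive hypothesis. Distributing, the coefficient of $a^{\bold c}_{\eta,n+1}$ becomes $\prod_{i=1}^n i! = \prod_{j<n+1} j!$ (reindexing: $\prod_{i=1}^n i! = \prod_{j=1}^{n} j! = \prod_{j=0}^{n} j!$ since $0! = 1$, which is $\prod_{j<n+1} j!$), matching the $i = n+1$ summand in the target sum. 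Similarly the coefficient of $(x^{\bold c}_{\eta\rest_L(n+1)} - x^{\bold c}_{\eta\rest_R(n+1)})$ becomes $\prod_{j<n+1} j!$, matching the $i = n+1$ term of the second sum, and the coefficient of $y^{\bold c}_{\eta,n+2}$ becomes $(\prod_{i=1}^n i!)\cdot(n+1)! = \prod_{i=1}^{n+1} i!$, matching the target. All other terms are unchanged, so the formula for $n+1$ follows.

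There is essentially no obstacle here — it is a routine bookkeeping induction, and the only thing to watch is the empty-product and off-by-one conventions ($\prod_{j<0} j! = 1$, $\prod_{i=1}^0 i! = 1$, $0! = 1$), which must be handled carefully so that the $n = 0$ case of the general formula literally reduces to the defining equation. I would write the inductive step as a short displayed computation chaining the inductive hypothesis, the substitution of the $\Gamma_{\bold c}$-relation for $y^{\bold c}_{\eta,n+1}$, and the collecting of like terms, and conclude. One could alternatively phrase the whole thing as: the relations in $\Gamma_{\bold c}$ let one "telescope" $y^{\bold c}_{\eta,0}$ down to $y^{\bold c}_{\eta,n+1}$, picking up the stated weighted sum of the $a^{\bold c}_{\eta,i}$'s and the $(x_{\eta\rest_L i} - x_{\eta\rest_R i})$'s along the way, with each weight being the product of the factorials encountered at earlier stages — but the clean way to make this rigorous is the induction above.
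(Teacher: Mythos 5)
Your proof is correct and follows exactly the same route as the paper: a direct induction on $n$ where the base case is the defining $\Gamma_{\bold c}$-relation at $n=0$ and the inductive step substitutes that relation for $y^{\bold c}_{\eta,n+1}$ and collects coefficients (using $\prod_{j<n+1}j!=\prod_{i=1}^{n}i!$ via $0!=1$). Nothing to add.
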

\begin{proof}
We proceed by induction on  $n$. The desired claim is clearly holds for $n=0.$ Suppose inductively that it holds for $n$. We are going to show the claim for $n+1$. To this end, we apply the induction assumption along with the relation
	$$y^{\bold c}_{\eta, n+1}=a^{\bold c}_{\eta, n+1}+(n+1)!y^{\bold c}_{\eta, n+2}+ (x^{\bold c}_{\eta \restriction_L n+1}-x^{\bold c}_{\eta \restriction_R n+1})$$
 to deduce\begin{equation*}
	\begin{array}{clcr}
	y^{\bold c}_{\eta, 0}&= \sum\limits_{i=0} ^n \big(\prod_{j<i}j!\big) a^{\bold c}_{\eta, i}+\big(\prod_{i=1}^{n}i!\big)y^{\bold c}_{\eta, n+1}+ \sum\limits_{i=0}^{n+1} (x^{\bold c}_{\eta \restriction_L i}-x^{\bold c}_{\eta \restriction_R i})
	\\
	&= \sum\limits_{i=0} ^n \big(\prod_{j<i}j!\big) a^{\bold c}_{\eta, i}+\big(\prod_{i=0}^n i!\big)a^{\bold c}_{\eta, n+1}+\big(\prod_{i=1}^{n}i!\big)(n+1)!y^{\bold c}_{\eta, n+2}\\
&~\quad~+ \big(\prod_{i=0}^n i!\big)(x^{\bold c}_{\eta \restriction_L n+1}-x^{\bold c}_{\eta \restriction_R n+1})+\sum\limits_{i=0}^n \big(\prod_{j<i}j!\big)(x^{\bold c}_{\eta \restriction_L i}-x^{\bold c}_{\eta \restriction_R i})\\
	&=\sum\limits_{i=0}^{n+1} \big(\prod_{j<i}j!\big) a^{\bold c}_{\eta,i}+\big(\prod_{i=1}^{n+1}i!)y^{\bold c}_{\eta, n+2}+ \sum\limits_{i=0}^{n+1} \big(\prod_{j<i}j!\big)(x^{\bold c}_{\eta \restriction_L i}-x^{\bold c}_{\eta \restriction_R i}).
	\end{array}
	\end{equation*}
	Thus the claim holds for $n+1$ as well.
\end{proof}
There are some distinguished and useful
objects in $\AP_{\bold k,  \lambda}$:

\iffalse
\begin{definition}
\label{g5} Let $\nu\in  \Lambda_\omega[\lambda]$ and $a\in  G_{\bold c} $.
\begin{enumerate}
\item $\supp^+(\nu)=\{\nu  \restriction_L k, \nu \restriction_R k: k < \omega   \} \cup \{\nu\}.$

\item $\supp^{\top}(\nu)=\supp^+(\nu) \cap \Lambda_{\omega}[\lambda]=\{\nu\}$.

\item $\supp^{\bot}(\nu)=\supp^+(\nu) \cap \Lambda_{<\omega}[\lambda]=\{\nu  \restriction_L k, \nu \restriction_R k: k < \omega   \}$.

%\item $\supp^{\top}_{<\omega}(\nu)= \supp^\bot(\nu) \cap \Lambda_{<\omega}[\lambda]$.

\item $\supp_{\leq \omega}(a)= \bigcup \{\supp^+(\nu): \nu \in \supp(a)     \}$.

\item $\supp^{\bot}(a)= \supp_{\leq \omega}(a) \cap \Lambda_{<\omega}[\lambda]$.

\item $\supp^{\top}(a)= \supp_{\leq \omega}(a) \cap \Lambda_{\omega}[\lambda]$.

\end{enumerate}
\end{definition}
\fi

\begin{definition}
\label{g6} We say $\bold c \in \AP_{\bold k, \lambda}$ is full when:
\begin{enumerate}
\item[(a)] $\Lambda_{\bold c} \supseteq \Lambda_{<\omega}[\lambda]$,

\item[(b)] if $a_n \in G_{\bold c}$ for $n<\omega$ and $f: \Lambda_{<\omega}[\lambda] \to \gamma$, where $\gamma < \lambda,$ then for some $\eta \in \Lambda_{\bold c}$
and all $n<\omega$ we have
$a^{\bold c}_{\eta, n}=a_n$ and $f(\eta \restriction_L n)=f(\eta \restriction_R n)$.

%\item[(c)] $\Lambda_{<\omega}[\lambda]=\{\eta \restriction_L n, \eta \restriction_R n: \eta \in \Lambda_{\bold c}, n<\omega  \}$.
\end{enumerate}
\end{definition}Now, we study the existence problem for  fullness    in $\AP$:
\begin{lemma}
\label{g7}Adopt the notation from Hypothesis
\ref{g1}.
Then
there are some full $\bold c \in \AP_{\bold k, \lambda}.$
\end{lemma}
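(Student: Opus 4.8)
The goal is to construct a full $\bold c \in \AP_{\bold k,\lambda}$, i.e.\ an approximation block whose $\Lambda_{\bold c}$ contains all of $\Lambda_{<\omega}[\lambda]$ and which is saturated with respect to the twofold black box $\bold b$ in the sense of Definition~\ref{g6}(b). The natural approach is a transfinite chain of length $\lambda$, at each stage repairing one potential failure of fullness, using Lemma~\ref{g4}(2) to take unions at limits and Lemma~\ref{g3}(5) to add a single $\omega$-branch with prescribed data $a^{\bold d}_{\eta,n}=a_n$. So I would build a $\leq_{\AP}$-increasing continuous sequence $\langle \bold c_\alpha : \alpha < \lambda\rangle$ with $\bold c := \bigcup_{\alpha<\lambda}\bold c_\alpha$ the desired full object.

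\textbf{Step 1 (initial object).} Start with $\bold c_0$ whose $\Lambda_{\bold c_0} := \Lambda_{<\omega}[\lambda]$, with $G_{\bold c_0}$ the group freely generated (over the module structure) by $\{rx_\nu : r\in R,\ \nu\in\Lambda_{<\omega}[\lambda]\}\cup K$ subject only to the $K$-relations and the $\bold k$-relations governing the $F_r$'s; there are no $\omega$-branches yet, so $\Gamma_{\bold c_0}=\emptyset$. One must check $\bold c_0\in\AP$, but this is routine from Definition~\ref{g2} since clause~(a) (downward closure of $\Lambda_{<\omega}[\lambda]$) is immediate and the module $M_{\bold c_0}$ is genuinely free.

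\textbf{Step 2 (bookkeeping and successor steps).} Fix a bijection between $\lambda$ and the set of all pairs $(\bar a, f)$ where $\bar a = \langle a_n : n<\omega\rangle$ is an $\omega$-sequence of elements of (the eventual) $G_{\bold c}$ and $f:\Lambda_{<\omega}[\lambda]\to\gamma$ for some $\gamma<\lambda$; since $\lambda=\lambda^{\aleph_0}>2^{\aleph_0}$ and each $G_{\bold c_\alpha}$ has size $\leq\lambda$ (Lemma~\ref{g3}(6)), there are exactly $\lambda$ such pairs appearing along the chain, and a standard reflection/bookkeeping argument arranges that every such pair is ``handled'' at some stage $\alpha$ where all its parameters already live in $G_{\bold c_\alpha}$. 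At a successor stage $\alpha+1$, given the pair $(\bar a, f)$ assigned to $\alpha$ with all $a_n\in G_{\bold c_\alpha}$: apply the twofold black box $\bold b$ (Definition~\ref{m1}(3)) to obtain $\eta\in\Lambda_\omega[\lambda]$ with $\gamma(\eta)>\gamma_{\bold c_\alpha}$, hence $\eta\notin\Lambda_{\bold c_\alpha}$, such that $g_\eta$ codes the relevant data and $f(\eta\restriction_L n)=f(\eta\restriction_R n)$ for all $n$. Crucially, $\gamma(\eta)=\eta(0,1)>\gamma_{\bold c_\alpha}$ forces $a_n\in G_{\bold c_\alpha}\subseteq G_{\bold c_\alpha,\eta(0,1)}$, so the hypothesis of Lemma~\ref{g3}(5) is met; let $\bold c_{\alpha+1}:=\bold d$ be the extension provided there, which adjoins exactly $\{\eta\}\cup\{\eta\restriction_L n,\eta\restriction_R n : n<\omega\}$ and sets $a^{\bold c_{\alpha+1}}_{\eta,n}=a_n$. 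Take unions at limits via Lemma~\ref{g4}(2).

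\textbf{Step 3 (verification of fullness).} For $\bold c=\bigcup_{\alpha<\lambda}\bold c_\alpha$: clause~(a) of Definition~\ref{g6} holds since $\Lambda_{\bold c}\supseteq\Lambda_{\bold c_0}=\Lambda_{<\omega}[\lambda]$. For clause~(b), given $\langle a_n\rangle$ in $G_{\bold c}$ and $f:\Lambda_{<\omega}[\lambda]\to\gamma$, the parameters $a_n$ all lie in some $G_{\bold c_\alpha}$ (finitely many elements, each with countable support, and $\cf(\lambda)=\lambda>\aleph_0$), and the bookkeeping guarantees $(\langle a_n\rangle, f)$ was assigned to some $\beta\geq\alpha$; then $\eta:=\eta_\beta\in\Lambda_{\bold c_{\beta+1}}\subseteq\Lambda_{\bold c}$ witnesses $a^{\bold c}_{\eta,n}=a_n$ and $f(\eta\restriction_L n)=f(\eta\restriction_R n)$ as required. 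The main obstacle — and the place where the twofold (rather than single) black box is essential — is arranging in Step~2 that the \emph{same} branch $\eta$ simultaneously codes the algebraic data $\langle a_n\rangle$ (via the $g$-component of $\bold b$) \emph{and} satisfies the combinatorial constraint $f(\eta\restriction_L n)=f(\eta\restriction_R n)$ (via the $f$-component of $\bold b$); this is exactly what Definition~\ref{m1}(3) delivers, so once the bookkeeping is set up correctly the verification goes through. $\square$
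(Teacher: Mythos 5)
There is a genuine gap in your bookkeeping step, and it is precisely the obstacle the black box is designed to circumvent. You claim that the set of pairs $(\bar a, f)$ you need to handle has cardinality $\lambda$, but this fails badly for the $f$-component: since $|\Lambda_{<\omega}[\lambda]| = \lambda$, the number of functions $f:\Lambda_{<\omega}[\lambda]\to\gamma$ with $2 \leq \gamma < \lambda$ is $\gamma^{\lambda} \geq 2^{\lambda} > \lambda$. No bookkeeping can arrange that each such $f$ is ``assigned to some stage $\beta < \lambda$,'' so the verification of Definition~\ref{g6}(b) in your Step~3 collapses: most pairs $(\bar a, f)$ will simply never be treated during your chain of length $\lambda$. (The $\bar a$-component is fine — $\lambda^{\aleph_0}=\lambda$ sequences — but fullness must hold for \emph{every} $f$, and there are too many.)

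The paper's proof avoids this entirely by not being iterative. It puts \emph{all} of $\Lambda_\omega[\lambda]$ into $\Lambda_{\bold c}$ in one shot, fixes a bijection $h:S_\lambda\to(\bigoplus_{\rho\in\Omega}Rx_\rho)\oplus K$ respecting the filtration $S_\alpha$, and defines $a^{\bold c}_{\eta,n} := h(g^{\bold b}_\eta(n+1))$ uniformly from the black box data $g^{\bold b}_\eta$. Fullness then follows directly from clause~(3) of Definition~\ref{m1}: given any challenge $(\langle a_n\rangle, f)$, one defines $g:\Lambda_{<\omega}[\lambda]\to S_\lambda$ coding the $a_n$'s and simply invokes the black box to produce the witnessing $\eta$, which already belongs to $\Lambda_{\bold c}$. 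The moral is that the black box is not a tool to be applied once per challenge inside a transfinite recursion — it is a prediction principle that handles $>\lambda$-many challenges with only $\lambda$-many branches because the values $a^{\bold c}_{\eta,n}$ are prescribed once and for all and many challenges share the same $\eta$. Your proposal would have needed a black box only to find the branch at each successor step, but your chain is too short to visit every $f$; the actual argument needs the black box to do the quantification over $f$ for you.
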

\begin{proof}
	Let $\bold b$ be a twofold $\lambda$-black box, which exists by Lemma \ref{m2}.
We look at $$\Omega:=\Lambda_{<\omega}[\lambda]\cup
	\big(\Lambda_{\omega}[\lambda]\times \omega\big),$$  and for each ordinal $\alpha < \lambda$ we set $$\Omega_\alpha:=\Lambda_{<\omega}[\alpha]\cup
	\big(\Lambda_{\omega}[\alpha]\times \omega\big).$$
 Fix a bijection map $$h:S_{\lambda}\stackrel{\cong}\longrightarrow (\oplus_{\rho\in \Omega} Rx_\rho)\oplus K $$such that for each ordinal $\alpha< \lambda$ one has $$h''[S_\alpha]\subseteq  (\oplus_{\rho\in \Omega_\alpha} Rx_\rho)\oplus K \quad(\ast),$$
 %where $g_{\eta}^\bold b$ is given by the twofold $\lambda$-Black Box.
 This is possible, as for each $\alpha,$
 \[
 |S_\alpha| \leq \aleph_0+|\alpha| \leq  |(\oplus_{\rho\in \Omega_\alpha} Rx_\rho)\oplus K| < \lambda.
 \]
  Let $\bold c$ be defined as
 \begin{enumerate}
 	\item $\Lambda_{\bold c}=\Lambda_{\omega}[\lambda] \cup \Lambda_{<\omega}[\lambda].$
 	
 	\item $X_{\bold c}$ is the following set:$$\{rx_{\nu}: r\in R, \nu  \in \Lambda_{\bold c, <\omega}    \}
 	\cup \{ry_{\eta, n}: r\in R, \eta \in \Lambda_{\bold c, \omega}, n<\omega  \}.$$
 	
 \item $a^{\bold c}_{\eta, n}=h(g_{\eta}^\bold b(n+1))$, where $g_{\eta}^\bold b$ is given by the twofold $\lambda$-black box.
 	\item
 	$G_{\bold c}$ is generated, as an abelian  group, freely by the sets $K$ and
 	$X_{\bold c}$ except the following set of relations:
 	
 	\begin{equation*}
 	\begin{array}{clcr}
 	y_{\eta, n}&= a^{\bold c}_{\eta, n}+(n!)y_{\eta, n+1}+(x_{\eta\rest_Ln}-x_{\eta\rest_Rn}),
 	\end{array}
 	\end{equation*}
 with the convenience that $a^{\bold c}_{\eta, n}$ is regarded as an element of $G_\bold c$ via the quotient map $$(\bigoplus_{\rho\in \Omega} Rx_\rho)\oplus K \twoheadrightarrow G_\bold c.$$
% where $b^{\bold c}_{\eta, n}$ are as  $(\ast$). Let $\pi:= (\bigoplus_{\rho\in \Omega} Rx_\rho)\oplus K\to G_\bold c$ be the natural projection. Set $$a^{\bold c}_{\eta, n}:=\pi(b^{\bold c}_{\eta, n})\quad(\ast\ast)$$
%Recall that they are in $G_{\bold c, \eta(0, 1)}.$
From this identification and  (*), $a^{\bold c}_{\eta, n} \in G_{\bold c, \eta(0, 1)}.$

\iffalse
 	\item $\bold m_{\bold c}=(G_{\bold c}, R, 0, 0, 0)$,
 	so that all functions $F_r, F_{r, s}$ and $F_{(r, s)}$
 	are zero.
 	
 	\item for $\eta \in \Lambda_{\bold c}$ and $n<\omega$ set $a^{\bold c}_{\eta, n}$ be as $(\ast,\ast)$.
 	\fi
 	\item $\Gamma_{\bold c}$ is defined naturally as in Definition \ref{g2}.
 \end{enumerate}

Let us show that $\bold c$ is as required. It clearly satisfies clause (a) of Definition \ref{g6}. To show that
clause (b)
of Definition \ref{g6} is satisfied,
let  $\langle a_n:n<\omega\rangle\in~   ^{\omega} G_{\bold c}$ and  $f: \Lambda_{<\omega}[\lambda] \to \gamma,$ where $\gamma < \lambda$.
Let  $g: \Lambda_{<\omega}[\lambda]\to S_\lambda$ be defined such that
 for all $\nu \in  \Lambda_{<\omega}[\lambda]\setminus \{ \langle\rangle \}$, $$h(g(\nu))=a_{\lg(\nu)-1}\quad(+).$$
We are going to apply the twofold $\lambda$-black box $\bold b$. According to its properties, there is an $\eta \in \Lambda_{\omega}[\lambda]$
such that:
\begin{enumerate}
\item[(6)]
 $\gamma(\eta) > \gamma$,

\item[(7)] $g^{\bold b}_\eta(0)=g(\langle\rangle)$,

\item[(8)] $g^{\bold b}_\eta(n+1)=g(\eta \upharpoonright_L n)$\footnote{Here we are using a modified version of the twofold $\lambda$-black box $\bold b$, which can be easily obtained from the original one.},

\item[(9)]  $\eta(n, 1) < \eta(n, 2)$ and $f(\eta \upharpoonright_L n)=f(\eta \upharpoonright_R n)$   for all   $1\leq n < \omega$.
\end{enumerate}
Applying $h$ to the both sides of (8), one has $$a^{\bold c}_{\eta, n}\stackrel{(3)}=h(g^{\bold b}_\eta(n+1))=h(g(\eta\rest_Ln))\stackrel{(+)}=a_{n},$$
thereby completing the proof.
\end{proof}

\begin{lemma}
\label{g10} Assume ${\bold c}\in \AP$ is full and let $h \in \Hom(G_{\bold c}, K)$ be unbounded. Then there is a sequence $$\langle a_n: n<\omega     \rangle \in{}^{\omega}\rng(h)$$
such that the following set of equations $\Gamma$ has no solution, not only in $G_{\bold c}$,
but in any
$G_{\bold d}$ with  $\bold c\leq  \bold d \in \AP$, where
\[
\Gamma:=\{z_n= a_n+ n! z_{n+1}: n< \omega       \}.
\]
\end{lemma}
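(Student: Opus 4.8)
The idea is to use the fullness of $\bold c$ to "diagonalize against" the unbounded homomorphism $h$, producing a sequence $\langle a_n : n<\omega\rangle$ of elements of $\rng(h)\subseteq K$ whose associated divisibility/height demands cannot be met. First I would observe that since $h$ is unbounded, $\rng(h)\subseteq K$ is an unbounded subgroup of the torsion group $K$; so for every $n$ there is $a_n'\in\rng(h)$ of order divisible by an arbitrarily large prime power, or more usefully, we can pick $a_n\in\rng(h)$ so that $a_n\notin (n+1)!\,K$ — indeed, choose primes $p_n$ (with multiplicities) so that the $p_n$-component of some element of $\rng(h)$ is not $(n+1)!$-divisible in $K$; this is exactly where unboundedness is used, since a bounded subgroup would be $(n+1)!$-divisible-modulo-a-fixed-bound in the relevant sense. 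The precise selection should be arranged so that $a_n$ witnesses a failure of divisibility by $n!$ in $K$, hence in any extension $G_{\bold d}$ (since $\tor(G_{\bold d})=K$ by Definition \ref{f11}(b), and $a_n$ is torsion, its divisibility in $G_{\bold d}$ is controlled by $K$).

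Then I would analyze the system $\Gamma = \{z_n = a_n + n!\, z_{n+1} : n<\omega\}$. Suppose, toward a contradiction, that $\bold c \le \bold d \in \AP$ and $\langle b_n : n<\omega\rangle$ in $G_{\bold d}$ solves $\Gamma$. Iterating the relations as in Lemma \ref{g21}, for every $N$ we get
\[
b_0 = \sum_{i=0}^{N-1}\Big(\prod_{j<i} j!\Big) a_i + \Big(\prod_{i=1}^{N-1} i!\Big) b_N.
\]
The key point is that $b_0$ is a fixed element with finite support in $G_{\bold d}$, so its support-analysis (Definition \ref{g211}, and the support lemmas) is bounded, while the right-hand side forces $b_0$ to be divisible by $\prod_{i=1}^{N-1} i!$ modulo a term lying in a subgroup we can control, for all $N$. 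Since $K = \tor(G_{\bold d})$ is reduced (Definition \ref{f1}(a)) and the torsion-free part $G_{\bold d}/K$ is reduced by Lemma \ref{g31}, the element $b_0$ cannot be infinitely divisible. The contradiction comes from choosing the $a_n$ so that the "defect" of $b_0$ from being $\prod i!$-divisible is pinned down precisely by the $a_n$'s themselves — i.e., the height of $b_0$ at the relevant primes is forced to exceed any finite bound, contradicting reducedness together with the finiteness of $\supp_{\bold d}(b_0)$.

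**Where fullness enters.** Actually, I realize the statement only asks for a sequence $\langle a_n\rangle \in {}^\omega\rng(h)$ with the stated non-solvability; fullness of $\bold c$ is presumably not needed to build $\Gamma$ but rather will be used *later* (to find $\eta\in\Lambda_{\bold c}$ with $a^{\bold c}_{\eta,n}=a_n$, which is where the contradiction with boundedly rigid endomorphisms is eventually derived). So for this lemma I would keep the construction of $\langle a_n\rangle$ elementary: enumerate a cofinal sequence of "obstructions" in $\rng(h)$ and let $a_n$ be one whose $p$-height in $K$, for a suitable prime $p\mid$ (something growing), is exactly controlled, ensuring $\Gamma_n := \{z_k = a_k + k!\,z_{k+1} : k\ge n\}$ forces its putative solution $z_n$ to have unbounded height. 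The cleanest route: pick a prime $p$ with $\Gamma_p(K)$ infinite or $\{p : \Gamma_p(K)\ne 0\}$ infinite is *not* assumed for arbitrary $\bold k$, so instead use that $h$ unbounded means $\rng(h)$ is unbounded in $K$, hence (Fact \ref{bounded_exponent}-style reasoning) contains elements of unbounded order; select $a_n\in\rng(h)$ with $\ord(a_n)$ not dividing $n!$, and in fact not dividing $n!\cdot m$ for the fixed bound $m$ coming from any single $b_0$'s support.

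**Main obstacle.** The delicate step is showing non-solvability *in every* $G_{\bold d}\supseteq G_{\bold c}$, not merely in $G_{\bold c}$: we must rule out that a later approximation $\bold d$ adds an element absorbing the growing divisibility. This is handled because any solution $b_0$ lies in some $G_{\bold d}$ with finite support, and the support/height machinery (together with $K$ reduced and $G_{\bold d}/K$ reduced by Lemma \ref{g31}) caps the divisibility of $b_0$ by a finite bound depending only on $\supp_{\bold d}(b_0)$; choosing the $a_n$ to violate *that* bound — uniformly, by making the obstruction grow faster than any single finite support can accommodate via the $\prod i!$ coefficients — yields the contradiction. I expect verifying this uniformity (i.e., that the divisibility bound genuinely depends only on the finite support and not on which large $\bold d$ we are in) to be the technical heart, drawing on the explicit freeness of $M_{\bold d}$ modulo the relations $\Gamma_{\bold d}$ established in Lemma \ref{g3}.
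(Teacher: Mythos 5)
There is a genuine gap, and it concerns precisely the part you flag as the ``technical heart.'' Your proposal correctly isolates three ingredients the paper uses: reducedness of $G_{\bold d}/K$ (Lemma~\ref{g31}) to force any solution to lie in $K$; the iterated relation $c_0 = a_0 + \sum_{\ell\le n}\bigl(\prod_{i=1}^{\ell}i!\bigr)a_\ell + \bigl(\prod_{i=1}^{n}i!\bigr)c_{n+1}$; and the need for $\ord(a_n)$ to grow faster than the factorial products. But you do not supply the structural device that makes the contradiction actually close, namely a \emph{direct-sum staircase} inside $\rng(h)$: one chooses, by repeated application of Facts~\ref{kapla2} and~\ref{bounded_exponent}, a descending chain $H_0=\rng(h)\supseteq H_1\supseteq\cdots$ with $H_n = a_n\mathbb{Z}\oplus H_{n+1}$ and $\ord(a_n)=p_n^{\bold l_n}$ growing sufficiently fast (with a case split: either a fixed prime $p$ with $\Gamma_p(\rng(h))$ infinite, or a strictly increasing sequence of primes). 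This gives $a_{k+1}\mathbb{Z}\cap H_{k+2}=0$ and the auxiliary fact $\bigl(\prod_{i<n}i!\bigr)c_n\in H_n$, which together let one multiply the iterated relation by the order of $c_0$ and \emph{project onto the summand $a_{k+1}\mathbb{Z}$}, obtaining a pure order inequality that violates the growth condition. Without the direct-sum decomposition, the terms $\bigl(\prod_{i\le\ell}i!\bigr)a_\ell$ and $\bigl(\prod i!\bigr)c_{n+1}$ mix in $K$ and there is no way to isolate a single $a_{k+1}$.

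Two further problems. First, your choice of $a_n$ is circular: you condition it on ``the fixed bound $m$ coming from any single $b_0$'s support,'' but the $a_n$ must be fixed before the hypothetical solution exists; the paper avoids this by taking $\bold l_n$ to grow faster than $\prod_{i\le n+1}i!$, a bound depending only on $n$. Second, once reducedness of $G_{\bold d}/K$ has forced the solution into $K$, the contradiction is a finite-order computation entirely inside the torsion group $K$ (the order of $c_0$ vs.\ the orders of the $a_n$); invoking $\supp_{\bold d}(b_0)$, heights, or the explicit freeness of $M_{\bold d}$ is neither how the paper proceeds nor, as far as I can see, a route that would close cleanly --- the support machinery is reserved for the later reduction lemmas (\ref{g12}--\ref{g15}), not for this one.
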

\begin{proof}
We have two possibilities.
First, suppose for some prime number $p$, the group  $\Gamma_p(\rng(h))$ is infinite, and let $p$ be the first such prime number. Also,  let $p_n=p$ for all $n<\omega$.
Otherwise, we let
$$p_n \in \big\{p: \Gamma_p(\rng(h))\neq 0  \big\}$$
be a strictly increasing sequence of prime numbers. We refer this as a second possibility.

In the first part of the proof, we argue for both possibilities at the same time. Then, we consider each scenario separately.

Since $h$ is not bounded, we can find by induction on $n$, the pair $(H_n, a_n)$
such that:
\begin{enumerate}
\item[$(+)$]
\begin{enumerate}
\item $H_0=\Rang(h)$,

\item $H_{n}=a_n \mathbb{Z} \oplus H_{n+1}$,

\item $a_n$  has order $p_n^{\bold l_n}$, \item for $n=m+1$ we have $$(d_{n}):\quad\bold l_n > \bold l_m+ \left(\prod_{i=0}^{n+1}i!\right).$$
\end{enumerate}
\end{enumerate}
To see this, let $H_0:=\Rang(h)$ and let $a_0 \in \Gamma_{p_0}[\rng(h)]$ be any nonzero element. Now, suppose inductively that $n>0$ and we have defined $\langle H_i: i \leq n    \rangle$
and $\langle  a_i: i<n     \rangle$ satisfying the above items. We shall now  define $a_n$
and $H_{n+1}$. By our induction assumption,
\[
\Rang(h)=(\bigoplus_{i<n}a_i\mathbb{Z}) \oplus H_n.
\]
In particular, $H_n$ is torsion. Using Fact \ref{kapla2} (and also Fact \ref{bounded_exponent} in the second possibility case), we can find for some $\ell_n$ and an element $a_n$ such that
$a_n$  has order $p_n^{\bold l_n}$ and $a_n \mathbb{Z}$ is a direct summand of $H_n$. We may further suppose that $$\bold l_n > \bold l_m+ \left(\prod_{i=0}^{n+1}i!\right).$$ Since 
$(a_n)$ is a direct summand of $H_n$, there is an abelian group
$H_{n+1}$  so that $H_{n}=a_n \mathbb{Z} \oplus H_{n+1}$.

To prove that the sequence $\langle a_n: n<\omega     \rangle$
is as required, assume towards a contradiction that there is $\bold c\leq  \bold d \in \AP$ such that $\langle c_n: n<\omega     \rangle$
is a solution of $\Gamma$ in $G_{\bold d}$. So
\[
G_{\bold d}\models  \bigwedge\limits_{n<\omega}\big(  c_n=a_n + n! c_{n+1}\big)\quad(\ast)
\]
Since for each $n, a_n \in K,$ it  follows that
 $$G_{\bold d}/K \models \bigwedge\limits_{n<\omega} \big(c_n +K= n! c_{n+1}+K\big).$$

By Lemma \ref{g31}, $G_{\bold c}/K$ is reduced,
hence necessarily,
$$\bigwedge\limits_{n<\omega}  \big(c_n +K= 0+K\big).$$
In other words, $c_n \in K$ for all $n<\omega.$

We now show that for each $n,$
$$\left( \prod_{i < n}i!\right)c_n \in H_n \quad(\ast\ast)$$
This is true for $n=0$, because $c_0 \in K=H_0$. Suppose it holds for $n$. Then multiplying both sides of $(*)$ into
$\prod_{i < n}i!$ we get
\[
\left(\prod_{i < n}i!\right) c_n =\left( \prod_{i < n}i!\right) a_n +\left( \prod_{i < n+1}i!\right) c_{n+1}.
\]
Using the induction hypothesis and $(\star)(b)$ we get $$\left(\prod_{i < n+1}i!\right) c_{n+1} \in H_{n+1},$$ as requested.

By an easy induction, for each $n$ we have
\[
c_0= a_0+ \sum\limits_{\ell \leq n} \left(\prod_{i=1}^{\ell}i!\right) a_\ell+ \left(\prod_{i=1}^{n}i!\right) c_{n+1} \quad(\ast\ast\ast)_n
\]
Indeed this is true for $n=0$, as $c_0=a_0+c_1$. Suppose it holds for $n$, then using $(\ast)$ and the induction hypothesis
\begin{equation*}
\begin{array}{clcr}
c_0 &= a_0+ \sum\limits_{\ell \leq n} \left(\prod_{i=1}^{\ell}i!\right) a_\ell+ \left(\prod_{i=1}^{n}i!\right)c_{n+1}
\\
&= a_0+ \sum\limits_{\ell \leq n} \left(\prod_{i=1}^{\ell}i!\right) a_\ell+ \left(\prod_{i=1}^{n}i!\right)\left(a_{n+1}+(n+1)!c_{n+2}\right)\\
&= a_0+ \sum\limits_{\ell \leq n+1} \left(\prod_{i=1}^{\ell}i!\right) a_\ell+ \left(\prod_{i=1}^{n+1}i!\right) c_{n+2}.
\end{array}
\end{equation*}
We are now ready to complete the proof. Let $m(*)$ be the order of $c_0$.

Now, we consider each case separately:
\\
{\bf Case 1. $p_n=p$ for all $n$:} Let $t$ be an integer such that   $$m(*)=tp^{\ell(*)} > 1,$$ where $\ell(*)\geq 0$
and $(p, t)=1$, i.e., $p$ does not divide $t$. Let
$k$ be the least natural number such that $\bold l_k > \ell(*)$.
By multiplying both sides of $(\ast\ast\ast)_{k+1}$
into $tp^{\bold l_k}$, we get to
\[
tp^{\bold l_k} c_0=tp^{\bold l_k} a_0 + tp^{\bold l_k} \sum\limits_{\ell \leq k+1} \left(\prod_{i=1}^{\ell}i!\right) a_\ell+t p^{\bold l_k}\left(\prod_{i=1}^{k+1}i!\right) c_{k+2}.
\]
Since the sequence $\langle \bold l_{\ell}: \ell \leq k      \rangle$
is increasing, we have $p^{\bold{l}_k}a_\ell=0$ for all $\ell \leq k.$ Consequently, 
\[
0=tp^{\bold l_k} \left(\prod_{i=1}^{k+1}i!\right) a_{k+1} +t p^{\bold l_k}\left(\prod_{i=1}^{k+1}i!\right) c_{k+2}\quad(\dagger)
\]
According  to $(+)_b$, we know $a_{k+1} \mathbb{Z} \cap H_{k+2}=0$, and by using $(\ast\ast)$ along with  $(\dagger)$ we get that
\[
t p^{\bold l_k} \left(\prod_{i=1}^{k+1}i!\right) a_{k+1}=0.
\]
Recall that the order of $a_{k+1}$ is a power of $p$. We apply this along with  the equality $(p, t)=1$ to get that
$$p^{\bold l_k} \left(\prod_{i=1}^{k+1}i!\right) a_{k+1}=0.$$
Moreover, $$p^{\bold l_{k+1}}=\ord(a_{k+1})\leq p^{\bold l_k} \left(\prod_{i=1}^{k+1}i!\right)\leq p^{\bold l_k +\left(\prod_{i=1}^{k+1}i!\right)}.$$ Taking $\log_p(-)$ from both sides, 
we have $\bold l_{k+1} \leq \bold l_k +\left(\prod_{i=1}^{k+1}i!\right)$. But, this contradicts   $(d_{\bold l_{k+1}})$.
The result follows.

Thereby, without loss of generality we deal with:

{\bf Case 2. Otherwise:} Then the sequence $\langle  p_n: n<\omega     \rangle$
is strictly increasing. Let $k$ be the least integer such that $$p_{k+1} >  m(*)\times \left(\prod_{i=1}^{k+1}i!\right)\quad(\dagger\dagger)$$By multiplying both sides of $(\ast\ast\ast)_{k+1}$
into $m(*)\times\left( \prod_{i=1}^{k}p_i^{\bold l_i}\right)$ we get
\iffalse
\begin{equation*}
\begin{array}{clcr}
0&=m(*) \times\left( \prod_{i=1}^{k}p_i^{\bold l_i}\right) c_0 \\&= m(*) \times\left( \prod_{i=1}^{k}p_i^{\bold l_i}\right) a_0
+
 m(*)\times\left( \prod_{i=1}^{k}p_i^{\bold l_i}\right) \sum\limits_{\ell \leq k+1} \left(\prod_{i=1}^{\ell}i!\right) a_\ell \\
&\quad  +m(*)\times\left( \prod_{i=1}^{k}p_i^{\bold l_i}\right)\left(\prod_{i=1}^{k+1}i!\right) c_{k+2}\\
&=m(*)\times\left( \prod_{i=1}^{k}p_i^{\bold l_i}\right) \sum\limits_{\ell \leq k+1} \left(\prod_{i=1}^{\ell}i!\right) a_\ell +m(*)\times\left( \prod_{i=1}^{k}p_i^{\bold l_i}\right)\left(\prod_{i=1}^{k+1}i!\right) c_{k+2}\\
&=m(*)\times\left( \prod_{i=1}^{k}p_i^{\bold l_i}\right)   \left(\prod_{i=1}^{k+1}i!\right) a_{k+1} +m(*)\times\left( \prod_{i=1}^{k}p_i^{\bold l_i}\right)\left(\prod_{i=1}^{k+1}i!\right) c_{k+2},
\end{array}
\end{equation*}
\fi

\begin{equation*}
\begin{array}{clcr}
0&=m(*) \times\left( \prod_{i=1}^{k}p_i^{\bold l_i}\right) c_0 \\&= m(*) \times\left( \prod_{i=1}^{k}p_i^{\bold l_i}\right) a_0
+
 m(*)\times\left( \prod_{i=1}^{k}p_i^{\bold l_i}\right) \sum\limits_{\ell \leq k+1} \left(\prod_{i=1}^{\ell}i!\right) a_\ell \\
&\quad  +m(*)\times\left( \prod_{i=1}^{k}p_i^{\bold l_i}\right)\left(\prod_{i=1}^{k+1}i!\right) c_{k+2}.
\end{array}
\end{equation*}
We have that
$m(*) \times\left( \prod_{i=1}^{k}p_i^{\bold l_i}\right) a_0=0$ and

$$m(*)\times\left( \prod_{i=1}^{k}p_i^{\bold l_i}\right)\left(\prod_{i=1}^{\ell}i!\right) a_\ell=0,$$for all $\ell \leq k,$ thus
\[
0=m(*)\times\left( \prod_{i=1}^{k}p_i^{\bold l_i}\right)   \left(\prod_{i=1}^{k+1}i!\right) a_{k+1} +m(*)\times\left( \prod_{i=1}^{k}p_i^{\bold l_i}\right)\left(\prod_{i=1}^{k+1}i!\right) c_{k+2}.
\]
Again, according  to $(+)_b$, we know $a_{k+1} \mathbb{Z} \cap H_{k+2}=0$, and by using $(\ast\ast)$ along with  the previous formula, we lead to the following vanishing formula
\[
m(*)\times\left( \prod_{i=1}^{k}p_i^{\bold l_i}\right)   \left(\prod_{i=1}^{k+1}i!\right) a_{k+1}=0.
\]
As the order of $a_{k+1}$ is a power of $p_{k+1}$ and it is different from all $p_\ell$'s, for $\ell\leq k,$ we have
 \[
m(*)\times  \left(\prod_{i=1}^{k+1}i!\right) a_{k+1}=0.
\]
So, $$p_{k+1}<p^{\bold l_{k+1}}_{k+1}=\ord(a_{k+1})\leq   m(*)\times \left(\prod_{i=1}^{k+1}i!\right).$$ But this contradicts   $(\dagger\dagger)$.
The result follows.
\end{proof}

To prove the endo-rigidity property, we first deal with the following special case, and then we reduce things to this situation:
\begin{lemma}
\label{g11}
Let $\bold c\in \AP$ be full.  Then every $h \in \Hom(G_{\bold c},K)$
is bounded.
\end{lemma}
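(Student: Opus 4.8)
The plan is to argue by contradiction: suppose some $h \in \Hom(G_{\bold c}, K)$ is unbounded. Then Lemma \ref{g10} applies and gives us a sequence $\langle a_n : n<\omega\rangle \in {}^{\omega}\rng(h)$ such that the system of equations
\[
\Gamma = \{z_n = a_n + n!\, z_{n+1} : n<\omega\}
\]
has no solution in any $G_{\bold d}$ with $\bold c \leq \bold d \in \AP$. In particular it has no solution in $G_{\bold c}$ itself. The idea is that this is absurd, because fullness of $\bold c$ guarantees the existence of an $\eta \in \Lambda_{\bold c, \omega}$ witnessing precisely the parameters $\langle a_n : n<\omega\rangle$, and then the elements $\langle y^{\bold c}_{\eta, n} : n<\omega\rangle$ — suitably corrected — give a solution of $\Gamma$.

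The key steps are as follows. First, for each $n<\omega$, since $a_n \in \rng(h) \subseteq K \subseteq G_{\bold c}$, I pick a preimage; but more to the point, I want to invoke Definition \ref{g6}(b) of fullness. To do this I set up an auxiliary function $f : \Lambda_{<\omega}[\lambda] \to \gamma$ — I can take $\gamma = 1$ and $f$ constant, since the only role of $f$ in the present argument is to be absorbed by the fullness statement (the condition $f(\eta\rest_L n) = f(\eta\rest_R n)$ is then automatic). Applying fullness with the sequence $\langle a_n : n<\omega\rangle$ and this $f$, I obtain $\eta \in \Lambda_{\bold c, \omega}$ with $a^{\bold c}_{\eta, n} = a_n$ for all $n<\omega$. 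Second, I recall from Definition \ref{g2}(f) that the defining relations of $G_{\bold c}$ include
\[
y_{\eta, n} = a^{\bold c}_{\eta, n} + (n!)\, y_{\eta, n+1} + (x_{\eta\rest_L n} - x_{\eta\rest_R n}),
\]
that is,
\[
y_{\eta, n} = a_n + (n!)\, y_{\eta, n+1} + (x_{\eta\rest_L n} - x_{\eta\rest_R n}).
\]
Third, I must deal with the extra terms $x_{\eta\rest_L n} - x_{\eta\rest_R n}$, which prevent $\langle y_{\eta,n}\rangle$ from being an outright solution of $\Gamma$. The natural fix is to set $z_n := y_{\eta, n} - w_n$ for a suitable telescoping correction $w_n \in G_{\bold c}$: writing out $z_n - a_n - n!\, z_{n+1}$ using the relation above, the requirement becomes $-w_n + n!\, w_{n+1} = -(x_{\eta\rest_L n} - x_{\eta\rest_R n})$, i.e.\ $w_n = n!\, w_{n+1} + (x_{\eta\rest_L n} - x_{\eta\rest_R n})$. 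One checks this has a solution in $G_{\bold c}$: indeed since $x_{\eta\rest_L n}, x_{\eta\rest_R n}$ lie in the free part and, modulo $K$ together with $\{x_\nu\}$, the module is divisible and torsion-free (Lemma \ref{g3}(2)), each required division by $n!$ can be carried out — more carefully, one can take $w_n = \sum_{k\geq n}(\prod_{n\leq j<k} j!)^{-1}\cdot(\text{finite-ish expression})$; but cleaner is to observe that in $G_{\bold c}/\langle K \cup \{rx_\nu\}\rangle$ the images of the $y_{\eta, n}$ already solve the homogeneous version, so one defines $w_n$ as an explicit finite combination of the $x_{\eta\rest_L k}, x_{\eta\rest_R k}$ using that $\prod j!$ divides appropriately. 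This produces a solution of $\Gamma$ in $G_{\bold c}$, contradicting Lemma \ref{g10} and finishing the proof.

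The main obstacle is the third step: constructing the correction terms $w_n$ so that $z_n = y_{\eta,n} - w_n$ genuinely solves $\Gamma$ inside $G_{\bold c}$ (not merely in some extension). The subtlety is that the $x_{\eta\rest_L n}, x_{\eta\rest_R n}$ do not lie in a single finitely generated summand and the divisions by $n!$ must be performed consistently; here is where Lemma \ref{g3}(2) (divisibility of $G_{\bold c}/\langle K \cup \{rx_\nu : r\in R, \nu\in\Lambda_{\bold c,<\omega}\}\rangle$) together with the $R$-module structure is essential. Once the $w_n$ are in hand the contradiction with Lemma \ref{g10} is immediate, so the entire content of the lemma is reducing "some $h$ unbounded" — via fullness — to the insolubility statement of Lemma \ref{g10} and then exhibiting the explicit solution that Lemma \ref{g10} forbids.
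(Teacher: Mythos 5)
Your opening --- invoking Lemma \ref{g10} to get a sequence $\langle a_n : n<\omega\rangle\in{}^{\omega}\rng(h)$ for which $\Gamma$ is unsolvable --- matches the paper, but from there the argument diverges and runs into a genuine gap. The paper does not try to build a solution of $\Gamma$ inside $G_{\bold c}$ by subtracting off the terms $x_{\eta\restriction_L n}-x_{\eta\restriction_R n}$; instead it applies $h$ to both sides of the defining relation for $y_{\eta,n}$ and arranges things so that the $x$-terms are annihilated by $h$. Two design choices that you explicitly discard are what make this work: (i) the parameters are set to preimages $b_n$ with $h(b_n)=a_n$, i.e.\ $a^{\bold c}_{\eta,n}=b_n$, not $a^{\bold c}_{\eta,n}=a_n$, so that applying $h$ produces $a_n$ on the nose; and (ii) the auxiliary $f$ is \emph{not} constant but encodes the action of $h$ on the $x_\nu$'s, say $f(\nu_1)=f(\nu_2)$ iff $h(x_{\nu_1})=h(x_{\nu_2})$, so that the fullness condition $f(\eta\restriction_L n)=f(\eta\restriction_R n)$ yields $h(x_{\eta\restriction_L n})=h(x_{\eta\restriction_R n})$. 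Your remark that the only role of $f$ ``is to be absorbed by the fullness statement'' is exactly the missed mechanism. With these choices, $z_n:=h(y_{\eta,n})\in K\subseteq G_{\bold c}$ solves $\Gamma$ outright, contradicting Lemma \ref{g10}.

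The correction-term route you propose instead does not close. You need $w_n\in G_{\bold c}$ with $w_n=n!\,w_{n+1}+(x_{\eta\restriction_L n}-x_{\eta\restriction_R n})$, which is itself an infinite system of exactly the shape the construction is designed to render unsolvable, and you give no argument that it admits a solution in $G_{\bold c}$. The two heuristics you offer both fail: the ``infinite sum'' $\sum_{k\ge n}(\prod_{n\le j<k}j!)^{-1}(\cdots)$ is not an element of $G_{\bold c}$, and the appeal to Lemma \ref{g3}(2) misfires because the inhomogeneities $x_{\eta\restriction_L n}-x_{\eta\restriction_R n}$ are themselves among the generators $x_\nu$ and hence vanish in the quotient $G_{\bold c}/\langle K\cup\{rx_\nu\}\rangle$, so divisibility there says nothing about lifting $w_n$ back to $G_{\bold c}$. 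In fact, since the $x_{\eta\restriction_L k}-x_{\eta\restriction_R k}$ are independent elements of a free part of $M_{\bold c}$ and $G_{\bold c}/K$ is reduced (Lemma \ref{g31}), one should expect this homogeneous-looking system to have no solution; in any case, producing the $w_n$ is at least as hard as what Lemma \ref{g10} rules out, so your contradiction is never actually reached.
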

\begin{proof}
Towards a contradiction assume $h \in \Hom(G_{\bold c},K)$
is not bounded.
In view of Lemma \ref{g10}, this implies that there is a sequence $$\langle a_n: n<\omega     \rangle \in{}^{\omega}\rng(h)$$
 such that
the set of equations
\[
\Gamma:=\{z_n= a_n+ n! z_{n+1}: n< \omega       \}
\]
has no solutions in $G_{\bold c}.$
Let $\gamma=|K|$,  and  define $f:\Lambda_{<\omega}[\lambda]\to\gamma$
such that $$f(\eta)=f(\nu)\Longleftrightarrow
h(x_\eta)=h(x_\nu)\quad(\ast)$$
Since $a_n \in\rng(h)$ there is $b_n$ such that $$\forall n<\omega,~a_n=h(b_n)\quad \quad(+)$$
As $\bold c$ is full, we can find some $\eta$ such that

\begin{enumerate}
	\item $f(\eta\rest_Ln)=f(\eta\rest_Rn)$,
	
	\item $a^{\bold c}_{\eta, n}=b_n$ for each $n$.
	
\end{enumerate}
Let us combining $(\ast)$ and (1). This yields  that
$$\forall n<\omega,~h(x_{\eta\rest_Ln})=h(x_{\eta\rest_Rn}) \quad\quad(\dagger).$$
Moreover, by applying $h$ to the both sides of the equation
$$y_{\eta, n}=a^{\bold c}_{\eta, n}+(n!)y_{\eta, n+1}+(x_{\eta\rest_Ln}-x_{\eta\rest_Rn}),$$
we lead to the following equation:

\begin{equation*}
\begin{array}{clcr}
h(y_{\eta, n})&= h(a^{\bold c}_{\eta, n})+ n! h(y_{\eta, n+1})+\big(h(x_{\eta\rest_Ln})-h(x_{\eta\rest_Rn})\big)
\\&\stackrel{(2)}= h(b_n)+ n! h(y_{\eta, n+1})+\big(h(x_{\eta\rest_Ln})-h(x_{\eta\rest_Rn})\big)\\
%&\stackrel{(\ast)}= h(b_n)+ n! h(y_{\eta, n+1})+\big(f({\eta\rest_Ln})-f{\eta\rest_Rn})\big)\\
&\stackrel{(\dagger)}= h(b_n)+(n!)h(y_{\eta, n+1})\\
&\stackrel{(+)}= a_{ n}+(n!)h(y_{\eta, n+1}).

\end{array}
\end{equation*}
In other words, $h(y_{\eta, n})$ is a solution
for
\[
\Gamma=\{z_n= a_n+ n! z_{n+1}: n< \omega       \}.
\]
This is a contradiction with the choice of the sequence
$\langle a_n: n < \omega \rangle.$
\end{proof}

\begin{notation}\label{cof}Suppose $\bold c\in \AP$.
	For each $n<\omega$, we define $$G_n:=\frac{G_\bold c}{ K+\big(\prod_{i=1}^n i!\big)G_\bold c}.$$
	Also, the notation $\pi_n$ stands for the natural projection  $G_\bold c \twoheadrightarrow G_n.$
\end{notation}

\begin{fact}\label{fcof}
	Adopt the above notation, let $n<\omega$ and   $g\in G_\bold c$.
	\begin{itemize}
		\item[(a)] The abelian group $G_n$ is a torsion abelian group  with the following minimal  generating set $$\{x_\rho:\rho\in\Lambda_{\bold c, < \omega}\}
		\cup\{y_{\eta,k}:\eta\in\Lambda_{\bold c,\omega} \emph { and } k\geq n+2\}.$$ \item[(b)] Similar to Definition \ref{circ}, we can define $\supp_\circ(\pi_n(g))$ with respect to generating set presented in clause $(a)$.
		\item[(c)] According to its definition, it is easy to see that $\supp_\circ(\pi_n(g))\subseteq\supp_\circ(g)$.
		\item[(d)] Recall from Lemma \ref{g31} that $G_c/ K$ is reduced.  This in turns  gives us an integer $m_n>n$ such that $\supp_\circ(g)\subseteq\supp_\circ(\pi_{m_n}(g))$.
	\end{itemize}
\end{fact}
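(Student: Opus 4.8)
The plan is to establish the four clauses in order, since each is used in the next, working throughout from the presentation of $G_{\bold c}$ in Definition \ref{g2}. Set $N_n:=\prod_{i=1}^{n}i!$, so that $G_n=G_{\bold c}/(K+N_nG_{\bold c})$ (Notation \ref{cof}). For clause (a), first note that $G_n$ is a homomorphic image of $G_{\bold c}/N_nG_{\bold c}$, which is annihilated by $N_n$, so $G_n$ is $N_n$-bounded, in particular torsion. For the generating set, recall $G_{\bold c}$ is generated by $K$ together with $X_{\bold c}$ subject to the relations $\Gamma_{\bold c}$; passing to $G_n$ kills $K$, and for each $\eta\in\Lambda_{\bold c,\omega}$ and $k\le n+1$ the relation $y_{\eta,k}=a^{\bold c}_{\eta,k}+k!\,y_{\eta,k+1}+(x_{\eta\rest_Lk}-x_{\eta\rest_Rk})$, iterated exactly as in Lemma \ref{g21}, rewrites $y_{\eta,k}$ as an $R$-combination of $y_{\eta,n+2}$, of finitely many $x_\rho$ with $\rho\in\Lambda_{\bold c,<\omega}$, and of the $a^{\bold c}_{\eta,i}$ with $i\le n+1$. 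Each $a^{\bold c}_{\eta,i}$ lies in $G_{\bold c,\eta(0,1)}$, so the same rewriting applies to it; since the $y$'s occurring in $a^{\bold c}_{\eta,i}$ are indexed by $\rho$ with $\gamma(\rho)<\gamma(\eta)$ and $\Lambda_{\bold c}$ is downward closed, this recursion is well-founded and terminates. Hence the images of $\{x_\rho:\rho\in\Lambda_{\bold c,<\omega}\}\cup\{y_{\eta,k}:\eta\in\Lambda_{\bold c,\omega},\ k\ge n+2\}$ generate $G_n$, and minimality is read off from the freeness of the presentation of $M_{\bold c}$ modulo $\Gamma_{\bold c}$: the relations surviving in $G_n$ among these generators are the consequences of $\Gamma_{\bold c}$ at levels $\ge n+2$ together with $N_n$-torsion, and none of these makes a listed generator redundant.

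Clause (b) is the literal transcription of Definition \ref{circ}(3)--(4) with $G_{\bold c}$ replaced by $G_n$ and the generating set of (a): for $b\in G_n$ one chooses $r_\ell\in R\setminus\{0\}$, $\eta_\ell\in\Lambda_{\bold c,\omega}$, $m_\ell\ge n+2$ with $b-\sum_\ell r_\ell\pi_n(y_{\eta_\ell,m_\ell})\in\sum_{\rho\in\Lambda_{\bold c,<\omega}}R\,\pi_n(x_\rho)$ and sets $\supp_\circ(b)=\{\eta_\ell:\ell<n_b\}$; independence of the choice is argued exactly as for $G_{\bold c}$, using that distinct $\eta$-blocks of the $y$'s stay $R$-independent modulo $\sum_\rho R\,x_\rho$ after the quotient, which follows from the $\aleph_1$-freeness statements of Lemma \ref{g3}(3) applied to the relevant countable subconfiguration. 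For clause (c), take a representation of $g$ in $G_{\bold c}$ realizing $\supp_\circ(g)$, apply $\pi_n$, and rewrite each $\pi_n(y_{\eta_\ell,m_\ell})$ with $m_\ell\le n+1$ by the reduction of (a); every step of that reduction leaves the index $\eta_\ell$ unchanged, and the nested $a$'s reintroduce only indices already occurring in a support-representation of $g$, while a coefficient may pick up a factor that is $\equiv 0\pmod{N_n}$ and so drop the corresponding index. This gives $\supp_\circ(\pi_n(g))\subseteq\supp_\circ(g)$.

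Clause (d) is the delicate point, and it is where Lemma \ref{g31} enters. Fix a representation $g-\sum_{\ell<n_g}r_\ell y_{\eta_\ell,m_\ell}\in\sum_\rho R\,x_\rho+K$ realizing $\supp_\circ(g)=\{\eta_\ell:\ell<n_g\}$. Since $G_{\bold c}/K$ is reduced, it has no nonzero element divisible by every integer, so for each $\ell$ there is a largest level to which the $\eta_\ell$-content of $g$ can be transported upward through $\Gamma_{\bold c}$ without being absorbed into $\sum_\rho R\,x_\rho+K$. The plan is to take $m_n>n$ above all the $m_\ell$ and above all of these thresholds, and then to check that in $G_{m_n}$ the net coefficient of $\pi_{m_n}(y_{\eta_\ell,m_n+2})$ produced from $r_\ell y_{\eta_\ell,m_\ell}$ is still nonzero modulo $N_{m_n}$ and is not cancelled by the contributions of the nested $a$'s, which carry indices different from $\eta_\ell$; this forces $\eta_\ell\in\supp_\circ(\pi_{m_n}(g))$ for every $\ell$, i.e. $\supp_\circ(g)\subseteq\supp_\circ(\pi_{m_n}(g))$, and combined with (c) yields equality. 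I expect the hard part to be exactly this last estimate: controlling the interplay between the factorial coefficients built into $\Gamma_{\bold c}$ and the quotient $G_{\bold c}\twoheadrightarrow G_{m_n}$, and verifying that reducedness of $G_{\bold c}/K$ leaves enough room to choose $m_n$ so that no index of $\supp_\circ(g)$ is lost.
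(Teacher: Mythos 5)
The paper offers no argument at all for this Fact — its entire proof is the single line ``This is straightforward'' — so there is no official proof to compare against; one can only assess your sketch on its own terms. Clause (a) is fine (iterate the relations of $\Gamma_{\bold c}$, using Lemma~\ref{g21}, and close the recursion through the $a^{\bold c}_{\eta,i}$'s by well-founded induction on $\gamma(\eta)$), and clause (b) is indeed just a transcription. The trouble is in (c) and (d), and it is concentrated in exactly the place you rush past: the $a$-terms. Your claim that ``the nested $a$'s reintroduce only indices already occurring in a support-representation of $g$'' is not justified and does not hold for $\supp_\circ$ as defined in Definition~\ref{circ}(3)--(4). The elements $a^{\bold c}_{\eta_\ell,i}\in G_{\bold c,\eta_\ell(0,1)}$ are arbitrary elements of the initial segment; their own $\supp_\circ$'s bear no a-priori relation to $\supp_\circ(g)$. (They would lie inside $\supp_{\bold c}(g)$ from Definition~\ref{g211}(b), which is by construction closed under passing to the $a$'s, but that is a different, larger notion than $\supp_\circ$.) So the reduction of $\pi_n(y_{\eta_\ell,m_\ell})$ to the generating set of (a) can, on its face, introduce $\eta$'s that are not in $\supp_\circ(g)$, and this must be ruled out or accounted for, not asserted.

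For (d) the direction of the difficulty is the reverse of what your sketch implies. Rewriting $r_\ell y_{\eta_\ell,m_\ell}$ (with $m_\ell\le m_n+1$) down to level $m_n+2$ produces the coefficient $r_\ell\prod_{i=m_\ell}^{m_n+1}i!$, and since $\prod_{i=m_\ell}^{m_n+1}i!=(m_n+1)!\cdot N_{m_n}/N_{m_\ell-1}$ with $N_{m_\ell-1}\mid(m_n+1)!$ for $m_n$ large, this coefficient is \emph{divisible} by $N_{m_n}$ once $m_n$ is large enough; the explicit $y_{\eta_\ell,m_n+2}$-term is therefore \emph{annihilated}, not preserved, in $G_{m_n}$. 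If $\eta_\ell$ is to appear in $\supp_\circ(\pi_{m_n}(g))$ it can only be by way of the $a^{\bold c}_{\eta_{\ell'},i}$-contributions from \emph{other} indices $\eta_{\ell'}$, and reducedness of $G_{\bold c}/K$ (Lemma~\ref{g31}) must be invoked precisely to control that interaction — not, as your sketch has it, to keep the direct $y_{\eta_\ell}$-content alive. Since the $a$-terms attached to $\eta_\ell$ live in $G_{\bold c,\eta_\ell(0,1)}$ and therefore never mention $\eta_\ell$ itself, this step is genuinely delicate, and your proposal does not supply the estimate that makes it work; you flag this yourself, and I agree it is the real gap.
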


\begin{proof}
	This is straightforward.
\end{proof}

\begin{lemma}
	\label{g12}
	Suppose $\bold c\in \AP$ is full and $h \in \End(G_{\bold c})$. Then for some countable $\Lambda_h \subseteq \Omega_{\bold c}$
	we have:
	\begin{center}
		$r \in R,~\nu \in \Omega_{\bold c} \setminus \Lambda_h \implies \supp_\circ(h(rx_\nu)) \subseteq \{\nu\} \cup \Lambda_h.$
	\end{center}
\end{lemma}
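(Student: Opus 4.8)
The plan is to argue by contradiction, exactly in the style of the cancelled Lemma \ref{g12}/\ref{h26} pair that appears commented out in the excerpt, but now using the ``$\supp_\circ$'' machinery of Definition \ref{circ} and Fact \ref{fcof}. Suppose no countable $\Lambda_h\subseteq\Omega_{\bold c}$ works. Then by induction on $i<\omega_1$ I would construct a sequence $\langle(\nu_i,r_i,Y_i,\rho_i):i<\omega_1\rangle$ where: $Y_i=\bigcup\{\supp_\circ(h(r_j x_{\nu_j})):j<i\}\cup\{\nu_j:j<i\}$ is the ``forbidden'' countable set built so far; $r_i\in R$ and $\nu_i\in\Omega_{\bold c}\setminus Y_i$ are chosen (using the failure hypothesis) so that $\supp_\circ(h(r_i x_{\nu_i}))\not\subseteq\{\nu_i\}\cup Y_i$; and $\rho_i\in\supp_\circ(h(r_i x_{\nu_i}))\setminus(\{\nu_i\}\cup Y_i)$ is an ``escaping'' index. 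After passing to an uncountable subsequence I may assume the pattern of lengths/comparisons among the finitely much data attached to each $i$ is constant and the relevant sets are pairwise disjoint; in particular I get an $\omega$-subsequence $\langle(\nu_n,r_n,\rho_n):n<\omega\rangle$ with $\rho_n\notin\supp_\circ(h(r_m x_{\nu_m}))$ for $m\neq n$ and $\rho_n\notin\{\nu_m:m<\omega\}$.

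Next I would feed this into the fullness of $\bold c$. Set $a_n:=r_n x_{\nu_n}\in G_{\bold c}$ and let $f:\Lambda_{<\omega}[\lambda]\to\gamma$ (with $\gamma<\lambda$, say $\gamma=1$, i.e.\ $f$ trivial, since here I do not need to separate $h$-values --- that refinement is for Lemma \ref{g13}). By Definition \ref{g6} there is $\eta\in\Lambda_{\bold c,\omega}$ with $a^{\bold c}_{\eta,n}=a_n=r_n x_{\nu_n}$ for all $n$. Now apply $h$ to the defining relations of $\Gamma_{\bold c}$, namely $y_{\eta,n}=a^{\bold c}_{\eta,n}+(n!)y_{\eta,n+1}+(x_{\eta\rest_Ln}-x_{\eta\rest_Rn})$, and use Lemma \ref{g21} to get, for every $n$,
\[
h(y^{\bold c}_{\eta,0})=\sum_{i=0}^n\Big(\prod_{j<i}j!\Big)h(r_i x_{\nu_i})+\Big(\prod_{i=1}^n i!\Big)h(y^{\bold c}_{\eta,n+1})+\sum_{i=0}^n\Big(\prod_{j<i}j!\Big)\big(h(x_{\eta\rest_Li})-h(x_{\eta\rest_Ri})\big).
\]
The point is that $h(y^{\bold c}_{\eta,0})$ is a single fixed element of $G_{\bold c}$, so its $\supp_\circ$ is a fixed finite (hence fixed) object, while the right-hand side is forced to ``see'' the escaping index $\rho_n\in\supp_\circ(h(a_n))$ at stage $n$ with a divisibility obstruction coming from the factor $\prod_{j<i}j!$ --- this is precisely the content of the now-deleted Lemma \ref{h26}, which I would re-prove in the $\supp_\circ$-language: if $\rho_n\in\supp_\circ(h(a_n))$ but $\rho_n$ lies in no $\supp_\circ$ of the other summands and is not among the $\nu_m$, then for a suitable $m=m_{n}$ the element $h(y^{\bold c}_{\eta,0})-\big(\text{truncation}\big)$ is not divisible by $m$ in any $G_{\bold d}\supseteq G_{\bold c}$, contradicting the displayed identity for large $n$. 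Concretely I would push the identity into the quotient $G_k=G_{\bold c}/(K+(\prod_{i=1}^k i!)G_{\bold c})$ of Notation \ref{cof}: by Fact \ref{fcof}(d) choose $k$ large enough that $\supp_\circ(h(a_n))\subseteq\supp_\circ(\pi_k(h(a_n)))$, note $\pi_k\big(\big(\prod_{i=1}^{k}i!\big)G_{\bold c}\big)=0$ kills the tail term $\big(\prod_{i=1}^n i!\big)h(y^{\bold c}_{\eta,n+1})$ for $n\geq k$, and observe that in $G_k$ the generator indexed by $\rho_n$ appears in $\pi_k(h(a_n))$ but in none of the other terms of the truncated sum nor in $\pi_k(h(y^{\bold c}_{\eta,0}))$ (the latter by disjointness of supports, since $\rho_n\notin Y_n\supseteq\supp_\circ(h(a_m))$ for $m<n$, and for $m>n$ one uses $\prod_{j<m}j!$ together with the fact that $h(x_{\eta\rest_L i})-h(x_{\eta\rest_R i})$ have supports avoiding $\rho_n$ after possibly also shrinking to guarantee this). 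That is the desired contradiction.

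The main obstacle I anticipate is bookkeeping in the second paragraph: making the ``escaping index'' $\rho_n$ genuinely robust, i.e.\ ensuring after the $\omega_1\to\omega$ shrinking that $\rho_n$ is avoided not only by the previously-built $Y_n$ but also by the later supports $\supp_\circ(h(a_m))$ for $m>n$ and by all the correction terms $h(x_{\eta\rest_L i})-h(x_{\eta\rest_R i})$. This is handled exactly as in the original Lemma \ref{g12}: one builds the $\omega_1$-sequence so that $\nu_i$ (and the data attached to $h(a_i)$) lie outside everything chosen before, then a $\Delta$-system / constant-pattern argument on the countably much ``shape'' data lets one extract an $\omega$-subsequence in which distinct indices have pairwise disjoint relevant supports; the $\rho_n$ then survives because the only terms on the right-hand side that could contain it are those with index $\leq n$, and those with index $<n$ are excluded by construction of $Y_n$ while the index-$n$ term is where it genuinely lives. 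The divisibility bookkeeping (choosing $k$ via Fact \ref{fcof}(d) and using reducedness of $G_{\bold c}/K$ from Lemma \ref{g31} to kill the tail) is then routine.
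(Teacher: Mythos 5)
Your overall skeleton matches the paper's: build an $\omega_1$-sequence of escaping indices, shrink to get pairwise disjointness, pack the data $a_n = r_n x_{\nu_n}$ into a full $\bold c$ via a single $\eta$, then push the telescoping identity from Lemma~\ref{g21} through $\pi_n h$ and isolate the escaping index in $\supp_\circ(h(y_{\eta,0}))$.

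However, there is a genuine gap at the point where you choose $f$ to be trivial. You write that you ``do not need to separate $h$-values --- that refinement is for Lemma \ref{g13}'', but in fact the non-trivial $f$ is essential already here. The terms $h(x_{\eta\rest_L i})-h(x_{\eta\rest_R i})$ that appear after applying $h$ to the relations in $\Gamma_{\bold c}$ are controlled \emph{only} through the fullness condition $f(\eta\rest_L n)=f(\eta\rest_R n)$: the paper chooses $f$ so that this equality forces $\supp_\circ\big(h(x_{\eta\rest_L n})-h(x_{\eta\rest_R n})\big)=\emptyset$, which is exactly what makes these correction terms disappear in the $\supp_\circ$-computation. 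With $f$ constant, the equality $f(\eta\rest_L n)=f(\eta\rest_R n)$ is vacuous and gives no information about $h(x_{\eta\rest_L n})$. Your proposed patch --- that the correction terms ``have supports avoiding $\rho_n$ after possibly also shrinking to guarantee this'' --- cannot work: the sequence $\eta$ (and hence all the $\eta\rest_L i$, $\eta\rest_R i$) is produced by fullness \emph{after} the $\omega_1\to\omega$ shrinking of $\langle (\nu_i,r_i,\rho_i)\rangle$ is fixed, so no amount of shrinking of your sequence controls the supports of $h(x_{\eta\rest_L i})$; those are governed purely by $h$ on an unrelated part of $\Lambda_{<\omega}[\lambda]$. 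Put differently, you are discarding the ``two-fold'' half of the black box (the $f$-equalization of the $L/R$ branches), and without it the terms $h(x_{\eta\rest_L i})-h(x_{\eta\rest_R i})$ could freely contribute $\rho_n$ to $\supp_\circ$ of the right-hand side, defeating the claimed contradiction. To repair this, define $f$ on $\Lambda_{\bold c,<\omega}$ so that $f(\nu)$ encodes the $\langle n_b,(\ell,m_{b,\ell},r_{b,\ell}):\ell<n_b\rangle$-data of $b := h(x_\nu)$ from Definition~\ref{circ}(3); then $f(\nu_1)=f(\nu_2)$ guarantees $\supp_\circ(h(x_{\nu_1}-x_{\nu_2}))=\emptyset$, which is the fact actually needed.
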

\begin{proof}
	Towards contradiction assume $h \in \End(G_{\bold c})$ but there is no  $\Lambda_h$ as promised.
\iffalse
It is enough to prove:
	\begin{enumerate}
		\item[$\boxplus_1$]
: There is a countable set $Y_\ast\subseteq \Lambda_{\bold c, \omega}$ such that $$\supp_\circ(h( r x_{\eta} ))\subseteq Y_\ast \cup\{\eta\} $$	for any $\eta\in\Omega_{\bold c}$ and $r \in R.$
	\end{enumerate}

Suppose $\boxplus_1$ fails.
\fi
We define a sequence
	$$\big\langle (\eta_i, Y_i,\nu_i, r_i): i < \omega_1\big \rangle,$$
	by induction on $i<\omega_1$, such that
	\begin{enumerate}
		\item[($\ast$)]
		\begin{enumerate}
			\item $\eta_i\in \Omega_{\bold c}$ and $r_i \in R \setminus \{0\},$
			
			\item $Y_i = \bigcup\{\supp_\circ(h(r_j x_{\eta_j})): j<i       \} \cup \{ \eta_j: j<i\},$
			
			\item $\nu_i \in \supp_\circ(h(r_ix_{\eta_i}))$ but $\nu_i \neq \eta_i$, $\nu_i \notin Y_i.$
			
			%\item $\rho_i \in \supp^{\pm}(\nu_i)\setminus (Y_i \cup \{\eta_i\})$.
		\end{enumerate}
	\end{enumerate}
	%To see this, we proceed by induction on $i<\omega_1$ and define $(\eta_i, Y_i,\nu_i,  \rho_i)$
	%as follows.
	To this end,
%we set $Y_0=\emptyset$ and let $\eta_0 \in \Lambda_{<\omega}[\lambda]$. Also, we choose $r_0 \neq 0$ and
%	$$\nu_0 \in \supp_\circ(h(r_0x_{\eta_0})) \setminus \{\eta_0\}.$$
	%and $\rho_0 \in \supp^{\pm}(\nu_0)\setminus  \{\eta_0\}$.
	 suppose that $i<\omega_1$ and we have defined
	$\langle (\eta_j, Y_j,\nu_j, r_j):j< i  \rangle$. Set $$Y_i = \bigcup\{\supp_\circ(h(r_jx_{\eta_j})): j<i       \} \cup \{ \eta_j: j<i\}.$$ Following its definition, we know $Y_i$ is  at most countable. Thus, due to our assumption, we can find some $\eta_i \in \Omega_{\bold c} \setminus Y_i$ and $r_i\in R\setminus\{0\}$
	such that $$\supp_\circ(h(r_ix_{\eta_i})) \nsubseteq  (\{\eta_i\} \cup Y_i).$$
	This allow us to define $\nu_i$, namely, it is enough to take $\nu_i$ be any element of  $\supp_\circ(h(r_ix_{\eta_i})) \setminus (\{\eta_i\} \cup Y_i)$.
	%Then pick $\rho_i$ as in clause (d).
	This completes the definition of
	$(\eta_i, Y_i,\nu_i, r_i)$.
	
%	By shrinking the sequence if necessary, without loss of generality, we can assume that $(\lh(\eta_i), \lh(\nu_i), \lh(\rho_i))$ are constant and those triples are pairwise disjoint.  %Furthermore $r_i=r$ for some fixed $r \in R$ and all $i<\kappa^+.$

Combining the facts
 $\nu_i \in \supp_\circ(h(r_ix_{\eta_i}))$ and
$\nu_i\notin(Y_i\cup\{\eta_i\})$ along with the finiteness of
$\supp_\circ(h(x_{\eta_i}))$
we are able to find a subset $W\subseteq \omega_1$ of cardinality $\omega_1$ such
that

	\begin{enumerate}
	\item[($\ast$)] If $i \neq j\in W$
	then $\nu_j\notin\supp_\circ(h(r_ix_{\eta_i}))$.
		\end{enumerate}
Without loss of generality we may and do assume that $W=\omega_1$. Let $a_i=r_ix_{\eta_i}$.
%where $r_i$ are nonzero element of $R$.
\iffalse
We now prove the following:
	\begin{enumerate}
	\item[($\ast\ast$)]
	There is a countable abelian group $H_i$ and a homomorphism $g_i:G_{\bold c}
\to H_i$ such that $g_i(r^\ast_{i,0}x_{\nu i})	\neq 0$, where we let $$h(r_ix_{\eta_i})=\sum_{\ell <n}r^\ast_{i,0}y_{\rho_i,\ell},$$ $\langle{\rho_i,\ell}:\ell<n_i\rangle$ with no repetition and $ ...$
\end{enumerate}

[Why? We can find a countable abelian pure subgroup $H_{i,0}$ of $(R,+)$ such that $r_i\in H_{i,0}$ and that $H_{i,0}\subseteq H_i$ be its divisible hull. Since the former is countable, we deduce $H_i$ is countable as well. In view of
Discussion \ref{red} (c) $H_i$ is injective.
Now, we look at the following diagram
$$\xymatrix{
	&0 \ar[r]&H_{i,0}\ar[r]^{\subseteq}\ar[d]_{\subseteq}&G_{\bold c}\\
	&& H_i
	&&&}$$
Since $H_i$ is injective, it can be extended to $g_i$. Namely,
$$\xymatrix{
	&0 \ar[r]&H_{i,0}\ar[r]^{\subseteq}\ar[d]_{\subseteq}&G_{\bold c}\ar[dl]^{\exists g_i}\\
	&& H_i
	&&&}$$
So, $\id_{H_{i,0}}$ can be extended to $g_i$ as desired.]
\fi
We can find $$f:\Lambda_{\bold c, <\omega}\longrightarrow |R|+\aleph_0 < \lambda$$ such that if $b\in G_{\bold c}$\footnote{Recall we have chosen $$b-\sum_{\ell<n_b} r_{b,\ell}y_{\eta_{b,\ell}, m_{b,\ell}}\in\sum_{\rho\in\Lambda_{\bold c, < \omega}}Rx_\rho +K.$$}, then from $f(b)$ we can compute
$$\big\langle n_b,\{(\ell,m_{b, \ell}, r_{b,\ell}):\ell<n_b\}\big \rangle.$$

Recall that $\bold c$ is full, and that $\range(f)$ has size less than $\lambda$. From these, there is some $\eta \in \Lambda_{\bold c, \omega}$ furnished with the the following two properties:
\begin{enumerate}
	\item $f(\eta \restriction_L n)=f(\eta \restriction_R n),$ for $n<\omega$,
	
	\item  $a^{\bold c}_{\eta, n}=a_n$ for all $n<\omega$.
\end{enumerate}
Now, we bring the following claim.

 $\bold{Claim}$:  $\nu_i\in\supp_0(h(y_{\eta_0}))\quad\forall i<\omega.$

Note that this will give us the desired  contradiction, as $\supp_0(h(y_{\eta_0}))$ is finite. Now we turn to the proof of the claim.
\begin{proof}[Proof of Claim]
By Lemma \ref{g21} we first observe that:
	\begin{equation*}
\begin{array}{clcr}
y_{\eta, 0}&= \sum\limits_{i=0} ^n \big(\prod_{j<i}j!\big)r_i x_{\eta_i}+\big(\prod_{i=1}^{n}i!\big) y_{\eta, n+1}+ \sum\limits_{i=0}^n \big(\prod_{j<i}j!\big)(x_{\eta \restriction_L i}-x_{\eta \restriction_R i}).
%\\
%&= \sum\limits_{i=0} ^n \big(\prod_{j<i}j!\big) h(rx_{\eta_i})+\big(\sum\limits_{i=0}^n \prod_{j<i}j!\big) x_{\nu_\omega} +(\prod_{i=1}^{n}i!) y_{\eta, n+1}+ %\sum\limits_{i=0}^n \big(\prod_{j<i}j!\big)(x_{\eta \restriction_L i}-x_{\eta \restriction_R i})\qquad(**).
\end{array}
\end{equation*}

Let $\ell$ be any integer.  We are going to use the notation presented in Notation \ref{cof} for $n=m_\ell$.
Applying $\pi_n h(-)$ to it, yields that

\begin{equation*}
\begin{array}{clcr}
(3) \quad \pi_n( h(y_{\eta, 0}))&= \sum\limits_{i=0} ^n \big(\prod_{j<i}j!\big)\pi_nh(r_i x_{\eta_i})+(\prod_{i=1}^{n}i!) \pi_nh(y_{\eta, n+1}) \\&\quad+\sum\limits_{i=0}^n \big(\prod_{j<i}j!\big)\pi_nh(x_{\eta \restriction_L i}-x_{\eta \restriction_R i})
\\&\stackrel{}= \sum\limits_{i=0} ^n \big(\prod_{j<i}j!\big)\pi_nh(r_i x_{\eta_i})+\sum\limits_{i=0}^n \big(\prod_{j<i}j!\big)\pi_nh(x_{\eta \restriction_L i}-x_{\eta \restriction_R i}),\\
%&\stackrel{(\ast)}= h(b_n)+ n! h(y_{\eta, n+1})+\big(f({\eta\rest_Ln})-f{\eta\rest_Rn})\big)\\

\end{array}
\end{equation*}
where the last equality follows by Definition \ref{cof}.
Now, we recall from the construction $(\ast)$ that:
\begin{enumerate}
	\item[(3.1)]  $\nu_i \in \supp_\circ(h(r_ix_{\eta_i}))$,	\item[(3.2)] $\nu_i \neq \eta_i$ and $\nu_i \notin Y_i.$

\end{enumerate}

Thanks to Fact \ref{fcof}(d)
we have
\begin{enumerate}
	\item[(4)] $\nu_i \in \supp_\circ(\pi_{n}h(r_ix_{\eta_i}))$.
\end{enumerate}
By clause (1) above, $\supp_\circ(h(x_{\eta \restriction_L i}-x_{\eta \restriction_R i}))=\emptyset$.
In view of Fact \ref{fcof}(c),
we deduce that

\begin{enumerate}
	\item[(5)] $\supp_\circ\big(\pi_n(h(x_{\eta \restriction_L i}-x_{\eta \restriction_R i}))\big)=\emptyset$.
\end{enumerate}
%This formula holds for $\pi_\ell$ instead of $\pi_n$. In particular, it holds for $\ell:=m_n$.
First, we plug  items (4) and (5) in the clause (3), then we use $(*)$. These enable  us
to observe that

\begin{equation*}
\begin{array}{clcr}
\nu_i& \in \supp_\circ\bigg( \sum\limits_{i=0} ^n \big(\prod_{j<i}j!\big)\pi_nh(r_i x_{\eta_i})+\sum\limits_{i=0}^n \big(\prod_{j<i}j!\big)\pi_nh(x_{\eta \restriction_L i}-x_{\eta \restriction_R i})\bigg) \\&= \supp_\circ(\pi_nh(y_{\eta, 0})).
\end{array}
\end{equation*}

Another use of Fact \ref{fcof}(c),
shows that $
\nu_i \in   \supp_\circ(h(y_{\eta, 0})).
$ This completes the proof of the claim.
\end{proof}
The lemma follows.
%By clause (1) above, $\supp_\circ(h(x_{\eta \restriction_L i}-x_{\eta \restriction_R i}))=\emptyset$, so for every $i$,
%\[
%\nu_i \in \supp_\circ(h(r_i x_{\eta_i})) \subseteq \supp_\circ(h(y_{\eta, 0})),
%\]
%as requested.
% So, the proof of
%$\boxplus_1$ is now completed.
 \end{proof}

%\begin{discussion}\label{mod+}Adopt the notation of Lemma \ref{g12}. \begin{enumerate}
%		\item
The following lemma can be proved easily.
\begin{lemma}
\label{g121}
Let $\bold c\in \AP$ be full and $h \in \End(G_{\bold c})$. Let
 $Y_0\subseteq \Omega_{\bold c}$ be the downward closure  of $\Lambda_h$, where $\Lambda_h$ is as in Lemma \ref{g12} and set
 $$K^+:=K+\sum_{\rho\in Y_0\cap\Lambda_{\bold c, < \omega}}Rx_\rho+\sum_{\rho\in Y_0\cap\Lambda_{\bold c, \omega}n<\omega}Ry_{\rho,n}.$$
 If $b\in G_{\bold c}$, then there are choices
	\begin{enumerate}
		\item[$\bullet$] $\bar r_b:=\langle r^2_{b,\rho}:\rho\in\Lambda _b \rangle $, and
		\item[$\bullet$] $\Lambda_{\bold b}\subseteq \Lambda_{\bold c, < \omega}\setminus Y_0$ finite
	\end{enumerate}
such that$$b-\sum_{\rho\in \Lambda_{\bold b}}r^2_{b,\rho} x_\rho\in K^+.$$
	%Furthermore, $\bar r_b$ is unique.
\end{lemma}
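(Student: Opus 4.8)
The plan is to reduce the statement to the normal form for elements of $G_{\bold c}$ recorded in Definition \ref{g211}(a) and then to partition the resulting finite sum according to membership in $Y_0$. Concretely, I would first write the given $b\in G_{\bold c}$ as
\[
b \;=\; \sum_{\eta\in\Lambda_{b,<\omega}} r_\eta\, x_\eta \;+\; \sum_{\nu\in\Lambda_{b,\omega}} r_\nu\, y_{\nu,n(b)} \;+\; d_b,
\]
where $\Lambda_b=\Lambda_{b,<\omega}\cup\Lambda_{b,\omega}\subseteq\Lambda_{\bold c}$ is finite, $r_\eta,r_\nu\in R\setminus\{0\}$, $n(b)<\omega$, and $d_b\in K$. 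Using the convention of Definition \ref{circ}(2), namely $x_{(\nu,n)}:=y_{\nu,n}$ for $(\nu,n)\in\Lambda_{\bold c,\omega}\times\omega$, this exhibits $b$, modulo $K$, as a finite $R$-linear combination $\sum_{\rho}r_\rho x_\rho$ of the generators indexed by a finite subset of $\Omega_{\bold c}$.

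Next I would split the indices $\rho$ occurring in this combination into those lying in $Y_0$ and those lying in $\Omega_{\bold c}\setminus Y_0$. For an index $\rho\in Y_0$ occurring in the sum, the associated generator --- that is, $x_\rho$ itself when $\rho\in\Lambda_{\bold c,<\omega}$, and $y_{\nu,n}$ when $\rho=(\nu,n)$ --- belongs to $K^+$ by the very construction of $K^+$; and $d_b\in K\subseteq K^+$ as well. Hence I would define $\Lambda_{\bold b}$ to be the set of those indices $\rho\notin Y_0$ that actually occur in the normal form of $b$, put $r^2_{b,\rho}:=r_\rho$ for $\rho\in\Lambda_{\bold b}$, and obtain
\[
b-\sum_{\rho\in\Lambda_{\bold b}} r^2_{b,\rho}\, x_\rho \;=\; \Big(\sum_{\substack{\rho\text{ occurs in }b\\ \rho\in Y_0}} r_\rho\, x_\rho\Big)+d_b \;\in\; K^+ .
\]
Since $\Lambda_b$ is finite by Definition \ref{g211}(a), so is $\Lambda_{\bold b}$, and the finite-type indices among them lie in $\Lambda_{\bold c,<\omega}\setminus Y_0$ as required.

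The one point that needs genuine care is purely definitional: one must unwind the descriptions of $\Omega_{\bold c}$, of the downward closure, and of the third summand of $K^+$ in order to confirm that every generator indexed by an element of $Y_0$ which can appear in a normal form really does lie in $K^+$ --- for finite-type indices this is immediate, while for the finitely many $y$-generators it rests on the fact that $Y_0$ is downward closed, so that all levels $y_{\nu,m}$, $m<\omega$, of a captured branch $\nu$ are present. Beyond this bookkeeping I anticipate no obstacle; the substantive work, controlling an arbitrary $h\in\End(G_{\bold c})$ on the portion of $G_{\bold c}$ lying away from the countable set $\Lambda_h$, has already been carried out in Lemma \ref{g12}, which is exactly why the present statement can be recorded as an easy consequence.
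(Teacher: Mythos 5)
Your reduction to the normal form of Definition~\ref{g211}(a) followed by partitioning the indices by membership in $Y_0$ is the natural move, and it matches what the paper's ``This is straightforward'' gestures at. But the argument as written does not deliver the stated conclusion. The lemma demands $\Lambda_{\bold b}\subseteq\Lambda_{\bold c,<\omega}\setminus Y_0$, i.e.\ \emph{every} index in $\Lambda_{\bold b}$ must be a finite-type one, so that the correction term $\sum_{\rho\in\Lambda_{\bold b}}r^2_{b,\rho}x_\rho$ involves only the generators $x_\rho$ with $\rho\in\Lambda_{\bold c,<\omega}$. Your $\Lambda_{\bold b}$, however, is defined as the set of \emph{all} $\rho\in\Omega_{\bold c}\setminus Y_0$ occurring in the normal form of $b$, and this may well contain $\omega$-type pairs $(\nu,n)$ with $\nu\notin Y_0$. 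The corresponding generators $y_{\nu,n}$ do not belong to $K^{+}$ (the third summand of $K^{+}$ ranges only over $\rho\in Y_0$), and they are not of the form $x_\rho$ for $\rho\in\Lambda_{\bold c,<\omega}$, so they fall through both sides of the required decomposition. Your closing remark that ``the finite-type indices among them lie in $\Lambda_{\bold c,<\omega}\setminus Y_0$ as required'' tacitly restricts attention to those indices, but that is not the assertion of the lemma; it is precisely the $\omega$-type remainders that need to be shown absent or eliminable.

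This gap is not merely cosmetic. For $b=y_{\eta,0}$ with $\eta\in\Lambda_{\bold c,\omega}\setminus Y_0$ (such $\eta$ exist since $Y_0$ is countable while $\Lambda_{\bold c,\omega}$ has size $\lambda$), there is no finite $\Lambda_{\bold b}\subseteq\Lambda_{\bold c,<\omega}\setminus Y_0$ that works: in the quotient of $G_{\bold c}$ by the subgroup generated by $K^{+}$ together with all $Rx_\rho$, $\rho\in\Lambda_{\bold c,<\omega}$, the defining relation $y_{\eta,n}=a^{\bold c}_{\eta,n}+n!\,y_{\eta,n+1}+(x_{\eta\restriction_Ln}-x_{\eta\restriction_Rn})$ collapses to $\bar y_{\eta,n}=n!\,\bar y_{\eta,n+1}$, and $\bar y_{\eta,0}$ stays nonzero. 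So either one must import into the proof the extra information actually available at the point of application in Lemma~\ref{findm} (where the relevant $b$ are of the form $h(r x_\nu)$ with $\nu\notin\Lambda_h$ and hence, by Lemma~\ref{g12}, have $\supp_\circ(b)$ essentially inside $\Lambda_h\subseteq Y_0$), or one must point out and resolve the mismatch in the lemma's wording. Your proposal does neither, so the step ``every non-$Y_0$ index occurring in $b$ is finite-type'' is missing, and the proof is incomplete as it stands.
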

\begin{proof}
This is straightforward.
\end{proof}
\begin{hypothesis}
For the rest of this section, we fix a well-ordering $\prec$ of the large enough part of the universe, and for each:
	\begin{enumerate}
	\item[$\bullet$] $\bold c\in \AP$ which is full,
	\item[$\bullet$] $h \in \End(G_{\bold c})$, and
	\item[$\bullet$]$b\in G_{\bold c}$,
\end{enumerate}
we
let  $\bar r_b:=\langle r^2_{b,\rho}:\rho\in\Lambda _b \rangle  $ be the $\prec$-least sequence satisfying the conclusions of Lemma \ref{g121}.
\end{hypothesis}

\begin{notation}\label{notl} Suppose $\bold c \in \AP$ and $\Lambda \subseteq \Lambda_{\bold c}$. By  $G_{\bold c, \Lambda}$ we mean $$G_{\bold c, \Lambda}:=G_{ \Lambda}:= \bigg\langle \big\{rx_\nu, ry_{\eta, n}: r \in R, \nu \in  \Lambda_{<\omega}, \eta \in  \Lambda_{\omega} \emph{ and } n<\omega  \big \}              \bigg\rangle.$$
\end{notation}
\iffalse{}Suppose $\Lambda \subseteq \Lambda[\lambda]$ is such that for $\eta \in \Lambda$ and $k<\omega,$
$\eta \upharpoonright_L k, \eta \upharpoonright_R k \in \Lambda.$\fi
We have the following observation, but as we do not use it, we leave its proof.
\begin{observation}
	\label{g9}
	Suppose $\Lambda \subseteq \Lambda[\lambda]$ is downward closed. Then
	$G_{\bold c, \Lambda}$ is a pure subgroup of $G_{\bold c}$.
\end{observation}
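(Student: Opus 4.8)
The plan is to derive purity from a torsion-freeness statement for a quotient module. I would use the standard fact that a subgroup $H\le G$ with $G/H$ torsion-free is automatically pure in $G$ (if $g\in H\cap nG$, write $g=nk$; then $n(k+H)=0$ in $G/H$ forces $k\in H$, so $g\in nH$). Since every element of $K=\tor(G_{\bold c})$ has empty support and $G_{\bold c,\Lambda}$ is to be read together with $K$ (cf.\ the remark following Definition \ref{g211}), I would work with $K+G_{\bold c,\Lambda}$ and pass to $M_{\bold c}=G_{\bold c}/K$, where $G_{\bold c,\Lambda}$ has image the sub-$R$-module $M_{\bold c,\Lambda}$ generated by $\{x_\nu:\nu\in\Lambda_{<\omega}\}\cup\{y_{\eta,n}:\eta\in\Lambda_\omega,\ n<\omega\}$, so that $G_{\bold c}/(K+G_{\bold c,\Lambda})\cong M_{\bold c}/M_{\bold c,\Lambda}$. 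It then suffices to show this last module is torsion-free.

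For that I would argue that $M_{\bold c}/M_{\bold c,\Lambda}$ is again a module presented in the style of Definition \ref{g2}(f). By that definition $M_{\bold c}$ is generated by $X_{\bold c}\cup K$ freely except for the equations $y_{\eta,n}=a^{\bold c}_{\eta,n}+(n!)\,y_{\eta,n+1}+(x_{\eta\restriction_Ln}-x_{\eta\restriction_Rn})$ with $a^{\bold c}_{\eta,n}\in G_{\bold c,\eta(0,1)}$. When $\Lambda$ is downward closed, for $\eta\in\Lambda_\omega$ the indices $\eta\restriction_Ln,\eta\restriction_Rn$ automatically lie in $\Lambda_{<\omega}$; after rewriting each $a^{\bold c}_{\eta,n}$ in the normal form of Definition \ref{g211}(a) and invoking the minimality clause (b.2) there, one checks that every equation of $\Gamma_{\bold c}$ mentioning a $\Lambda$-indexed generator involves (modulo $K$) only $\Lambda$-indexed generators. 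Hence the surviving equations, indexed by $\Lambda_{\bold c}\setminus\Lambda$, form a self-contained presentation of $M_{\bold c}/M_{\bold c,\Lambda}$ of the same type, and adapting the argument of Lemma \ref{g3}(3)(a),(b) to this smaller index family shows $M_{\bold c}/M_{\bold c,\Lambda}$ is $\aleph_1$-free as an $R$-module, hence torsion-free as an abelian group since $(R,+)$ is torsion-free (being cotorsion-free, by Hypothesis \ref{g1} and Fact \ref{redco}).

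The step I expect to be the main obstacle is exactly the bookkeeping claimed in the previous paragraph: verifying that the terms $a^{\bold c}_{\eta,n}\in G_{\bold c,\eta(0,1)}$ do not covertly couple a $\Lambda$-generator to a generator outside $\Lambda$. This I would handle with the support machinery of Definitions \ref{g211} and \ref{circ}, using that the indices occurring in the normal form of $a^{\bold c}_{\eta,n}$ already lie in $\Lambda_{\bold c}$ and that downward closedness together with the definition of $\supp_{\bold c}$ keeps them inside $\Lambda$ once $\eta\in\Lambda$, while the $K$-component of $a^{\bold c}_{\eta,n}$ is harmless since it is absorbed into $K$. Granting this, the torsion-free quotient criterion finishes the proof, and in fact yields the slightly stronger conclusion that $G_{\bold c}/(K+G_{\bold c,\Lambda})$ is $\aleph_1$-free.
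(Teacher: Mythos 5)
The paper states Observation \ref{g9} without proof, remarking explicitly that it is not used, so there is no argument of the authors' to compare against; the task is to assess your argument on its own, and the crucial step does not hold up. You yourself flag as the ``main obstacle'' the claim that, for a general downward-closed $\Lambda$, the relations of $\Gamma_{\bold c}$ touching a $\Lambda$-indexed generator involve (modulo $K$) only $\Lambda$-indexed generators. Downward closure controls only the indices $\eta\restriction_Ln,\eta\restriction_Rn$; it gives no control whatsoever over $a^{\bold c}_{\eta,n}$, which by Definition \ref{g2}(f) is an essentially arbitrary element of $G_{\bold c,\eta(0,1)}$ and may perfectly well involve $x_\rho$ with $\rho\notin\Lambda$. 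Invoking clause (b.2) of Definition \ref{g211} does not rescue this: that clause is a defining property of the specific minimal set $\supp_{\bold c}(a)$, produced by the minimization, not a property enjoyed by an arbitrary downward-closed $\Lambda$. For instance, take $\Lambda=\{\eta\}\cup\{\eta\restriction_Ln,\eta\restriction_Rn:n<\omega\}$ for a single $\eta\in\Lambda_{\bold c,\omega}$, and suppose $a^{\bold c}_{\eta,0}=px_\rho$ with $\rho\in\Lambda_{<\omega}[\eta(0,1)]\setminus\Lambda$ and $p$ a prime with $pR\neq R$. The defining equation forces $px_\rho=y_{\eta,0}-y_{\eta,1}-(x_{\eta\restriction_L0}-x_{\eta\restriction_R0})\in G_{\bold c,\Lambda}$, while $x_\rho$ itself has no reason to lie in $K+G_{\bold c,\Lambda}$; so your target quotient $M_{\bold c}/M_{\bold c,\Lambda}$ acquires $p$-torsion, and the torsion-freeness route collapses. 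Whatever the right argument for this Observation is, it cannot rest on $\Gamma_{\bold c}$ ``closing up'' on an arbitrary downward-closed index set.

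There is a secondary gap even granting the bookkeeping: what you would then have proved is that $K+G_{\bold c,\Lambda}$ is pure in $G_{\bold c}$, but Notation \ref{notl} defines $G_{\bold c,\Lambda}$ without $K$. Descending from purity of $K+G_{\bold c,\Lambda}$ to purity of $G_{\bold c,\Lambda}$ requires in addition that $G_{\bold c,\Lambda}$ be pure inside $K+G_{\bold c,\Lambda}$ (equivalently, that $K\cap G_{\bold c,\Lambda}$ be pure in $K$), a separate claim your proposal neither states nor addresses.
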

\iffalse

\begin{proof}
	Let $b\in  G_{\bold c}$ and $n\in\mathbb{N}$ be such that
	$nb\in G_{\bold c, \Lambda}$. Suppose on the way of contradiction that
	$b\notin G_{\bold c, \Lambda}$. Let $a=0$. Since
	$nb$ and $b$ have same support,
	and in view of Notation \ref{notl} we are in the situation of
	Lemma \ref{h26}. This allow us to deduce that $G_{\bold c} \models n \nmid (b-a)=b$.
	In other words, $b\notin  G_{\bold c}$. This contradiction says that
	$b\in G_{\bold c, \Lambda}$. In view of Fact \ref{pure} we see $G_{\bold c, \Lambda}$ is a pure in $G_{\bold c}$.
\end{proof}
\fi

\begin{lemma}\label{findm}
Let $\bold c\in \AP$ be full, and $h \in \End(G_{\bold c})$. Then for some countable  $\Lambda_h\subseteq \Lambda[\lambda]$
we have:
\[
r \in R, \nu \in \Omega_{\bold c} \setminus \Lambda_h\implies
h(r x_\nu) \in  G_{\bold c, \Lambda_h\cup\{\nu\}}+K.
\]
\end{lemma}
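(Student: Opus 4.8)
The plan is to combine Lemma \ref{g12} with the structural description of supports from Definition \ref{g211} and Fact \ref{fcof}. First I would take $\Lambda_h^0 \subseteq \Omega_{\bold c}$ to be the countable set provided by Lemma \ref{g12}, so that for $r\in R$ and $\nu\in\Omega_{\bold c}\setminus\Lambda_h^0$ we have $\supp_\circ(h(rx_\nu))\subseteq\{\nu\}\cup\Lambda_h^0$. The key point is that $\supp_\circ$ only records the ``$y$-part'' of an element (the components lying over $\Lambda_{\bold c,\omega}$, cf.\ Definition \ref{circ}(3)--(4)), so knowing $\supp_\circ(h(rx_\nu))\subseteq\{\nu\}\cup\Lambda_h^0$ tells us that $h(rx_\nu)$ lies, modulo the subgroup generated by $K$ together with $\{Rx_\rho:\rho\in\Lambda_{\bold c,<\omega}\}$, inside $G_{\bold c,\{\nu\}\cup\Lambda_h^0}$. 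What remains is to absorb the ``$x$-part'' over $\Lambda_{\bold c,<\omega}$, i.e.\ to show that the $x_\rho$-components of $h(rx_\nu)$ (for $\rho\in\Lambda_{\bold c,<\omega}$) also live in a fixed countable set once $\nu$ is outside a suitable enlargement of $\Lambda_h^0$.

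To handle the $x$-part I would run the same Födor/counting argument as in Lemma \ref{g12} but one level down: suppose for contradiction that for arbitrarily large (in the sense of cofinally many, in fact $\omega_1$-many after thinning) $\nu_i\in\Omega_{\bold c}\setminus\Lambda_h^0$ and $r_i\in R\setminus\{0\}$ one can pick $\rho_i\in\Lambda_{\bold c,<\omega}$ with $\rho_i$ appearing (with nonzero coefficient) in the canonical representation of $h(r_ix_{\nu_i})$ from Definition \ref{g211}(a), yet $\rho_i\notin\{\nu_i\}\cup\Lambda_h^0\cup\{\eta_j,\rho_j:j<i\}$. After passing to an uncountable subset on which the relevant lengths are constant and the data are ``disjoint'' (so that $\rho_j$ does not appear in $h(r_ix_{\nu_i})$ for $i\neq j$), I would use fullness of $\bold c$: choose $\eta\in\Lambda_{\bold c,\omega}$ with $a^{\bold c}_{\eta,n}=r_nx_{\nu_n}$ and $f(\eta\restriction_L n)=f(\eta\restriction_R n)$ for an appropriately chosen $f$ (chosen, as in Lemma \ref{g12}, so that $f$ records from $f(b)$ enough of the representation data of $b$, now including which $\rho\in\Lambda_{\bold c,<\omega}$ occur). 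Then apply $h$, together with the projections $\pi_n$ from Notation \ref{cof} and Lemma \ref{g21}, to the identity
\[
y_{\eta,0}=\sum_{i=0}^n\Big(\prod_{j<i}j!\Big)r_ix_{\nu_i}+\Big(\prod_{i=1}^n i!\Big)y_{\eta,n+1}+\sum_{i=0}^n\Big(\prod_{j<i}j!\Big)(x_{\eta\restriction_L i}-x_{\eta\restriction_R i});
\]
since $h(x_{\eta\restriction_L i})$ and $h(x_{\eta\restriction_R i})$ have equal relevant components by the choice of $f$, they cancel, and one concludes $\rho_i\in\supp(h(y_{\eta,0}))$ for all $i<\omega$, contradicting finiteness of $\supp(h(y_{\eta,0}))$ (Definition \ref{g211}). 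Taking $\Lambda_h$ to be the (countable, downward closed) union of $\Lambda_h^0$ with the countable set of ``bad'' $\rho$'s produced this way gives the conclusion: for $\nu\in\Omega_{\bold c}\setminus\Lambda_h$, both the $y$-support and the $x$-support of $h(rx_\nu)$ lie in $\Lambda_h\cup\{\nu\}$, hence $h(rx_\nu)\in G_{\bold c,\Lambda_h\cup\{\nu\}}+K$.

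The main obstacle I anticipate is bookkeeping rather than conceptual: making precise what ``the $x$-part over $\Lambda_{\bold c,<\omega}$ of $h(r x_\nu)$'' means in a way compatible with the support conventions (Definition \ref{circ} vs.\ Definition \ref{g211}), and verifying that the cancellation of the $h(x_{\eta\restriction_L i}-x_{\eta\restriction_R i})$ terms really does follow from a single countable-valued coloring $f$ — one must build $f$ so that $f(b)$ simultaneously codes the $y$-data (as in Lemma \ref{g12}) and enough of the $x$-data over $\Lambda_{\bold c,<\omega}$, while keeping its range of size $<\lambda$; this is possible since $|R|+\aleph_0<\lambda$ and each element has finite support. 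Once the coloring is set up correctly, the contradiction is obtained exactly as in Lemma \ref{g12} via $\pi_n$ and Fact \ref{fcof}(c)--(d), so I would present the argument by reduction to that lemma wherever possible rather than repeating it in full.
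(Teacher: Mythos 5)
Your proposal is correct and follows essentially the same route as the paper's proof: apply Lemma~\ref{g12} to control the $y$-support, then run a parallel $\omega_1$-length induction on ``bad'' pairs $(\nu_i,\rho_i)$ whose $x$-components escape the countable set, build a coloring $f$ that codes the $x$-data of $h(x_\rho)$, invoke fullness to force the cancellation $h(x_{\eta\restriction_L n})=h(x_{\eta\restriction_R n})$, and derive a contradiction with the finiteness of the support of $h(y_{\eta,0})$ via the projections $\pi_n$ of Notation~\ref{cof} and Fact~\ref{fcof}. The only (cosmetic) difference is that the paper packages the $x$-component bookkeeping through the auxiliary Lemma~\ref{g121} (the canonical decomposition of $b$ modulo $K^+$, with the finite set $\Lambda_b\subseteq\Lambda_{\bold c,<\omega}\setminus Y_0$), which is precisely what you describe informally through Definition~\ref{g211}(a); and the wrap-up sentence about taking $\Lambda_h$ to be ``$\Lambda_h^0$ plus the bad $\rho$'s'' should properly read as: the contradiction shows the induction must halt at some countable stage $i<\omega_1$, and the resulting countable $Y_i$ is the desired $\Lambda_h$.
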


\begin{proof}

Suppose  on the way of contradiction that the lemma fails. Let $Y_0$ be as
	 Lemma \ref{g121}. We define a sequence
$$\langle (Y_i,\nu_i, \rho_i,r_i): i < \omega_1 \rangle,$$
by induction on $i<\omega_1$, such that	

\begin{enumerate}
	\item[($\natural$)]
	\begin{enumerate}
		\item  $r_i \in R \setminus \{0\},$
		
		\item $Y_i = \bigcup\{\supp(h(r_j x_{\nu_j})): j<i       \} \cup \{ \rho_j: j<i\}\cup Y_0,$
		
		\item $\nu_i \in \Omega_{\bold c}\setminus Y_i$,
		
	 \item $h(r_i\nu_i)\notin  G_{\bold c, Y_i\cup\{\nu_i\}}+K,$  \item
	 let $b_i:=h(r_i\nu_i)$, and let $\bar r_{b_i}:=\langle r^2_{b_i,\rho}:\rho\in\Lambda _i \rangle $ be as  Lemma \ref{g121} applied to $b_i$.
	 Then $\rho_i\in \Lambda_i\setminus(Y_i\cup\{\nu_i\}),$ and even
\[
r^2_{b_i,\rho_i} x_{\rho_i} \notin  G_{\bold c, Y_i\cup\{\nu_i\}}+K.
\]
	\end{enumerate}\end{enumerate}
  To construct this, suppose $i< \omega$ and we have constructed the sequence up to $i$. Now, $(\natural)_b$ gives the definition of $Y_i$.
Since we assume that the lemma fails, there is an $ r_i \in R$ and $ \nu_i \in \Omega_{\bold c} \setminus Y_i$ such that
	  $h(r_i x_{\nu_i}) \notin  G_{\bold c, \Lambda_h\cup\{\nu\}}+K.$
	Now, we define $b_i:=h(r_i\nu_i)$.
	 Thanks to Lemma \ref{g121},
	there is a finite set $\Lambda_i \subseteq \Lambda_{\bold c, <\omega}\setminus Y_i$ and a  sequence $\langle r^2_{b_i,\rho}: \rho \in \Lambda_i \rangle$ such that $$b_i-\sum_{\rho\in \Lambda_{i}}r^2_{b_i,\rho} x_\rho\in K^+.$$
	 As $b_i\notin  G_{\bold c, Y_i\cup\{\nu_i\}}+K$
and due to the following containment $$b_i-\sum_{\rho\in \Lambda_{i}}r^2_{b_i,\rho} x_\rho\in K^+\subseteq  G_{\bold c, Y_i\cup\{\nu_i\}}+K,$$
there is $\rho_i\in \Lambda_{i}$ such that $\rho_i\notin (Y_i\cup\{\nu_i\})$, and indeed $$r^2_{b_i,\rho_i} x_{\rho_i} \notin  G_{\bold c, Y_i\cup\{\nu_i\}}+K.$$ This completes the proof of construction.
By shrinking the sequence, we may and do assume in addition that
\begin{enumerate}
\item[$\bullet$] for all $i \neq j < \omega_1, \rho_j \notin \Lambda_i$.
\end{enumerate}

Let $a_n:=r_nx_{\nu_n}$ and define $$f:\Lambda_{\bold c,<\omega}\to \mid R\mid+\mid K\mid+ \aleph_{0}<\lambda
$$be such that for any $\rho\in\Lambda_{\bold c,<\omega}$, $f(\rho)$ codes
\begin{itemize}
		\item[$\bullet$] $\langle r^2_{b,\rho}:\rho\in\Lambda _b \rangle $, and
	\item[$\bullet$] $b-\sum_{\nu\in \Lambda_{i}}r^2_{b,\nu} x_\nu,$
\end{itemize}
where $b:=h(x_{\rho})$. To see such a function $f$ exists, first we define:

	\begin{enumerate}
		\item[$\bullet$]  $f_1: R^{<\omega} \times K^+ \to  \mid R\mid+\mid K\mid+ \aleph_{0}$ is a bijection, and
		
		\item[$\bullet$]  $f_2: \Lambda_{\bold c,<\omega}\to R^{<\omega} \times K^+$ is defined as
		\[
		f_2(b)= \big(\langle r^2_{b,\rho}:\rho\in\Lambda _b \rangle, b-\sum_{\nu\in \Lambda_{i}}r^2_{b,\nu} x_\nu\big).
		\]
\end{enumerate}
Then, we set $f:=f_1 \circ f_2.$
Suppose $\rho_1,\rho_2\in\Lambda_{\bold c,<\omega}$
are such that $f(\rho_1)=f(\rho_2)$. We claim that $h(x_{\rho_1})=h(x_{\rho_2})$. To see this, it is enough to apply $f(\rho_1)=f(\rho_2
)$, and conclude that
\begin{itemize}
	\item[$(1)$] $\langle r^2_{b_1,\nu}:\nu\in\Lambda _{b_1} \rangle =\langle r^2_{b_2,\nu}:\nu\in\Lambda _{b_2} \rangle$
	\item[$(2)$] $b_1-\sum_{\nu\in \Lambda_{b_1}}r^2_{b,\nu} x_\nu=b_2-\sum_{\nu\in \Lambda_{b_2}}r^2_{b,\nu} x_\nu$,
\end{itemize}
	where $b_i=h(x_{\rho_i})$. But, then we have
	\begin{equation*}
	\begin{array}{clcr}
b_1&=b_1-\sum_{\nu\in \Lambda_{b_1}}r^2_{b,\nu} x_\nu+(\sum_{\nu\in \Lambda_{b_1}}r^2_{b,\nu} x_\nu)\\&\stackrel{(2)}= b_2-\sum_{\nu\in \Lambda_{b_2}}r^2_{b,\nu} x_\nu+(\sum_{\nu\in \Lambda_{b_2}}r^2_{b,\nu} x_\nu)
	\\&=b_2,
\end{array}
	\end{equation*}i.e.,
$h(x_{\rho_1})=h(x_{\rho_2})$.

Since $\bold c$ is full, and in the light of Definition \ref{g6}(b),  we are able to find an $\eta\in\Lambda_{\bold c, \omega}$ such that
\begin{itemize}
	\item[$(3)$] $a_n=a^{\bold c}_{\eta,n}$, and
	\item[$(4)$]  $f({\eta\rest_Ln})=f({\eta\rest_Rn})$,
\end{itemize} for all $n<\omega$.
 Thanks to the previous paragraph and clause (4) we deduce $$h(x_{\eta\rest_Ln})=h(x_{\eta\rest_Rn})\quad(\sharp)$$
By applying $h$ to the both sides of the following equation
 \begin{equation*}
\begin{array}{clcr}
y_{\eta, 0}&= \sum\limits_{i=0} ^n \big(\prod_{j<i}j!\big)r_ix_{\nu_i}+\big(\prod_{i=1}^{n}i!\big) y_{\eta, n+1}+ \sum\limits_{i=0}^n \big(\prod_{j<i}j!\big)(x_{\eta \restriction_L i}-x_{\eta \restriction_R i}),
%\\
%&= \sum\limits_{i=0} ^n \big(\prod_{j<i}j!\big) h(rx_{\eta_i})+\big(\sum\limits_{i=0}^n \prod_{j<i}j!\big) x_{\nu_\omega} +(\prod_{i=1}^{n}i!) y_{\eta, n+1}+ %\sum\limits_{i=0}^n \big(\prod_{j<i}j!\big)(x_{\eta \restriction_L i}-x_{\eta \restriction_R i})\qquad(**).
\end{array}
\end{equation*}
we get

\begin{equation*}
\begin{array}{clcr}
h(y_{\eta, 0})&= \sum\limits_{i=0} ^n \big(\prod_{j<i}j!\big)h(r_ix_{\nu_i})+ \big(\prod_{i=1}^{n}i!\big) h(y_{\eta, n+1})\\&~\quad~+\big(\prod_{j<i}j!\big)\big(h(x_{\eta\rest_Ln})-h(x_{\eta\rest_Rn})\big)
\\&\stackrel{(\sharp)}= \sum\limits_{i=0} ^n \big(\prod_{j<i}j!\big)h(r_ix_{\nu_i})+ \big(\prod_{i=1}^{n}i!\big) h(y_{\eta, n+1})\quad(+)\\
\end{array}
\end{equation*}
For each $i<\omega_1$ let $b_i=h(r_i x_{\nu_i})$.
Let also $b=h(y_{\eta, 0})$ and let $\Lambda_{b}$ be as in Lemma \ref{g121}. As $\Lambda_b$ is finite, for some  large enough $n$, we have
$$\{\rho_i:i<n\}\setminus \Lambda_b\neq \emptyset.$$
Let $i<n$ be such that $\rho_i\notin \Lambda_b$. Here, we apply the argument presented in items (3)-(5) from Lemma \ref{g12} to the displayed formula $(+)$. So, on the one hand, it turns out that $$\rho_i\in \Lambda_i \subseteq\Lambda_b.$$
On the other hand by the choice of $i$, $\rho_i \notin \Lambda_b.$ This is a contraction that we searched for it.
\end{proof}

\begin{lemma}
	\label{g13}
	Let $\bold c\in \AP$ be full, and $h \in \End(G_{\bold c})$. Then for some $m_* \in R$ and some countable  $\Lambda_h = \cl(\Lambda_h) \subseteq \Lambda[\lambda]$
	we have:
	\[
	r \in R, \nu \in \Omega_{\bold c} \setminus \Lambda_h\implies
	h(r x_\nu)-m_* x_\nu \in  G_{\Lambda_h}+K.
	\]
\end{lemma}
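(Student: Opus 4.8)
The plan is to bootstrap from Lemma \ref{findm}. First apply that lemma to obtain a countable, downward closed $\Lambda^0_h\subseteq\Lambda[\lambda]$ such that $h(rx_\nu)\in G_{\bold c,\Lambda^0_h\cup\{\nu\}}+K$ for every $r\in R$ and $\nu\in\Omega_{\bold c}\setminus\Lambda^0_h$. For such a $\nu$ with $\nu\in\Lambda_{\bold c,<\omega}$, the generator $x_\nu$ is part of a free $R$-basis of $M_{\bold c}=G_{\bold c}/K$ over the submodule spanned by the $\Lambda^0_h$-generators: since $\Lambda^0_h$ is downward closed and $\nu\notin\Lambda^0_h$, every equation of $\Gamma_{\bold c}$ (Definition \ref{g2}(f)) that mentions $x_\nu$ also mentions a $y$-generator outside $\Lambda^0_h$, so it imposes no relation on $x_\nu$ over $G_{\bold c,\Lambda^0_h}$; this is the same freeness phenomenon used in Lemma \ref{g3}(2)--(3). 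Hence $G_{\bold c,\Lambda^0_h\cup\{\nu\}}+K=G_{\bold c,\Lambda^0_h}+\{F_s(x_\nu):s\in R\}+K$, and combining this with $h(x_\nu)\in G_{\bold c,\Lambda^0_h\cup\{\nu\}}+K$ produces a unique $m_\nu\in R$ with
\[
h(x_\nu)-F_{m_\nu}(x_\nu)\in G_{\bold c,\Lambda^0_h}+K .
\]
Using additivity of $h$, the identities of Definition \ref{f11}(f),(g), and $h[K]\subseteq K=\tor(G_{\bold c})$, the same holds with $rx_\nu$ and $F_{m_\nu}(rx_\nu)$ in place of $x_\nu$ and $F_{m_\nu}(x_\nu)$, for every $r\in R$.

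It therefore suffices to show that $\nu\mapsto m_\nu$ is constant on $\Lambda_{\bold c,<\omega}$ off a countable set, and this is a black box argument in the style of Lemmas \ref{g12} and \ref{findm}. Suppose it fails. Then I build, by recursion on $i<\omega_1$, pairs $\eta_{i,1},\eta_{i,2}\in\Lambda_{\bold c,<\omega}$, all outside $\Lambda^0_h$ and outside the (countably many) supports produced so far, with the $\eta_{i,\ell}$ pairwise distinct and $m_{\eta_{i,1}}\ne m_{\eta_{i,2}}$. Swapping coordinates when necessary and shrinking, I may assume $m_{\eta_{i,1}}\ne 0$ for all $i$ and that $(\lh(\eta_{i,1}),\lh(\eta_{i,2}))$ is independent of $i$. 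Put $a_i:=x_{\eta_{i,1}}-x_{\eta_{i,2}}$. Next I fix a coloring $f:\Lambda_{\bold c,<\omega}\to|R|+|K|+\aleph_0<\lambda$ such that $f(\rho_1)=f(\rho_2)$ codes the part of the support of $h(x_\rho)$ lying in the downward closure of $\Lambda^0_h$, together with the corresponding coefficients and the multiplier $m_\rho$; then $f(\rho_1)=f(\rho_2)$ entails $h(x_{\rho_1})-h(x_{\rho_2})\in G_{\bold c,\Lambda^0_h}+K$. By fullness of $\bold c$ (Definition \ref{g6}(b)) there is $\eta\in\Lambda_{\bold c,\omega}$ with $a^{\bold c}_{\eta,n}=a_n$ and $f(\eta\restriction_L n)=f(\eta\restriction_R n)$ for all $n$; in particular $h(x_{\eta\restriction_L n})-h(x_{\eta\restriction_R n})\in G_{\bold c,\Lambda^0_h}+K$.

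To finish, apply $h$ to the identity of Lemma \ref{g21},
\[
y_{\eta,0}=\sum_{i=0}^{n}\Big(\prod_{j<i}j!\Big)a_i+\Big(\prod_{i=1}^{n}i!\Big)y_{\eta,n+1}+\sum_{i=0}^{n}\Big(\prod_{j<i}j!\Big)(x_{\eta\restriction_L i}-x_{\eta\restriction_R i}).
\]
The last sum maps into $G_{\bold c,\Lambda^0_h}+K$, whereas modulo $G_{\bold c,\Lambda^0_h}+K$ one has $h(a_i)\equiv F_{m_{\eta_{i,1}}}(x_{\eta_{i,1}})-F_{m_{\eta_{i,2}}}(x_{\eta_{i,2}})$, which genuinely involves $\eta_{i,1}$ because $m_{\eta_{i,1}}\ne 0$. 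Projecting by $\pi_n$ (Notation \ref{cof}) for suitably large $n$ to annihilate the tail $(\prod_{i=1}^{n}i!)y_{\eta,n+1}$, and then reusing verbatim the support-tracking computation of Lemma \ref{g12} (via Fact \ref{fcof} and the disjointness of the $\eta_{i,\ell}$), I get that $\eta_{i,1}$ lies in the support of $h(y_{\eta,0})$ for infinitely many $i$ --- contradicting the finiteness of the essential support of $h(y_{\eta,0})\in G_{\bold c}$. So $m_\nu$ equals a fixed $m_*\in R$ for all $\nu\in\Lambda_{\bold c,<\omega}$ outside some countable set; adjoining that set to $\Lambda^0_h$ and closing downward gives $\Lambda_h=\cl(\Lambda_h)$, and the case $\nu=(\zeta,n)\in\Lambda_{\bold c,\omega}\times\omega$ then follows from the $\Lambda_{<\omega}$ case by applying $h$ to the defining equations of $\Gamma_{\bold c}$ for the $y_{\zeta,k}$ and invoking that $G_{\bold c}/K$ is reduced (Lemma \ref{g31}).

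The step I expect to be the main obstacle is this black box part: one must design $f$ delicately so that matched values force the spine differences $h(x_{\eta\restriction_L n})-h(x_{\eta\restriction_R n})$ into $G_{\bold c,\Lambda^0_h}+K$ while still leaving each $h(a_i)$ detectable in the support, and then transport faithfully the $\pi_n$/support bookkeeping of Lemma \ref{g12} to the present situation, in which the witnessing coordinates $\eta_{i,1}$ live in $\Lambda_{<\omega}$ rather than in $\Lambda_\omega$. The preliminary point that $x_\nu$ is a free direction over $G_{\bold c,\Lambda^0_h}+K$ in $M_{\bold c}$, making $m_\nu$ well defined, is where the downward-closedness of $\Lambda^0_h$ and the precise form of the relations $\Gamma_{\bold c}$ are used.
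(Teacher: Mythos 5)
Your plan closely mirrors the paper's proof: start from Lemma~\ref{findm} to get a countable downward-closed $\Lambda^0_h$, read off a multiplier in front of $x_\nu$, then prove constancy off a countable set by an $\omega_1$-length recursion, a coloring $f$ so that fullness gives $\eta$ with $f(\eta\restriction_Ln)=f(\eta\restriction_Rn)$, application of $h$ to the Lemma~\ref{g21} identity, and the $\pi_n$/support bookkeeping of Lemma~\ref{g12}. However, there is a genuine gap at the first step. You extract a single $m_\nu\in R$ from the case $r=1$ and then assert that ``by additivity of $h$, Definition~\ref{f11}(f),(g), and $h[K]\subseteq K$'' the same containment holds with $rx_\nu$ in place of $x_\nu$. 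This does not follow: $h$ is only a $\mathbb{Z}$-endomorphism of $G_{\bold c}$ and is not assumed to commute with the $F_r$, so $h(rx_\nu)=h(F_r(x_\nu))$ is not controlled by $h(x_\nu)$. What Lemma~\ref{findm} gives you is some $m^r_\nu\in R$ (depending on both $r$ and $\nu$) with $h(rx_\nu)-m^r_\nu x_\nu\in G_{\Lambda^0_h}+K$, and nothing you have established makes $m^r_\nu$ independent of $r$. Since the lemma's conclusion quantifies over all $r\in R$ with a single $m_*$, the $r$-dependence must be killed, and your route does not do so.

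The paper eliminates the $r$-dependence inside the recursion rather than beforehand: the $\omega_1$-sequence carries arbitrary $r_{i,1},r_{i,2}\in R\setminus\{0\}$ together with $\eta_{i,1},\eta_{i,2}\in\Omega_{\bold c}$ (not just $\Lambda_{\bold c,<\omega}$) subject only to $m^{r_{i,1}}_{\eta_{i,1}}\ne m^{r_{i,2}}_{\eta_{i,2}}$; the elements fed to fullness are $a_n:=r_{n,1}x_{\eta_{n,1}}-r_{n,2}x_{\eta_{n,2}}$; and the coloring codes both $m^r_\eta$ and $b^r_\eta$. The resulting contradiction then forces a single constant $m_*$ uniformly over all pairs $(r,\nu)$, which is exactly what the lemma claims. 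A secondary issue is that by restricting the recursion to $\Lambda_{\bold c,<\omega}$ you leave the $y$-generators for a separate bootstrap through the $\Gamma_{\bold c}$ relations and reducedness; this needs its own argument (in particular you would have to control $h(a^{\bold c}_{\zeta,k})$, which has no a priori shape), whereas the paper treats $\Omega_{\bold c}$ uniformly and avoids the extra step. To repair your proof, replace $m_\nu$ by $m^r_\nu$ throughout, enlarge the recursion to carry the ring elements $r_{i,1},r_{i,2}$, allow $\eta_{i,\ell}\in\Omega_{\bold c}$, and set $a_i:=r_{i,1}x_{\eta_{i,1}}-r_{i,2}x_{\eta_{i,2}}$; the rest of your outline then runs as intended.
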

\begin{proof}
	In view of  Lemma \ref{findm},  there is some countable downward closed subset $\Lambda $ of $ \Lambda_{\bold c}$ such that for every $r \in R$ and
	$\eta \in \Omega_{\bold c} \setminus \Lambda$, we have  $h(rx_\eta) \in G_{\Lambda \cup \{\nu\}}+K.$  Thus, for such $r$ and $\eta$, there are $m^r_\eta \in R$ and $b^r_\eta$ satisfying the following two   properties:
	\begin{itemize}
		\item $h(rx_\eta)=m^r_\eta x_{\eta}+b^r_\eta$,
%\footnote{if $\eta=\rho^1\langle n \rangle, \rho \in \Lambda_{<\omega}[\lambda],$ then maybe $m^r_\eta \in \mathbb{Q}R$}
		
		\item $b^r_\eta \in  G_{\Lambda}+K.$
	\end{itemize}
	%We are going to show $\{m_\eta \}$ is constant up to some shrinking.
	Suppose on the way of contradiction that the desired conclusion fails.
	By induction on $i<\omega_1$ we define a sequence
	$$\big\langle Y_i, r_{i, 1}, r_{i, 2}, \eta_{i, 1}, \eta_{i, 2}:i<\omega_1 \big\rangle$$
	such that:
	\begin{enumerate}
		\item[($\dag$)]
		\begin{enumerate}
			\item $Y_i=\Lambda \cup \{\eta_{j, \ell}: j<i, \ell \in \{1, 2\}    \},$
			
			\item $r_{i, 1}, r_{i, 2} \in R \setminus \{0\}$,
			\item $\eta_{i, \ell} \in\Omega_{\bold c} \setminus Y_i$, for $\ell \in \{1, 2\},$
			
			\item $m^{r_{i, 1}}_{\eta_{i, 1}} \neq m^{r_{i, 2}}_{\eta_{i, 2}}$.
		\end{enumerate}
	\end{enumerate}
	The construction is easy, but we elaborate. Let us start with the case $i=0$. We  set $Y_0=\Lambda$
	and then choose $r_{0, 1}, r_{0, 2} \in  R \setminus \{0\}$ and $\eta_{0, 1}, \eta_{0, 2} \in \Lambda_{<\omega}[\lambda] \setminus \Lambda_h$ such that
	$m^{r_{0, 1}}_{\eta_{0, 1}} \neq m^{r_{0, 2}}_{\eta_{0, 2}}$. Now suppose $i<\omega_1$ and we have define the sequence for all $j<i$. Define $Y_i$ as in clause
	$(\dag)$(a). By our assumption, we can find
	\begin{enumerate}
	\item[i)]	$r_{i, 1}, r_{i, 2} \in R \setminus \{0\}$ and
	 \item[ii)] $\eta_{i, 1}, \eta_{i, 2} \in \Omega_{\bold c} \setminus Y_i$,	\end{enumerate}
	so that  $m^{r_{i, 1}}_{\eta_{i, 1}} \neq m^{r_{i, 2}}_{\eta_{i, 2}}$. This completes the induction construction.
	\iffalse
	For each $i$ let $H_i$ be a countable divisible abelian group and let $g_i \in \Hom((R, +), H_i)$ be such that
	\[
	g_i(m^{r_{i, 1}}_{\eta_{i, 1}} - m^{r_{i, 2}}_{\eta_{i, 2}}) \neq 0,
	\]
	or in other words, $g_i \in \Hom(G_{\bold c}, H_i)$
	is such that
	\[
	g_i(m^{r_{i, 1}}_{\eta_{i, 1}}x_{\eta_{i, 1}} - m^{r_{i, 2}}_{\eta_{i, 2}}x_{\eta_{i, 2}}) \neq 0,
	\]
\fi

Let $$f:\Lambda_{\bold c,<\omega}\to \mid R\mid+\mid K \mid+\aleph_{0}<\lambda
$$
be such that if
$r \in R$ and $\eta \in \Omega_{\bold c}$,
 then  $f(rx_\eta)$ is defined in a way that  one can compute $m^r_\eta$ and $b^r_\eta.$
 Again we can define $f$ as $$f=f_1 \circ f_2 \circ f_3,$$ where:

 	\begin{itemize}
 	\item $f_1: R \times (G_\Lambda+K) \to \mid R\mid+\mid K \mid+\aleph_{0}$ is a bijection,
 	
 \item	$f_2: R \times \Lambda_{\bold c,<\omega} \to R \times (G_\Lambda+K)$
 	is defined as
 $
 	f_2(r, \eta)=(m^r_\eta, b^r_\eta),
 $

 \item	 $f_3: \Lambda_{\bold c,<\omega} \to R \times \Lambda_{\bold c,<\omega}$ is
 	a bijection.

 \end{itemize}

 \iffalse
 $b\in G_{\bold c}$ (recalling we have chosen $$b-\sum_{\ell<n_b} r_{b,\ell}y_{\eta_{b,\ell}, m_{b,\ell}}\in\sum_{\rho\in\Lambda_{\bold c, < \omega}}Rx_\rho +K),$$then from $f(b)$ we can compute
$$n_b,\{(i,\ell,m_{b, \ell},g_i(r_{b,\ell})):i<\omega,\ell<n_b\}.$$

\fi

	For each $n<\omega$, we set
	$$a_n:=r_{n,1}x_{\eta_{n, 1}} -r_{n,2}x_{\eta_{n, 2}} .$$
	Applying $h$ to it yields:
	
	 $$h(a_n)=m^{r_{n, 1}}_{\eta_{n, 1}}x_{\eta_{n, 1}} -m^{r_{n, 2}}_{\eta_{n, 2}}x_{\eta_{n, 2}}+b_n \quad(+),$$
where $b_n:=b^{r_{n, 1}}_{\eta_{n, 1}}-b^{r_{n, 1}}_{\eta_{n, 1}}$. Since $\bold c$ is full, there is an $\eta\in\Lambda_{\bold c, \omega}$ such that
	\begin{itemize}
		\item[$(1)$] $a_n=a^{\bold c}_{\eta,n}$, and
		\item[$(2)$]  $f({\eta\rest_Ln})=f({\eta\rest_Rn})$
	\end{itemize}for all $n<\omega$. By clause (2)
we deduce:
	\begin{itemize}
	\item[$(3)$]   $\supp_\circ (h(x_{\eta\rest_Ln}-x_{\eta\rest_Rn}))=\emptyset$ for all $n<\omega$.
\end{itemize}
	Applying $h$ to $$y_{\eta, 0}= \sum\limits_{i=0} ^n  a_i+\big(\prod_{i=1}^{n}i!\big) y_{\eta, n+1}+\sum\limits_{i=0}^n \big(\prod_{j<i}j!\big)(x_{\eta \restriction_L i}-x_{\eta \restriction_R i}),$$  yields that
	\begin{equation*}
	\begin{array}{clcr}(\natural)\quad
h(y_{\eta, 0})&= \sum\limits_{i=0} ^n  h(a_i)+\big(\prod_{i=1}^{n}i!\big) h(y_{\eta, n+1})+\big(\prod_{j<i}j!\big)\big(h(x_{\eta\rest_Ln})-h(x_{\eta\rest_Rn})\big)
	\\&\stackrel{(3)}=\sum\limits_{i=0} ^n  h(a_i)+\big(\prod_{i=1}^{n}i!\big) h(y_{\eta, n+1})
	\\
	&\stackrel{(+)}= \sum\limits_{i=0} ^n \big(m^{r_{n, 1}}_{\eta_{n, 1}}x_{\eta_{n, 1}} -m^{r_{n, 2}}_{\eta_{n, 2}}x_{\eta_{n, 2}}+b_n   \big)+   \big(\prod_{i=1}^{n}i!\big) h(y_{\eta, n+1}) .
	\end{array}
	\end{equation*}
Let $n<\omega$ be  large enough.
Here, we are going to apply the arguments taken from (3)-(5) in Lemma \ref{g12} to the displayed formula $(\natural)$.	Then, it turns out that
\begin{itemize}
	\item[$(4)$]  $\supp_\circ(h(y_{\eta, 0}))\supseteq\supp_\circ(h(a_n))$, and
	\item[$(5)$]  $\supp_\circ(h(a_n))\cap \{\eta_{n, 1},\eta_{n, 2}\} \neq \emptyset$.
\end{itemize}Without loss of generality, let us assume that for each $n<\omega,$ $\eta_{n, 1}\in\supp_\circ((h(a_n))$.
So, $$\{\eta_{n, 1}:n<\omega\}\subseteq\supp_\circ(h(y_{\eta, 0})),$$ which is   infinite. This is a contraction.
\end{proof}

\begin{lemma}
	\label{t11} Assume $\Lambda=\cl(\Lambda) \subseteq \Lambda_{\bold c}$ is countable and $h \in \Hom(G_{\bold c}, G_{\Lambda}+K)$. Then $h$
	is bounded.
\end{lemma}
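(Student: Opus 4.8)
The plan is to reduce to Lemma~\ref{g11} and then to manufacture a contradiction from fullness, in the spirit of Lemma~\ref{g10}. First I would observe that $h$ sends torsion elements to torsion elements and that $\tor(G_{\Lambda}+K)=(G_{\Lambda}+K)\cap\tor(G_{\bold c})=K$, using $\tor(G_{\bold c})=K$ (Lemma~\ref{g3}(1)); hence $h(K)\subseteq K$, and $h$ induces an $R$-module map $\bar h\colon M_{\bold c}=G_{\bold c}/K\longrightarrow \bar M_{\Lambda}:=(G_{\Lambda}+K)/K$. Since $\Lambda$ is countable, $\bar M_{\Lambda}$ is a countably generated submodule of the $\aleph_{1}$-free $R$-module $M_{\bold c}$ (Lemma~\ref{g3}(3)(a),(b)), so it is contained in a free $R$-submodule $F=\bigoplus_{k\in I}R$ of $M_{\bold c}$. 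One then checks that $h$ is bounded iff $\rng(h)\subseteq K$ iff $\bar h=0$: if $N\cdot\rng(h)=0$ then every element of $\rng(h)$ is torsion, hence lies in $(G_{\Lambda}+K)\cap K=K$; conversely, if $\rng(h)\subseteq K$ then $h\in\Hom(G_{\bold c},K)$ is bounded by Lemma~\ref{g11} (here $\bold c$ is full). So it suffices to prove $\bar h=0$.

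Suppose toward a contradiction that $\bar h\neq 0$, and fix $g_{*}\in G_{\bold c}$ with $v:=\bar h(g_{*}+K)\neq 0$; since $\bar M_{\Lambda}$ embeds in $F$, some coordinate $r\in R$ of $v$ is nonzero. I would then invoke fullness of $\bold c$ (Definition~\ref{g6}(b), whose instances exist by Lemma~\ref{g7}) applied to the constant sequence $a_{i}:=g_{*}$ and to a colouring $f\colon\Lambda_{<\omega}[\lambda]\to\gamma$ with $\gamma<\lambda$ given by $f(\rho_{1})=f(\rho_{2})\iff h(x_{\rho_{1}})=h(x_{\rho_{2}})$ — legitimate since $\rng(f)$ may be taken of size $\le|G_{\Lambda}+K|\le|R|+|K|+\aleph_{0}<\lambda$. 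This yields $\eta\in\Lambda_{\bold c,\omega}$ with $a^{\bold c}_{\eta,i}=g_{*}$ for every $i$ and $h(x_{\eta\restriction_L i})=h(x_{\eta\restriction_R i})$ for every $i<\omega$. Expanding $y_{\eta,0}$ by Lemma~\ref{g21}, applying $h$, and cancelling the $x$-difference terms gives, with $d_{n}:=\sum_{i=0}^{n}\prod_{j<i}j!$ and $M_{n}:=\prod_{i=1}^{n}i!$,
\[
h(y_{\eta,0})=d_{n}\,h(g_{*})+M_{n}\,h(y_{\eta,n+1})\qquad(n<\omega),
\]
and passing to $M_{\bold c}$ and reading off the chosen coordinate in $F$ produces $w-d_{n}r\in M_{n}R$ for all $n$, where $w\in R$ is that coordinate of $\bar h(y_{\eta,0}+K)$.

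The hard part is to see that no such $w$ exists, and this is where cotorsion-freeness of $(R,+)$ does the work. The relevant features are that $(d_{n})$ is strictly increasing with $d_{n+1}-d_{n}=M_{n}$ and that $\bigcap_{n}M_{n}R=0$ (the latter because $(R,+)$ is reduced and torsion-free, as in the proof of Lemma~\ref{g31}). Subtracting the relations for $n$ and $n+1$ yields $r_{n}\in R$ with $w-d_{n}r=M_{n}r_{n}$ and $r_{n}=r+(n+1)!\,r_{n+1}$; iterating, $r$ would have to be ``$\{M_{n}\}$-adically captured'' inside $R$, i.e.\ the element $r\bigl(1+\sum_{n}M_{n}\bigr)$ of the $\{M_{n}\}$-adic completion of $R$ would have to lie in $R$, which by Fact~\ref{redco} contradicts $(R,+)$ being cotorsion-free. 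This is the same obstruction exploited in Lemma~\ref{g10}, and when $R=\mathbb{Z}$ it is immediate: for large $n$ one has $0<|w-d_{n}r|<M_{n}$ while $M_{n}\mid w-d_{n}r$, forcing $w=d_{n}r$ for all large $n$, which is absurd. I expect the only genuinely delicate points to be this closing divisibility argument — which I would take over essentially from the proof of Lemma~\ref{g10}, adapted to the torsion-free target $\bar M_{\Lambda}$ — together with the bookkeeping needed to make the reduction ``$h$ bounded $\iff\bar h=0$'' precise. Once $\bar h=0$ is established, $h$ is bounded by Lemma~\ref{g11}, completing the proof.
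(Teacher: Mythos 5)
Your overall architecture is close to the paper's: both reduce modulo $K$, use the $\aleph_1$-freeness of $M_{\bold c}$ to place the countably generated quotient $(G_\Lambda+K)/K$ inside a free $R$-module, pick a nonzero coordinate, and then run a fullness-plus-``no solution to a $\Gamma$-style system'' argument. The reduction ``$h$ bounded $\iff\rng(h)\subseteq K\iff\bar h=0$'' is correct, and the appeal to Lemma~\ref{g11} for the ``$\rng(h)\subseteq K$'' case is the same as the paper's.

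However, there is a genuine gap in the closing step, and it is precisely where you acknowledge being least sure. You apply fullness to the \emph{constant} sequence $a_i=g_*$, which forces $e_n=1$ for all $n$, and then you claim that in a cotorsion-free $R$ the resulting constant system $r_n=r+(n+1)!\,r_{n+1}$ ($r\neq 0$) has no solution, ``by Fact~\ref{redco}.'' This is false. Cotorsion-freeness (Fact~\ref{redco}: reduced, torsion-free, no copy of $\widehat{\mathbb{Z}}_p$) does \emph{not} forbid a single element from being ``$M$-adically captured.'' The correct characterization only says: for every $0\neq r\in R$ there \emph{exists some} integer sequence $\langle e_n\rangle$ for which the system $\{y_n=n!\,y_{n+1}+e_n r\}$ has no solution. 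A concrete counterexample to your claim: let $\alpha_n:=1+(n+1)!+(n+1)!(n+2)!+\cdots\in\widehat{\mathbb{Z}}$ and set $A:=\langle 1,\alpha_0,\alpha_1,\ldots\rangle\subseteq\widehat{\mathbb{Z}}$. Then $A$ is countable, reduced and torsion-free, hence cotorsion-free by Fact~\ref{redco}, yet $\alpha_n=1+(n+1)!\,\alpha_{n+1}$ solves your constant system with $r=1\neq 0$. So if $R$ happened to be such an $A$ (which Hypothesis~\ref{g1} permits), your argument collapses. The paper avoids this by getting the quantifiers in the right order: it \emph{first} uses cotorsion-freeness (together with the embedding of the countably generated $J_1$ into a free $R$-module) to produce a specific bad sequence $\langle e_n\rangle$ for which $\{y_n=n!\,y_{n+1}+e_n b_*\}$ is unsolvable in $J_1$, and \emph{only then} invokes fullness with $a^{\bold c}_{\eta,n}=e_n a_*$. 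By committing to $e_n\equiv 1$ before invoking fullness, you throw away exactly the degree of freedom that cotorsion-freeness provides. The $R=\mathbb{Z}$ case you spell out is fine, but it does not extend; the analogy you draw with Lemma~\ref{g10} also does not carry over, since that lemma's argument exploits strictly growing $p$-power orders in a torsion target, not cotorsion-freeness of a torsion-free one.
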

\begin{proof}
	Towards a contradiction we assume that $h$ is unbounded. It follows from Lemma \ref{g11} that  $\range(h) \nsubseteq K$. Let $b_* \in \range(h) \setminus K.$ Then, for some  $d_* \in K$, a finite set $\Lambda_*$ and two sequences $\langle r_\eta \in R \setminus \{0\}: \eta \in \Lambda_* \rangle$
and $\langle m_\eta \in \omega: \eta \in \Lambda_*         \rangle$, we can represent $b_*$ as
		\[
	b_*=\sum\{r_\eta x_\eta: \eta \in \Lambda_* \cap \Lambda_{<\omega}      \}+ \sum\{r_\eta y_{\eta, m(\eta)}: \eta \in \Lambda_* \cap \Lambda_{\omega}      \}+d_*.
	\]
\iffalse
	where $d_* \in K$ and $r_\eta \in R \setminus \{0\}$.
	
[why?: prove for $b \in G_{\bold c \restriction \gamma}$ by induction on $\gamma$.]
\fi
	Let
		\begin{enumerate}
			\item $J_0=G_{\Lambda}+K,$
			
			\item $J_1=J_0/K,$ which is torsion free.
			
		%	\item $J_2=$ the divisible hull of $J_1$.
			
		\end{enumerate}
	So, $b_* \in J_0$. Let $\pi: J_0 \to J_1$ be the natural map defined by the assignment $b\mapsto\pi(b):=b+K$. Since  $b_* \in \range(h) \setminus K, $ we have $\pi(b_*) \neq 0.$
\iffalse
We now prove the following:
\begin{enumerate}
\item[$(\ast)$]  there is $g \in \Hom(J_0, \mathbb{Q})$ such that $g(b_*) \neq 0$.
\end{enumerate}	
To see this, first note that $J_1$ is a countable torsion free abelian group, so there are some countable set $I$ and some embedding
$g_0: J_1 \to \bigoplus_{i \in I}\mathbb{Q}$. As $b_* \notin K,$ we have $g_0(b_*) \neq 0$, so for some $i \in I$, the composition map
$\pi_i \circ g_0: J_1 \to \mathbb{Q}$ is non-zero, where $\pi_i$ is the projection map on the $i$-th component of $\bigoplus_{i \in I}\mathbb{Q}$.
Let also $\pi: J_0 \to J_1$ be the natural map $\pi(b)=b+K$. Then $g=\pi_i \circ g_0 \circ \pi$
is as required.
\fi
%Recall
%	\begin{enumerate}
%		\item[(*)] if $\nu \in \Lambda_{\bold c, <\omega}$, then $h(x_\nu) \in J_0,$ hence $g \circ h(x_\nu) \in \mathbb{Q}.$
%	\end{enumerate}
Suppose on the way of contradiction that
for any
sequence $\langle e_n: n < \omega\rangle \in {}^{\omega}\mathbb{Z}$
the following system of equations
\[
\Gamma := \{y_n=n! y_{n+1} +e_n b_*: n<\omega  \}
\]
is solvable in $J_1$. Say for example, $\{y_n: n<\omega\}$ is such a solution.

Thanks to Lemma \ref{g3}(3)(a)
$\Lambda_{\bold c}$ is $\aleph_1$-free.
We combine this with Lemma \ref{g3}(3)(b)
to deduce that $M_{\bold c}$ is $\aleph_1$-free as an $R$-module.
Now, since
$J_1$ is countably generated,  we can find a solution  to
	\[
	\Gamma = \{y_n=n! y_{n+1} +e_n \bar{b}_*: n<\omega  \}
	\] in $R$.
Since $R$ is cotorsion-free, a such system of equations has no solution the ring.	So,
there is a sequence $\langle e_n: n < \omega\rangle \in {}^{\omega}\mathbb{Z}$
the following equations
\[
\Gamma = \{y_n=n! y_{n+1} +e_n b_*: n<\omega  \}
\]
is not solvable in $J_1$.

Let $a_* \in G_{\bold c}$ be such that $b_*=h(a_*)$. Let also $f: \Lambda_{\bold c, < \omega} \to \omega$ be such that for all $\nu, \rho \in \Lambda_{\bold c, < \omega},$
\[
f(\nu)=f(\rho) \Leftrightarrow \pi \circ h(x_\nu)=\pi \circ h(x_\rho).
\]
As $\bold c$ is full, there is some $\eta \in \Lambda_{\bold c, \omega}$
	such that:
	\begin{enumerate}
		\item[(3)] $a^{\bold c}_{\eta, n}=e_n a_*,$ for all $n<\omega$, and
		\item[(4)] $f(\eta \upharpoonright_L n)=f(\eta \upharpoonright_R n)$,  for $n<\omega.$
	\end{enumerate}
Thanks to (4), one has
$$\forall n< \omega,~ \pi \circ h(x_{\eta \upharpoonright_L n})=\pi \circ h(x_{\eta \upharpoonright_R n}) \quad\quad(+)$$
By applying $\pi \circ h$ into the equation
\[
y_{\eta, n} = a^{\bold c}_{\eta, n} + n! y_{\eta, n+1} + (x_{\eta \upharpoonright_L n} - x_{\eta \upharpoonright_R n}),
\]
and using clause (3) and $(+)$ we get
\[
\pi \circ h(y_{\eta, n}) = e_n \pi(b_*) + n! \pi \circ h(y_{\eta, n+1}).
\]
This clearly gives a contradiction, as then
\[
J_1 \models``y_n=n! y_{n+1} +e_n b_*'',
\]
where $y_n=\pi \circ h(y_{\eta, n})$.
\end{proof}

\begin{lemma}
	\label{g15}Let  $\bold c$ be full and $h \in \End(G_{\bold c})$. Then $\rng(h)$   is  bounded.
\end{lemma}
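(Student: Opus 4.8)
The plan is to deduce the lemma from Lemma \ref{g13} together with (the argument of) Lemma \ref{t11}. Fix $h\in\End(G_{\bold c})$ and apply Lemma \ref{g13}: there are $m_*\in R$ and a countable, downward closed $\Lambda_h=\cl(\Lambda_h)\subseteq\Lambda[\lambda]$ such that
\[
r\in R,\ \nu\in\Omega_{\bold c}\setminus\Lambda_h\ \Longrightarrow\ h(rx_\nu)-m_*x_\nu\in G_{\bold c,\Lambda_h}+K
\]
($G_{\bold c,\Lambda_h}$ as in Notation \ref{notl}). First I would upgrade this to $h(rx_\nu)\in G_{\bold c,\Lambda_h}+K$. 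Taking $r=0$ and recalling that $0\cdot x_\nu=F_0(x_\nu)$, Definition \ref{f11}(f) (with $r=s=t=0$) gives $F_0=F_{0,0}$, whose range is $\Psi^{\bold k}_{0,0}[K]=\phi^{\bold k}_0[K]\subseteq K$; hence $h(0\cdot x_\nu)\in h(K)\subseteq\tor(G_{\bold c})=K$ by Lemma \ref{g3}(1), so $m_*x_\nu\in G_{\bold c,\Lambda_h}+K$, and therefore $h(rx_\nu)\in G_{\bold c,\Lambda_h}+K$ for every $r\in R$ and every $\nu\in\Omega_{\bold c}\setminus\Lambda_h$. Also $h(K)\subseteq\tor(G_{\bold c})=K\subseteq G_{\bold c,\Lambda_h}+K$.

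Next I would absorb the remaining generators. The generators of $G_{\bold c}$ indexed by $\Lambda_h$, namely $\{rx_\nu:\nu\in\Lambda_h\cap\Lambda_{\bold c,<\omega},\ r\in R\}\cup\{ry_{\eta,n}:\eta\in\Lambda_h\cap\Lambda_{\bold c,\omega},\ r\in R,\ n<\omega\}$, are at most $|R|+\aleph_0$ in number, and the support $\supp_{\bold c}(h(b))$ of $h$ applied to any $b\in G_{\bold c}$ is countable. So I would build a downward closed $\Lambda\supseteq\Lambda_h$ with $|\Lambda|\le|R|+\aleph_0<\lambda$ by iterating $\omega$ times the operation $\Lambda'\mapsto\cl\bigl(\Lambda'\cup\bigcup\{\supp_{\bold c}(h(b)):b\in G_{\bold c,\Lambda'}\}\bigr)$; then $h(G_{\bold c,\Lambda})\subseteq G_{\bold c,\Lambda}+K$ (here $G_{\bold c,\Lambda}$ is pure, by Observation \ref{g9}). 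Combining this with the previous paragraph and $h(K)\subseteq K$, every abelian-group generator of $G_{\bold c}$ is sent into $G_{\bold c,\Lambda}+K$, so $\rng(h)\subseteq G_{\bold c,\Lambda}+K$, i.e. $h\in\Hom(G_{\bold c},G_{\bold c,\Lambda}+K)$. Finally I would run the proof of Lemma \ref{t11} with this $\Lambda$ in place of a countable one: that proof only uses that $M_{\bold c}$ is $\aleph_1$-free (Lemma \ref{g3}(3)) and that $(R,+)$ is cotorsion-free — any candidate solution of the relevant countable system of equations lives in a countably generated submodule — so countability of $\Lambda$ is not really needed, and the conclusion is that $h$ is bounded, i.e. $\rng(h)$ is bounded, as required.

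I expect the main obstacle to be exactly this last bookkeeping: checking that the boundedness extraction of Lemma \ref{t11} survives when the exceptional set $\Lambda$ has size $|R|+\aleph_0$ rather than $\aleph_0$ — which is precisely why it matters that Lemma \ref{g13} produces a countable $\Lambda_h$ and that the absorbing iteration enlarges it only by $|R|+\aleph_0$ generators. A secondary subtlety is the step in the first paragraph from ``$h(rx_\nu)-m_*x_\nu\in G_{\bold c,\Lambda_h}+K$'' to ``$h(rx_\nu)\in G_{\bold c,\Lambda_h}+K$'': it rests on the fact (Definition \ref{f11}(f)) that $F_0$ already has range inside the torsion subgroup $K=\tor(G_{\bold c})$, so that the $r=0$ instance pins the scalar term $m_*x_\nu$ inside $G_{\bold c,\Lambda_h}+K$.
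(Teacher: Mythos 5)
Your skeleton (reduce via Lemma \ref{g13} to a map into $G_{\Lambda}+K$, then finish by a Lemma \ref{t11}--type argument) matches the paper's, but the ``upgrade'' in your first paragraph is a genuine flaw, not bookkeeping. If one could really pass from $h(rx_\nu)-m_*x_\nu\in G_{\Lambda_h}+K$ to $h(rx_\nu)\in G_{\Lambda_h}+K$, your argument would show that \emph{every} endomorphism of $G_{\bold c}$ has bounded range --- in particular $\id_{G_{\bold c}}$ and every multiplication $F_r$ --- which is absurd, since $G_{\bold c}$ contains elements of infinite order. The $r=0$ trick is licensed only by reading the displayed conclusion of Lemma \ref{g13} with a subtracted term $m_*x_\nu$ that does not scale with $r$; that literal reading is itself untenable (for $h=\id_{G_{\bold c}}$ the coefficient of $x_\eta$ in $h(rx_\eta)$ is $r$, not a single constant), and the proof of Lemma \ref{g13} in fact extracts the scalar as the coefficient of $x_\eta$ in $h(rx_\eta)$, so the usable content is that $h-F_{m_*}$ sends the generators $rx_\nu$, $\nu\in\Omega_{\bold c}\setminus\Lambda_h$, into $G_{\Lambda_h}+K$. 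Under that reading the $r=0$ instance is vacuous, the scalar term cannot be discarded, and what the chain of lemmas actually yields is that $h-F_{m_*}$ has small range and hence is bounded --- i.e.\ the bounded endo-rigidity that Theorem \ref{main} extracts --- not that $\rng(h)$ itself is bounded. (The same caveat applies to the statement of Lemma \ref{g15} as printed: it has to be read modulo the scalar part.)

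The paper's own proof never attempts your elimination: assuming $\rng(h)$ is unbounded, it keeps a countable closed $\Lambda$ (coming from Lemmas \ref{findm}--\ref{g13}), restricts $h$ to the subgroup generated by $h^{-1}[G_{\Lambda}+K]$, and contradicts Lemma \ref{t11}; the multiplication part of $h$ is left untouched. Because $\Lambda$ stays countable there, your secondary worry --- re-running Lemma \ref{t11} after enlarging $\Lambda$ to a set of size $|R|+\aleph_0$, where the $\aleph_1$-freeness supplied by Lemma \ref{g3}(3) no longer literally covers $G_{\Lambda}+K$ --- does not arise in the paper. That worry is a real extra verification your route would need (your remark that any solution of the countable system lives in a countably generated submodule is the right idea for it), but it is downstream of the main problem: the step that erases $F_{m_*}$ is exactly the step that cannot be made correct.
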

\begin{proof}
	Suppose not, it follows that for some countable $\Lambda=\cl(\Lambda) \subseteq \Lambda_{\bold c}$, $$h \restriction G \in \Hom(G, G_{\Lambda}+K)$$ is unbounded, where
	$G$ is the subgroup of $G_{\bold c}$ generated by $h^{-1}[G_{\Lambda}+K]$. This contradicts Lemma \ref{t11}.
\end{proof}

Now, we are ready to prove:

\begin{theorem}\label{main}Adopt the notation from Hypothesis
\ref{g1}.
	Then there is some $\bold c$	such that the abelian  group $G_{\bold c}$ is boundedly rigid. In particular, there is an abelian group $G$ equipped with the following properties
			\begin{enumerate}
		\item $\tor(G)=K$,
		
		\item $G$ is of size $\lambda$,
		
		\item the sequence $$0\longrightarrow   R_\bold c\longrightarrow \End(G)\longrightarrow \frac{\End(G)}{\BEnd(G)}\longrightarrow 0$$is exact.
		
	\end{enumerate}

\end{theorem}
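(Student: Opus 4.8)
The plan is to take $\bold c$ to be a \emph{full} member of $\AP_{\bold k,\lambda}$, which exists by Lemma \ref{g7}, and to set $G:=G_{\bold c}$. Then conclusion (1), $\tor(G)=K$, is Lemma \ref{g3}(1), and conclusion (2), $|G|=\lambda$, is Lemma \ref{g3}(6) --- here the hypothesis $\lambda=\lambda^{\aleph_0}$ from Hypothesis \ref{g1} turns the cardinality bound $|G_{\bold c}|=\lambda^{\aleph_0}$ of Lemma \ref{siz} into $\lambda$. So everything comes down to showing that $\bold m_{\bold c}$ is boundedly rigid in the sense of Definition \ref{f2}(1), and then reading off the exact sequence (3).

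For bounded rigidity, fix $h\in\End(G)$ and run it through the reduction lemmas in order. Lemma \ref{g12} produces a countable $\Lambda_h\subseteq\Omega_{\bold c}$ with $\supp_\circ(h(rx_\nu))\subseteq\{\nu\}\cup\Lambda_h$ for all $r\in R$ and $\nu\in\Omega_{\bold c}\setminus\Lambda_h$; Lemma \ref{findm} upgrades this to $h(rx_\nu)\in G_{\bold c,\Lambda_h\cup\{\nu\}}+K$; and Lemma \ref{g13} extracts a single scalar $m_*\in R$ together with a countable closed set $\Lambda_h\subseteq\Lambda[\lambda]$ so that $h(rx_\nu)-m_*x_\nu\in G_{\bold c,\Lambda_h}+K$ for all such $r,\nu$. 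Now set $h':=h-F_{m_*}\in\End(G)$, where $F_{m_*}$ is the endomorphism given by multiplication by $m_*$ (so $F_{m_*}(x)=m_*x$). By Lemma \ref{g13}, $h'$ carries every generator $rx_\rho$ with $\rho\in\Omega_{\bold c}\setminus\Lambda_h$ into $G_{\bold c,\Lambda_h}+K$; and for the countably many remaining generators $x_\rho$, $\rho\in\Lambda_h$, the images $h'(x_\rho)$ have countable support, so after enlarging $\Lambda_h$ to a countable closed set $\Lambda^{+}$ absorbing all of these supports we get $\rng(h')\subseteq G_{\bold c,\Lambda^{+}}+K$ (a subgroup, pure in $G$ by Observation \ref{g9}), i.e.\ $h'\in\Hom(G_{\bold c},G_{\bold c,\Lambda^{+}}+K)$. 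Lemma \ref{t11} then forces $h'$ to be bounded, so $h=F_{m_*}+h'$ with $h'\in\BEnd(G)$; this is exactly bounded rigidity of $\bold m_{\bold c}$, i.e.\ essentially the content of Lemma \ref{g15}.

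For (3), the assignment $r\mapsto F_r$ sends $R_{\bold c}$ into $\End(G)$, and composing with the projection $\End(G)\twoheadrightarrow\End(G)/\BEnd(G)$ gives a ring map which is surjective by bounded rigidity (every $h$ equals $F_{m_*}$ modulo $\BEnd(G)$) and injective because a bounded $F_r$ would make $\rng(\overline{F_r})=r\cdot M_{\bold c}$ bounded, hence $r=0$, as $M_{\bold c}$ is a nonzero $\aleph_1$-free $R$-module (Lemma \ref{g3}(3)) and $(R,+)$ is cotorsion-free and so torsion-free. This is the asserted exactness of $0\to R_{\bold c}\to\End(G)\to\End(G)/\BEnd(G)\to0$. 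I expect the genuine difficulty to sit not in this final assembly but in the reduction step beneath it, above all Lemma \ref{t11} and, inside it, Lemmas \ref{g10} and \ref{g11}: there one uses fullness of $\bold c$ to diagonalize, via the twofold black box, against any hypothetical unbounded $h'\colon G\to G_{\bold c,\Lambda^{+}}+K$ --- predicting the relevant data and manufacturing a system $\Gamma=\{z_n=a_n+n!\,z_{n+1}:n<\omega\}$ inside $\rng(h')$ that has no solution in any $G_{\bold d}\supseteq G_{\bold c}$ --- and there one must treat separately the two shapes the unbounded torsion group $\rng(h')$ can take, namely a single prime $p$ with $\Gamma_p(\rng(h'))$ infinite versus infinitely many primes $p$ with $\Gamma_p(\rng(h'))\neq0$, with the reducedness of $G/K$ (Lemma \ref{g31}) and the rapid growth imposed on the orders of the $a_n$ being what rules out a solution.
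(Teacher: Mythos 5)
Your proposal is correct and follows the paper's own route: the paper proves Theorem \ref{main} exactly by taking a full $\bold c\in\AP$ from Lemma \ref{g7} and invoking Lemma \ref{g15}, with (1), (2) coming from Lemma \ref{g3} and the exact sequence read off from the definition of bounded rigidity for $\bold m_{\bold c}$. Your unwinding of the rigidity step through Lemmas \ref{g12}, \ref{findm}, \ref{g13} and \ref{t11} (subtracting $F_{m_*}$ and pushing the remainder into $G_{\bold c,\Lambda^{+}}+K$) is precisely the content the paper packages as Lemma \ref{g15}, so this is the same argument, merely spelled out in more detail.
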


\begin{proof}
	According to Lemma \ref{g7}, there is a full $\bold c \in \AP.$
	This allows us to apply Lemma \ref{g15}, and deduce that
	$G:=G_{\bold c}$ is boundedly rigid. By definition, this completes the proof.
\end{proof}	

\section{Co-Hopfian and boundedly endo-rigid abelian  groups}\label{5}
\bigskip

As stated in \cite{fuchs}, it
is difficult to construct an infinite Hopfian--co-Hopfian $p$-group.
What about mixed groups? In this section, we answer  the question. We start by recalling that	a group 	$G$ is called
	\begin{itemize}
		\item[(i)]
\emph{Hopfian} if its surjective endomorphisms are automorphisms;
		\item[(ii)] 
		\emph{co-Hopfian} if its injective endomorphisms are automorphisms.
	\end{itemize}

In what follows we will use the following two items:
\begin{fact}\label{dir}	\begin{itemize}
		\item[(i)] (See \cite[Claim 2.15(1)]{Sh:F2005}). Any direct summand of a  co-Hopfian  abelian group
is again co-Hopfian.

	\item[(ii)]
(See  \cite[Theorem 1.2]{1214}).
Suppose  $ 2^{\aleph_{0}}<\lambda< \lambda^{\aleph_{0}}$. Then  there is
no co-Hopfian abelian group of size $\lambda$.
\end{itemize}
	\end{fact}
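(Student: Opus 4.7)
My plan is to address the two items separately, since they are entirely independent and of very different depth.

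For item (i), the approach is a direct splitting argument. Given $G = H \oplus K$ co-Hopfian and $f \in \End(H)$ injective, I would extend $f$ to $\tilde f \in \End(G)$ by $\tilde f(h+k) := f(h) + k$, using the direct sum decomposition. Injectivity of $\tilde f$ is immediate: if $f(h) + k = 0$, directness forces $f(h) = 0$ and $k = 0$, whence $h = 0$ by injectivity of $f$. Since $G$ is co-Hopfian, $\tilde f$ is surjective; for any $h' \in H$ picking $h+k$ with $\tilde f(h+k) = h'$ and projecting onto $H$ gives $f(h) = h'$. So $f$ is onto, and $H$ is co-Hopfian. This is the whole argument and carries no obstacle.

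For item (ii), the argument is genuinely deeper and I would follow the strategy of \cite{1214}. Suppose $G$ is co-Hopfian of size $\lambda$ with $2^{\aleph_0} < \lambda < \lambda^{\aleph_0}$. First, by Fact \ref{co-hop_pgroup}(ii), $|\tor(G)| \leq 2^{\aleph_0} < \lambda$, so $\bar G := G/\tor(G)$ is torsion-free of cardinality $\lambda$. The plan is to produce an injective endomorphism of $G$ that is not surjective. The key leverage is the cardinal gap $\lambda < \lambda^{\aleph_0}$: on the torsion-free side, one can embed $\bar G$ into an appropriate $\mathbb{Z}$-adic (or pure-injective) completion whose cardinality is at least $\lambda^{\aleph_0}$, and then use countable approximation sequences in $\bar G$ whose formal limits land outside $\bar G$, to build an injective self-map of $\bar G$ whose image has proper cokernel.

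The main obstacle, which is the technical heart of \cite{1214}, is to lift such a map of $\bar G$ to an endomorphism of $G$ itself. Because $\tor(G)$ is generally not a direct summand of $G$, one cannot simply split. Instead, using that $|\tor(G)| \leq 2^{\aleph_0}$, one decomposes $\tor(G)$ into its $p$-components, peels off a bounded summand via Fact \ref{kapla2}, and uses Fact \ref{a7} to extend the prescribed torsion part of the map; the unbounded/divisible pieces are handled via injectivity of their divisible hulls (Fact \ref{ided}). The delicate point is to ensure that the lifted endomorphism remains injective (not just injective modulo torsion) and still fails to be surjective. Once the lift exists with both properties, it contradicts the co-Hopfianness of $G$, yielding (ii).
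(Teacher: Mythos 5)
The paper does not prove this Fact; both items are stated with citations to Paolini--Shelah, so there is no internal proof to compare your attempt against.

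Your proof of item (i) is correct and complete. Extending $f$ by the identity on the complementary summand preserves injectivity; co-Hopfianity of $G$ forces the extension to be surjective; and the directness of $G = H \oplus K$ transfers surjectivity back down to $f$ on $H$. This is the standard argument and nothing more is needed.

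Item (ii), however, is a deep theorem, and what you have written is an outline rather than a proof, with a genuine gap at its center. The reduction via Beaumont--Pierce to $\bar G := G/\tor(G)$ being torsion-free of cardinality $\lambda$ is fine. But the pivotal claim --- that from $\lambda < \lambda^{\aleph_0}$, via embedding $\bar G$ into a $\mathbb{Z}$-adic or pure-injective completion and using ``countable approximation sequences whose limits land outside $\bar G$,'' one obtains an injective non-surjective self-map of $\bar G$ --- is not justified and does not obviously work. Knowing that the completion strictly contains $\bar G$ is a very weak statement (it holds for essentially every reduced torsion-free group that is not algebraically compact, including groups of size $2^{\aleph_0}$ for which the conclusion of (ii) is false), so that feature alone cannot drive the argument; the hypothesis $\lambda < \lambda^{\aleph_0}$ has to enter through a real set-theoretic construction, not through a cardinality count on the completion. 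Similarly, the lifting discussion conflates two different problems: Fact \ref{a7} extends bounded homomorphisms into bounded torsion targets, whereas what you would actually need is to lift an endomorphism of the quotient across $0 \to \tor(G) \to G \to \bar G \to 0$ in a way that stays injective and stays non-surjective; that is obstructed in general (think of $\Ext(\bar G, \tor(G))$), and no argument is offered for why the obstruction can be handled. In short, (i) is proved; for (ii) you have a plausible-sounding narrative but the construction at its heart is missing, and the result should be taken on citation as the paper does.
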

Here, we introduce a useful criteria:

\begin{definition}\label{a16}
	Let  $G$ be an abelian  group of size $\lambda$  and $m, n \geq 1$ be such that $m \mid n.$ Then:
	\begin{enumerate}
		\item $\rm{NQr}_{(m, n)}(G)$ means that there is an $(m, n)$-anti-witness $h$, which means:
		
		\begin{enumerate}[(a)]
			\item $h \in \End(\Gamma_{n}(G))$,
			
			\item $\range(h)$ is a bounded group,
			
			\item $h' := m\cdot\id_{\Gamma_{n}(G)} + h \in \End(\Gamma_{n}(G))$ is $1$-to-$1$,
			
			\item $h'$ is not onto or $m > 1$ and $G/\Gamma_{n}(G)$ is not $m$-divisible.
		\end{enumerate}	
		
		\item $\rm{NQr}_m(G)$ means $\rm{NQr}_{(m, n)}(G)$ for some $n \geq 1$.
		
		\item $\rm{NQr}(G)$ means $\rm{NQr}_m(G)$ for some $m \geq 1$.

	\end{enumerate}
\end{definition}

\begin{definition}\label{a16b}
	Adopt the previous notation.\begin{enumerate}
		\item $\rm{Qr}(G)$ means the negation of $\rm{NQr}(G)$.
		
		\item $\rm{Qr}_*(G)$ means $\rm{Qr}(G)$ and in addition that
		 $\Gamma_p(G)$ is unbounded, for at least one $p \in \mathbb{P}$.

	\end{enumerate}
\end{definition}

In items \ref{a19}--\ref{a31}  we check $\rm{NQr}_{(m, n)}(G)$ and its negation. This enables us to present   some  new classes of co-Hopfian and non co-Hopfian groups.

\begin{lemma}\label{a19}
	Let $G$ be an abelian group such that the property $\rm{NQr}(G)$ holds. Then $G$ is not co-Hopfian. Furthermore, let  $h\in\Hom(G, \Gamma_{n}(G))$ be such that $h \restriction \Gamma_{n}(G)$ is an $(m, n)$-anti-witness. Then $m \cdot\id_G + h$ witnesses that $G$ is not co-Hopfian.
\end{lemma}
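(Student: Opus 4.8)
The plan is to unwind the definitions and produce an explicit injective non-surjective endomorphism of $G$. Let $h\in\Hom(G,\Gamma_n(G))$ be such that $h\restriction\Gamma_n(G)$ is an $(m,n)$-anti-witness, and set $f:=m\cdot\id_G+h$. First I would verify that $f$ is indeed an endomorphism of $G$: this is immediate since $m\cdot\id_G\in\End(G)$ and $h\in\Hom(G,\Gamma_n(G))\subseteq\End(G)$ via the inclusion $\Gamma_n(G)\subseteq G$. The two things to check are then injectivity of $f$ and non-surjectivity of $f$, using conditions (c) and (d) of Definition \ref{a16}.

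\textbf{Injectivity.} Suppose $f(x)=mx+h(x)=0$ for some $x\in G$. Since $h(x)\in\Gamma_n(G)$, we get $mx\in\Gamma_n(G)$; as $\Gamma_n(G)=\bigoplus\{\Gamma_p(G):p\mid n\}$ is exactly the set of elements killed by a power of $n$ (equivalently, by a power of $m$ when we only care about primes dividing $n$, and $m\mid n$), a short argument shows $x\in\Gamma_n(G)$. Here one uses that $G/\Gamma_n(G)$ is torsion-free at the primes dividing $n$: if $mx\in\Gamma_n(G)$ then $x+\Gamma_n(G)$ is $m$-torsion in $G/\Gamma_n(G)$, but that quotient has no elements of order dividing a power of $n$ other than $0$ — more carefully, $\Gamma_n$ is a section functor, so $\Gamma_n(G/\Gamma_n(G))=0$, and $x+\Gamma_n(G)$ is killed by $m^k$ for suitable $k$, hence is $0$. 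Thus $x\in\Gamma_n(G)$, and then $f(x)=(m\cdot\id_{\Gamma_n(G)}+h)(x)=h'(x)=0$, so $x=0$ by (c). Hence $f$ is $1$-to-$1$.

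\textbf{Non-surjectivity.} By (d) there are two cases. If $h'=m\cdot\id_{\Gamma_n(G)}+h$ is not onto $\Gamma_n(G)$, pick $a\in\Gamma_n(G)\setminus\range(h')$; I claim $a\notin\range(f)$. Indeed if $f(x)=a$ then as above (since $a\in\Gamma_n(G)$ and $h(x)\in\Gamma_n(G)$ force $mx\in\Gamma_n(G)$, hence $x\in\Gamma_n(G)$) we would get $h'(x)=a$, a contradiction. If instead $m>1$ and $G/\Gamma_n(G)$ is not $m$-divisible, pick $b\in G$ with $b+\Gamma_n(G)\notin m\cdot(G/\Gamma_n(G))$; then $b\notin\range(f)$, because $f(x)=b$ would give $b\equiv mx\pmod{\Gamma_n(G)}$, contradicting the choice of $b$. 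In either case $f$ is not onto, so $f$ is an injective endomorphism that is not an automorphism, i.e.\ $G$ is not co-Hopfian.

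\textbf{Main obstacle.} The only delicate point is the repeatedly used implication ``$mx\in\Gamma_n(G)\Rightarrow x\in\Gamma_n(G)$'', which rests on the functorial/section-functor property of $\Gamma_n$ recalled in the preliminaries (the commuting diagram of natural short exact sequences), together with $m\mid n$ so that $m$ and $n$ have the same prime divisors relevant here. Everything else is a routine diagram chase. I would state this implication once as a small observation and then invoke it in both the injectivity and the non-surjectivity arguments.
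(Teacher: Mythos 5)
Your argument for the ``furthermore'' clause is correct and essentially identical to the paper's: injectivity of $f=m\cdot\id_G+h$ comes from clause (c) of Definition~\ref{a16} together with the observation that $mx\in\Gamma_n(G)$ and $m\mid n$ force $x\in\Gamma_n(G)$, and non-surjectivity splits on the two alternatives in clause (d) exactly as in the paper (you even supply the detail behind the paper's ``it is easy to see'' in Case~1). However, you take the $h\in\Hom(G,\Gamma_n(G))$ as given, which leaves a small gap for the first assertion: $\rm{NQr}(G)$ only provides an $(m,n)$-anti-witness $h_0\in\End(\Gamma_n(G))$, and to produce the required $h\in\Hom(G,\Gamma_n(G))$ with $h\restriction\Gamma_n(G)=h_0$ one must invoke the extension step that the paper carries out via Fact~\ref{a7}, using that $\Gamma_n(G)$ is pure in $G$ and that $h_0$ has bounded range. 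Once that extension is noted, your proof closes the argument.
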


\begin{proof}
	Suppose that $G$ admits an $(m, n)$-anti-witness $h_0\in\End (\Gamma_{n}(G))$ as in  Definition \ref{a16}. As $h_0$ is bounded, by Fact  \ref{a7} we can extend $h_0$ to $h_1 \in \rm{Hom}(G, \Gamma_{n}(G))$. So, the following diagram commutes:
	
	$$\xymatrix{
		&0 \ar[r]&\Gamma_{n}(G)\ar[r]^{\subseteq_\ast}\ar[d]_{h_0}&G\ar[dl]^{\exists h_1}\\
		&& \Gamma_{n}(G)
		&&&}$$

	 We claim that $f = m\cdot\rm{id}_G + h_1 \in \End(G)$ is $1$-to-$1$ but not onto.
	\begin{enumerate}
		\item[$(*)_1$]	$f$ is one-to-one.
	\end{enumerate}
	To see this, suppose $x \in G$ in non-zero and we want to show that $f(x) \neq 0.$ Suppose first we deal with  the case $x \in \Gamma_{n}(G) \setminus \{0\}$. According to clause (c) of Definition \ref{a16}(1) we have $$ f(x) = mx + h_1(x) = m \cdot  \id_{\Gamma_{n}(G)}(x) + h_0(x) \Rightarrow f(x) \neq 0.$$
	Now, suppose that $x \in G \setminus \Gamma_{n}(G)$.  Recall from Definition \ref{a16} that $m$ divides $n$. As $m \mid n,$ we have
	$mx \in G \setminus \Gamma_{n}(G)$. If $f(x)=0$, we have $mx + h_1(x)=0$, thus
	\[
	h_1(x)=-mx \in G \setminus \Gamma_{n}(G).
	\]
	But,
	$\rng(h_1) \subseteq \Gamma_{n}(G)$, which is impossible.    Thus $f$ is $1$-to-$1$, as wanted.

	\begin{itemize}
		\item[$(*)_2$]  $f$ is not onto.
	\end{itemize}
	For this, we  consider two cases:
	
	{Case 1}) $h_{0}$ is not onto:
	\newline  By the case assumption, there is $$y \in \Gamma_{n}(G) \setminus \range\big(\rm{id}_{\Gamma_{n}(G)} + (h_0 \restriction \Gamma_{n}(G))\big)$$ and it is easy to see that such a $y$ is  also a witness for $f$ to be not onto.

	{Case 2}) $h_0$ is onto:
	\newline By Definition \ref{a16}(1)(d), we must have
	$m > 1$ and $G/ \Gamma_{n}(G)$ is not $m$-divisible.
	Let $z \in G$ be such that $z + \Gamma_n(G)$ is not divisible by $m$ in $G/ \Gamma_m(G)$. Clearly, $z$ does not belong to $\range(f).$
	
	The lemma follows.
\end{proof}

% Here starts the new version: 2022-04-07 (32 pages)

% page 6/32 (v2)

\begin{lemma}\label{a22}
	Let  $K$ be an abelian  $p$-group.  The following claims are valid:
	 If $\NQr(K)$ holds, then $K$ is infinite.
\end{lemma}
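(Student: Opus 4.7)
The plan is to prove the contrapositive: I assume $K$ is a \emph{finite} abelian $p$-group and show that $\Qr(K)$ holds, i.e., $\NQr_{(m,n)}(K)$ fails for every pair $(m,n)$ with $m\mid n$ and $m,n\geq 1$. Fix such $m$ and $n$. The key preliminary observation is the computation of $\Gamma_n(K)$ using
\[
\Gamma_n(K)=\bigoplus\{\Gamma_q(K):q\mid n,\ q\in\mathbb{P}\}.
\]
Because $K$ is a $p$-group, $\Gamma_q(K)=0$ for every prime $q\neq p$ and $\Gamma_p(K)=K$. Hence $\Gamma_n(K)=K$ if $p\mid n$ and $\Gamma_n(K)=0$ otherwise. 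The argument now splits into two cases, and in each one I aim to refute clause (d) of Definition \ref{a16}(1).

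First I treat the case $p\mid n$. Here $\Gamma_n(K)=K$, so any candidate $h$ lies in $\End(K)$, and $h':=m\cdot\id_K+h$ is required to be one-to-one. Since $K$ is finite, an injective endomorphism of $K$ is automatically surjective; thus the first disjunct of clause (d) fails. For the second disjunct, $K/\Gamma_n(K)=0$ is $m$-divisible trivially (and similarly the condition $m>1$ becomes irrelevant). So clause (d) fails and no $(m,n)$-anti-witness exists.

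Next I treat the case $p\nmid n$. Then $\Gamma_n(K)=0$, so the only possible $h$ is the zero endomorphism of the trivial group and $h'=0:0\to 0$ is vacuously onto; again the first disjunct of (d) fails. For the second disjunct I need $K/\Gamma_n(K)=K$ to fail to be $m$-divisible. But $m\mid n$ and $p\nmid n$ force $p\nmid m$, so $m$ is a unit modulo $p^\ell$ for every $\ell$; consequently multiplication by $m$ is a bijection on every finite $p$-group, in particular on $K$. Hence $K$ is $m$-divisible and the second disjunct fails as well.

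Combining the two cases, no $(m,n)$-anti-witness exists for any admissible $(m,n)$, so $\NQr(K)$ fails whenever $K$ is a finite abelian $p$-group. This yields the desired implication by contraposition. The only mildly subtle point is verifying that $p\nmid m$ in the second case yields $m$-divisibility of the finite $p$-group $K$; once this elementary observation is made the argument is routine.
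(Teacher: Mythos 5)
Your proof is correct and is essentially the paper's argument run in contrapositive form: the same case split on whether $p\mid n$, the same identification $\Gamma_n(K)=K$ or $0$, and the same elementary facts (an injective endomorphism of a finite group is surjective; $p\nmid m$ makes multiplication by $m$ bijective on a $p$-group). The paper instead assumes $\NQr_{(m,n)}(K)$ and exhibits a one-to-one, non-onto endomorphism of $K$ to conclude $K$ is infinite, which is the same content viewed in the other direction.
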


\begin{proof}
	By definition, there are  $m$ and $n$    such that $m \mid n$ and that
	$\NQr_{(m, n)}(K)$ holds. Thanks to Definition \ref{a16}(1), there is $h \in \End(\Gamma_{n}(G))$
satisfying the following properties:
	\begin{enumerate}
\item[(a)] $\range(h)$ is a bounded group,
		\item[(b)] $h' := m\cdot(\rm{id}_{\Gamma_{n}(K)}) + h \in \End(\Gamma_{n}(K))$ is $1$-to-$1$,
		\item[(c)] $h'$ is not onto or $m > 1$ and $K/\Gamma_{n}(K)$ is not $m$-divisible.
	\end{enumerate}	
	We have two possibilities:
	1) $p \nmid n$ and 2) $p \mid n$.
	
\begin{enumerate}
	\item[(1)]Suppose first that
	$p \nmid n$. 	As $K$ is a $p$-group,  $\Gamma_{n}(K) = \{ 0 \}$. This means that $h$ is constantly zero and is onto, as well as $h'$. Thanks to clause (c) it follows   that $m > 1$ and $K$ is not $m$-divisible. Since $m \mid n$ we deduce that $p \nmid m$.
Now, we consider the map $m\cdot\id_K:K\to K$.  Since $K$ is not $m$-divisible, this map is not surjective. Let us show that it is 1-to-1. To this end, let $x\in K$ be such that $mx=0$. Let $\ell$ be the order of $x$ so that $p^\ell x=0$. As $(p^\ell, m)=1,$ we can find $r,s$ such that $r p^\ell+ sm=1.$ By multiplying both sides with $x$, we obtain
$$x=r p^\ell x+ smx=0+0=0.$$
 It follows that $m \cdot \id_K: K \to K$ is 1-to-1 and not onto, hence $K$ is infinite.
		\item[(2)]Suppose $p \mid n$.  As $K$ is a $p$-group, this implies that  $\Gamma_{n}(K) = K$. Therefore, in the above item (c), the case ``$K / \Gamma_{n}(K)$ is not $m$-divisible{''}  does not occur. This is in turn implies that $h'$ is not onto $K$.  We proved that the map
		$h'\in\End(K)$ is 1-to-1 and not onto. Hence $K$ is infinite.
	\end{enumerate}	
	The proof is now complete.
\end{proof}

\begin{discussion}
	Keep the notation of Fact \ref{kapla2}. One can not   replace "divisible" with "reduced" and drives a similar result, as some easy examples suggest this. Here, we consider this as an application of the construct of co-Hopfian groups. 
	
(i)	Suppose on the way of contradiction that the replacement is valid. 

(ii) Let $G$ be  a co-Hopfian group such that its reduced part is unbounded (recall from the introduction that a such group exists, see \cite{crawley}).

(iii) Here, we drive a contradiction by showing from that $G$ is not co-Hopfian.
Indeed, let $K_2$ be the maximal divisible subgroup of $K$. Recall from Fact \ref{ided} that $K_2$ is injective. Since it is injective, we know $K_2$ is a directed summand. Let us write $K$ as $K=K_1 \oplus K_2.$ Due to the maximality of $K_2$ one may know that $K_1$ is reduced.
We show that $K_1$ is not co-Hopfian, and hence by Fact \ref{dir}(i), $K$ is not co-Hopfian. Thus by replacing $K$ by $K_1$ if necessary, we may assume without loss of generality that $K$ is reduced and unbounded. For $\ell < \omega,$ we choose by  induction $H_{\ell}, y_\ell$ and $z_\ell$ such that:

\begin{enumerate}
	\item[($iii)_a$] $H_{0} = K, $
	
	%\item[(b)] $K_{\leq \ell} = \bigoplus_{k \leq \ell}\mathbb{Z}y_{k},$
	
	\item[($iii)_b$] if $\ell = k+1,$ then $H_{k} = H_{\ell} \oplus \mathbb{Z}z_{\ell},$
	
	\item[($iii)_c$] $z_\ell \in (\mathbb{Z}y_\ell)_*$ recall that $(\mathbb{Z}y_\ell)_*$ denotes the pure closure of
	$\mathbb{Z}y_\ell$,
	
	\item[($iii)_d$] $y_{\ell+1} \in H_\ell$,
	\item[($iii)_e$] The order of $z_i$ is $\geq p^{\ell}$.
	%\item[(d)] $(K_{\leq \ell})_* \leq_{*} K,$ where $(K_{\leq \ell})_*$ denotes the pure closure of $K_{\leq \ell}$,
	
	%\item[(e)] $(K_{\leq \ell})_*$ is  finite,
	
	%\item[(f)] $y_{\ell + 1} \notin (K_{\leq \ell})_*$.
	
	%   \item[(g)] $\mathbb{Z}z_\ell$ is a direct summand of $(K_{\leq \ell})_*$
\end{enumerate}
[Why? For $\ell=0$, we  set 	$H_{0} = K$ and let $y_0 \in K$ be arbitrary.
Then $(\mathbb{Z}y_0)_*$ is a pure subgroup of $K$ of bounded exponent. Thanks to   Fact \ref{kapla2} we know
$(\mathbb{Z}y_0)_*$ is a direct summand of $K$. In view of Fact \ref{bounded_exponent}  we can find $z_0$ such that $\mathbb{Z}z_0$ is a direct summand of
$(\mathbb{Z}y_0)_*$. In other words, $\mathbb{Z}z_0$ is a direct summand of $H_0=K$ as well. Consequently,
we have $H_0=H_1 \oplus \mathbb{Z}z_0$  for some $H_1$.
Having defined  inductively $\{H_\ell, y_\ell,z_\ell\}$, let $y_{\ell+1} \in H_\ell$.
Let  $\chi$ be a regular cardinal, large enough, so that $H_\ell \in  \cH(\chi) $. The notation $\cB$ stands for $(\cH(\chi), \in)$. Let $\cB_\ell$ be countable
such that $H_\ell\in\cB_\ell$.
Now, we look at  $$\mathcal{L}_\ell:=\cB_\ell\cap H_\ell.$$ We find easily that $\mathcal{L}_\ell$ is an unbounded countable abelian $p$-group. Hence it is of the form $\oplus_{i}\mathbb{Z}z_{\ell,i}$ where $z_{\ell,i}$ is of order $p^{m(\ell,i)}$.
As $\mathcal{L}_\ell$ is unbounded,  we may and do assume that $m(\ell,i)>\ell$. This implies that
$\mathbb{Z}z_{\ell,i}$ is a pure subgroup of
$\mathcal{L}_\ell$, and hence $H_\ell$. Consequently,
$\mathbb{Z}z_{\ell,i}$ is a direct summand  of
$H_\ell$  as well.
By definition,
we have $H_\ell=H_{\ell+1} \oplus \mathbb{Z}z_{\ell+1}$ for some abelian  subgroup $H_{\ell+1}$ of $H_\ell$.]

\iffalse
This is possible as $H_\ell$ is infinite (and hence non-empty). Again, we are going to use   Fact \ref{kapla2}. This implies that $(\mathbb{Z}y_{\ell+1})_*$ is a direct summand of $H_\ell$ and by revisiting  Fact \ref{bounded_exponent}, we are able to find $z_{\ell+1}$ such that $\mathbb{Z}z_{\ell+1}$ is a direct summand of
$(\mathbb{Z}y_{\ell+1})_*$. Thus $\mathbb{Z}z_{\ell+1}$ is a direct summand of $H_\ell$ as well. \fi

For each $i<\omega$, we let $\ell(i)>1$ be such that  $z_{i}$ is of order $p^{\ell(i)}.$
Following clause $(e)$,  clearly we can find some infinite $u \subseteq \omega$
such that the sequence $\langle \ell(i): i \in u     \rangle$ is increasing.
For any $j<\omega,$ we clearly have
$ \bigoplus_{i \in u \cap j} \bbZ z_{i} \subseteq_{*}  K,$ 
and hence
 $\oplus_{i \in u} \bbZ z_{i} \subseteq_{*}  K.$ 
In the light of part (i), $\bigoplus_{i \in u} \bbZ z_{i}$ is a direct summand of $K$, thus there is  some $K_{3}$ such that  $K = \oplus_{i \in u} \bbZ z_{i} \oplus K_{3}.$  Let   $\langle j(k): k < \omega \rangle$ be lists $u$ in an increasing order,
and define  $h \in \End(K)$ be such that 	
\begin{itemize}
	\item
	$h \rest K_{3} = \id_{K_{3}}$,
	
	\item $ h(z_{j(k)}) = p^{\ell(k+1)-1} z_{j(\ell +1)}$.
\end{itemize} It is easy to check that $h$ is a well-defined endomorphism of $K$ and it satisfies the following properties:
\begin{itemize}
	\item
	$h$ is injective,
	
	\item $h$ is not surjective.
\end{itemize}
In sum, $h$ witnesses that $K$ is not co-Hopfian.
This is a contradiction that we searched for it.
\end{discussion}
The following is clear:
\begin{corollary}
Let $G$ be  a   $p$-group such that its reduced part is unbounded and its countable pure subgroups
are directed summand. Then $G$ is not co-Hopfian.
\end{corollary}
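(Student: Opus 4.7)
The plan is to adapt the construction carried out in the preceding Discussion, but instead of relying on a (nonexistent) strengthening of Fact \ref{kapla2} to split off a countable pure subgroup, we invoke the corollary's hypothesis directly. The payoff will be an explicit endomorphism of $G$ that is injective but not surjective.

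First I would reduce to the case $G$ is reduced. Write $G = G_{r} \oplus G_{d}$ where $G_{d}$ is the maximal divisible subgroup of $G$ (a direct summand by Fact \ref{ided}) and $G_{r}$ is reduced. By Fact \ref{dir}(i), if $G$ were co-Hopfian then $G_{r}$ would be co-Hopfian as well. The unboundedness of the reduced part of $G$ is by definition the unboundedness of $G_{r}$, and the hypothesis that countable pure subgroups are direct summands is inherited by $G_{r}$ (pull back a splitting of a pure subgroup of $G_{r}\subseteq G$ and intersect with $G_{r}$). So it is enough to prove: if $G$ is a reduced unbounded $p$-group in which every countable pure subgroup is a direct summand, then $G$ is not co-Hopfian. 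Henceforth assume this.

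Second I would construct, by induction on $\ell < \omega$, a triple $(H_{\ell}, \mathcal{L}_{\ell}, z_{\ell})$ with $H_{0} = G$, $H_{\ell} = H_{\ell+1} \oplus \mathbb{Z}z_{\ell}$, and $z_{\ell} \in H_{\ell}$ of order $p^{m(\ell)}$ with $m(\ell)\geq \ell$, mimicking the induction in the preceding Discussion. At stage $\ell$, pick a countable elementary submodel $\mathcal{B}_{\ell} \prec (\mathcal{H}(\chi),\in)$ with $H_{\ell} \in \mathcal{B}_{\ell}$, and set $\mathcal{L}_{\ell} := \mathcal{B}_{\ell} \cap H_{\ell}$. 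By elementarity, $\mathcal{L}_{\ell}$ is an unbounded countable $p$-group and is pure in $H_{\ell}$. Working inside $\mathcal{L}_{\ell}$ one selects an element $z_{\ell}$ whose order $p^{m(\ell)}$ exceeds $p^{\ell}$ and such that $\mathbb{Z}z_{\ell}$ is pure in $\mathcal{L}_{\ell}$, hence pure in $H_{\ell}$; then Fact \ref{kapla2} (applied to this bounded pure cyclic subgroup) yields $H_{\ell} = H_{\ell+1} \oplus \mathbb{Z}z_{\ell}$. After thinning to an infinite subsequence indexed by $\langle j(k) : k<\omega\rangle$, one may assume $m(j(k))$ is strictly increasing; write $\ell(k) := m(j(k))$.

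Third, the direct sum $\bigoplus_{k<\omega} \mathbb{Z}z_{j(k)}$ is pure in $G$ (its intersection with $p^{n}G$ is computed summand-by-summand using that each finite partial sum is a summand of some $H_{i}$). This is the decisive step where I invoke the corollary's hypothesis: since this countable pure subgroup is a direct summand, there exists $K_{3} \leq G$ with
\[
G = \Bigl(\bigoplus_{k<\omega}\mathbb{Z}z_{j(k)}\Bigr) \oplus K_{3}.
\]
Finally, define $h \in \End(G)$ by $h\restriction K_{3} = \id_{K_{3}}$ and, on the cyclic summands, by
\[
h(z_{j(k)}) := p^{\,\ell(k+1)-\ell(k)}\, z_{j(k+1)}.
\]
Since $p^{\ell(k+1)-\ell(k)} z_{j(k+1)}$ has order exactly $p^{\ell(k)}$, the map respects the relation $p^{\ell(k)} z_{j(k)} = 0$, so $h$ is a well-defined endomorphism. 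It is injective because its restriction to each summand $\mathbb{Z}z_{j(k)}$ sends a generator to an element of the same order in a different summand, and the summand $K_{3}$ is mapped identically. It fails to be surjective because $z_{j(0)} \in G$ does not lie in $K_{3}$ and cannot be written as a value of $h$ on any $z_{j(k)}$ (values on the cyclic part always lie in $\bigoplus_{k\geq 1}\mathbb{Z}z_{j(k)}$). Hence $h$ witnesses that $G$ is not co-Hopfian, completing the proof.

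The main obstacle I anticipate is the elementary-submodel step: verifying that $\mathcal{L}_{\ell}$ really contains pure cyclic subgroups of arbitrarily large order and that these remain pure after being promoted through the tower $\mathcal{L}_{\ell} \subseteq H_{\ell} \subseteq G$, so that the eventual direct sum $\bigoplus_{k} \mathbb{Z}z_{j(k)}$ is pure in $G$ rather than merely in some $H_{i}$. Once purity is secured, the corollary's hypothesis does the decisive work, and the endomorphism $h$ is then routine to write down.
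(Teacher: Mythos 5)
Your proof is correct and follows exactly the route the paper intends: the Corollary is stated immediately after the Discussion precisely because the argument there (build the tower $H_\ell = H_{\ell+1}\oplus\mathbb{Z}z_{\ell}$, thin to an increasing subsequence, and apply the ``countable pure $\Rightarrow$ summand'' hypothesis to split off $\bigoplus_k\mathbb{Z}z_{j(k)}$) goes through verbatim once the Discussion's ad hoc contradiction hypothesis is replaced by the corollary's assumption. As a small bonus, you sidestep the Discussion's claim that $\mathcal{L}_\ell$ is a direct sum of cyclics (which for a countable $p$-group requires separability, not just reducedness, by Pr\"ufer's theorem) by only extracting a single pure cyclic of large order, and your formula $h(z_{j(k)}) = p^{\ell(k+1)-\ell(k)}z_{j(k+1)}$ corrects the garbled exponent in the paper's display.
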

\begin{lemma}\label{a25}
	Let  $G$ be an abelian  group  of size $\lambda$  and $m \geq 1$. Suppose there is a bounded  $h \in \End(G)$ such that $f := m\cdot\id_{G} + h \in \End(G)$ is $1$-to-$1$ not onto \footnote{Thus $f$ witnesses non co-Hopfianity of $G$.}. Then for some $n \geq 1$ we have:
	
	\begin{enumerate}
		\item[(i)] $\NQr_{(m, n)}(G)$,
		
		\item[(ii)] Letting $h_0 = h \restriction \Gamma_{n}(G)$, $h_0$ is an $(m, n)$-anti-witness for $\Gamma_{n}(G)$.
	\end{enumerate}
\end{lemma}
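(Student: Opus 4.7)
The plan is to choose a single integer $n$ carefully and then verify the four clauses of Definition \ref{a16}(1) in order, with only the last being substantive.

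\emph{Choice of $n$.} Since $h \in \End(G)$ is bounded, there is $N \geq 1$ with $N \cdot h(g) = 0$ for every $g \in G$; in particular $\range(h) \subseteq G[N] \subseteq \Gamma_N(G)$. I take $n := mN$, so that $m \mid n$ and every prime dividing $N$ divides $n$, whence $\range(h) \subseteq \Gamma_n(G)$.

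\emph{Clauses (a)--(c).} Because $\range(h) \subseteq \Gamma_n(G)$, the subgroup $\Gamma_n(G)$ is stable under $h$, so $h_0 := h \restriction \Gamma_n(G) \in \End(\Gamma_n(G))$ with $\range(h_0) \subseteq \range(h)$ bounded; this gives (a) and (b) of Definition \ref{a16}(1). Clause (c), that $h_0' := m \cdot \id_{\Gamma_n(G)} + h_0$ is injective, is immediate, since $h_0'$ is the restriction of the injective endomorphism $f = m \cdot \id_G + h$ to $\Gamma_n(G)$.

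\emph{Clause (d).} Fix $y \in G \setminus \range(f)$ and let $\pi : G \twoheadrightarrow G/\Gamma_n(G)$. Because $\range(h) \subseteq \Gamma_n(G)$, the map $f$ descends modulo $\Gamma_n(G)$ to multiplication by $m$, so $\pi \circ f = m \cdot \pi$. I split into two cases according to whether $\pi(y)$ lies in $m \cdot (G/\Gamma_n(G))$. If not, then $G/\Gamma_n(G)$ is not $m$-divisible; this case forces $m > 1$ (for $m=1$ every quotient is trivially $1$-divisible), giving the second alternative of (d). Otherwise write $y = mx + t$ with $x \in G$ and $t \in \Gamma_n(G)$, and set $y' := y - f(x) = t - h(x)$, which lies in $\Gamma_n(G)$. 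If one had $y' = h_0'(x')$ for some $x' \in \Gamma_n(G)$, then $y = f(x) + f(x') = f(x+x')$ would contradict $y \notin \range(f)$. Hence $h_0'$ is not onto $\Gamma_n(G)$, which is the first alternative of (d). In either case the $h_0$ constructed is an $(m,n)$-anti-witness for $\Gamma_n(G)$, proving both (i) and (ii).

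\emph{Main obstacle.} The only delicate step is clause (d): one must see that the failure of surjectivity of $f$ on all of $G$ can always be ``localized'' to either the failure of $m$-divisibility of the quotient $G/\Gamma_n(G)$ or the failure of surjectivity of $h_0'$ on the bounded torsion part $\Gamma_n(G)$. The choice $n = mN$ is exactly what makes $h$ vanish modulo $\Gamma_n(G)$, and that is what allows a witness $y$ to non-surjectivity of $f$ to be split cleanly into the two alternatives listed in Definition \ref{a16}(1)(d).
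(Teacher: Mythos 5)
Your proof is correct and follows essentially the same approach as the paper: choose $n$ with $m \mid n$ and $\range(h) \subseteq \Gamma_n(G)$, then verify the four clauses of Definition \ref{a16}(1) directly, with clause (d) being the only substantive one. The paper proves clause (d) by contrapositive (assuming $f_0$ onto and $G/\Gamma_n(G)$ $m$-divisible and deriving that $f$ is onto), whereas you argue directly from a fixed $y \in G \setminus \range(f)$; the underlying algebra is identical, so this is a presentational variant rather than a genuinely different route.
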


\begin{proof}
	
	Let $f$ and $h$ be as above. As $\range(h)$ is bounded, for some $n \geq 1$ we have $\range(h) \leq \Gamma_{n}(G)$ and without loss of generality $m \, \vert \, n$. Possibly, replacing $n$ with $nm$, which is possible as $n_{1} \vert n_{2}$ implies that $\Gamma_{n_{1}}(G) \leq\Gamma_{n_{2}}(G)$. Notice now that:
	
	\begin{enumerate}
		\item[$(*)_{1}$]
		\begin{enumerate}
			\item[(a)] $f$ maps $\Gamma_{n}(G)$ into itself.
			
			\item[(b)] if $x \in G \setminus \Gamma_{n}(G)$, then $f(x) \notin \Gamma_{n}(G)$.
		\end{enumerate}	
	\end{enumerate}	
	Clause	(a) clearly holds as by the choice of $n$ we have $\range(h) \leq \Gamma_{n}(G)$. To see clause (b), we suppose by contradiction
	that $f(x)=mx+h(x) \in \Gamma_{n}(G).$ It follows that $mx=f(x)-h(x) \in \Gamma_{n}(G)$, and hence as $m \, \vert \, n$,
	$x \in \Gamma_{n}(G),$ a contradiction.
	
	Let now $h_{0} = h \restriction \Gamma_{n}(G).$ Then  we have:
	
	\begin{enumerate}
		\item[$(*)_{2}$]
		\begin{enumerate}
			\item $h_{0} \in \End(\Gamma_{n}(G)),$
			
			\item $h_{0}$ is bounded,
			
			\item Since $f$ is $1$-to-$1$, so is $f_0 = m\cdot\id_{\Gamma_{n}(G)} + h_{0} \in \End(\Gamma_{n}(G))$.
		\end{enumerate}
	\end{enumerate}
	We are left to show that  $h_0$ is an $(m, n)$-anti-witness. By $(*)_{2}$ it suffices show that $f_0$ is not onto or $G / \Gamma_{n}(G)$ is not $m$-divisible. Suppose on the contrary that $f_0$ is onto and $G / \Gamma_{n}(G)$ is $m$-divisible. We are going to show that $f$
	is onto, which contradicts our assumption. To this end, let $x \in G.$ Since $G / \Gamma_{n}(G)$ is $m$-divisible,
	we can find some $y \in G$ such that
	\[
	x-my \in \Gamma_n(G).
	\]
We look at $$w:=x-my-h_0(y)\in\Gamma_{n}(G).$$	As $f_0$ is onto, we can find some $z \in \Gamma_n(G)$ such that $f_0(z)=w$. So,
	\[
	x-my-h_0(y)=w=f_0(z)=mz+h_0(z).
	\]
	Using this equation, and the additivity of $h_0$, we observe that $$x=m(y+z)+h_0(y+z)=f(y+z).$$
In other words, $f$ is onto. This is a contradiction.	
\end{proof}
\begin{notation}	Suppose $\kappa$ and $\mu$
		are infinite cardinals. The infinitary language $\mathcal{L}_{\mu, \kappa}(\tau)$
		is defined so as its vocabulary is the same as $\tau,$ it has the same terms and atomic formulas as in $\tau,$ but we also allow conjunction and disjunction of length less than $\mu$, i.e., if $\phi_j,$ for $j<\beta < \mu$ are formulas, then so are $\bigvee_{j<\beta}\phi_j$ and $\bigwedge_{j<\beta}\phi_j$. Also, quantification over less than $\kappa$ many variables. 
\end{notation}

\begin{lemma}\label{a28}
	
	Let $G$ be a reduced abelian group of size $\lambda$ such that
	\begin{enumerate}
		\item  $\lambda > 2^{\aleph_0}$,  \item $G$ is   co-Hopfian.
	\end{enumerate}
		Then the property $\Qr_*(G)$ is valid.
\end{lemma}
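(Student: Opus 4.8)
The plan is to establish the two clauses of $\Qr_*(G)$ in turn. That $\Qr(G)$ holds is immediate: if instead $\NQr(G)$ held, Lemma~\ref{a19} would make $G$ fail to be co-Hopfian, against hypothesis~(2). So the whole work lies in showing that $\Gamma_p(G)$ is unbounded for at least one prime $p$, and I would argue this by contradiction, assuming from now on that every $\Gamma_p(G)$ is bounded.

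The first reduction disposes of the case that $\tor(G)$ is itself bounded. Then $\tor(G)$ is a bounded pure subgroup of $G$ (torsion subgroups are pure), hence a direct summand by Fact~\ref{kapla1}, say $G=\tor(G)\oplus G'$ with $G'$ torsion-free; by Fact~\ref{dir}(i) $G'$ is co-Hopfian, so it is divisible of finite rank, so $G'=0$ because $G$ is reduced; but then $G=\tor(G)$ has size at most $2^{\aleph_0}$ by Fact~\ref{co-hop_pgroup}(ii), contradicting $\lambda>2^{\aleph_0}$. Hence $\tor(G)$ is unbounded, and since each $\Gamma_p(G)$ is bounded the set $P:=\{p\in\mathbb P:\Gamma_p(G)\ne 0\}$ is infinite; list it as $p_0<p_1<\cdots$. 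I would then upgrade ``bounded'' to ``finite'': if some $\Gamma_p(G)$ were infinite and bounded, then by Fact~\ref{bounded_exponent} it is an infinite direct sum of finite cyclic $p$-groups of bounded exponent, so some isomorphism type $\mathbb Z/p^j\mathbb Z$ occurs in infinitely many summands by pigeonhole; the shift endomorphism $\sigma$ on the corresponding $\bigoplus_{\omega}\mathbb Z/p^j\mathbb Z$ (the identity on the complement) is injective but not onto, so $h:=\sigma-\id$ is a $(1,p)$-anti-witness for $G$ (when $m=1$, clause~(d) of Definition~\ref{a16} only asks that $h'=\sigma$ be not onto), contradicting $\Qr(G)$. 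So every $\Gamma_{p_k}(G)$ is finite and $\tor(G)=\bigoplus_{k<\omega}\Gamma_{p_k}(G)$ is countable.

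The heart of the argument is then the following. Each $\Gamma_{p_k}(G)$, being finite and pure in $G$, is a direct summand by Fact~\ref{kapla1}, and iterating I fix a nested decomposition $G=\Gamma_{p_0}(G)\oplus\cdots\oplus\Gamma_{p_n}(G)\oplus G^{(n)}$ with $G^{(0)}\supseteq G^{(1)}\supseteq\cdots$, where $G^{(n)}$ has exactly the $p_k$-torsion of $G$ for $k>n$ and no $p_k$-torsion for $k\le n$ (and no $q$-torsion for $q\notin P$). I also note that for $q\notin P$ multiplication by $q$ is injective on $G$ (its kernel is $\Gamma_q(G)=0$), hence onto since $G$ is co-Hopfian, so $G$ and every summand $G^{(n)}$ is $q$-divisible. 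Now there is a dichotomy. If some $G^{(n)}$ fails to be $p_n$-divisible, then multiplication by $p_n$ is injective on $G^{(n)}$ (no $p_n$-torsion there) but not onto, and taking it together with the identity on $\Gamma_{p_0}(G)\oplus\cdots\oplus\Gamma_{p_n}(G)$ produces an injective, non-surjective endomorphism of $G$, which is impossible. Otherwise $G^{(n)}$ is $p_n$-divisible for every $n$; then each $G^{(n)}$ is also $p_j$-divisible for $j<n$ (being a direct summand of the $p_j$-divisible group $G^{(j)}$), and since division by $p_j$ inside $G^{(n)}$ is unique for $j\le n$, one checks that $\bigcap_n G^{(n)}$ is torsion-free and divisible, hence equal to $0$ as $G$ is reduced. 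Consequently the homomorphism sending $g$ to the sequence of its $\Gamma_{p_k}(G)$-components in the fixed decomposition has trivial kernel, so $G$ embeds into $\prod_{k<\omega}\Gamma_{p_k}(G)$ and therefore $|G|\le 2^{\aleph_0}$, contradicting $\lambda>2^{\aleph_0}$. This finishes the proof that some $\Gamma_p(G)$ is unbounded, and combined with the first clause it yields $\Qr_*(G)$.

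The main obstacle, I expect, is the last case above, where every $G^{(n)}$ is $p_n$-divisible: one must set up the nested direct-sum decomposition correctly and keep careful track of which primes divide which $G^{(n)}$, and it is precisely there that the uniqueness of $p$-division, the reducedness of $G$, and the bound $\lambda>2^{\aleph_0}$ come together to force $|G|\le 2^{\aleph_0}$. The earlier steps — the pigeonhole producing the shift in the infinite-bounded case, and the bounded-torsion reduction — are routine given the cited facts.
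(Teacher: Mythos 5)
Your proof is correct, and it reaches the conclusion by a genuinely different route once both of you have reduced (by contradiction) to the situation where every $\Gamma_p(G)$ is finite and nonzero for infinitely many primes. The paper at that point deduces from $\Qr(G)$ that $G/\Gamma_p(G)$ is $p$-divisible for every $p$, then takes an $\mathcal L_{\aleph_1,\aleph_1}$-elementary submodel $M\prec(\cH(\chi),\in)$ of size $2^{\aleph_0}$ containing $\tor(G)$, observes $G/(G\cap M)$ is torsion-free and divisible, and uses the elementarity to lift a witness $x\in G\setminus M$ to a nonzero divisible subgroup $\bigcup_n\mathbb Z(x_n-y_n)$ of $G$, contradicting reducedness. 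You instead stay entirely algebraic: you fix a nested decomposition $G=\Gamma_{p_0}\oplus\cdots\oplus\Gamma_{p_n}\oplus G^{(n)}$, dispatch the case that some $G^{(n)}$ is not $p_n$-divisible by exhibiting an injective non-surjective endomorphism directly (equivalently one could invoke $\Qr(G)$ as the paper does), and in the remaining case use uniqueness of $p$-division and reducedness to show $\bigcap_nG^{(n)}=0$, so $G$ embeds into $\prod_k\Gamma_{p_k}(G)$, forcing $|G|\le2^{\aleph_0}$ and contradicting $\lambda>2^{\aleph_0}$. The elementary-submodel argument in the paper is shorter once you accept the $\mathcal L_{\aleph_1,\aleph_1}$ machinery and delivers a contradiction with reducedness directly; your version buys a fully elementary, verifiable argument at the cost of maintaining the nested decomposition carefully and turning the contradiction into a cardinality bound. (One small remark: your preliminary reduction to unbounded $\tor(G)$ is not needed in the paper's proof, which handles bounded $\tor(G)$ uniformly, but it does no harm, and your alternative derivation of finiteness of $\Gamma_p(G)$ via the shift anti-witness is a valid substitute for the paper's appeal to Fact~\ref{co-hop_pgroup}(i) together with Fact~\ref{dir}(i).)
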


\begin{proof}
  Thanks to Lemma  \ref{a19} we know  $\Qr(G)$ is satisfied, so it is enough to show that for some prime $p$,  $\Gamma_p(G)$ is not bounded. Towards a contradiction, we suppose that  $\Gamma_p(G)$ is bounded   for every prime $p \in \bbP$.
	
Here, we are going to show the pure subgroup $\Gamma_p(G)$ is finite.	
Suppose on the way of contradiction that  $\Gamma_p(G)$ is infinite. Recall that $p$-torsion subgroups are pure. According to
Fact \ref{kapla1} $\Gamma_p(G)$  is a direct summand of $G$, as we assumed that it is bounded. Also, following Fact \ref{bounded_exponent}
 we know that $\Gamma_p(G)$  is a direct summand of cyclic groups. In sum, we observed that $\Gamma_p(G)$ has a direct summand $K$ which is a countably infinite $p$-group.  In view of Fact \ref{co-hop_pgroup}(i), we may and do assume that $K$ is not co-Hopfian. Recall that any direct summand of co-Hopfian, is co-Hopfian. This means that $G$ is not co-Hopfian as well, which contradicts our assumption. Thus, it follows that for every $p \in \mathbb{P}$, the group $\Gamma_p(G)$ is finite and therefore a direct summand of $G$, hence there is a projection $h_p$ from $G$ onto $\Gamma_p(G)$.
Recall that $p \in \mathbb{P}$ and also $h_p \restriction \Gamma_p(G) \in \End(\Gamma_p(G))$ is essentially equal to the identity map, so is one-to-one, and hence onto, as $\Gamma_p(G)$ is finite.    Since $\Qr(G)$ is satisfied, it follows from Definition  \ref{a16}(1)(d) that $G/ \Gamma_p(G)$ is $p$-divisible.
	
	Now, we take $\chi$ be a  regular cardinal, large enough,  such that $G \in \cH(\chi)$ and let $$M \prec_{\mathcal{L}_{\aleph_1, \aleph_1}} (\cH(\chi), \in)$$ be
	such that:
	\begin{itemize}
		\item $M$ has cardinality $2^{\aleph_0}$,
		\item  $G, \tor(G) \in M$,
		\item $2^{\aleph_0} + 1 \subseteq M$.
	\end{itemize}
In the light of  Fact \ref{co-hop_pgroup}(ii), we may and do assume that  $|\tor(G)| = \mu \leq 2^{\aleph_0}$. Recall that  $2^{\aleph_0} + 1 \subseteq M$ and $\tor(G) \in M$. These imply that   $\tor(G) \subseteq M$. Now, as $G/ \Gamma_p(G)$ is $p$-divisible, then so is $$\frac{G/ \Gamma_p(G)}{(G \cap M)/\Gamma_p(G)},$$ which by the Third Isomorphism Theorem, is canonically isomorphic to $G/G \cap M$.  As  $\tor(G) \subseteq M$,
	$G/(G \cap M) $ is torsion-free,  it is divisible. Let $x \in G \setminus M$ and define the sequence $(x_n : n < \omega)$  such that:
	\begin{itemize}
		\item	$x_0 = x$,
		\item If $n = m+1$ then $$G/(G \cap M) \models ``n!x_n+(G \cap M) = x_m+(G \cap M)''.$$
	\end{itemize}
	So, letting $a_0 = 0$ and for $n=m+1 < \omega$, $$a_n = n!x_n - x_m \in G \cap M,$$ we have that $(a_n : n < \omega) \in M^\omega \subseteq M$ and so, as $$M \prec_{\mathcal{L}_{\aleph_1, \aleph_1}} (\cH(\chi), \in),$$ we can find $$\bar{y} = (y_n : n < \omega) \in (G \cap M)^\omega$$ such that $a_n = n!y_n - y_m$, but then for every $m < \omega$:
	$$G \models``m!(x_{m+1} - y_{m+1}) = x_m - y_m''.$$
	Hence, $$\bigcup \{\mathbb{Z}(x_{m} - y_{m}) : m < \omega\}$$ is a non-trivial divisible subgroup of $G$, contradicting the assumption that $G$ is reduced. So we have proved the desired claim.
\end{proof}

\begin{proposition}\label{a31}
	Let $G \in $ be a boundedly endo-rigid abelian group. The following assertions are valid:
	\begin{enumerate}
		\item $G$ is co-Hopfian iff $\Qr(G),$
		
		\item If $|G| > 2^{\aleph_0}$, then $G$ is co-Hopfian iff $\Qr_*(G)$.
	\end{enumerate}
\end{proposition}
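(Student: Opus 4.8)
The plan is to obtain both biconditionals by assembling Lemmas \ref{a19}, \ref{a25} and \ref{a28}, invoking bounded endo-rigidity only to linearize a hypothetical bad endomorphism of $G$.

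For part (1): the implication ``$G$ co-Hopfian $\Rightarrow\Qr(G)$'' will just be the contrapositive of Lemma \ref{a19}, since $\NQr(G)$ directly produces an injective, non-surjective endomorphism of $G$ (this uses nothing about $G$ beyond being abelian). For the converse I would argue by contradiction: assuming $\Qr(G)$ but $G$ not co-Hopfian, choose $f\in\End(G)$ that is $1$-to-$1$ and not onto, and use Fact \ref{a4} to write $f=m\cdot\id_G+h$ with $m\in\mathbb{Z}$ and $h\in\End(G)$ of bounded range. Then I would split on $m$. If $m\ge 1$, apply Lemma \ref{a25} to $f$ verbatim to get $\NQr_{(m,n)}(G)$ for some $n$, contradicting $\Qr(G)$. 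If $m<0$, first replace $f$ by $-f=(-m)\cdot\id_G+(-h)$, which is still $1$-to-$1$ and not onto and now has leading coefficient $-m\ge 1$, reducing to the previous case. If $m=0$, then $f=h$ has bounded range, so $f$ being $1$-to-$1$ forces $G$ to embed into a bounded group, hence $G$ itself is bounded, hence \emph{every} endomorphism of $G$ has bounded range; then $f=1\cdot\id_G+(f-\id_G)$ exhibits $f$ in the form required by Lemma \ref{a25} with $m=1$, again yielding $\NQr(G)$, a contradiction. This finishes (1).

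For part (2): the direction ``$\Qr_*(G)\Rightarrow G$ co-Hopfian'' is immediate, because $\Qr_*(G)$ includes $\Qr(G)$ and part (1) then applies (no size hypothesis is needed for this direction). The substantive direction is ``$G$ co-Hopfian $\Rightarrow\Qr_*(G)$'' under $|G|>2^{\aleph_0}$. By part (1) we already have $\Qr(G)$, so it remains to produce a prime $p$ with $\Gamma_p(G)$ unbounded. I would decompose $G=D\oplus R$ with $D$ the maximal divisible subgroup and $R$ reduced; by Fact \ref{dir}(i) both $D$ and $R$ are co-Hopfian. Using the structure theorem for divisible groups (Discussion \ref{endo}(i)) together with a shift argument — or, more cheaply, noting that the torsion part of $D$ sits inside $\tor(G)$, which has size $\le 2^{\aleph_0}$ by Fact \ref{co-hop_pgroup}(ii), while the torsion-free part of $D$ is a co-Hopfian divisible torsion-free group, hence of finite rank — one gets $|D|\le 2^{\aleph_0}<|G|$, so $|R|=|G|>2^{\aleph_0}$. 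Now $R$ is a reduced co-Hopfian group of size $>2^{\aleph_0}$, so Lemma \ref{a28} gives $\Qr_*(R)$; in particular $\Gamma_p(R)$ is unbounded for some prime $p$, and since $R\le G$ we have $\Gamma_p(R)\le\Gamma_p(G)$, so $\Gamma_p(G)$ is unbounded. Combined with $\Qr(G)$ this gives $\Qr_*(G)$.

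The only routine points to fill in are that a subgroup of a bounded group is bounded and that a co-Hopfian divisible group is small. The step I expect to require the most care is the passage to the reduced direct summand $R$ in part (2): one must check that $R$ is genuinely of size $>2^{\aleph_0}$ so that Lemma \ref{a28} applies, and that the witnessing prime transfers back from $R$ to $G$; notably, bounded endo-rigidity of $G$ plays no role in this transfer — it is used only in part (1) to linearize $f$.
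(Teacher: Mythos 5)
Your proof is correct and follows the same overall route as the paper — assembling Lemmas \ref{a19}, \ref{a25} and \ref{a28} and using bounded endo-rigidity only to write a hypothetical bad endomorphism as $m\cdot\id_G+h$ — but it is worked out more carefully than the paper's own two-line proof, and it quietly fills two gaps that the paper leaves open.

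For part (1), the paper's proof simply reads off $f=m\cdot\id_G+h$ from bounded endo-rigidity and immediately applies Lemma \ref{a25}, without remarking that Lemma \ref{a25} (and indeed the very definition of $\NQr_{(m,n)}$) requires $m\ge 1$. Your case split — replacing $f$ by $-f$ when $m<0$, and in the $m=0$ case observing that $G$ is then bounded so $f=1\cdot\id_G+(f-\id_G)$ is again in the required form — is a correct and genuinely needed supplement.

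For part (2), the paper's proof is literally ``It follows from clause (1) and Lemma \ref{a28}.'' But Lemma \ref{a28} only applies to \emph{reduced} groups, and at this point in the paper it has not been established that a boundedly endo-rigid group has no non-trivial divisible summand; that fact only appears later, inside the proof of Proposition \ref{s11c}, via Lemma \ref{a1d} and Discussion \ref{endo}(iii). Your decomposition $G=D\oplus R$, with the cardinality estimate $|D|\le 2^{\aleph_0}$ (torsion part of $D$ inside $\tor(G)$ of size $\le 2^{\aleph_0}$; torsion-free divisible co-Hopfian part of finite rank) giving $|R|>2^{\aleph_0}$, then applying Lemma \ref{a28} to the reduced co-Hopfian group $R$ and transferring $\Gamma_p(R)\le\Gamma_p(G)$, correctly circumvents this reliance. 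A shorter route closer to what the paper presumably intends is to note directly that $D=0$: by Lemma \ref{a1d} any direct summand of a boundedly endo-rigid group is boundedly endo-rigid, while by Discussion \ref{endo}(iii) a nonzero divisible group has $\BEnd=0$ and $\End\cong\prod_{p\in\mathbb{P}_0}\widehat{\mathbb{Z}}_p^{\oplus x_p}\ne\mathbb{Z}$, so it cannot be boundedly endo-rigid. Either way your argument is sound, and your closing observation that bounded endo-rigidity enters only via part (1) (i.e.\ only in the direction $\Qr\Rightarrow$ co-Hopfian) is accurate.
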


\begin{proof}
	(1). If $G$ is co-Hopfian, then by Lemma \ref{a19}, $\Qr(G)$ holds. For the other direction, suppose that $G$ is boundedly rigid and $\Qr(G)$ holds. Let $f \in \End(G)$ be $1$-to-$1$, we want to show that $f$ is onto. As $G$ is boundedly rigid we have $m$, $h$ and $L$ such that the following items hold:
	
	\begin{enumerate}
		\item[]
		\begin{enumerate}
			\item[(a)] $m \in \mathbb{Z}$, $h \in \rm{End}(G),$
			
			\item[(b)] $f(x) = mx + h(x),$
			
			\item[(c)] $L = \range(h)$ is a bounded subgroup of $G$ (and so of $\tor(G)$).
		\end{enumerate}
	\end{enumerate}
	If $f$ is not onto, then by Lemma \ref{a25}, there is $n \geq 1$ such that $\NQr_{(m, n)}(G)$ holds, which is not possible (as we are assuming $\Qr(G)$). Thus $f$ is onto as required.

	(2). It follows from clause (1) and Lemma \ref{a28}.
\end{proof}

\begin{construction}\label{a35}
Let $K := \oplus \{\frac{\mathbb{Z}}{p^n\mathbb{Z}} : p \in \mathbb{P} \text{ and } 1 \leq n <m \}$, where  $m< \omega$,
and  $\mathbb{P}$ is the set of prime numbers. Let $G$ be a boundedly endo-rigid abelian group   such that $\tor(G)=K$\footnote{In the light of our main result such a group exists for any $\lambda=\lambda^{\aleph_0} > 2^{\aleph_0}$ and the size of $G$ should be $\lambda$.}. Then $G$ is co-Hopfian.	
\end{construction}

\begin{proof}
For any $p_1\in \mathbb{P}$ and $n_1<m$, let us define 	$$
(x_{(p_1, n_1)})_{(p,n)}=\left\{\begin{array}{ll}
1+ p^n\mathbb{Z}&\mbox{if } (p,n)=(p_1,n_1)\\
0	&\mbox{otherwise }
\end{array}\right.
$$ For simplicity, we abbreviate it by $x_{(p_1, n_1)}$.	Assume towards a contradiction that there exists $f \in \End(G)$ such that $f$ is $1$-to-$1$ and not onto. As $G$ is boundedly endo-rigid, there are $m \in \mathbb{Z}$ and $h \in \BEnd(G)$ such that $f = m\cdot\id_G + h$.
	As $f$ is $1$-to-$1$ and $K$ has no infinite bounded subgroup, we can conclude that $m \neq 0$.
	\begin{enumerate}
		\item[$(*)_{1}$] $m \in \{1, -1\}$.
	\end{enumerate}
	To see $(*)_{1}$, suppose on the contrary that there is $p \in \mathbb{P}$ such that $p | m$ and let $m_1$ be such that $m = m_1p$. Now, as $\range(h)$ is bounded, there is $k \geq 1$ such that $$p^k(\range(h)) \cap \Gamma_{p}(G) = \{0\}.$$ Let $n \geq k+1$, then:
	
	\begin{equation*}
	\begin{array}{clcr}
	f(p^{n-1}x_{(p, n)}) &= m p^{n-1}x_{(p, n)} + h(p^{n-1}x_{(p, n)})\\
	&
	= m_1pp^{n-1}x_{(p, n)} + p^kh(p^{n-1 - k}x_{(p, n)})\\
	&=0,
	\end{array}
	\end{equation*}
	which contradicts the fact that $f$ is $1$-to-$1$. This completes the argument of $m \in \{1, -1\}$
	and without loss of generality we may assume that $m = 1$. Thus $f=\id_G+h$.
	
	\begin{enumerate}
		\item[$(*)_{2}$] $f$ maps $G \setminus \tor(G)$ into itself.
	\end{enumerate}
	This is because $f$ is $1$-to-$1$. Indeed let $x \in G \setminus \tor(G)$. If $f(x) \in \tor(G),$
	then for some $k, f(kx)=kf(x)=0,$ thus $kx=0,$ i.e., $x \in \tor(G)$ which contradicts
	$x \in G \setminus \tor(G)$.
	\begin{enumerate}
		\item[$(*)_{3}$] $f \restriction \tor(G) \in \End(\tor(G))$ is $1$-to-$1$ not onto.
	\end{enumerate}
	Clearly $f \restriction \tor(G) \in \End(\tor(G))$, and since $f$ is $1$-to-$1$,   $f \restriction \tor(G)$ is $1$-to-$1$ as well. Now, suppose by contradiction that $f \restriction \tor(G)$ is onto. Then
	
	\begin{enumerate}
		\item $\tor(G) \subseteq \range(f),$
		
		\item $x \in G \Rightarrow f(x) = x + h(x) \in  \tor(G).$ 	
	\end{enumerate}Recall that $h(x) \in  \tor(G).$ Apply this along with $(1)$, we deduce that $h(x)\in \range(f).$ Also, recall that $\range(f)$ is a group. Now, let
$x \in G$. Thanks to $(2)$,  we observe that$$ x =f(x)- h(x) \in  \range(f).$$ In other words, $f$ is onto, a contradiction. So, $f \rest \tor(G)$ is not onto.
	
	\begin{enumerate}
		\item[$(*)_{4}$]
		\begin{enumerate}
			\item[(a)] for every $p \in \mathbb{P}$, $f$ maps $\Gamma_p(G)$ into itself and so $f \restriction \Gamma_p(G)$ is $1$-to-$1,$
			
			\item[(b)] for some $p \in \mathbb{P}$, $f \restriction \Gamma_p(G)$ is not onto.
		\end{enumerate}
	\end{enumerate}
	Item (a) above is simply because $f$ is $1$-to-$1$. To see (b) holds, note that if  $f \restriction \Gamma_p(G)$
	is onto for all prime number $p$, then so is $f \restriction \tor(G)$, which contradicts $(*)_3.$
	
	Thus, let us fix some prime $p \in \mathbb{P}$ such that $f \restriction \Gamma_p(G)$ is not onto
	and let $h_p = h \restriction \Gamma_p(G)$. Then by the above observations,  it equipped with the following properties:
	\begin{enumerate}
		\item[$(*)_{5}$]
		\begin{enumerate}
			\item[(a)] $h_p \in \rm{End}(\Gamma_p(G)),$
			
			\item[(b)] $\range(h_p)$ is bounded,
			\item[(c)] $h'_p = m\cdot \id_{\Gamma_p(G)} + h_p = \id_{\Gamma_p(G)} + h_p$ is $1$-to-$1$,
			
			\item[(d)] $h'_p$ is not onto.
		\end{enumerate}
	\end{enumerate}
In the light of Definition~\ref{a16} and $(*)_{5}$  we observe that
	\begin{enumerate}
		\item[$(*)_{6}$] $h_p$ is a $(1, p)$-anti-witness for $\Gamma_p(G)$ and so $\NQr(\Gamma_p(G))$.
	\end{enumerate}
Thanks to Lemma \ref{a22}, $\Gamma_p(G)$  is infinite.
	But,
	\begin{center}
		$\Gamma_p(G) = \Gamma_p(K) =\bigoplus \{\frac{\mathbb{Z}}{p^n\mathbb{Z}} : 1 \leq n <m \}$,
	\end{center}
	which is finite. Thus we get a contradiction, and hence $f$ is onto. It follows that $G$
	is co-Hopfian and the lemma follows.
\end{proof}
\iffalse
By the  same method one can prove the following variation of Theorem \ref{a35}.

\begin{observation}\label{a35n}
	Let $K = \bigoplus \{\frac{z_{p, n}\mathbb{Z}}{p^n\mathbb{Z}} : p \in \mathbb{P} \text{ and } 1 \leq n < \omega\}$ where $z_{p,n}\in\mathbb{Z}$ is chosen so that  ${z_{p, n}\mathbb{Z}}\neq{p^n\mathbb{Z}} $. If $G$ is a boundedly endo-rigid  abelian  group  of size $\lambda$   and $K = \tor(G)$, then $G$ is co-Hopfian.
\end{observation}
\fi

\begin{corollary} \label{end}
	For any cardinals $\lambda> 2^{\aleph_{0}}$, there is a co-Hopfian abelian group $G$ of size $\lambda$ iff $\lambda= \lambda^{\aleph_{0}}$.
\end{corollary}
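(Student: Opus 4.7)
The plan is to extract both directions directly from results already established. For the $(\Leftarrow)$ direction, assume $\lambda = \lambda^{\aleph_0} > 2^{\aleph_0}$. First I would fix a small test torsion group, namely
\[
K := \bigoplus \big\{\mathbb{Z}/p^{n}\mathbb{Z} : p \in \mathbb{P},\ 1 \leq n < m\big\}
\]
for some $m < \omega$. Then $|K| = \aleph_{0} < \lambda$, so the hypotheses of Theorem \ref{th2} (equivalently Theorem \ref{main}) are satisfied with a suitable nontrivial rigidity context $\bold k$ having $K_{\bold k} = K$ and $R_{\bold k} = \mathbb{Z}$. This yields a boundedly endo-rigid abelian group $G$ of size $\lambda$ with $\tor(G) = K$. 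Now apply Construction \ref{a35} to this $G$: its conclusion is precisely that $G$ is co-Hopfian. Hence a co-Hopfian abelian group of size $\lambda$ exists.

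For the $(\Rightarrow)$ direction, suppose $\lambda > 2^{\aleph_{0}}$ and that there exists a co-Hopfian abelian group $G$ of size $\lambda$. I would argue by contradiction: if $\lambda \neq \lambda^{\aleph_{0}}$, then $\lambda < \lambda^{\aleph_{0}}$, so we are in the regime $2^{\aleph_{0}} < \lambda < \lambda^{\aleph_{0}}$. But by Fact \ref{dir}(ii), there is no co-Hopfian abelian group of size $\lambda$ in this regime, contradicting the existence of $G$. Therefore $\lambda = \lambda^{\aleph_{0}}$.

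Since both implications are immediate consequences of previously established results, there is no genuine obstacle here; the corollary functions as a clean packaging of the main construction (Theorem \ref{th2} together with Construction \ref{a35}) against the complementary non-existence result of Paolini--Shelah recorded as Fact \ref{dir}(ii). The only minor thing to verify is the admissibility check for invoking Theorem \ref{th2}, i.e., that the chosen $\bold k$ with $K$ as above and $R = \mathbb{Z}$ is a nontrivial rigidity context with $|K|, |R| < \lambda$, which is trivial since $\lambda > 2^{\aleph_{0}} \geq \aleph_{0}$.
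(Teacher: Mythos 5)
Your proof is correct and follows the paper's argument essentially verbatim: the forward direction is the contrapositive via Fact \ref{dir}(ii) (Paolini--Shelah), and the backward direction combines Theorem \ref{th2} (with the same choice of $K$) and Construction \ref{a35}. The only elaboration you add is the explicit admissibility remark for the rigidity context (which is fine, taking $R=\mathbb{Z}$, $S_{\bold k}=\mathbb{P}$, and $\phi_r\equiv 0$); this is implicit in the paper.
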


\begin{proof} Let $\lambda> 2^{\aleph_{0}}$
	be given. Suppose first that $\lambda< \lambda^{\aleph_{0}}$. In other words, $ 2^{\aleph_{0}}<\lambda< \lambda^{\aleph_{0}}$. According to  Fact \ref{dir}(ii),  there is
	no co-Hopfian  abelian  group of size $\lambda$. Now,  assume that $\lambda= \lambda^{\aleph_{0}}$. Let
	$$K := \oplus \{\frac{\mathbb{Z}}{p^n\mathbb{Z}} : p \in \mathbb{P} \text{ and } 1 \leq n <m \},$$ where  $m< \omega$. In the light of Theorem \ref{th2}, there exists a
	boundedly endo-rigid abelian group $G$ with $\tor(G) = K$. By Construction \ref{a35}, $G$ is
	co-Hopfian.\end{proof}

\begin{lemma}\label{a1d}
	Let $G=G_1\oplus G_2$ be a boundedly endo-rigid abelian  group. Then $G_1$ is  boundedly endo-rigid.
\end{lemma}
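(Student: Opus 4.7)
My plan is to reduce the claim to the boundedly endo-rigid hypothesis on $G$ by extending any endomorphism of $G_1$ to an endomorphism of $G$ via the direct sum projection, and then restricting the resulting decomposition back to $G_1$.

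More precisely, let $\pi_1 : G \to G_1$ be the projection associated with the decomposition $G = G_1 \oplus G_2$, and let $\iota_1 : G_1 \hookrightarrow G$ be the inclusion. Given any $f_1 \in \End(G_1)$, I would set $\tilde f := \iota_1 \circ f_1 \circ \pi_1 \in \End(G)$. Since $G$ is boundedly endo-rigid (Definition \ref{a1}), Fact \ref{a4} supplies $m \in \mathbb{Z}$ and $h \in \BEnd(G)$ such that $\tilde f(x) = mx + h(x)$ for every $x \in G$.

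Now restrict to $x \in G_1$. On the one hand $\tilde f(x) = f_1(x) \in G_1$; on the other hand $\tilde f(x) = mx + h(x)$, so
\[
h(x) = f_1(x) - mx \in G_1.
\]
Hence $h \restriction G_1$ actually maps $G_1$ into $G_1$, and $h_1 := h \restriction G_1 \in \End(G_1)$. Its range is contained in $\range(h)$, which is bounded, so $h_1 \in \BEnd(G_1)$. The identity $f_1(x) = mx + h_1(x)$ for all $x \in G_1$ then witnesses, via Fact \ref{a4}, that $G_1$ is boundedly endo-rigid.

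There is no serious obstacle: the only thing to watch out for is that the extension/restriction procedure genuinely produces an endomorphism of $G_1$ (not merely of $G$ taking values in $G_1$ in a roundabout way), which is handled by the computation $h(x) = f_1(x) - mx \in G_1$ displayed above. The result is essentially formal once one writes down the extension by zero on the complement.
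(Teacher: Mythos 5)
Your proof is correct and follows essentially the same strategy as the paper: extend $f_1$ to an endomorphism of $G$, apply the bounded endo-rigidity of $G$ to get an $m$ and a bounded $h$, and then restrict back to $G_1$. The only (immaterial) difference is the choice of extension — you extend by zero on $G_2$ via $\tilde f = \iota_1 \circ f_1 \circ \pi_1$, while the paper uses $f_1 \oplus \id_{G_2}$; both choices make the restriction argument go through unchanged.
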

\begin{proof}
Let	$f_1 \in \End(G_1)$. Then $f_1\oplus\id_{G_2} \in \End(G)$. Since
 $G$ is boundedly endo-rigid there is $m \in \bbZ$ such that the map $x \mapsto f(x) - mx$ has bounded range. In other words, $$(f_1-m\cdot\id_{G_1})\oplus 0\subseteq(f_1-m\cdot\id_{G_1})\oplus(\id_{G_2}-m\cdot\id_{G_2})=(f-m\cdot \id_{G})$$has bounded range. By definition, 
$G_1$ is  boundedly endo-rigid.\end{proof}
\begin{notation} (Harrison) For each group $G$, we set $$S := S_{G}: = \{p \in \mathbb{P}:  G / \Gamma_{p}(G)\emph{ is not  p-divisible}\}.$$
\end{notation}

Now, we are ready to present the following promised  criteria:
\begin{proposition}\label{s11c} Let $\lambda > 2^{\aleph_{0}},$
and suppose $G$ is a boundedly endo-rigid abelian  group  of size $\lambda$.  Then $G$ is co-Hopfian if and only if:
	
	\begin{enumerate}
		\item[(a):] $S_{G}$ is a non-empty set of primes,
		
		\item[(b):]
		\begin{enumerate}
			\item[$(b_1)$] $\tor(G) \neq G,$
			
			\item[$(b_2)$] if $p \in S,$ then $ \Gamma_{p}(G)$ is not bounded,
			
			\item[$(b_3)$] if $\Gamma_p(G)$ is bounded, then it is finite (and $p \notin S_{G}$).
		\end{enumerate}
	\end{enumerate}
\end{proposition}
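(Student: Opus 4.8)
The plan is to reduce the statement to Proposition~\ref{a31}(2), which, since $G$ is boundedly endo-rigid of size $\lambda>2^{\aleph_0}$, already asserts that $G$ is co-Hopfian if and only if $\Qr_*(G)$ holds. So the whole content becomes the purely group-theoretic equivalence
\[
\Qr_*(G)\iff (a)\wedge(b_1)\wedge(b_2)\wedge(b_3).
\]
Before proving it I would record two preliminaries. \emph{First}, a boundedly endo-rigid group is reduced: if $G=D\oplus C$ with $D\neq 0$ divisible, split off a $\mathbb{Z}(p^\infty)$- (or a $\mathbb{Q}$-) summand of $D$ and apply Definition~\ref{a1} to the endomorphism acting on that summand as multiplication by some $u\in\widehat{\mathbb{Z}}_p\setminus\mathbb{Z}$ (resp.\ by a non-integer rational) and as the identity elsewhere; for every $m\in\mathbb{Z}$ the map $x\mapsto f(x)-mx$ then has range $(u-m)\mathbb{Z}(p^\infty)=\mathbb{Z}(p^\infty)$ (resp.\ $(u-m)\mathbb{Q}=\mathbb{Q}$), which is unbounded, a contradiction. \emph{Second}, on either side of the displayed equivalence $G$ is unbounded (from (a) and $(b_2)$, or from $\Qr_*(G)$), so Fact~\ref{a10} applies.

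\textbf{From $\Qr_*(G)$ to (a)$\wedge$(b).} By Proposition~\ref{a31}(2), $\Qr_*(G)$ makes $G$ co-Hopfian, whence Fact~\ref{co-hop_pgroup}(ii) gives $|\tor(G)|\le 2^{\aleph_0}<\lambda=|G|$, i.e.\ $(b_1)$. Next, $\Qr_*(G)$ includes $\Qr(G)$, i.e.\ there is no $(m,n)$-anti-witness in the sense of Definition~\ref{a16}. If $\Gamma_p(G)$ were bounded and infinite, it would be a direct summand of $G$ (pure and bounded, Fact~\ref{kapla1}) and, by Fact~\ref{bounded_exponent}, would have a summand $A\cong\bigoplus_{i<\omega}\mathbb{Z}/p^k\mathbb{Z}$; the forward shift $\sigma$ on $A$, extended by the identity on a complement $B$ of $A$ in $\Gamma_p(G)$, is injective and not onto, and $h:=\sigma-\id$ (extended by $0$ on $B$) has range inside the bounded group $A$, so $h$ would be a $(1,p)$-anti-witness --- impossible. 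Hence $\Gamma_p(G)$ bounded implies $\Gamma_p(G)$ finite, part of $(b_3)$. If moreover $p\in S_G$ and $\Gamma_p(G)$ is finite, then $h:=\id_{\Gamma_p(G)}$ is bounded, $h':=p\cdot\id_{\Gamma_p(G)}+h=(p+1)\id_{\Gamma_p(G)}$ is an automorphism of the finite $p$-group $\Gamma_p(G)$ (as $\gcd(p+1,p)=1$), $m=p>1$, and $G/\Gamma_p(G)$ is not $p$-divisible, so $h$ would be a $(p,p)$-anti-witness (the case $\Gamma_p(G)=\{0\}$ being $h=0$) --- impossible. Thus for $p\in S_G$ the component $\Gamma_p(G)$ is neither bounded-infinite nor finite nor $\{0\}$, hence unbounded; this gives $(b_2)$ and the remaining ``$p\notin S_G$'' clause of $(b_3)$. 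Finally, if $S_G=\emptyset$ then $G/\Gamma_p(G)$ is $p$-divisible for every prime $p$, and the elementary-submodel argument at the end of the proof of Lemma~\ref{a28} (choose $M\prec_{\mathcal{L}_{\aleph_1,\aleph_1}}(\mathcal{H}(\chi),\in)$ of cardinality $2^{\aleph_0}$ with $G,\tor(G)\in M$ and $2^{\aleph_0}+1\subseteq M$, so $\tor(G)\subseteq M$ since $|\tor(G)|\le 2^{\aleph_0}$, deduce that $G/(G\cap M)$ is divisible and torsion-free, and build a nonzero divisible subgroup of $G$) contradicts the reducedness of $G$; hence $S_G\neq\emptyset$, i.e.\ (a).

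\textbf{From (a)$\wedge$(b) to $\Qr_*(G)$.} The ``unbounded primary component'' clause of $\Qr_*(G)$ (Definition~\ref{a16b}) follows at once from (a) and $(b_2)$. It remains to prove $\Qr(G)$. Equivalently, via Lemma~\ref{a25}: given any injective $f\in\End(G)$, write $f=m\cdot\id_G+h$ with $m\in\mathbb{Z}$ and $h\in\BEnd(G)$ (Definition~\ref{a1}); $m\neq 0$ since $G$ is unbounded and $f$ is injective, and we may take $m\ge 1$; it suffices to exclude an $(m,n)$-anti-witness $h\restriction\Gamma_n(G)$, for then $f$ is onto. Decompose $\Gamma_n(G)=\bigoplus_{q\mid n}\Gamma_q(G)$; as $\Hom(\Gamma_q,\Gamma_{q'})=0$ for distinct primes, $h$ and $h':=m\cdot\id+h$ split blockwise, and by $(b_3)$ with reducedness each $\Gamma_q(G)$ ($q\mid n$) is finite or unbounded. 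One then argues: if $m=1$, then $\id+h_q$ injective with $h_q$ of bounded range forces $\id+h_q$ onto on each $\Gamma_q(G)$ (finite, or a reduced $p$-group without infinite bounded summand), so $h'$ is onto, contradicting the anti-witness condition; if $m>1$ with $h'$ merely injective not onto we are back in that case; and if $m>1$, $h'$ an automorphism and $G/\Gamma_n(G)$ not $m$-divisible, then some prime $q\mid m$ lies in $S_G$, so $q\mid n$ and by $(b_2)$ $\Gamma_q(G)$ is unbounded, hence (reduced) has infinite $q$-socle, part of which $m\cdot\id$ annihilates (as $q\mid m$) and which the bounded $h_q$ cannot repair while keeping $h'$ injective --- a contradiction. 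So no anti-witness exists, $\Qr(G)$ holds, and Proposition~\ref{a31}(2) yields that $G$ is co-Hopfian.

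\textbf{The main obstacle.} The delicate point in the previous paragraph is the claim that an \emph{unbounded} $\Gamma_q(G)$ ($q\mid n$) has \emph{no infinite bounded direct summand} --- $(b_3)$ only forbids $\Gamma_q(G)$ itself from being bounded-infinite --- so that ``$\id+\mathrm{bounded}$ injective $\Rightarrow$ onto'' can be invoked, and the corresponding care is needed for the $m>1$, $h'$-an-automorphism case. I expect this to be the hardest step, and it is precisely where one must use the full strength of Definition~\ref{a1} ($\End(G)/\BEnd(G)\cong\mathbb{Z}$, not merely $\cong R$): were $\Gamma_q(G)$ to contain such a summand $A\cong\bigoplus_{i<\omega}\mathbb{Z}/p^k\mathbb{Z}$, then $G=A\oplus G_1$ with $G_1$ again boundedly endo-rigid, and this rigidity of $G_1$ should force enough rigidity of $G_1/\tor(G_1)$ to populate $S_G$ with primes whose primary components (being zero, or bounded) contradict $(b_2)$; making this quantitative is the technical heart, while everything else is the bookkeeping sketched above.
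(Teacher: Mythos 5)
Your forward direction (co-Hopfian together with bounded endo-rigidity implies (a) and (b)) is essentially sound and runs parallel to the paper: your preliminary ``boundedly endo-rigid $\Rightarrow$ reduced'' does the job that Lemma \ref{a1d} together with Discussion \ref{endo} does in the paper's proof of clause (a), the elementary-submodel argument you import from Lemma \ref{a28} is exactly what the paper repeats there, and deriving $(b_2)$, $(b_3)$ from the absence of anti-witnesses (the shift on an infinite bounded summand, and $p\cdot\id+\id$ on a finite $\Gamma_p(G)$ with $p\in S_G$) is a legitimate variant of the paper's direct witnesses $\id_{\Gamma_p(G)}\oplus\, p\cdot\id$ on a complement.

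The right-to-left implication, however, is not proved. After reducing via Lemma \ref{a25} to excluding an $(m,n)$-anti-witness of the form $h\restriction\Gamma_n(G)$, your blockwise argument needs the claim that on each unbounded $\Gamma_q(G)$ an injective map $m\cdot\id+h_q$ with $h_q$ of bounded range is onto, and you justify this only under the extra hypothesis that $\Gamma_q(G)$ has no infinite bounded direct summand --- a hypothesis which $(b_3)$ does not supply and which you admit you cannot derive. This is not a removable technicality: if $\Gamma_q(G)$ contained an infinite bounded pure subgroup $A\cong\bigoplus_{i<\omega}\mathbb{Z}/q^k\mathbb{Z}$, then $A$ would be a direct summand of $G$ by Fact \ref{kapla1}, and the forward shift on $A$ extended by the identity on a complement is injective, not onto, and differs from $\id_G$ by a map with range inside $A$; that is, it is exactly the kind of endomorphism that bounded endo-rigidity tolerates (Remark \ref{a13}), so no contradiction can be extracted from rigidity of a complement $G_1$ alone (Lemma \ref{a1d} gives rigidity of $G_1$, but rigidity is blind to bounded summands), and nothing in (a)--(b) as you use them visibly excludes this configuration. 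Your proposed repair --- that rigidity of $G_1$ should ``populate $S_G$'' with primes contradicting $(b_2)$ --- is a hope, not an argument. Note also that the paper's own proof of this direction does not pass through your prime-by-prime surjectivity claim: it keeps the global $f=m\cdot\id_G+h$, first proves its step $(*)_2$ (if $\Gamma_p(G)$ is infinite then $p\nmid m$, using $(b_3)$ and an order computation), then $(*)_3$--$(*)_5$, turns $h\restriction\Gamma_p(G)$ into an anti-witness for the $p$-group $\Gamma_p(G)$ itself, and obtains its contradiction by playing Lemma \ref{a22} (which forces $\Gamma_p(G)$ to be infinite) against $(*)_2$; your reduction discards precisely the information that $(*)_2$ provides, so even that concluding move is unavailable to you. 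As it stands, the backward direction of your proposal is a sketch whose central step is missing.
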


\begin{proof}
	Let $K := \tor(G)$, and for each prime number $p$, we set $K_{p} := \Gamma_{p}(G).$
	
	First, we assume that	
	$G$ is co-Hopfian, and we are going to show items (a) and (b) are valid. 
		As $G$ is co-Hopfian, and recall from the introduction that
	Beaumont and Pierce (see \cite{Be}) proved  that for the co-Hopfian group $G$, we know $\tor(G)$ is of size at most continuum.
 In other words,  $\vert \tor(G) \vert \leq 2^{\aleph_{0}}$. We combine this along with our assumption  $\vert G \vert = \lambda > 2^{\aleph_{0}},$ and conclude that $K = \tor(G) \neq G,$  as claimed by $(b_1)$.
	
	To prove $(b_2)$, let $p \in S$ and suppose by contradiction that   $K_{p}$ is bounded. As $K_p$ is pure in $G$,  and following Fact \ref{kapla1}, the boundedness property guarantees that $K_P$ is a direct summand of $G$. By definition, there is $G_p$  such that $G = K_{p} \oplus  G_{p}$. Now, we look at $\id_{K_{p}} + p \cdot \id_{G_{p}} \in \End(G)$. Let $$(k,g) \in \Ker(\id_{K_{p}} + p \cdot \id_{G_{p}}).$$ Following definition, $$(0,0)=(\id_{K_{p}} + p \cdot \id_{G_{p}})(k,g)=(k,pg).$$
	In other words,
	$k=0$ and as $G_p$ is $p$-torsion-free, $g=0$. This means that  $$\Ker(\id_{K_{p}} + p \cdot \id_{G_{p}})=0,$$ and hence $\id_{K_{p}} + p \cdot \id_{G_{p}}$ is $1$-to-$1$.
	Since $p\in S$, ${G_{p}}:=G / \Gamma_{p}(G)$ is not  p-divisible, thus there is  $g$ in $G_p$ such that
	$g\notin \range(p \cdot \id_{G_{p}})$. Consequently, $\id_{K_{p}} + p \cdot \id_{G_{p}}$
	is $1$-to-$1$ not onto. This is in contradiction with the co-Hopfian assumption, so $K_{p}$ is not bounded and  $(b_2)$ follows.
	
	In order to check $(b_3)$, suppose $K_{p} = \Gamma_{p}(G)$ is bounded. Then it is a direct summand of $G$, say $G = K_{p} \oplus G_{p}$. Since $G$ is co-Hopfian, and in view of Fact \ref{dir},  we observe that  $K_p$ is co-Hopfian. Thanks to Fact \ref{co-hop_pgroup}  $K_{p}$ is finite.
	
	Lastly, we check clause (a). Suppose on the way of contradiction that $S$ is empty. Let $G_{1} \prec_{\mathcal{L}_{\aleph_{1}, \aleph_{1}}} G$ be of cardinality $2^{\aleph_{0}}$ containing $\tor(G),$ recalling $\vert \tor(G) \vert \leq 2^{\aleph_{0}},$ so $G / G_{1}$ is divisible of cardinality $\lambda$.

	As $G_1 \neq G$, there is $x_{0} \in G \setminus G_{1},$ and note that $x \notin \tor(G)$. Now as $G/ \tor(G)$ is divisible, we can choose the sequence  $\langle x_{n}: n \geq 1 \rangle$ of elements of $G$, by induction on $n$, such that $x_0=x$ and for each $n$,
	$$G/\tor(G) \models`` n! x_{n+1}+\tor(G)= x_{n}+ \tor(G)''.$$
	Set $$ a_{n}:=n! x_{n+1}-x_n \in \tor(G).$$
	Note that $\langle  a_n: n<\omega  \rangle \in G_1$, thus as $G_{1} \prec_{\mathcal{L}_{\aleph_{1}, \aleph_{1}}} G$,
	we can find elements $y_{n} \in G_{1}$ for $n < \omega$  such that $$n! y_{n+1} = y_{n} + a_{n}.$$ Subtracting the last two displayed formulas,  shows that the group $$L = \bigcup \{\bbZ {(x_{n} - y_{n})}: n < \omega \}$$ is a  {non-zero} divisible subgroup  of  $G.$
	Recall from Fact \ref{ided} that $L$ is injective. Since it is injective, we know $L$ is a directed summand of its extensions.
In sum, the sequence $$0\longrightarrow L\stackrel{g} \longrightarrow G\longrightarrow \coker(g)\longrightarrow 0,$$ splits. Recall from
	Discussion \ref{endo} that $$\End(I)=\prod_{p\in\mathbb{P}_0} \widehat{\mathbb{Z}}_{p}^{\oplus{x_p}},$$	 where $\mathbb{P}_0:=\mathbb{P}\cup\{0\}$  and ${x_p}$  are some index sets. This turns out that $I$ is not boundedly endo-rigid, provided it is nonzero.
Recall from Lemma \ref{a1d} that the property of boundedly endo-rigid behaves well with respect to direct summand, it obviously implies $G$ is not boundedly endo-rigid. This contradiction implies that $S$ is not empty.
	So clause (a) holds. All together, we are done proving the left-right implication.
	
	For the right-left implication, assume items $(a)$ and $(b)$  hold, and we  show  that $G$ is co-Hopfian. Suppose on the way of contradiction  that there exists $f \in \End(G)$ such that $f$ is $1$-to-$1$ and not onto. As $G$ is boundedly endo-rigid, there are $m \in \bbZ$ and $h \in \BEnd(G)$ such that $f = m \cdot \id_{G} + h.$
	
	\sn
	\begin{enumerate}
		\item[$(*)_{1}$] $m \neq 0.$
	\end{enumerate}
	To see $(*)_{1}$, suppose  $m = 0.$ Then $f = h$, and since $\range(h)$ is bounded and $f$ is $1$-to-$1$, we can conclude that $G$ is bounded
	and  therefor $G = \tor(G)$.  This contradicts clause $(b_1)$.

	\begin{enumerate}
		\item[$(*)_{2}$]  If $\Gamma_{p}(G)$ is infinite, then $p \nmid m.$
	\end{enumerate}
	In order to see $(*)_2$, first note  that $\tor(G)$ is unbounded, as otherwise $\Gamma_{p}(G)$ is also bounded, hence by $(b_3)$ it is finite, contradicting our assumption.  Suppose on the way of contradiction that $p \mid m$. Then there is $m_1$ such that $m = m_{1}p$.
	Now, as $\range(h)$ is bounded, there exists $k \geq 1$ such that $$p^{k}\big(\range(h) \rest \Gamma_{p}(G)\big) = \{ 0 \}.$$ Recall that $K_{p}$ is unbounded. This gives us an element  $x \in \Gamma_{p}(G)$ of order $p^{n}$ for some $n \geq k +1.$ But then

	\begin{equation*}
	\begin{array}{clcr}
	f(p^{n-1}x)  &= mp^{n-1}x + h(p^{n-1}x) \\
	&
	=m_{1}pp^{n-1}x + p^{k}h(p^{n-1-k}x)\\
	&=0,
	\end{array}
	\end{equation*}
	
	which contradicts the fact that $f$ is $1$-to-$1$.

	As before,  we have the following properties:
	
	\begin{enumerate}
		\item[$(*)_{3}$] $f$ maps $G \setminus \tor(G)$ into itself.
		\item[$(*)_{4}$] $f \rest \tor(G) \in \End(\tor(G))$ is $1$-to-$1$ not onto.
		\item[$(*)_{5}$]
		\begin{enumerate}
			\item[(a)] for every $p \in \mathbb{P}$, $f$ maps $\Gamma_p(G)$ into itself and so $f \restriction \Gamma_p(G)$ is $1$-to-$1,$
			
			\item[(b)] for some $p \in \mathbb{P}$, $f \restriction \Gamma_p(G)$ is not onto.
		\end{enumerate}
	\end{enumerate}
	\iffalse
	\sn
	[Why? Suppose on way of contradiction that there is $g\in G \setminus \tor(G)$   such that
	$ f(g)  \in \tor(G)$. By definition there is $n$ such that $nf(g)=0$. In other words, $f(ng)=0$.  As $f$ is $1$-to-$1,$  $ng=0$. Since $g$ is torsion-free, we get to a contradiction.]
	
	\sn
	\begin{enumerate}
		\item[$(*)_{4}$] $f \rest \tor(G) \in \End(\tor(G))$ is $1$-to-$1$ not onto.
	\end{enumerate}
	
	\sn
	[Why? Recall from $(*)_{3}$ that $ p \rest \tor(G).$ For $f \rest \tor(G)$ being not onto \underline{otherwise} note:
	
	\sn
	\begin{itemize}
		\item $\tor(G) \subseteq \range(f),$
		
		\item $x \in G \Rightarrow f(x) = x + h(x) \in \blueq{\lambda} \in \tor(G).$ Together, $f$  is onto so $f \rest \tor(G)$ is not onto.]
	\end{itemize}
	
	\sn
	\begin{enumerate}
		\item[$(*)_{5}$]
		\begin{enumerate}
			\item[(a)] for every $p \in \mathbb{P}$, $f$ maps $\Gamma_p(G)$ into itself and so $f \restriction \Gamma_p(G)$ is $1$-to-$1,$
			
			\item[(b)] for some $p \in \mathbb{P}$, $f \restriction \Gamma_p(G)$ is not onto.
		\end{enumerate}
	\end{enumerate}
	
	\sn
	[Why? Item (a) is simple, because $f$ is $1$-to-$1$, while item (b) is by $(*)_{4}$.]
	\fi	
	Fix $p \in \mathbb{P}$ such that $f \restriction \Gamma_p(G)$ is not onto. Then  $h_p := h \restriction \Gamma_p(G)$ is equipped with the following properties:
	\begin{enumerate}
		\item[$(*)_{6}$]
		\begin{enumerate}
			\item[(a)] $h_p \in \rm{End}(\Gamma_p(G)),$
			
			\item[(b)] $\range(h_p)$ is bounded,
			\item[(c)] $h'_p = m\cdot \id_{\Gamma_p(G)} + h_p = \id_{\Gamma_p(G)} + h_p$ is $1$-to-$1$,
			
			\item[(d)] $h'_p$ is not onto.
		\end{enumerate}
	\end{enumerate}
	In the light of its definition, $h_p$ is a $(1, p)$-anti-witness and so $\NQr(\Gamma_p(G))$ holds. Thanks to Lemma \ref{a22}:
	\begin{enumerate}
		\item[$(*)_{7}$] $\Gamma_p(G)$  is infinite.
	\end{enumerate}
This is in contradiction with
$(*)_2$.
	\iffalse
	But now $h'_p=h_p \restriction \Gamma_p(G): \Gamma_p(G) \to \Gamma_p(G)$ is one-to-one and since $\Gamma_p(G)$
	is finite, we have $h'_p$ is onto, which contradicts $(*)_6$(d).\fi
\end{proof}

In  \cite{AGS} we   studied  absolutely co-Hopfian  abelian groups. Recall an abelian group is absolutely co-Hopfian if it is co-Hopfian
in any further generic extension of the universe.
Also, see \cite{1205} for the existence of absolutely  Hopfian abelian groups of any given size.
 Similarly, one may define absolutely endo-rigid groups.
Despite  its simple statement, one of the most frustrating problems in the theory
infinite abelian groups is as follows:
\begin{problem}
	Are there  absolutely endo-rigid abelian groups of arbitrary large cardinality?
\end{problem}

\begin{acknowledgement}
	The authors  thank the referees for reading the paper thoroughly and providing valuable comments.
\end{acknowledgement}


\begin{thebibliography}{}
\bibitem{AGS}M.	
Asgharzadeh, M. Golshani and S.   Shelah, \emph{ Expressive power of infinitary logic and absolute co-Hopfianity}, Preprint. arXiv: 2309.16997.	
	
\bibitem{B1}Reinhold Baer, \emph{Types of elements and characteristic subgroups of abelian   groups}, Proc. London Math.
Soc. {\bf{39}} (1935), 481-514.

\bibitem{B2}Reinhold Baer, \emph{Automorphism rings of primary abelian  operator groups}, Ann. Math. {\bf{44}} (1943), 192-227.

%\bibitem{baer}
%R. Baer.
%\newblock {\em abelian  groups without elements of finite order}.
%\newblock Duke Math. J. {\bf 03} (1937), 68-122.

\bibitem{baer}
Reinhold Baer,
\newblock {\em Groups without proper isomorphic quotient groups}.
\newblock Bull. Amer. Math. Soc. {\bf 50} (1944), 267-278.

\bibitem{Be}
R. A. Beaumont and R. S. Pierce,
\newblock {\em Partly transitive modules and modules with proper isomorphic submodules}.
\newblock  Trans. Amer. Math. Soc. {\bf 91} (1959), 209-219.

\bibitem{independence_paper}
G. Braun and L. Str\"ungmann,
\newblock {\em The independence of the notions of Hopfian and co-Hopfian abelian  $p$-groups}.
\newblock Proc. Amer. Math. Soc. 143 (2015), no. 8, 3331-3341.

\bibitem{crawley}
Peter Crawley,
\newblock {\em An infinite primary abelian  group without proper isomorphic subgroups
},
\newblock Bull. Amer. Math. Soc. {\bf 68} (1962), no. 5, 463-467.



\bibitem{CG85} A. L. S. Corner and R\"udiger G\"obel, \emph{Prescribing endomorphism algebras, a unified treatment}, Proceedings of the London Mathematical Society. Third Series {\bf{50}} (1985), 447-479.


\bibitem{c}
A. L. S. Corner, \emph{On endomorphism rings of primary abelian
	groups}, Quart. J. Math. Oxford Ser., {\bf{20}},    (1969) 277-296.

\bibitem{dic}
M. A. Dickman, \emph{Larger infinitary languages, Model Theoretic Logics},
 (J. Barwise and
S. Feferman, eds.), Perspectives in Mathematical Logic, Springer-Verlag, New York Berlin
Heidelberg Tokyo, 1985, 317-364.

\bibitem{dg}M.
Dugas,  and R\"udiger
G\"obel,
\emph{Endomorphism
	rings of separable torsion-free abelian  groups},
Houston J. Math.
{\bf{11}},    (1985)   471--483.






\bibitem{EM02}
{Paul C. Eklof and Alan Mekler}, \emph{Almost free modules: Set theoretic methods},
{Revised Edition},
{North--Holland Publishing Co.},
{North--Holland Mathematical Library},
{\bf{65}},
{2002}.


\bibitem{fg}B.
Franzen, B. Goldsmith, \emph{On endomorphism algebras of mixed modules} J. Lond. Math.
Soc. {\bf{31}}, 468-472 (1985).




\bibitem{fuch_vol2}
L. Fuchs.
\newblock {\em Infinite abelian  groups. Vol. II}.
\newblock Pure and Applied Mathematics, Vol. 36-II Academic Press, New York-London 1973.

\bibitem{fuch_vol1}
L. Fuchs.
\newblock {\em Infinite abelian  groups. Vol. I}.
\newblock Pure and Applied Mathematics, Vol. 36 Academic Press, New York-London 1970.


\bibitem{fuchs} Laszlo Fuchs,  \emph{Abelian  groups}, Springer Monographs in Mathematics. Springer, Cham, 2015.



\bibitem{kap}
Irving Kaplansky,  \emph{Infinite abelian  groups}, University of Michigan Press, Ann Arbor, 1954.

\bibitem{GT} R\"udiger
G\"obel and   Jan Trlifaj,   \emph{Approximations and endomorphism algebras of modules},
Vols. 1, 2, de Gruyter Expositions in Mathematics, Vol. {\bf{41}}, Walter de Gruyter,
Berlin, 2012.


\bibitem{matlis}
E. Matlis, \emph{Injective modules over noetherian rings}, Pacific J . Math. {\bf{8}} (1958), 511-528.

\bibitem{p}R. S.
Pierce,
\emph{Homomorphisms of primary abelian  groups}, (1963)
Topics in abelian  Groups (Proc. Sympos., New Mexico State Univ., 1962) pp. 215-310.


\bibitem{609}M. Kojman and S.  Shelah,   \emph{A ZFC Dowker space in $\aleph_{\omega+1}$: an application of PCF theory to topology}, Proc. Amer. Math. Soc., {\bf{126}}(8),  (1998) 2459-2465.

\bibitem{1205} G. Paolini and S. Shelah,  \emph{Torsion free abelian  groups are Borel complete and a solution to some (co-)Hopfian problems}, available at  arXiv: 2102.12371.


\bibitem{1214} G. Paolini and S. Shelah, \emph{On the existence of uncountable Hopfian and co-Hopfian abelian  groups}, to appear  Israel J. Math. arXiv: 2107.11290.



\bibitem{Sh:F2005}	 G. Paolini and S. Shelah,  \emph{Co-Hopfian groups are complete co-analytic and a solution to some other (co-)Hopfian problems},
Preprint.



\bibitem{sh172}  Saharon Shelah,
\newblock {\em
  A combinatorial principle and endomorphism rings. I. On p-groups}, Israel J. Math., {\bf{49}}(1-3) (1984), 239--257.

\bibitem{sh44}
 Saharon Shelah,
\newblock {\em Infinite abelian  groups, Whitehead problem and some constructions}.
\newblock Israel J. Math. {\bf 18} (1974), 243-256.




\bibitem{45} Saharon Shelah,
\emph{Existence of rigid-like families of abelian  p-groups}, In Model theory and algebra (A memorial tribute to Abraham Robinson), Vol. {\bf{498}},	  (1975).  Springer, Berlin,  384-402.

\bibitem{48}   Saharon Shelah,
\emph{Categoricity in $\aleph_1$
	of sentences in $L_{\omega_1,\omega}(Q)$}, Israel J. Math., {\bf{20}}(2),   (1975). 127-148.

\bibitem{sh136}  Saharon Shelah, \emph{Constructions of many complicated uncountable structures and Boolean algebras}, Israel J. Math. {\bf{45}} (1983), no. 2-3, Lecture Notes in Math., {\bf{1292}}, Springer, Berlin, (1987) 100-146.

\bibitem{Sh:172}  Saharon Shelah,   \emph{A combinatorial principle and endomorphism rings. I}, On p-groups, Israel Journal
of Mathematics {\bf{49}} (1984), 239-257, Proceedings of the 1980/1 Jerusalem Model Theory year.



\bibitem{Sh:227} Saharon Shelah,  \emph{A combinatorial theorem and endomorphism rings of abelian groups.
	II,} abelian groups and modules (Udine, 1984), CISM Courses and Lectures, vol. {\bf{287}}, Springer,
Vienna, 1984, Proceedings of the Conference on abelian Groups, Undine, April 9-14, (1984); edited by R.
G\"obel,  C. Metelli, A. Orsatti,  and L. Salce,  37-86.


\bibitem{sh300} Saharon Shelah, \emph{Universal classes}. Classification theory  (Chicago, IL, (1985)), 264-418, Lecture Notes in Math., {\bf{1292}}, Springer, Berlin, 1987.


\bibitem{sh309}  Saharon Shelah,
\emph{ Black Boxes},
Sec. Math., LXV, 69-130, available at arXiv: 0812.0656.

\bibitem{511}   Saharon Shelah,
\emph{ Building complicated index models and Boolean algebras},  available at https://shelah.logic.at/files/230169/511.pdf

\bibitem{t}B.
Thome,   \emph{$\aleph_1$-separable groups and  Kaplansky's test problems}, Forum Math. {\bf{2}} (1990),
203--212.





 \end{thebibliography}
\end{document}